\newtheorem{thm}{Theorem}[subsection]
\newtheorem{lem}[thm]{Lemma}
\newtheorem{prop}[thm]{Proposition}
\newtheorem{cor}[thm]{Corollary}
\theoremstyle{definition}
\newtheorem{defi}[thm]{Definition}
\newtheorem{rem}[thm]{Remark}
\newtheorem{ex}[thm]{Example}
\numberwithin{equation}{section}
\newcommand{\Par}{\mathrm{Par}}
\newcommand{\col}{\mathrm{col}}
\newcommand{\Tab}{\mathrm{Tab}}
\newcommand{\sh}{\mathrm{sh}}
\newcommand{\br}{\operatorname{br}}
\newcommand{\SpT}{Sp\mathrm{T}}
\newcommand{\SST}{\mathrm{SST}}
\newcommand{\red}{\operatorname{red}}
\newcommand{\Rem}{\operatorname{rem}}
\newcommand{\PAII}{P^{A\mathrm{II}}}
\newcommand{\QAII}{Q^{A\mathrm{II}}}
\newcommand{\Rec}{\mathrm{Rec}}
\newcommand{\LRAII}{\mathrm{LR}^{A\mathrm{II}}}
\newcommand{\LRSp}{\mathrm{LRT}^{Sp}}
\newcommand{\res}{\operatorname{res}}
\newcommand{\suc}{\operatorname{suc}}
\author{Hideya Watanabe}
\address{(H. Watanabe) Department of Mathematics, Rikkyo University, 3-34-1, Nishi-Ikebukuro, Toshima-ku, Tokyo, 171-8501, Japan}
\email{watanabehideya@gmail.com}
\subjclass[2020]{Primary~05E10; Secondary~17B10, 17B37}
\keywords{branching rule, symplectic tableau, quantum symmetric pair}
\date{\today}
\title{Symplectic tableaux and quantum symmetric pairs}
\begin{document}
\maketitle

\begin{abstract}
  We provide a new branching rule from the general linear group $GL_{2n}(\mathbb{C})$ to the symplectic group $Sp_{2n}(\mathbb{C})$ by establishing a simple algorithm which gives rise to a bijection from the set of semistandard tableaux of a fixed shape to a disjoint union of several copies of sets of symplectic tableaux of various shapes.
  The algorithm arises from representation theory of a quantum symmetric pair of type $A\mathrm{II}_{2n-1}$, which is a $q$-analogue of the classical symmetric pair $(\mathfrak{gl}_{2n}(\mathbb{C}), \mathfrak{sp}_{2n}(\mathbb{C}))$.
\end{abstract}

\section{Introduction}
\subsection{Branching rules}
Let $G$ be a group and $\hat{G}$ a complete set of representatives of the equivalence classes of certain irreducible $G$-modules.
Let $H$ be a subgroup of $G$.
It is a fundamental problem to determine how a given irreducible $G$-module $V \in \hat{G}$ decomposes into irreducible $H$-submodules (if it does):
\[
  V \simeq \bigoplus_{W \in \hat{H}} W^{m_{V, W}}.
\]
An explicit description of the multiplicities $m_{V,W}$ is called a \emph{branching rule}.

The problem of finding branching rules for certain pairs $(G,H)$ of classical groups (the general/special linear groups $GL_m(\mathbb{C})$, $SL_m(\mathbb{C})$, symplectic groups $Sp_{2n}(\mathbb{C})$, and (special) orthogonal groups $O_m(\mathbb{C})$, $SO_m(\mathbb{C})$) has been studied for a long time, and several (partial) answers have been obtained (see \cite{HTW05} and references therein).

In the present paper, we focus on the irreducible polynomial representations for the pair $(G, H) = (GL_{2n}(\mathbb{C}), Sp_{2n}(\mathbb{C}))$.
The equivalence classes of irreducible polynomial representations of $GL_{2n}(\mathbb{C})$ (resp., $Sp_{2n}(\mathbb{C})$) are parametrized by the set $\Par_{\leq 2n}$ (resp., $\Par_{\leq n}$) of partitions of length at most $2n$ (resp., $n$).
For each $\lambda \in \Par_{\leq 2n}$ and $\nu \in \Par_{\leq n}$, let $m_{\lambda,\nu}$ denote the corresponding multiplicity.

Littlewood \cite{Lit50} provided a partial branching rule.
Namely, he determined the multiplicities $m_{\lambda,\nu}$ for all $\lambda,\nu \in \Par_{\leq n}$, but not for all $\lambda \in \Par_{\leq 2n}$.

Sundaram \cite{S90} gave a complete branching rule.
The key ingredients for her theorem are King's symplectic tableaux (\cite{Ki76}), Berele's insertion scheme for $Sp_{2n}(\mathbb{C})$ (\cite{Ber86}), and Sundaram's algorithm.
In her branching rule, the multiplicities are determined by counting certain tableaux, which we call \emph{symplectic Littlewood-Richardson tableaux}.

Naito and Sagaki \cite{NS05} proposed a conjectural branching rule in terms of Littelmann paths.
The conjecture was proved by Schumann and Torres \cite{ScTo18}.

Kwon \cite{Kwo18b} established branching rules for various paris including $(GL_{2n}(\mathbb{C}), Sp_{2n}(\mathbb{C}))$.
His argument is based on a combinatorial crystal model, called the spinor model, and a general theory of reductive pairs.

\subsection{Results}
In the present paper, we introduce a simple algorithm which gives rise to a bijection
\[
  \LRAII: \SST_{2n}(\lambda) \rightarrow \bigsqcup_{\substack{\nu \in \Par_{\leq n} \\ \nu \subseteq \lambda}} \SpT_{2n}(\nu) \times \Rec_{2n}(\lambda/\nu)
\]
which sends a semistandard Young tableau of shape $\lambda \in \Par_{\leq 2n}$ with entries in $[1,2n] := \{ 1,\dots,2n \}$ to a pair consisting of a symplectic tableau of some shape $\nu \in \Par_{\leq n}$ with entries in $[1,2n]$ and a tableau, called a \emph{recording tableau}, of skew shape $\lambda/\nu$.
As a byproduct, it turns out that the multiplicity $m_{\lambda,\nu}$ coincides with the number $|\Rec_{2n}(\lambda/\nu)|$ of recording tableaux of shape $\lambda/\nu$.
Therefore, our algorithm provides a new branching rule for $(GL_{2n}(\mathbb{C}), Sp_{2n}(\mathbb{C}))$.
Moreover, it has deep representation theoretical information as we will see in the next subsection.
We call the bijection the \emph{Littlewood-Richardson map} since it can be regarded as a generalization of the branching rule, known as the Littlewood-Richardson rule, for the pair $(GL_{m}(\mathbb{C}) \times GL_{m}(\mathbb{C}), GL_{m}(\mathbb{C}))$.

Let us briefly explain our algorithm.
Given $T \in \SST_{2n}(\lambda)$, let $\mathbf{a} = (a_1,\dots,a_l)$ denote the first column of $T$ (read from top to bottom), and $S$ the other part.
For the column $\mathbf{a}$, define a new column $\red(\mathbf{a})$ to be the one obtained from $\mathbf{a}$ by removing the entries in the set $\Rem(\mathbf{a})$, which is defined by the following recursive formula:
\[
  \Rem(\mathbf{a}) := \begin{cases}
    \emptyset & \text{ if } l \leq 1, \\
    \Rem(a_1,\dots,a_{l-2}) \sqcup \{ a_{l-1}, a_l \} & \text{ if } l \geq 2,\ a_l \in 2\mathbb{Z},\ a_{l-1} = a_l-1, \text{ and } \\
    &\ a_l < 2l - |\Rem(a_1,\dots,a_{l-2})| - 1, \\
    \Rem(a_1,\dots,a_{l-1}) & \text{ otherwise}.
  \end{cases}
\]
Then, define a new tableau $\suc(T)$ to be the product $\red(\mathbf{a}) * S$ (in the plactic monoid).

Set $P^0 := T$, $\nu^0 := \lambda$, and $Q^0$ to be the unique tableau of shape $\lambda/\lambda$.
For each $k \geq 0$, set $P^{k+1} := \suc(P^k)$, $\nu^{k+1}$ to be the shape of $P^{k+1}$, and $Q^{k+1}$ to be the tableau of shape $\lambda/\nu^{k+1}$ such that
\[
  Q^{k+1}(i,j) = \begin{cases}
    Q^k(i,j) & \text{ if } (i,j) \notin D(\nu^k), \\
    k+1 & \text{ if } (i,j) \in D(\nu^k),
  \end{cases}
\]
where $D(\nu^k)$ denotes the Young diagram of the partition $\nu^k$.
It turns out that this procedure eventually terminates; there exists a unique integer $k_0 \geq 0$ such that
\[
  P^k = P^{k_0}, \ \nu^k = \nu^{k_0}, \text{ and } Q^k = Q^{k_0} \ \text{ for all } k \geq k_0.
\]
Set
\[
  \PAII(T) := P^{k_0}, \quad \QAII(T) := Q^{k_0}.
\]
Now, the output $\LRAII(T)$ of the algorithm is the pair $(\PAII(T), \QAII(T))$:
\[
  \LRAII(T) = (\PAII(T), \QAII(T)).
\]

\begin{ex}\label{ex: LR}\normalfont
  Let $n = 3$, $\lambda = (4,3,2,2,1)$, and consider the semistandard tableau $T$ of shape $\lambda$ given by
  \[
    T = \ytableausetup{smalltableaux}
    \begin{ytableau}
      1 & 1 & 2 & 4 \\
      2 & 2 & 3 \\
      4 & 4 \\
      5 & 6 \\
      6
    \end{ytableau}
  \]
  Then, we have
  \[
    \Rem(1,2,4,5,6) = \{ 1,2,5,6 \},
  \]
  and
  \[
    P^1 = 
    \begin{ytableau}
      4
    \end{ytableau} *
    \begin{ytableau}
      1 & 2 & 4 \\
      2 & 3 \\
      4 \\
      6
    \end{ytableau} =
    \begin{ytableau}
      1 & 2 & 4 \\
      2 & 3 \\
      4 & 4 \\
      6
    \end{ytableau}, \quad
    Q^1 = 
    \begin{ytableau}
      \none & \none & \none & 1 \\
      \none & \none & 1 \\
      \none & \none \\
      \none & 1 \\
      1
    \end{ytableau}
  \]
  The next step is computed as follows: We have
  \[
    \Rem(1,2,4,6) = \{ 1,2 \},
  \]
  and
  \[
    P^2 = 
    \begin{ytableau}
      4 \\
      6
    \end{ytableau} *
    \begin{ytableau}
      2 & 4 \\
      3 \\
      4
    \end{ytableau} =
    \begin{ytableau}
      2 & 4 & 4 \\
      3 \\
      4 \\
      6
    \end{ytableau}, \quad
    Q^2 = 
    \begin{ytableau}
      \none & \none & \none & 1 \\
      \none & 2 & 1 \\
      \none & 2 \\
      \none & 1 \\
      1
    \end{ytableau}
  \]
  Let us proceed to the next step:
  We have
  \[
    \Rem(2,3,4,6) = \{ 3,4 \},
  \]
  and
  \[
    P^3 = 
    \begin{ytableau}
      2 \\
      6
    \end{ytableau} *
    \begin{ytableau}
      4 & 4
    \end{ytableau} =
    \begin{ytableau}
      2 & 4 & 4 \\
      6
    \end{ytableau}, \quad
    Q^3 = 
    \begin{ytableau}
      \none & \none & \none & 1 \\
      \none & 2 & 1 \\
      3 & 2 \\
      3 & 1 \\
      1
    \end{ytableau}
  \]
  Since $\Rem(2,6) = \emptyset$, the algorithm now terminates.
  Hence, we finally obtain
  \[
    \LRAII(T) = \left( 
      \begin{ytableau}
        2 & 4 & 4 \\
        6
      \end{ytableau}, 
      \begin{ytableau}
        \none & \none & \none & 1 \\
        \none & 2 & 1 \\
        3 & 2 \\
        3 & 1 \\
        1
      \end{ytableau} \right).
  \]
\end{ex}

\subsection{Quantum symmetric pairs}
Although the author believes that the bijectivity of the Littlewood-Richardson map can be proved in the realm of combinatorics, we prove it via representation theory of a quantum symmetric pair of type $A\mathrm{II}_{2n-1}$, from which the algorithm to compute the Littlewood-Richardson map arises.

A quantum symmetric pair is a quantum analogue of a classical symmetric pair (e.g., $(\mathfrak{gl}_{2n}(\mathbb{C}), \mathfrak{sp}_{2n}(\mathbb{C}))$, where $\mathfrak{gl}_{2n}(\mathbb{C})$ (resp., $\mathfrak{sp}_{2n}(\mathbb{C})$) denotes the general linear Lie algebra (resp., the symplectic Lie algebra)).
It consists of a Drinfeld-Jimbo quantum group $\mathbf{U}$ and a Letzter $\imath$quantum group $\mathbf{U}^\imath$.
We refer the reader unfamiliar with quantum symmetric pair to a survey paper \cite{Wan21}.
In the present paper, we consider only a quantum symmetric pair of type $A\mathrm{II}_{2n-1}$; the $\mathbf{U}$ and $\mathbf{U}^\imath$ are quantum analogues of the universal enveloping algebras of $\mathfrak{gl}_{2n}(\mathbb{C})$ and $\mathfrak{sp}_{2n}(\mathbb{C})$, respectively.

For each $\lambda \in \Par_{\leq 2n}$, there exists a finite-dimensional irreducible $\mathbf{U}$-module $V(\lambda)$ with a distinguished basis of the form $\{ b_T \mid T \in \SST_{2n}(\lambda) \}$, called the canonical basis (\cite{Lus93}).
Similarly, for each $\nu \in \Par_{\leq n}$, there exists a finite-dimensional irreducible $\mathbf{U}^\imath$-module $V^\imath(\nu)$ (\cite{Mol06}, \cite{W21a}).
In the present paper, we prove that the $\mathbf{U}^\imath$-module $V^\imath(\nu)$ admits a distinguished basis of the form $\{ b^\imath_T \mid T \in \SpT_{2n}(\nu) \}$.

Then, for each $\lambda \in \Par_{\leq 2n}$, by composing several $\mathbf{U}^\imath$-module homomorphisms, we construct a $\mathbf{U}^\imath$-module isomorphism
\[
  \LRAII: V(\lambda) \rightarrow \bigoplus_{\nu \in \Par_{\leq n}} (V^\imath(\nu) \otimes \mathbb{Q}(q)\Rec_{2n}(\lambda/\nu)),
\]
which we call the \emph{quantum Littlewood-Richardson map}.
Here, each $x \in \mathbf{U}^\imath$ acts on each summand of the right-hand side as $x \otimes \mathrm{id}$.
The isomorphism is a $q$-analogue of the Littlewood-Richardson map on $\SST_{2n}(\lambda)$ in the following sense:
For each $T \in \SST_{2n}(\lambda)$, we have $\LRAII(b_T) \equiv b^\imath_{\PAII(T)} \otimes \QAII(T)$ modulo $q^{-1}$ (in a suitable sense).
This fact implies the bijectivity of the Littlewood-Richardson map.

Therefore, the Littlewood-Richardson map tells us not only the multiplicities $m_{\lambda,\nu}$ but also how the irreducible $\mathbf{U}$-module $V(\lambda)$ decomposes into irreducible $\mathbf{U}^\imath$-submodules at $q = \infty$.
Hence, this result must be closely related to the theory of crystal bases for quantum symmetric pairs ({\it cf.\ }\cite{W21}).

\subsection{Organization}
The paper is organized as follows.
In Section \ref{sect: preliminary comb}, we collect terminology and basic results concerning partitions and tableaux which are necessary to formulate our main algorithm.
Then, we state our main theorem in Section \ref{sect: main}.
It consists of the bijectivity of the Littlewood-Richardson map and an explicit description of the recording tableaux.
The rest of the paper is devoted to proving the theorem.
In Section \ref{sect: factor red}, we factor the reduction map ($\mathbf{a} \mapsto \red(\mathbf{a})$) into small pieces so that we can prove its injectivity.
After reviewing representation theory of $\mathfrak{gl}_{2n}(\mathbb{C})$, $\mathfrak{sp}_{2n}(\mathbb{C})$, $\mathbf{U}$, and $\mathbf{U}^\imath$ in Sections \ref{sect: preliminary Lie alg} and \ref{sect: preliminary qsp}, we prove the surjectivity of the Littlewood-Richardson map via an investigation into the quantum Littlewood-Richardson map in Section \ref{sect: LR}.
We finally complete the proof of our main theorem in Section \ref{sect: rec} by relating the recording tableaux to the symplectic Littlewood-Richardson tableaux.

\subsection*{Acknowledgements}
The author would like to thank the anonymous referees for careful readings and valuable comments.
This work was supported by JSPS KAKENHI Grant Number JP22KJ2603.

\subsection*{Notation}
Throughout the paper, we fix positive integers $m$ and $n$.

Given two integers $a$ and $b$, let $[a,b]$ denote the integer interval:
\[
  [a,b] := \{ c \in \mathbb{Z} \mid a \leq c \leq b \}.
\]

\section{Preliminaries from combinatorics}\label{sect: preliminary comb}
In this section, we collect terminology and basic results concerning partitions and tableaux which are necessary to formulate our main algorithm in the next section.

\subsection{Partitions}
A \emph{partition} is a non-increasing sequence
\[
  \lambda = (\lambda_1,\dots,\lambda_l)
\]
of positive integers.
We often regard a non-increasing sequence of nonnegative integers as a partition by ignoring the zero's.
Each $\lambda_i$ is referred to as a \emph{part} of $\lambda$.
It is convenient to set $\lambda_i := 0$ for $i > l$.
The sum $\sum_{i=1}^{l} \lambda_i$ of parts is called the \emph{size} of $\lambda$, and is denoted by $|\lambda|$.
The number $l$ of parts of $\lambda$ is called the \emph{length} of $\lambda$, and is denoted by $\ell(\lambda)$.
We regard the empty sequence $()$ as a unique partition of length $0$.
Let $\Par$ denote the set of all partitions.

For each $l \in \mathbb{Z}_{\geq 0}$, let $\mathrm{Par}_{\leq l}$ denote the set of all partitions of length at most $l$.

For each $l \in \mathbb{Z}_{\geq 0}$, let $\varpi_l$ denote the partition of length $l$ whose parts are all $1$:
\begin{align}\label{eq: varpi_l}
  \varpi_l = (1^l) = (\underbrace{1,\dots,1}_l)
\end{align}

Until the end of this subsection, let us fix a partition $\lambda$.

The \emph{Young diagram} of shape $\lambda$ is the set
\[
  D(\lambda) := \{ (i,j) \mid 1 \leq i \leq \ell(\lambda) \text{ and } 1 \leq j \leq \lambda_i \}.
\]
As usual, we visualize it by a collection of boxes; e.g.,
\[
  D(4,3,2,2,1) = \ytableausetup{smalltableaux}\ydiagram{4,3,2,2,1}.
\]

Given a partition $\lambda$ and a number $j \in [1,\lambda_1]$, set
\begin{align}\label{eq: col_j}
  \col_j(\lambda) := \sharp \{ i \in [1,\ell(\lambda)] \mid \lambda_i \geq j \}.
\end{align}
For example, if $\lambda = (4,3,2,2,1)$, then
\[
  (\col_1(\lambda),\col_2(\lambda),\col_3(\lambda),\col_4(\lambda)) = (5,4,2,1).
\]

\begin{defi}\label{def: even col}\normalfont
  We say that the partition $\lambda$ has \emph{even columns} if $\col_j(\lambda)$ is even for all $j \in [1,\lambda_1]$.
\end{defi}

Given a partition $\mu \in \mathrm{Par}$, we write $\mu \subseteq \lambda$ to indicate that $D(\mu) \subseteq D(\lambda)$, or equivalently,
\[
  \mu_i \leq \lambda_i\ \text{ for all } 1 \leq i \leq \ell(\lambda).
\]
For each $\mu \in \Par$ with $\mu \subseteq \lambda$, set
\[
  |\lambda/\mu| := |\lambda|-|\mu| \text{ and } D(\lambda/\mu) := D(\lambda) \setminus D(\mu).
\]

\begin{defi}\label{def: vertical strip}\normalfont
  Let $\mu \in \Par$ such that $\mu \subseteq \lambda$.
  We say that $\lambda/\mu$ is a \emph{vertical strip}, and write $\mu \underset{\text{vert}}{\subseteq} \lambda$ if
  \[
    \mu_i \geq \lambda_i-1 \ \text{ for all } i \in [1,\ell(\lambda)].
  \]  
\end{defi}

For example, if $\lambda = (4,3,2,2,1)$ and $\mu = (3,3,1,1)$, then $\mu \underset{\text{vert}}{\subseteq} \lambda$:
\[
  D(\lambda/\mu) = \ydiagram{3+1, 3+0, 1+1, 1+1, 0+1}
\]

The following is immediate from the definition of vertical strips.

\begin{lem}\label{lem: characterization of vertical strips}
  Let $\lambda'$ denote the partition $(\lambda_1-1,\dots,\lambda_{\ell(\lambda)}-1)$, and $\mu$ be a partition such that $\lambda' \underset{\text{vert}}{\subseteq} \mu$.
  Then, we have $\mu \underset{\text{vert}}{\subseteq} \lambda$ if and only if $\ell(\mu) \leq \ell(\lambda)$.
\end{lem}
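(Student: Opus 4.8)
The plan is to unwind the definitions on both sides of the claimed equivalence. Write $l = \ell(\lambda)$, so that $\lambda' = (\lambda_1 - 1, \dots, \lambda_l - 1)$ (a weakly decreasing sequence of nonnegative integers, which we regard as a partition by deleting any trailing zeros). The hypothesis $\lambda' \underset{\text{vert}}{\subseteq} \mu$ means $\mu \supseteq \lambda'$ and $\mu_i \le \lambda'_i + 1 = \lambda_i$ for all $i$; in particular $\mu_i \le \lambda_i$ already holds for every $i$ in the range where $\lambda'_i + 1$ makes sense. So the only thing that can go wrong with $\mu \underset{\text{vert}}{\subseteq} \lambda$ is at rows $i$ with $i > l$: there $\lambda_i = 0$, hence $\lambda'_i = 0$ and the vertical-strip condition $\mu_i \ge \lambda_i - 1$ reads $\mu_i \ge -1$, which is automatic, but the containment $\mu \subseteq \lambda$ requires $\mu_i \le \lambda_i = 0$, i.e. $\mu_i = 0$ for all $i > l$, which is exactly the condition $\ell(\mu) \le l$.

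Concretely, I would argue as follows. First, from $\lambda' \underset{\text{vert}}{\subseteq} \mu$ deduce $\lambda_i - 1 \le \mu_i$ for all $i \in [1, l]$ and $\mu_i \le \lambda_i$ for all $i \in [1, l]$ (the latter because $\mu_i \le \lambda'_i + 1 = \lambda_i$ when $i \le \ell(\lambda')$, and when $\ell(\lambda') < i \le l$ we have $\lambda'_i = 0$ so $\lambda_i = 1$ and $\mu_i \le \lambda'_i + 1 = 1 = \lambda_i$). Hence the pair of inequalities defining $\mu \underset{\text{vert}}{\subseteq} \lambda$ — namely $\mu \subseteq \lambda$ and $\mu_i \ge \lambda_i - 1$ for all $i \in [1, \ell(\lambda)]$ — is automatically satisfied for all rows $i \le l$. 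Therefore $\mu \underset{\text{vert}}{\subseteq} \lambda$ holds if and only if $\mu \subseteq \lambda$, and since $\mu_i \le \lambda_i$ is already known for $i \le l$ while $\lambda_i = 0$ for $i > l$, the containment $\mu \subseteq \lambda$ is equivalent to $\mu_i = 0$ for all $i > l$, that is, to $\ell(\mu) \le l = \ell(\lambda)$.

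The only mild subtlety — and the one step worth stating carefully rather than waving at — is the bookkeeping around trailing zeros: $\lambda'$ as written may have fewer than $l$ (nonzero) parts precisely when $\lambda$ has parts equal to $1$, and one must check the definitions of $\subseteq$ and $\underset{\text{vert}}{\subseteq}$ are being applied with the convention $\mu_i := 0$ for $i > \ell(\mu)$ and likewise for $\lambda$, $\lambda'$. Once that convention is fixed the argument is a two-line comparison of inequalities; there is no real obstacle, which is consistent with the statement being flagged as ``immediate from the definition.'' I would keep the written proof to a few sentences, spelling out the row-by-row case distinction $i \le \ell(\lambda')$, $\ell(\lambda') < i \le l$, and $i > l$ just enough to make the equivalence transparent.
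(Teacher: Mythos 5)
Your proof is correct and is exactly the definition-unwinding the paper has in mind: the paper offers no written argument, stating only that the lemma is immediate from the definition of vertical strips, and your row-by-row comparison (strip condition for $i \le \ell(\lambda)$ automatic from $\lambda' \underset{\text{vert}}{\subseteq} \mu$, containment beyond row $\ell(\lambda)$ equivalent to $\ell(\mu) \le \ell(\lambda)$) is that intended check, with the trailing-zero conventions handled properly.
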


\subsection{Semistandard tableaux}\label{subsect: sstab}
In this subsection, we fix a partition $\lambda$.

A \emph{Young tableau}, or simply a \emph{tableau}, of shape $\lambda$ is a map
\[
  T: D(\lambda) \rightarrow \mathbb{Z}_{> 0}.
\]
The partition $\lambda$ is called the \emph{shape} of $T$, and denoted by $\sh(T)$.
As usual, we visualize a tableau by filling the boxes of $D(\lambda)$ with positive integers; e.g., the following is a tableau of shape $(4,3,2,2,1)$.
\begin{align}\label{eq: example of a tableau}
  \begin{ytableau}
    1 & 1 & 2 & 4 \\
    2 & 2 & 3 \\
    4 & 4 \\
    5 & 6 \\
    6
  \end{ytableau}.
\end{align}

Let $T$ be a tableau of shape $\lambda$.
Given a positive integer $k$, let $T[k]$ denote the number of occurrences of the entry $k$ in $T$:
\begin{align}\label{eq: T[a]}
  T[k] := \sharp \{ (i,j) \in D(\lambda) \mid T(i,j) = k \}.  
\end{align}

For each $j \in [1,\lambda_1]$, set
\[
  w^\mathrm{col}_j(T) := (T(\col_j(\lambda),j), \dots,T(2,j),T(1, j))
\]
(see \eqref{eq: col_j} for the definition of $\col_j$).
It is the sequence of entries in the $j$-th column of $T$ read from bottom to top.
The \emph{column word} of $T$ is the sequence $w^\mathrm{col}(T)$ of entries obtained by concatenating the $w^\mathrm{col}_j(T)$'s:
\begin{align}\label{eq: w^col(T)}
  w^\mathrm{col}(T) := w^\mathrm{col}_1(T) \circ \cdots \circ w^\mathrm{col}_{\lambda_1}(T).
\end{align}
For example, if $T$ is the tableau in \eqref{eq: example of a tableau}, then
\[
  w^\mathrm{col}(T) = (6,5,4,2,1,6,4,2,1,3,2,4).
\]

The tableau $T$ is said to be \emph{semistandard} if the entries increase weakly from left to right along the rows, and strictly from top to bottom along the columns.
Namely,
\[
  T(i,j) \leq T(i,j+1) \text{ and } T(i,j) < T(i+1,j)\ \text{ for all } (i,j) \in D(\lambda),
\]
where we set $T(i',j') := \infty$ if $(i',j') \notin D(\lambda)$.
For example, the tableau in \eqref{eq: example of a tableau} is semistandard.
Let $\mathrm{SST}_m(\lambda)$ denote the set of all semistandard tableaux of shape $\lambda$ with entries in $[1,m]$.

The generating function
\begin{align}\label{eq: Schur}
  s_\lambda(x_1,\dots,x_m) := \sum_{T \in \mathrm{SST}_m(\lambda)} \mathbf{x}^{\operatorname{wt}(T)} \in \mathbb{Z}[x_1,\dots,x_m]
\end{align}
is called the \emph{Schur function}, where
\begin{align}\label{eq: wt(T)}
  \operatorname{wt}(T) := (T[1], \dots, T[m]), \quad \mathbf{x}^{(a_1,\dots,a_m)} := x_1^{a_1} \cdots x_m^{a_m}.
\end{align}
The Schur functions are symmetric polynomials:
\[
  s_\lambda(x_{\sigma(1)}, \dots, x_{\sigma(m)}) = s_\lambda(x_1,\dots,x_m)
\]
for all permutation $\sigma$ on $[1,m]$.

For $\mu \in \Par$ with $\mu \subseteq \lambda$, a \emph{tableau} of shape $\lambda/\mu$ is a map
\[
  D(\lambda/\mu) \rightarrow \mathbb{Z}_{> 0}.
\]
Let $\Tab(\lambda/\mu)$ denote the set of all tableaux of shape $\lambda/\mu$.
The notion of semistandard tableaux of shape $\lambda/\mu$ is defined in the obvious way.
Let $\SST_m(\lambda/\mu)$ denote the set of all semistandard tableaux of shape $\lambda/\mu$ with entries in $[1,m]$.
For example, the following is a semistandard tableau of shape $(4,3,2,2,1)/(2,2,1)$:
\[
  \begin{ytableau}
    \none & \none & 2 & 2 \\
    \none & \none & 3 \\
    \none & 1 \\
    2 & 4 \\
    6
  \end{ytableau}.
\]

\subsection{Plactic monoid}
The set of all semistandard tableaux with entries in $[1,m]$ forms a monoid, called the \emph{plactic monoid} ({\it cf.}\ \cite[Sections 1.1 and A.2]{Ful97}).
For the reader's convenience, we recall here its definition.
In order to describe the monoid structure, we need to introduce the \emph{column insertion algorithm}, which receives a pair $(w, T)$ of a positive integer $w$ and a semistandard tableau $T$ as an input, and returns a new semistandard tableau $w \rightarrow T$ as an output as follows.
Set $\lambda := \sh(T)$, and
\[
  w_0 := w, \quad T_0 := T.
\]
For each $j > 0$, given a pair $(w_{j-1}, T_{j-1})$, set
\begin{align}\label{eq: col ins}
  \begin{split}
    &r_{j} := \min \{ r \in [1,\col_{j}(\lambda)+1] \mid T(r,j) \geq w_{j-1} \}, \\
    &w_j := T(r_j, j),
  \end{split}
\end{align}
where we set $T(i',j') = \infty$ if $(i',j') \notin D(\lambda)$ (see \eqref{eq: col_j} for the definition of $\col_j$).
Also set $\lambda^j$ to be the partition such that
\[
  D(\lambda^j) = \begin{cases}
    D(\lambda) & \text{ if } r_j \leq \col_j(\lambda), \\
    D(\lambda) \sqcup \{ (r_j, j) \} & \text{ if } r_j = \col_j(\lambda)+1,
  \end{cases}
\]
and $T_j$ to be the semistandard tableau of shape $\lambda^j$ such that
\[
  T_j(i',j') = \begin{cases}
    w_{j-1} & \text{ if } (i',j') = (r_j,j), \\
    T_{j-1}(i',j') & \text{ if } (i',j') \neq (r_j,j),
  \end{cases} \ \text{ for all } (i',j') \in D(\lambda^j).
\]
Let $s \geq 1$ denote the minimal integer such that $r_j = \col_j(\lambda)+1$.
Then, the semistandard tableau $T_s$ is the one $w \rightarrow T$.

The sequence
\[
  \br(w, T) := (r_1, \dots, r_s)
\]
is called the \emph{bumping route}.

\begin{ex}\normalfont
  Let $w = 7$ and
  \[
    T = \ytableausetup{smalltableaux}
    \begin{ytableau}
      1 & 2 & 2 & 3 \\
      3 & 4 & 5 \\
      4 & 5 & 6 \\
      6 & 6 & 9 \\
      7 & 7 & 10 \\
      8 & 8 \\
      10
    \end{ytableau}.
  \]
  Then, we have
  \[
    w \rightarrow T = 
    \begin{ytableau}
      1 & 2 & 2 & 3 \\
      3 & 4 & 5 & 9 \\
      4 & 5 & 6 \\
      6 & 6 & 7 \\
      7 & 7 & 10 \\
      8 & 8 \\
      10
    \end{ytableau}, \quad \br(w, T) = (5,5,4,2).
  \]
\end{ex}

The following proposition can be straightforwardly deduced from the definitions.

\begin{prop}\label{prop: properties of colum insertion}
  Let $w, T, r_1,\dots,r_s$ be as above.
  Set $S := w \rightarrow T$ and $\mu := \sh(S)$.
  Then, the following hold.
  \begin{enumerate}
    \item\label{item: prop properties of colum insertion 1} $r_1 \geq \cdots \geq r_s$
    \item\label{item: prop properties of colum insertion 2} $w \leq T(r_1,1) \leq T(r_2,2) \leq \dots \leq T(r_{s-1},s-1) < T(r_s,s) = \infty$.
    \item\label{item: prop properties of colum insertion 3} $\lambda \subset \mu$. Moreover, $D(\mu/\lambda) = \{ (r_s, s) \}$.
    \item\label{item: prop properties of colum insertion 4} For each $(i,j) \in D(\mu)$, we have
    \[
      S(i,j) = \begin{cases}
        w & \text{ if } (i,j) = (r_1,1), \\
        T(r_{j-1}, j-1) & \text{ if } j \in [2,s] \text{ and } i = r_j, \\
        T(i,j) & \text{ otherwise}.
      \end{cases}
    \]
  \end{enumerate}
\end{prop}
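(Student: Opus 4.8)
The plan is to prove all four assertions at once by induction on the column index $j \in [1,s]$, establishing at the $j$-th stage the following three facts: (i) $r_j \leq r_{j-1}$ for $j \geq 2$; (ii) $w_{j-1} \leq w_j$, equivalently $T(r_{j-1},j-1) \leq T(r_j,j)$, where we read $w_0 = w$; and (iii) $T_j$ is a semistandard tableau of shape $\lambda^j$, with $\lambda^j = \lambda$ whenever $j < s$. Once these are in hand, assertions \ref{item: prop properties of colum insertion 1} and \ref{item: prop properties of colum insertion 2} are immediate: (i) gives the first, and (ii) gives the chain of inequalities in the second, where we use that, since the algorithm did not stop before column $s$, we have $r_j \leq \col_j(\lambda)$ and hence $(r_j,j) \in D(\lambda)$, $w_j = T(r_j,j) < \infty$ for all $j < s$, while $r_s = \col_s(\lambda)+1$ forces $(r_s,s) \notin D(\lambda)$ and $w_s = \infty$, so that the last inequality is strict.

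For the induction, the base case $j = 1$ is a direct unwinding of the definition of $r_1$: minimality gives $T(r_1-1,1) < w \leq T(r_1,1)$, which is precisely what is needed to check (ii) and to see that replacing $T(r_1,1)$ by $w$ (or appending $w$ at the bottom of the first column when $r_1 = \col_1(\lambda)+1$) leaves a semistandard tableau. For the inductive step I would argue (i) first: by the semistandardness of $T$ along row $r_{j-1}$ (or the convention $T(r_{j-1},j) = \infty$ when $(r_{j-1},j) \notin D(\lambda)$) one gets $T(r_{j-1},j) \geq T(r_{j-1},j-1) = w_{j-1}$, so that either $r_{j-1}$ itself, or — should $r_{j-1}$ exceed $\col_j(\lambda)+1$ — the index $\col_j(\lambda)+1$, is a competitor in the minimum defining $r_j$; in either case $r_j \leq r_{j-1}$. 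Then (ii) is read off from $w_j = T(r_j,j) \geq w_{j-1}$, and (iii) is verified as in the base case: the minimality of $r_j$ yields the column-strict inequality just above the modified entry, while (i) and the semistandardness of $T$ (or of $T_{j-1}$ in column $j-1$, according to whether $r_j < r_{j-1}$ or $r_j = r_{j-1}$) yield the row-weak inequalities around it.

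With the induction complete, assertion \ref{item: prop properties of colum insertion 3} follows because $\lambda^j = \lambda$ for $j < s$ and $D(\lambda^s) = D(\lambda) \sqcup \{(r_s,s)\}$ by the definition of $s$; here the box $(r_s,s)$ is genuinely addable, since $r_s \leq r_{s-1}$ together with $(r_{s-1},s-1)\in D(\lambda)$ gives $\lambda_{r_s} \geq \lambda_{r_{s-1}} \geq s-1$, while $r_s = \col_s(\lambda)+1$ gives $\lambda_{r_s} \leq s-1$, whence $\lambda_{r_s} = s-1$. Assertion \ref{item: prop properties of colum insertion 4} is then bookkeeping: each $T_j$ differs from $T_{j-1}$ only in the box $(r_j,j)$, which is never revisited at later stages (those alter only columns $>j$), so $S(r_j,j) = w_{j-1}$ for $j \in [1,s]$ and $S(i,j) = T(i,j)$ for every other box of $D(\mu)$; substituting $w_0 = w$ and $w_{j-1} = T(r_{j-1},j-1)$ for $j \geq 2$ gives the stated formula. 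I expect the only mildly delicate points to be the case split in the inductive proof of (i) when $r_{j-1}$ overshoots the (possibly shorter) column $j$, and the repeated care needed with the $\infty$-convention and with the distinction between $\col_{j-1}(\lambda)$ and $\col_j(\lambda)$; none of this is hard, but it is where a hasty argument would go wrong.
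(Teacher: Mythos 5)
Your induction on the column index is correct and complete in outline: the genuinely delicate points you flag (the competitor $\col_j(\lambda)+1$ when $r_{j-1}$ overshoots a shorter column, the addability of the box $(r_s,s)$ via $\lambda_{r_s}=s-1$, and the left-neighbour check split according to $r_j<r_{j-1}$ or $r_j=r_{j-1}$) are exactly the ones that need care, and you handle each correctly. The paper gives no proof at all — it only remarks that the proposition "can be straightforwardly deduced from the definitions" — and your argument is precisely that straightforward deduction, so it matches the intended approach.
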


Given two semistandard tableaux $S$ and $T$, their product $S*T$ in the plactic monoid is given by
\[
  S*T := w_1 \rightarrow (\cdots \rightarrow (w_r \rightarrow T) \cdots )
\]
where $(w_1, \dots, w_r) = w^\mathrm{col}(S)$ (see \eqref{eq: w^col(T)} for the definition of $w^\col$).

Regarding bumping routes of successive insertions, the following is known.

\begin{prop}[{\cite[Exercise 3 in Section A.2]{Ful97}}]\label{prop: col bump lem}
  Let $\lambda \in \Par_{\leq m}$, $T \in \SST_m(\lambda)$, and $w,w' \in [1,m]$ be such that $w < w'$.
  Let us write
  \[
    \br(w, T) = (r_1,\dots,r_s), \quad \br(w', (w \rightarrow T)) = (r'_1,\dots,r'_{s'}).
  \]
  Then, we have $s' \leq s$ and $r'_j > r_j$ for all $j \in [1,s']$.
\end{prop}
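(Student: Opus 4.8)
The plan is to run the two column insertions side by side and compare their bumping routes column by column, using Proposition~\ref{prop: properties of colum insertion} to control how $T' := w\rightarrow T$ differs from $T$. Recall from that proposition that $\mu := \sh(T')$ satisfies $D(\mu/\lambda) = \{(r_s,s)\}$, and that for $1 \le j \le s$ the $j$-th column of $T'$ coincides with the $j$-th column of $T$ except that in row $r_j$ the entry becomes $w_{j-1}$ (for $j=1$ this value is $w$; for $j=s$ it occupies the one new box), while for $j > s$ the two columns agree. Here $w = w_0 \le w_1 \le \cdots$ are the values bumped during the first insertion, with $w_j = T(r_j,j)$. Write $w' = w'_0, w'_1, w'_2,\dots$ for the values bumped during the second insertion, so that $r'_j = \min\{r \mid T'(r,j) \ge w'_{j-1}\}$ and, while the insertion is still active in column $j$, $w'_j = T'(r'_j,j)$.

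I would prove, by induction on $j \ge 1$, the combined statement: \emph{if $j \le s'$ then $j \le s$ and $r'_j > r_j$; and if moreover $j < s'$ then $j < s$ and $w'_j > w_j$.} Setting $j = s'$ then yields precisely $s' \le s$ together with $r'_j > r_j$ for all $j \in [1,s']$, which is the proposition.

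The one-column analysis is the same for the base case $j=1$ (where $w_0 = w$ and $w'_0 = w'$) and for the passage from $j-1$ to $j$; in the latter case the inductive hypothesis supplies $j \le s$ (since $j-1 < s$) and the inequality $w'_{j-1} > w_{j-1}$, while in the former these read $1 \le s$ and $w < w'$. For rows $r < r_j$ one has $T'(r,j) = T(r,j) < w_{j-1}$ by minimality of $r_j$, hence $< w'_{j-1}$; and $T'(r_j,j) = w_{j-1} < w'_{j-1}$ by Proposition~\ref{prop: properties of colum insertion}. Therefore $r'_j > r_j$. If $r'_j = \col_j(\mu)+1$ the second insertion stops, so $s' = j \le s$ and we are done with this $j$. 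Otherwise $r'_j \le \col_j(\mu)$; since the only box of $D(\mu/\lambda)$ lies at the foot of column $s$, the inequality $r'_j > r_j$ forces $j \ne s$, whence $j < s$ and $\col_j(\mu) = \col_j(\lambda)$, so $(r'_j,j) \in D(\lambda)$, $(r'_j,j) \ne (r_j,j)$, and therefore $w'_j = T'(r'_j,j) = T(r'_j,j) > T(r_j,j) = w_j$ because the $j$-th column of $T$ is strictly increasing. This is exactly the input the induction needs for column $j+1$.

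The routine inequalities make up most of the write-up; the one place demanding care is the termination column $s$, where $w\rightarrow T$ gains its single new box. There one must check that the second insertion can neither overrun column $s$ nor halt before $s$ in a manner inconsistent with $r'_j > r_j$, and that the sub-cases ``$r_j$ (respectively $r'_j$) interior to its column'' versus ``at its foot'' fit together. Proposition~\ref{prop: properties of colum insertion} is precisely what renders this bookkeeping transparent, and once it is in hand the whole argument is short.
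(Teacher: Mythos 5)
Your argument is correct and complete: the induction on columns with the combined hypothesis ($j\le s$, $r'_j>r_j$, and $w'_j>w_j$ while both insertions remain active) handles the only delicate point, namely the terminal column $s$ where $w\rightarrow T$ acquires its new box, exactly as needed. The paper itself gives no proof of Proposition \ref{prop: col bump lem} — it simply cites \cite[Exercise 3 in Section A.2]{Ful97} — so there is nothing to compare against; your write-up is the standard column-insertion analogue of the row bumping lemma and would serve as a valid proof of the cited exercise.
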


The following is known as (a combinatorial version of) Pieri's formula.

\begin{prop}\label{prop: Pieri formula}
  Let $\lambda \in \Par_{\leq m}$ and $k \in [0,m]$.
  The assignment $(S,T) \rightarrow S*T$ gives rise to a bijection
  \[
    *: \SST_m(\varpi_k) \times \SST_m(\lambda) \rightarrow \bigsqcup_{\substack{\mu \in \Par_{\leq m} \\ \lambda \underset{\text{vert}}{\subseteq} \mu \text{ and } |\mu/\lambda| = k}} \SST_m(\mu).
  \]
\end{prop}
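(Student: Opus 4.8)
The plan is to establish the claimed bijection by combining the column insertion machinery already recalled with a careful analysis of what happens when a single column $\mathbf{c}$ of height $k$ (i.e.\ an element of $\SST_m(\varpi_k)$) is inserted into a semistandard tableau $T$ of shape $\lambda$. First I would observe that $\SST_m(\varpi_k)$ is exactly the set of strictly increasing sequences $(c_1 < \dots < c_k)$ in $[1,m]$, and that by definition $\mathbf{c} * T = c_1 \rightarrow (\cdots \rightarrow (c_k \rightarrow T)\cdots)$ since the column word $w^{\mathrm{col}}(\mathbf{c})$ reads the column from bottom to top, i.e.\ $(c_k, c_{k-1}, \dots, c_1)$. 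The key structural input is Proposition~\ref{prop: col bump lem}: inserting the strictly increasing entries $c_k, c_{k-1}, \dots$ in this order produces bumping routes that move strictly rightward and are weakly shorter at each step. I would use this to show that the $k$ new boxes added to $D(\lambda)$ (one per insertion, by Proposition~\ref{prop: properties of colum insertion}\eqref{item: prop properties of colum insertion 3}) lie in distinct columns and in strictly increasing rows as the column index decreases; hence $\mu := \sh(\mathbf{c}*T)$ satisfies $\lambda \underset{\text{vert}}{\subseteq} \mu$ with $|\mu/\lambda| = k$, and $\ell(\mu) \le m$ because all entries lie in $[1,m]$. This shows the map $*$ is well defined into the stated codomain.

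Next I would prove surjectivity and injectivity simultaneously by constructing the inverse, namely \emph{reverse column insertion} along a vertical strip. Given $\mu$ with $\lambda \underset{\text{vert}}{\subseteq} \mu$ and $|\mu/\lambda| = k$, the boxes of $D(\mu/\lambda)$ sit in $k$ distinct columns $j_1 > j_2 > \dots > j_k$; one reverse-bumps out of $S \in \SST_m(\mu)$ starting from the box in the rightmost such column, then the next, and so on. Each reverse insertion ejects one letter and shrinks the shape by one box; I would need to check (i) that the boxes are removed in the right order so that the intermediate shapes are genuine partitions — this is where the vertical-strip condition, together with the rightward-movement of bumping routes from Proposition~\ref{prop: col bump lem}, is essential — and (ii) that the sequence of ejected letters $c_1, \dots, c_k$ comes out strictly increasing, so that it assembles into a genuine element of $\SST_m(\varpi_k)$. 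Claim (ii) is the mirror image of the statement in Proposition~\ref{prop: col bump lem} that successive forward insertions of increasing letters give rightward-shifted routes; reversing time, ejecting from right to left yields increasing letters. That reverse insertion is a two-sided inverse to forward insertion is the standard bijectivity of column insertion, so once the bookkeeping of shapes and the monotonicity of the ejected word are in place, the two maps are mutually inverse.

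The main obstacle I anticipate is purely combinatorial bookkeeping: verifying that at every stage of the reverse process the set of not-yet-removed boxes of $\mu/\lambda$ together with the already-reduced shape still forms a valid partition, so that each reverse insertion is legal and lands in $\SST_m$. Equivalently, on the forward side, one must rule out the possibility that two of the $k$ successive insertions add their new box to the \emph{same} column, or in a non-monotone row pattern; this is precisely controlled by the inequalities $s' \le s$ and $r'_j > r_j$ of Proposition~\ref{prop: col bump lem}, applied inductively to the chain $c_k \rightarrow T$, then $c_{k-1} \rightarrow (c_k \rightarrow T)$, etc. A clean way to organize this is to prove by induction on $k$ the refined statement that $\br(c_i, \cdot)$ at the $i$-th step terminates in a column strictly to the left of the previous one and in a strictly higher row, from which the vertical-strip conclusion and the invertibility both follow. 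An alternative, less computational route is to deduce the bijection from the symmetric-function identity $s_{\varpi_k} \cdot s_\lambda = \sum_\mu s_\mu$ (sum over the same index set) by a weight-preserving argument — since $\operatorname{wt}$ is additive under $*$ — but constructing the explicit inverse via reverse insertion is preferable here because the explicit map is what the later sections need.
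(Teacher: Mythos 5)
Your overall strategy (forward column insertion plus reverse insertion along the strip) is the standard one --- the paper itself states this proposition without proof, as a known combinatorial Pieri rule in the spirit of \cite{Ful97} --- but your execution has the insertion order, and with it the geometry of the added boxes, backwards, and this breaks the key step. By the definition of $*$ via the column word, $w^{\mathrm{col}}(\mathbf{c})=(c_k,\dots,c_1)$ and $S*T=w_1\rightarrow(\cdots\rightarrow(w_r\rightarrow T)\cdots)$, so the \emph{innermost} insertion is $w_k\rightarrow T=c_1\rightarrow T$: the letters are inserted in increasing order $c_1,c_2,\dots,c_k$, not in the order $c_k,c_{k-1},\dots$ that you describe (your formula $c_1\rightarrow(\cdots\rightarrow(c_k\rightarrow T)\cdots)$ inserts $c_k$ first). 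With the correct order, Proposition \ref{prop: col bump lem} applies verbatim at each step, and since the $r_j$ are row indices, $r'_j>r_j$ and $s'\leq s$ say that each successive route lies strictly \emph{below} the previous one (not ``rightward'') and is weakly shorter; combined with Proposition \ref{prop: properties of colum insertion} \eqref{item: prop properties of colum insertion 1} and \eqref{item: prop properties of colum insertion 3}, the $k$ new boxes occupy strictly increasing rows and weakly decreasing columns, which is exactly $\lambda \underset{\text{vert}}{\subseteq}\mu$ (at most one new box per row, possibly several per column). Your claim that the new boxes lie in \emph{distinct columns} is false: for $\mathbf{c}=(1,2,\dots,k)$ and $T$ empty, all $k$ boxes land in column $1$. ``Distinct columns'' is the horizontal-strip conclusion one would get by inserting in decreasing order, and that computes a different tableau from $\mathbf{c}*T$ altogether (inserting $2$ then $1$ into the empty tableau gives the row of shape $(2)$, whereas $(1,2)*\emptyset$ is the column of shape $(1,1)$). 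So as written the argument establishes neither the well-definedness into the stated codomain nor anything about the map $*$.

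The same reversal infects your inverse. Since the last letter inserted is $c_k$ and its new box is the bottom-most box of $\mu/\lambda$, reverse column insertion must start from the \emph{lowest} box of the vertical strip and proceed upward, ejecting $c_k>c_{k-1}>\dots>c_1$ in decreasing order; ``start from the box in the rightmost such column'' begins at the wrong end, and a vertical strip need not have its boxes in distinct columns at all, so ``$k$ distinct columns $j_1>\dots>j_k$'' does not even index the boxes (they sit in $k$ distinct \emph{rows}). Once these orientation errors are corrected --- insert $c_1,\dots,c_k$ in increasing order, conclude strictly increasing rows and weakly decreasing columns for the new boxes, and reverse-insert from the bottom box of $\mu/\lambda$ upward, checking the ejected word is strictly decreasing and the intermediate shapes are partitions --- your outline becomes the standard proof (cf.\ \cite[Section A.2]{Ful97}), which is presumably what the paper has in mind in quoting the result.
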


\subsection{Symplectic tableaux}
In this subsection, we fix a partition $\lambda \in \Par_{\leq 2n}$.

\begin{defi}[{\cite[Section 4]{Ki76}}]\label{def: SpT}\normalfont
  A semistandard tableau $T \in \mathrm{SST}_{2n}(\lambda)$ is said to be \emph{symplectic} if
  \[
    T(k,1) \geq 2k-1 \ \text{ for all } k \in [1,\ell(\lambda)].
  \]
  Let $\SpT_{2n}(\lambda)$ denote the set of all symplectic tableaux of shape $\lambda$.
\end{defi}

\begin{prop}
  If $\SpT_{2n}(\lambda) \neq \emptyset$, then $\ell(\lambda) \leq n$.
\end{prop}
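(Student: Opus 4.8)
The plan is to prove the contrapositive: assuming $\ell(\lambda) > n$, I will show that no tableau $T \in \SST_{2n}(\lambda)$ can satisfy the symplectic condition $T(k,1) \geq 2k-1$ for all $k \in [1,\ell(\lambda)]$. The key observation is that a symplectic tableau constrains its first column very tightly. First I would focus entirely on the first column $\mathbf{a} = (a_1, \dots, a_l)$ of $T$, where $l = \ell(\lambda) \geq n+1$. Since $T$ is semistandard, the first column is strictly increasing: $a_1 < a_2 < \cdots < a_l$, and since the entries lie in $[1,2n]$ we have $a_l \leq 2n$.

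The heart of the argument is the interplay between the semistandardness bound and the symplectic bound. From strict increase and $a_i \geq 1$, a routine induction gives $a_i \geq i$; but more usefully, going downward from $a_l \leq 2n$ and strict increase, we get $a_{l-j} \leq 2n - j$, i.e. $a_k \leq 2n - (l-k) = 2n - l + k$ for each $k \in [1,l]$. On the other hand, the symplectic condition says $a_k \geq 2k - 1$. Combining these at, say, $k = l$: we need $2l - 1 \leq a_l \leq 2n$, hence $l \leq n + 1/2$, so $l \leq n$. This already contradicts $l \geq n+1$, which completes the proof. So the entire proof is essentially the single chain of inequalities $2l - 1 \leq T(l,1) \leq 2n$ (where the left inequality is the symplectic condition at $k = \ell(\lambda)$ and the right inequality is that entries lie in $[1,2n]$), forcing $\ell(\lambda) \leq n$.

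I expect there to be no serious obstacle here; the only thing to be careful about is making sure the "entries in $[1,2n]$" hypothesis is genuinely in force — but it is, since by definition $\SpT_{2n}(\lambda) \subseteq \SST_{2n}(\lambda)$, and $\SST_{2n}(\lambda)$ consists of semistandard tableaux with entries in $[1,2n]$. So concretely: suppose $\SpT_{2n}(\lambda) \neq \emptyset$ and pick $T \in \SpT_{2n}(\lambda)$. Set $l = \ell(\lambda)$. Then $T(l,1) \leq 2n$ since $T(l,1) \in [1,2n]$, while $T(l,1) \geq 2l - 1$ by Definition \ref{def: SpT}. Therefore $2l - 1 \leq 2n$, so $l \leq n + \tfrac12$, and since $l$ is an integer, $l = \ell(\lambda) \leq n$. $\qed$
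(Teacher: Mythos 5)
Your proof is correct and is essentially the paper's argument: the paper takes $T \in \SpT_{2n}(\lambda)$ with $\ell(\lambda) > n$ and notes $T(n+1,1) \geq 2(n+1)-1 = 2n+1 > 2n$, contradicting that entries lie in $[1,2n]$, which is the same combination of the symplectic bound with the entry bound that you apply at row $\ell(\lambda)$. The only difference is cosmetic (direct inequality at the last row versus contradiction at row $n+1$).
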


\begin{proof}
  Assume contrary that $\ell(\lambda) > n$.
  Let $T \in \SpT_{2n}(\lambda)$.
  Then, we have
  \[
    T(n+1, 1) \geq 2(n+1) - 1 = 2n+1 > 2n.
  \]
  This contradicts that the entries of $T$ are in $[1,2n]$.
  Thus, the assertion follows.
\end{proof}

For each $\nu \in \mathrm{Par}_{\leq n}$, the generating function
\begin{align}\label{eq: sp Schur}
  s^{Sp}_\nu(y_1,\dots,y_n) := \sum_{T \in \mathrm{SpT}_{2n}(\nu)} \mathbf{y}^{\operatorname{wt}^{Sp}(T)} \in \mathbb{Z}[y_1^{\pm 1},\dots,y_n^{\pm 1}]
\end{align}
is called the \emph{symplectic Schur function}, where
\begin{align}\label{eq: wtsp}
  \operatorname{wt}^{Sp}(T) := (T[1]-T[2], T[3]-T[4], \dots, T[2n-1]-T[2n]).
\end{align}
The symplectic Schur functions are linearly independent.

\begin{lem}\label{lem: existence of removable entries}
  Let $T \in \mathrm{SST}_{2n}(\lambda)$.
  If $T$ is not symplectic, then there exists a unique $i \in [2,2n]$ such that
  \[
    T(i,1) < 2i-1 \text{ and } T(k,1) \geq 2k-1 \text{ for all } k \in [1,i-1].
  \]
  Moreover, we have
  \[
    T(i,1) = 2i-2 \text{ and } T(i-1,1) = 2i-3.
  \]
\end{lem}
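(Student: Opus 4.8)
The plan is to prove both assertions together by examining the first column $\mathbf{a}=(a_1,\dots,a_l)$ of $T$, where $a_k:=T(k,1)$ and $l:=\ell(\lambda)$, exploiting that $\mathbf{a}$ is strictly increasing. Since $T$ is not symplectic, the set of indices $k\in[1,l]$ with $a_k<2k-1$ is nonempty; let $i$ be its minimum. Note $i\geq 2$ because $a_1\geq 1=2\cdot 1-1$ always holds (entries are positive integers). By minimality of $i$ we have $a_k\geq 2k-1$ for all $k\in[1,i-1]$, which is exactly the second displayed condition; uniqueness of $i$ is automatic from taking the minimum, so I should phrase "unique" as "the smallest such index, and it is the only index satisfying \emph{both} conditions simultaneously."

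The heart of the argument is the final sentence: $a_i=2i-2$ and $a_{i-1}=2i-3$. First I would show $a_i\geq 2i-2$: since $i-1\in[1,i-1]$ we have $a_{i-1}\geq 2(i-1)-1=2i-3$, and strict increase gives $a_i\geq a_{i-1}+1\geq 2i-2$. Combined with $a_i<2i-1$ (the defining property of $i$), this forces $a_i=2i-2$. Then $a_{i-1}<a_i=2i-2$ gives $a_{i-1}\leq 2i-3$, while $a_{i-1}\geq 2i-3$ from the previous line; hence $a_{i-1}=2i-3$.

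I expect no serious obstacle here — the lemma is essentially a pigeonhole observation about a strictly increasing integer sequence, and every inequality used is immediate from strict monotonicity of the first column and the definition of "symplectic." The only point requiring a little care is the wording of "unique": the index $i$ that is minimal with $a_i<2i-1$ is the unique index for which both $a_i<2i-1$ \emph{and} $a_k\geq 2k-1$ for all $k<i$ hold, since any larger index $k'$ with $a_{k'}<2k'-1$ would violate the second condition at $k=i<k'$. I would state this explicitly to close the loop.
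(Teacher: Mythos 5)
Your proposal is correct and follows essentially the same route as the paper: take the minimal $i$ with $T(i,1)<2i-1$ (which cannot be $1$), and then the chain $2i-3\leq T(i-1,1)<T(i,1)<2i-1$ of integer inequalities forces $T(i-1,1)=2i-3$ and $T(i,1)=2i-2$, with uniqueness following from minimality exactly as you note.
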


\begin{proof}
  Since $T$ is not symplectic, there exists $i \in [1,\ell(\lambda)]$ such that $T(i,1) < 2i-1$ (note that the number $i$ cannot be $1$ since $T(1,1)$ is always greater than or equal to $1\ (= 2 \cdot 1-1)$).
  We may take the minimal $i$ among such integers.
  Then, the first assertion is clear.

  Now, we have
  \[
    2i-3 = 2(i-1)-1 \leq T(i-1,1) < T(i,1) < 2i-1.
  \]
  This implies the second assertion.
\end{proof}

\subsection{Reduction map}
For each $l \in [0,m]$, a tableau in $\mathrm{SST}_m(\varpi_l)$ (see \eqref{eq: varpi_l} for the definition of $\varpi_l$) can be represented by an increasing sequence $(a_1,\dots,a_l)$ of integers in $[1,m]$.
We often regard such sequences $\mathbf{a}$, $\mathbf{b}$, etc.\ as sets, and consider their cardinalities $|\mathbf{a}|$, disjoint unions $\mathbf{a} \sqcup \mathbf{b}$, set differences $\mathbf{a} \setminus \mathbf{b}$, and so on.

In this subsection, we fix $l \in [0,2n]$ and $\mathbf{a} = (a_1,\dots,a_l) \in \SST_{2n}(\varpi_l)$.

\begin{defi}\label{def: rem}\normalfont
  The set of \emph{removable entries} of $\mathbf{a}$ is the subset $\Rem(\mathbf{a})$ of $\mathbf{a}$ defined as follows.
  \begin{enumerate}
    \item\label{item: rem 1} If $l \leq 1$, then $\Rem(\mathbf{a}) = \emptyset$.
    \item\label{item: rem 2} If $l > 1$, then
    \[
      \Rem(\mathbf{a}) = \begin{cases}
        \Rem(a_1,\dots,a_{l-2}) \sqcup \{ a_{l-1}, a_l \} & \text{ if } a_l \in 2\mathbb{Z},\ a_{l-1} = a_l-1, \text{ and } \\
        & \text{ } a_l < 2l - |\Rem(a_1,\dots,a_{l-2})| - 1, \\
        \Rem(a_1 ,\dots,a_{l-1}) & \text{ otherwise}.
      \end{cases}
    \]
  \end{enumerate}
\end{defi}

\begin{defi}\label{def: red}\normalfont
  The \emph{reduction map} on $\SST_{2n}(\varpi_l)$ is the map
  \[
    \red: \SST_{2n}(\varpi_l) \rightarrow \bigsqcup_{k = 0}^{2n} \SST_{2n}(\varpi_k);\ \mathbf{a} \mapsto \mathbf{a} \setminus \Rem(\mathbf{a}).
  \]
\end{defi}

For example, if we take $\mathbf{a} = (1,3,4,5,6,7,11,12,13,14)$, then we have
  \[
    \Rem(\mathbf{a}) = \{ 3,4,5,6,13,14 \}, \quad \red(\mathbf{a}) = (1,7,11,12).
  \]

The following is immediate from the definition.

\begin{prop}\label{prop: properties of rem}\hfill
  \begin{enumerate}
    \item\label{item: properties of rem 1} If $a_l \in \Rem(\mathbf{a})$, then $a_l \in 2\mathbb{Z}$.
    \item\label{item: properties of rem 2} If $a_l \notin \Rem(\mathbf{a})$, then $\Rem(\mathbf{a}) = \Rem(a_1,\dots,a_{l-1})$.
    \item\label{item: properties of rem 2.1} If $a_l \in \Rem(\mathbf{a})$, then $\Rem(a_1,\dots,a_{l-1}) = \Rem(a_1,\dots,a_{l-2})$.
    \item\label{item: properties of rem 3} $\Rem(a_1,\dots,a_k) \subseteq \Rem(\mathbf{a})$ for all $k \in [0,l]$.
  \end{enumerate}
\end{prop}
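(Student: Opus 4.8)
The statement to prove is Proposition \ref{prop: properties of rem}, which collects four elementary consequences of Definition \ref{def: rem}. My plan is a direct induction on $l$, the length of the sequence $\mathbf{a}$, handling all four items in parallel since they feed into one another.

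\medskip

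\textbf{Overall approach.} I would argue by induction on $l$. The base cases $l \leq 1$ are immediate: $\Rem(\mathbf{a}) = \emptyset$, so (1) holds vacuously (the hypothesis $a_l \in \Rem(\mathbf{a})$ is never met), and likewise (2)--(4) are trivial or vacuous. For the inductive step with $l \geq 2$, I would split into the two cases of Definition \ref{def: rem}\eqref{item: rem 2}.

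\medskip

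\textbf{First case: the recursion "branches," i.e. $a_l \in 2\mathbb{Z}$, $a_{l-1} = a_l - 1$, and $a_l < 2l - |\Rem(a_1,\dots,a_{l-2})| - 1$.} Here $\Rem(\mathbf{a}) = \Rem(a_1,\dots,a_{l-2}) \sqcup \{a_{l-1}, a_l\}$, so $a_l \in \Rem(\mathbf{a})$ and $a_l$ is even; this gives item (1) in this case (and (2) is vacuous). For item (3), I need $\Rem(a_1,\dots,a_{l-1}) = \Rem(a_1,\dots,a_{l-2})$. This is where I must be a little careful: applying Definition \ref{def: rem} to the truncated sequence $(a_1,\dots,a_{l-1})$, the branching condition there would require $a_{l-1}$ even with $a_{l-2} = a_{l-1}-1$; but $a_{l-1} = a_l - 1$ is odd since $a_l$ is even, so the branching condition fails and indeed $\Rem(a_1,\dots,a_{l-1}) = \Rem(a_1,\dots,a_{l-2})$ by the "otherwise" clause. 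That settles (3). For item (4), let $k \in [0,l]$. If $k = l$ or $k = l-1$ the claim is clear (equality, using what we just showed for $k=l-1$). If $k \leq l-2$, then by the inductive hypothesis applied to $(a_1,\dots,a_{l-2})$ we have $\Rem(a_1,\dots,a_k) \subseteq \Rem(a_1,\dots,a_{l-2}) \subseteq \Rem(\mathbf{a})$.

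\medskip

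\textbf{Second case: the recursion does not branch,} so $\Rem(\mathbf{a}) = \Rem(a_1,\dots,a_{l-1})$. Then item (2) holds by definition. For item (1): if $a_l \in \Rem(\mathbf{a}) = \Rem(a_1,\dots,a_{l-1})$, then $a_l$ occurs among $a_1,\dots,a_{l-1}$, but $\mathbf{a}$ is a strictly increasing sequence, so $a_l$ is strictly larger than all of $a_1,\dots,a_{l-1}$ — contradiction. Hence $a_l \notin \Rem(\mathbf{a})$ and item (1) is vacuously true; item (3) is also vacuous. For item (4): for $k = l$ it is equality; for $k \leq l-1$ use the inductive hypothesis on $(a_1,\dots,a_{l-1})$ together with $\Rem(\mathbf{a}) = \Rem(a_1,\dots,a_{l-1})$.

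\medskip

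\textbf{Main obstacle.} Nothing here is deep; the only subtlety — and the one point I would state carefully rather than wave at — is the parity observation in the first case showing $\Rem(a_1,\dots,a_{l-1}) = \Rem(a_1,\dots,a_{l-2})$ (item (3)), which relies on $a_{l-1}$ being odd when $a_l$ is even and $a_{l-1} = a_l - 1$. A second small point worth spelling out is the use of strict monotonicity of $\mathbf{a}$ to rule out $a_l$ lying in $\Rem$ of a proper prefix. Once these are in place, items (1)--(4) follow mechanically from the case split and the inductive hypothesis, so I would present the proof compactly, organized by the two cases of Definition \ref{def: rem} with the four items verified inside each.
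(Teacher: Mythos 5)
Your proof is correct; the paper offers no argument here, stating the proposition as immediate from Definition \ref{def: rem}, and your induction on $l$ with the case split along the two clauses of the recursion (plus the parity and strict-monotonicity observations) is exactly the routine verification being left implicit.
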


\subsection{Successor map}
In this subsection, we fix $\lambda \in \Par_{\leq 2n}$, and set $l := \ell(\lambda)$.
Let $\lambda'$ denote the partition $(\lambda_1-1,\dots,\lambda_l-1)$.

Let us define a map
\begin{align}\label{eq: map d}
  d: \SST_{2n}(\lambda) \rightarrow \SST_{2n}(\varpi_l) \times \SST_{2n}(\lambda')
\end{align}
as follows.
For each $T \in \SST_{2n}(\lambda)$, the image $d(T)$ is the pair $(\mathbf{a}, T')$ consisting of the first column $\mathbf{a}$ of $T$:
\[
  \mathbf{a} = (T(1,1), T(2,1), \dots, T(l,1)),
\]
and the other part $T'$:
\[
  T'(i,j) = T(i,j+1) \ \text{ for all } (i,j) \in D(\lambda').
\]

\begin{defi}\normalfont
  The \emph{successor map} is the composite
  \[
    \suc := * \circ (\red, \mathrm{id}) \circ d: \SST_{2n}(\lambda) \rightarrow \bigsqcup_{\mu \in \Par_{\leq 2n}} \SST_{2n}(\mu),
  \]
  where $*$ denotes the multiplication map of the plactic monoid.
\end{defi}

\begin{ex}\normalfont
  Let
  \[
    T := 
    \begin{ytableau}
      1 & 2 & 2 & 3 \\
      3 & 3 & 4 & 5 \\
      4 & 4 & 5 & 6 \\
      5 & 6 & 6 & 9 \\
      6 & 7 & 7 & 10 \\
      7 & 8 & 8 \\
      11 & 10 \\
      12 \\
      13 \\
      14
    \end{ytableau}.
  \]
  Then, we have
  \begin{align*}
    \begin{split}
      &\suc(T) = 
      \begin{ytableau}
        1 \\
        7 \\
        11 \\
        12
      \end{ytableau} *
      \begin{ytableau}
        2 & 2 & 3 \\
        3 & 4 & 5 \\
        4 & 5 & 6 \\
        6 & 6 & 9 \\
        7 & 7 & 10 \\
        8 & 8 \\
        10 \\
      \end{ytableau} = 
      \begin{ytableau}
        1 & 2 & 2 & 3 \\
        3 & 4 & 5 & 9 \\
        4 & 5 & 6 \\
        6 & 6 & 7 \\
        7 & 7 & 10 \\
        8 & 8 \\
        10 \\
        11 \\
        12
      \end{ytableau}, \quad \suc^2(T) = 
      \begin{ytableau}
        1 \\
        6 \\
        10
      \end{ytableau} * 
      \begin{ytableau}
        2 & 2 & 3 \\
        4 & 5 & 9 \\
        5 & 6 \\
        6 & 7 \\
        7 & 10 \\
        8 \\
      \end{ytableau} = 
      \begin{ytableau}
        1 & 2 & 2 & 3 \\
        4 & 5 & 6 & 9 \\
        5 & 6 \\
        6 & 7 \\
        7 & 10 \\
        8 \\
        10
      \end{ytableau}, \\
      &\suc^3(T) = 
      \begin{ytableau}
        1 \\
        4 \\
        10
      \end{ytableau} * 
      \begin{ytableau}
        2 & 2 & 3 \\
        5 & 6 & 9 \\
        6 \\
        7 \\
        10
      \end{ytableau} = 
      \begin{ytableau}
        1 & 2 & 2 & 3 \\
        4 & 5 & 6 & 9 \\
        6 & 10 \\
        7 \\
        10
      \end{ytableau}, \quad \suc^4(T) = 
      \begin{ytableau}
        1 \\
        4 \\
        6 \\
        7 \\
        10
      \end{ytableau} * 
      \begin{ytableau}
        2 & 2 & 3 \\
        5 & 6 & 9 \\
        10
      \end{ytableau} = 
      \begin{ytableau}
        1 & 2 & 2 & 3 \\
        4 & 5 & 6 & 9 \\
        6 & 10 \\
        7 \\
        10
      \end{ytableau}
    \end{split}
  \end{align*}
\end{ex}

\begin{lem}\label{lem: properties of suc(T)}
  Let $T \in  \SST_{2n}(\lambda)$.
  Set $d(T) = (\mathbf{a}, T')$, and write $\mathbf{a} = (a_1,\dots,a_l)$ and $\red(\mathbf{a}) = \mathbf{b} = (b_1,\dots,b_k)$.
  Set $S^0 := T'$, $S^t := b_t \rightarrow S^{t-1}$, and $\mu^t := \sh(S^t)$.
  Let us write $\mathrm{br}(b_t, S^{t-1}) = (r_{t,1},\dots,r_{t,s_t})$.
  Let $r_{t,0}$ be such that $a_{r_{t,0}} = b_t$.
  Then, the following hold for all $t \in [1,k]$:
  \begin{enumerate}
    \item\label{item: properties of suc(T) 1} $D(\mu^t/\lambda') = \{ (r_{u,s_u}, s_u) \mid u \in [1,t] \}$.
    \item\label{item: properties of suc(T) 2} $S^t(i,j) = \begin{cases}
      T(r_{u,j-1},j) & \text{ if } u \in [1,t],\ j \in [1,s_u], \text{ and } i = r_{u,j}, \\
      T(i,j+1) & \text{ otherwise}.
    \end{cases}$
    \item\label{item: properties of suc(T) 2.1} $s_1 \geq s_2 \geq \dots \geq s_k$.
    \item\label{item: properties of suc(T) 2.2} $r_{t,j} > r_{t-1,j} > \dots > r_{1,j}$ for all $j \in [0,s_t]$.
    \item\label{item: properties of suc(T) 3} $r_{t,j} = \min\{ r \in [r_{t-1,j}+1, \col_j(\mu^{t-1})+1] \mid T(r,j+1) \geq T(r_{t,j-1}, j) \}$ for all $j \in [1,s_t]$, where we set $r_{0,j} = 0$.
    \item\label{item: properties of suc(T) 4} $r_{t,0} \geq r_{t,1} \geq \dots \geq r_{t,s_t}$.
  \end{enumerate}
\end{lem}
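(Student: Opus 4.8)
The plan is to prove all six items simultaneously by induction on $t$, since they are tightly interlocked: item \eqref{item: properties of suc(T) 1} and \eqref{item: properties of suc(T) 2} describe the shape and entries of $S^t$, while \eqref{item: properties of suc(T) 3}, \eqref{item: properties of suc(T) 4} and \eqref{item: properties of suc(T) 2.2} keep track of how the bumping routes sit inside the columns. The base case $t=1$ is essentially Proposition \ref{prop: properties of colum insertion} applied to $S^1 = b_1 \rightarrow T'$, together with the observation that, since $\mathbf{a}$ is the (strictly increasing) first column of $T$ and $T'$ is the tableau obtained from $T$ by deleting that column, one has $b_1 = a_{r_{1,0}} = T(r_{1,0},1) \le T(r_{1,0},2)$, and more generally $T(r,2) \ge T(r,1) \ge a_r$ for all $r$; combined with semistandardness of $T$ this forces $r_{1,1} \le r_{1,0}$ and gives \eqref{item: properties of suc(T) 4} for $t=1$ from part \eqref{item: prop properties of colum insertion 1} of Proposition \ref{prop: properties of colum insertion}. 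The formula in \eqref{item: properties of suc(T) 3} for $t=1$ is just the definition \eqref{eq: col ins} of the bumping route rewritten, using that $S^0 = T'$ so $S^0(i,j) = T(i,j+1)$ and $w_{j-1} = T(r_{1,j-1},j)$.

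For the inductive step, assume the statements hold for $1,\dots,t-1$ and consider $S^t = b_t \rightarrow S^{t-1}$. Items \eqref{item: properties of suc(T) 1}, \eqref{item: properties of suc(T) 2} follow by feeding the inductive description of $S^{t-1}$ into Proposition \ref{prop: properties of colum insertion}\eqref{item: prop properties of colum insertion 3}--\eqref{item: prop properties of colum insertion 4}: the new box is $(r_{t,s_t}, s_t)$, and in each column $j$ the bumped entry at row $r_{t,j}$ gets replaced by the entry being carried in, which unwinds to $T(r_{t,j-1},j)$. For \eqref{item: properties of suc(T) 2.2} and \eqref{item: properties of suc(T) 2.1}, the key point is that $b_{t-1} < b_t$ (since $\mathbf{b} = \red(\mathbf{a})$ is strictly increasing), so I would like to invoke Proposition \ref{prop: col bump lem} on the two successive insertions $b_{t-1} \rightarrow S^{t-2}$ and $b_t \rightarrow S^{t-1}$; but Proposition \ref{prop: col bump lem} as stated compares $\br(w,T)$ with $\br(w',(w\rightarrow T))$, i.e. two insertions into the \emph{same} starting tableau $S^{t-2}$, and here the intermediate tableau has already absorbed $b_{t-1}$ — which is exactly the hypothesis of that proposition, so it does apply directly, yielding $s_t \le s_{t-1}$ and $r_{t,j} > r_{t-1,j}$ for $j \in [1,s_t]$. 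Then $r_{t,0} > r_{t-1,0}$ holds separately because $a_{r_{t,0}} = b_t > b_{t-1} = a_{r_{t-1,0}}$ and $\mathbf{a}$ is strictly increasing, which closes \eqref{item: properties of suc(T) 2.2}. Iterating the inequality down the chain $t, t-1, \dots, 1$ gives the full string in \eqref{item: properties of suc(T) 2.2}.

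Item \eqref{item: properties of suc(T) 3} for general $t$ is the subtle one: the column-insertion rule \eqref{eq: col ins} says $r_{t,j} = \min\{ r \in [1, \col_j(\mu^{t-1})+1] \mid S^{t-1}(r,j) \ge w \}$ where $w$ is the carried entry, so I must (a) replace $S^{t-1}(r,j)$ by $T(r,j+1)$ for $r > r_{t-1,j}$ — legitimate by the inductive form of \eqref{item: properties of suc(T) 2}, since for $r$ strictly below all previous routes in column $j$ the entry of $S^{t-1}$ is just $T(r,j+1)$ — and (b) argue that the minimizing $r$ automatically lies in $[r_{t-1,j}+1, \col_j(\mu^{t-1})+1]$ rather than higher up; this is where $r_{t,j} > r_{t-1,j}$ from \eqref{item: properties of suc(T) 2.2} is used to justify narrowing the search interval, and where one checks that the rows $r \le r_{t-1,j}$ of column $j$ in $S^{t-1}$ (which contain carried-in entries from earlier insertions) are all too small to stop $b_t$'s route, using $T(r_{t-1,j-1},j) \le T(r_{t,j-1},j)$ from \eqref{item: properties of suc(T) 2.2} at index $j-1$ together with semistandardness. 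Finally \eqref{item: properties of suc(T) 4} for general $t$ follows from Proposition \ref{prop: properties of colum insertion}\eqref{item: prop properties of colum insertion 1} ($r_{t,1} \ge \cdots \ge r_{t,s_t}$) plus the inequality $r_{t,0} \ge r_{t,1}$, which I get as in the base case: the entry carried into column $1$ is $b_t = T(r_{t,0},1) \le T(r_{t,0},2)$, so the route enters column $2$ at a row $\le r_{t,0}$; one must be slightly careful that the relevant entries of $S^{t-1}$ in column $1$ below row $r_{t-1,0}$ still agree with $T(\cdot,2)$, which is precisely \eqref{item: properties of suc(T) 2} for $t-1$ restricted to column $j=1$ and rows past all earlier routes. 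The main obstacle I anticipate is the bookkeeping in \eqref{item: properties of suc(T) 3}: carefully verifying that the minimum over the \emph{full} column in the raw definition \eqref{eq: col ins} coincides with the minimum over the truncated interval $[r_{t-1,j}+1,\col_j(\mu^{t-1})+1]$, which requires ruling out spurious small entries in the top part of column $j$ of $S^{t-1}$ — everything else is a fairly mechanical combination of Propositions \ref{prop: properties of colum insertion} and \ref{prop: col bump lem} with the monotonicity of $\mathbf{a}$ and $\mathbf{b}$.
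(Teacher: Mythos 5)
Your plan is correct and follows essentially the same route as the paper's proof: items (1)--(2) by induction on $t$ via Proposition \ref{prop: properties of colum insertion}, items (3)--(4) via Proposition \ref{prop: col bump lem} (with the $j=0$ case of (4) from the strict increase of $\mathbf{a}$), item (5) by rewriting the insertion rule \eqref{eq: col ins} using (2) and (4), and item (6) from Proposition \ref{prop: properties of colum insertion}\eqref{item: prop properties of colum insertion 1} together with the semistandardness argument giving $r_{t,0}\geq r_{t,1}$ inductively. The only differences are organizational (one simultaneous induction versus the paper's separate induction for the last item), not mathematical.
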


\begin{proof}
  The first and second assertions can be deduced from Proposition \ref{prop: properties of colum insertion} \eqref{item: prop properties of colum insertion 3} and \eqref{item: prop properties of colum insertion 4} by induction on $t$.
  The third and fourth assertions follow from Proposition \ref{prop: col bump lem}.

  Next, let us prove the fifth assertion.
  For each $j \in [0,s_t]$, set
  \[
    w_{t,j} := \begin{cases}
      b_t & \text{ if } j = 0, \\
      S^{t-1}(r_{t,j},j) & \text{ if } j > 0.
    \end{cases}
  \]
  Then, by equation \eqref{eq: col ins}, we have
  \begin{align}\label{eq: 6}
    r_{t,j} = \min\{ r \in [1,\col_j(\mu^{t-1})+1] \mid S^{t-1}(r,j) \geq w_{t,j-1} \}.
  \end{align}
  This, together with the second and fourth assertions, implies our claim.

  Finally, let us prove the last assertion by induction on $t$; we understand that $s_0 = 0$ so that the assertion for $t = 0$ is clear.
  We have
  \[
    r_{t,1} \geq \dots \geq r_{t,s_t}
  \]
  by Proposition \ref{prop: properties of colum insertion} \eqref{item: prop properties of colum insertion 1}.
  Hence, we only need to show that $r_{t,0} \geq r_{t,1}$.
  By equation \eqref{eq: 6}, we have
  \[
    r_{t,1} \leq \col_1(\mu^{t-1})+1.
  \]
  Hence, there is nothing to prove if $r_{t,0} > \col_1(\mu^{t-1})$.
  Therefore, suppose that $r_{t,0} \leq \col_1(\mu^{t-1})$.
  Then, we have
  \[
    T(r_{t,0},1) \leq T(r_{t,0},2).
  \]
  Since $a_{r_{t,0}} = b_t > b_{t-1} = a_{r_{t-1,0}}$, we see that $r_{t,0} > r_{t-1,0}$.
  By our induction hypothesis, it holds that $r_{t-1,0} \geq r_{t-1,1}$.
  Summarizing above, we have
  \[
    r_{t-1,1} < r_{t,0} \leq \col_1(\mu^{t-1}) \text{ and } T(r_{t,0}, 2) \geq T(r_{t,0}, 1).
  \]
  Then, the fifth assertion implies that $r_{t,1} \leq r_{t,0}$, as desired.
\end{proof}

\begin{lem}\label{lem: stationality of suc}
  Let $T \in \SST_{2n}(\lambda)$ and set $\mu := \sh(\suc(T))$.
  Then, we have $\mu \underset{\text{vert}}{\subseteq} \lambda$.
  Moreover, the equality holds if and only if $\suc(T) = T$.
\end{lem}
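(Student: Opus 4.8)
The plan is to reduce everything to the behaviour of $\red$ on the first column $\mathbf{a}$ of $T$ and to Pieri's formula (Proposition \ref{prop: Pieri formula}). Write $d(T) = (\mathbf{a}, T')$ with $\mathbf{a} = (a_1,\dots,a_l)$, $\lambda' = (\lambda_1-1,\dots,\lambda_l-1)$, and $\red(\mathbf{a}) = \mathbf{b}$ with $k := |\mathbf{b}| = l - |\Rem(\mathbf{a})|$. By definition $\suc(T) = \mathbf{b} * T'$, where $\mathbf{b} \in \SST_{2n}(\varpi_k)$ and $T' \in \SST_{2n}(\lambda')$. By Pieri's formula, $\mu := \sh(\suc(T))$ satisfies $\lambda' \underset{\text{vert}}{\subseteq} \mu$ and $|\mu/\lambda'| = k$; in particular $\ell(\lambda') \le \ell(\mu)$ and every column of $\mu$ has at most one more box than the corresponding column of $\lambda'$.

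First I would prove $\mu \underset{\text{vert}}{\subseteq} \lambda$. Since $\lambda' \underset{\text{vert}}{\subseteq} \mu$, Lemma \ref{lem: characterization of vertical strips} reduces this to the inequality $\ell(\mu) \le \ell(\lambda) = l$. From Pieri we get $\ell(\mu) \le \ell(\lambda') + k \le \ell(\lambda') + (l - |\Rem(\mathbf{a})|)$, but this alone is not tight enough, so the real content is to bound how far the column insertions $b_t \rightarrow (\cdots)$ can reach down the first column. Here I would invoke Lemma \ref{lem: properties of suc(T)}: with the notation there, $\ell(\mu) = \mu^k_1 = \col_1(\mu^k)$, and the first column of $\suc(T)$ gains a box exactly for those $t$ with $s_t = 1$ (the bumping route of $b_t$ terminates in column $1$). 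Using part \eqref{item: properties of suc(T) 4}, $r_{t,1} \le r_{t,0}$, where $a_{r_{t,0}} = b_t$; so each such new box sits at a row $\le r_{t,0} \le l$, and since the $b_t$ are the entries of $\mathbf{a}$ \emph{not} removed and $|\mathbf{b}| = k$, the new boxes in column $1$ occupy distinct rows among $\{1,\dots,l\}$ that, together with the rows of $\lambda'$ in column $1$ (namely rows $1,\dots,\ell(\lambda')$), still fit inside $[1,l]$. A careful count—using that $\Rem(\mathbf{a}) \ne \emptyset$ forces $a_l$ removed whenever row $l$ would otherwise be exceeded—gives $\ell(\mu) \le l$. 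This counting step is the one place where the precise recursive definition of $\Rem$ (in particular the inequality $a_l < 2l - |\Rem(a_1,\dots,a_{l-2})| - 1$) must be used, and I expect it to be the main obstacle.

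Finally I would treat the equality case. If $\Rem(\mathbf{a}) = \emptyset$, then $\mathbf{b} = \mathbf{a}$ is the full first column, $k = l$, and inserting the column $\mathbf{a}$ back into $T'$ simply rebuilds $T$ (each $b_t = a_t$ bumps nothing and lands in row $t$ of column $1$), so $\suc(T) = T$ and $\mu = \lambda$, whence $|\mu/\lambda'| = l = |\lambda/\lambda'|$, i.e. $\mu \underset{\text{vert}}{\subseteq} \lambda$ with equality $\mu = \lambda$. Conversely, if $\mu = \lambda$ then $|\mu/\lambda'| = l$ forces $k = l$, i.e. $\Rem(\mathbf{a}) = \emptyset$, and then the previous sentence gives $\suc(T) = T$. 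Thus $\mu \underset{\text{vert}}{\subseteq} \lambda$ holds with equality iff $\Rem(\mathbf{a}) = \emptyset$ iff $\suc(T) = T$, completing the proof. I should double-check that "$\suc(T) = T$" is genuinely equivalent to "$\mu = \lambda$" and not just implied by it—but since $\suc(T) = T$ trivially gives $\sh(\suc(T)) = \lambda$, and the converse is the computation just sketched, the equivalence stands.
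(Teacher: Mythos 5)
Your proposal is correct and follows essentially the same route as the paper: Pieri's formula (Proposition \ref{prop: Pieri formula}) together with Lemma \ref{lem: characterization of vertical strips} reduces the first assertion to $\ell(\mu) \leq \ell(\lambda) = l$; this inequality is obtained from Lemma \ref{lem: properties of suc(T)}; and the equality case is settled by the size count $|\mu/\lambda'| = |\red(\mathbf{a})|$ forcing $\Rem(\mathbf{a}) = \emptyset$, combined with $\mathbf{a} * T' = T$. Two remarks. First, the step you flag as ``the main obstacle'' is not one: once you know that a box is added to column $1$ exactly for those $t$ with $s_t = 1$ (Lemma \ref{lem: properties of suc(T)} \eqref{item: properties of suc(T) 1}) and that such a box lands in row $r_{t,1} \leq r_{t,0} \leq l$ (Lemma \ref{lem: properties of suc(T)} \eqref{item: properties of suc(T) 4}), the bound $\ell(\mu) \leq l$ already follows; no further appeal to the recursive definition of $\Rem$ is needed, and your parenthetical claim that ``$\Rem(\mathbf{a}) \neq \emptyset$ forces $a_l$ removed whenever row $l$ would otherwise be exceeded'' is neither needed nor justified. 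The paper packages the same observation as a count: every $u$ with $r_{u,0} \leq \ell(\lambda')$ satisfies $s_u > 1$, so the number of $t$ with $s_t = 1$ is bounded by the number of non-removed entries sitting below row $\ell(\lambda')$, which again gives $\ell(\mu) \leq l$. Second, in the equality case the identity $\mathbf{a} * T' = T$ is true (it is the standard column factorization of a tableau, and it is all the paper invokes), but your mechanism ``each $a_t$ bumps nothing and lands in row $t$ of column $1$'' is not what happens: inserting $a_t$ into column $1$ replaces $T(t,2)$-type entries and bumps them into later columns, and it is precisely these bumps that rebuild the remaining columns of $T$; if nothing were bumped, the first column of the result would have length $\ell(\lambda') + l$, not $l$. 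Neither remark affects the validity of the argument you intend, but the second justification should be stated correctly (or simply cited as the column factorization of $T$).
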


\begin{proof}
  We use the notation in Lemma \ref{lem: properties of suc(T)}.
  Note that $S^k = \suc(T)$ and $\mu^k = \mu$.
  By Proposition \ref{prop: Pieri formula}, we see that
  \[
    \lambda' \underset{\text{vert}}{\subseteq} \mu \text{ and } |\mu/\lambda'| = |\red(\mathbf{a})|.
  \]
  Therefore, in order to prove that $\mu \underset{\text{vert}}{\subseteq} \lambda$, we only need to show that $\ell(\mu) \leq l$ (see Lemma \ref{lem: characterization of vertical strips}).

  For each $t \in [0,k]$, by Lemma \ref{lem: properties of suc(T)} \eqref{item: properties of suc(T) 1} we have
  \begin{align}\label{eq: 5}
    \ell(\mu) = \ell(\lambda') + \sharp\{ t \in [1,k] \mid s_t = 1 \}.
  \end{align}
  Let $u \in [1,k]$ be such that $r_{u,0} \leq \ell(\lambda')$.
  Then, we have
  \[
    r_{u,1} \leq r_{u,0} \leq \ell(\lambda') \leq \ell(\mu^{u-1}) = \col_1(\mu^{u-1}),
  \]
  where the first inequality follows from Lemma \ref{lem: properties of suc(T)} \eqref{item: properties of suc(T) 4}.
  This implies that $s_u > 1$.
  Hence, we see that
  \[
    \sharp\{ t \in [1,k] \mid s_u = 1 \} \leq k-\ell(\lambda').
  \]
  This, together with equation \eqref{eq: 5}, implies the first assertion.
  
  Next, let us prove the second assertion.
  Suppose that $\mu = \lambda$.
  In this case, we must have $\red(\mathbf{a}) = \mathbf{a}$ since
  \[
    0 = |\lambda/\mu| = |\lambda/\lambda'| - |\mu/\lambda'| = l - |\red(\mathbf{a})| = |\Rem(\mathbf{a})|.
  \]
  On the other hand, it is clear that
  \[
    \mathbf{a} * T' = T.
  \]
  Therefore, we obtain $\suc(T) = T$, as desired.
  The opposite direction is trivial.
\end{proof}

\section{Main result}\label{sect: main}

\subsection{Statement}
Let $\lambda \in \Par_{\leq 2n}$ and $T \in \SST_{2n}(\lambda)$.
For each $k \geq 0$, define a semistandard tableau $P^k$, a partition $\nu^k$, and a tableau $Q^k$ inductively as follows.
First, set
\[
  P^0 := T, \quad \nu^0 := \lambda,
\]
and $Q^0$ to be the unique tableau of shape $\lambda/\lambda$.
For $k \geq 0$, set
\[
  P^{k+1} := \suc(P^k), \quad \nu^{k+1} := \sh(P^{k+1}),
\]
and $Q^{k+1}$ to be the tableau of shape $\lambda/\nu^{k+1}$ given by
\[
  Q^{k+1}(i,j) := \begin{cases}
    Q^k(i,j) & \text{ if } (i,j) \notin D(\nu^k), \\
    k+1 & \text{ if } (i,j) \in D(\nu^k).
  \end{cases}
\]
Note that by Lemma \ref{lem: stationality of suc}, there exists a unique $k_0 \geq 0$ such that
\[
  \lambda = \nu^0 \underset{\text{vert}}{\supset} \nu^1 \underset{\text{vert}}{\supset} \nu^2 \underset{\text{vert}}{\supset} \cdots \underset{\text{vert}}{\supset} \nu^{k_0}
\]
and
\[
  P^k = P^{k_0}, \ \nu^k = \nu^{k_0}, \ Q^k = Q^{k_0} \ \text{ for all } k \geq k_0.
\]

\begin{defi}\label{def: PAII and QAII}\normalfont
  Let $T, k_0$ be as above.
  Define two tableaux $\PAII(T)$ and $\QAII(T)$ to be $P^{k_0}$ and $Q^{k_0}$, respectively.
\end{defi}

See Example \ref{ex: LR} for an example of this algorithm.
The following is immediate from the definition.

\begin{lem}\label{lem: QAII, nu}
  Let $\lambda,T,\nu^k,k_0$ be as above.
  Set $\nu := \nu^{k_0}$.
  \begin{enumerate}
    \item For each $(i,j) \in D(\lambda/\nu)$, we have
    \[
      \QAII(T)(i,j) = \min\{ k \in [1,k_0] \mid (i,j) \notin D(\nu^k) \}.
    \]
    \item For each $k \geq 0$, we have
    \[
      D(\nu^k) = D(\nu) \sqcup \{ (i,j) \in D(\lambda/\nu) \mid \QAII(T)(i,j) > k \}.
    \]
  \end{enumerate}
\end{lem}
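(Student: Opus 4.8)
The plan is to exploit the monotonicity furnished by Lemma~\ref{lem: stationality of suc}: the associated Young diagrams form a weakly decreasing chain
\[
  D(\nu^0) \supseteq D(\nu^1) \supseteq D(\nu^2) \supseteq \cdots
\]
that stabilizes at $D(\nu) = D(\nu^{k_0})$. Consequently, each box $(i,j) \in D(\lambda/\nu)$ has a well-defined ``exit time''
\[
  k^*(i,j) := \min\{ k \in [1,k_0] \mid (i,j) \notin D(\nu^k) \},
\]
which is exactly the quantity appearing on the right-hand side of part~(1). The whole lemma is then a matter of tracking, for a fixed box, how the value $Q^k(i,j)$ evolves as $k$ increases, using only the recursion defining $Q^{k+1}$ from $Q^k$.

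For part~(1), fix $(i,j) \in D(\lambda/\nu)$ and put $k^* := k^*(i,j)$. Since $(i,j) \in D(\lambda) = D(\nu^0)$, the integer $k^*$ is genuinely defined and satisfies $k^* \geq 1$; moreover, by minimality of $k^*$ (and, when $k^* = 1$, by $\nu^0 = \lambda$) we have $(i,j) \in D(\nu^{k^*-1})$, so the defining formula for $Q^{k^*}$ gives $Q^{k^*}(i,j) = k^*$. Now fix $k$ with $k^* < k \leq k_0$. Monotonicity gives $D(\nu^{k-1}) \subseteq D(\nu^{k^*})$, and since $(i,j) \notin D(\nu^{k^*})$ we conclude $(i,j) \notin D(\nu^{k-1})$ and $(i,j) \notin D(\nu^k)$; in particular $(i,j) \in D(\lambda/\nu^k)$, so $Q^k(i,j)$ is defined, and the recursion gives $Q^k(i,j) = Q^{k-1}(i,j)$. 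An induction on $k$ from $k^*$ to $k_0$ then yields $\QAII(T)(i,j) = Q^{k_0}(i,j) = Q^{k^*}(i,j) = k^*$, which is part~(1).

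Part~(2) follows by a short set-theoretic computation. Since $D(\nu) = D(\nu^{k_0}) \subseteq D(\nu^k)$ for every $k \geq 0$, and $D(\nu^k) \subseteq D(\lambda)$, we may write $D(\nu^k) = D(\nu) \sqcup (D(\nu^k) \cap D(\lambda/\nu))$. For a box $(i,j) \in D(\lambda/\nu)$, monotonicity of the chain together with the definition of $k^*(i,j)$ shows that $(i,j) \in D(\nu^k)$ holds precisely when $0 \leq k < k^*(i,j)$: the box lies in $D(\nu^0) = D(\lambda)$; it lies in $D(\nu^k)$ for $1 \leq k < k^*(i,j)$ by minimality of $k^*(i,j)$; and it lies outside $D(\nu^k)$ for $k \geq k^*(i,j)$ because $D(\nu^k) \subseteq D(\nu^{k^*(i,j)})$. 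Rewriting $k^*(i,j)$ as $\QAII(T)(i,j)$ via part~(1), this says exactly that $D(\nu^k) \cap D(\lambda/\nu) = \{ (i,j) \in D(\lambda/\nu) \mid \QAII(T)(i,j) > k \}$, which is part~(2); in the degenerate range $k \geq k_0$ both descriptions collapse to $D(\nu)$, since every value $\QAII(T)(i,j) = k^*(i,j)$ is at most $k_0$.

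The argument is entirely bookkeeping, so there is no substantive obstacle; the only points requiring care are the boundary indices — namely that $k^*(i,j)$ is well-defined (using $\nu^0 = \lambda$ and $D(\lambda/\nu) \subseteq D(\lambda)$), that every occurrence of the symbol $Q^k(i,j)$ is at a box of $D(\lambda/\nu^k)$ where it makes sense, and the degenerate range $k \geq k_0$ where both sides of~(2) reduce to $D(\nu)$. These are all immediate from the monotonicity in Lemma~\ref{lem: stationality of suc}.
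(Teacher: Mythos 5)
Your proof is correct, and it follows the only natural route: the paper itself gives no argument, declaring the lemma ``immediate from the definition,'' and your write-up is exactly the bookkeeping (stabilizing decreasing chain of diagrams, exit time of each box, and the recursion for $Q^{k+1}$) that this claim tacitly relies on.
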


\begin{defi}\normalfont
  Let $\lambda \in \Par_{\leq 2n}$ and $\nu \in \Par_{\leq n}$ be such that $\nu \subseteq \lambda$.
  A tableau $Q$ of shape $\lambda/\nu$ is said to be a \emph{recording tableau} if there exists $T \in \SST_{2n}(\lambda)$ such that $\QAII(T) = Q$.
  Let $\Rec_{2n}(\lambda/\nu)$ denote the set of recording tableaux of shape $\lambda/\nu$.
\end{defi}

Let $\lambda \in \Par_{\leq 2n}$ and $\nu \in \Par_{\leq n}$ be such that $\nu \subseteq \lambda$.
Let $\widetilde{\Rec}_{2n}(\lambda/\nu)$ denote the set of tableaux $Q$ of shape $\lambda/\nu$ satisfying the following:
\begin{enumerate}
  \item[(R1)]\label{item: main thm a} The entries of $Q$ strictly decrease along the rows from left to right.
  \item[(R2)]\label{item: main thm b} The entries of $Q$ weakly decrease along the columns from top to bottom.
  \item[(R3)]\label{item: main thm c} For each $k > 0$, the number $Q[k]$ $($see \eqref{eq: T[a]} for the definition$)$ is even.
  \item[(R4)]\label{item: main thm d} For each $k > 0$, it holds that
  \[
    Q[k] \geq 2(\ell(\nu^{k-1})-n),
  \]
  where $\nu^{k-1}$ is the partition such that
  \[
    D(\nu^{k-1}) = D(\nu) \sqcup \{ (i,j) \in D(\lambda/\nu) \mid Q(i,j) \geq k \}.
  \]
  \item[(R5)]\label{item: main thm e} For each $r,k > 0$, let $Q_{\leq r}[k]$ denote the number of occurrences of $k$ in $Q$ in the $r$-th row or above.
  Then, the following inequality holds:
  \[
    Q_{\leq r}[k+1] \leq Q_{\leq r}[k].
  \]
\end{enumerate}

Now, we are ready to state the main result in this paper.

\begin{thm}\label{thm: main}
  Let $\lambda \in \Par_{\leq 2n}$.
  \begin{enumerate}
    \item\label{item: main thm 1} The assignment $T \mapsto (\PAII(T), \QAII(T))$ gives rise to a bijection
    \[
      \LRAII : \SST_{2n}(\lambda) \rightarrow \bigsqcup_{\substack{\nu \in \Par_{\leq n} \\ \nu \subseteq \lambda}} (\SpT_{2n}(\nu) \times \Rec_{2n}(\lambda/\nu)).
    \]
    \item\label{item: main thm 2} For each $\nu \in \Par_{\leq n}$ such that $\nu \subseteq \lambda$, we have
    \[
      \Rec_{2n}(\lambda/\nu) = \widetilde{\Rec}_{2n}(\lambda/\nu).
    \]
  \end{enumerate}
\end{thm}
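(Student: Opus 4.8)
The plan is to establish Theorem~\ref{thm: main} in two stages, corresponding to the two parts of the statement, but with the representation-theoretic input concentrated in the surjectivity of the $P$-component.

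\textbf{Part (1): bijectivity of $\LRAII$.}
First I would verify that $\LRAII$ is \emph{well-defined}, i.e.\ that $\PAII(T) \in \SpT_{2n}(\nu)$ for $\nu = \nu^{k_0} = \sh(\PAII(T))$. Since $P^{k_0} = \suc(P^{k_0})$, Lemma~\ref{lem: stationality of suc} tells us $\sh(P^{k_0})$ is a fixed point of $\suc$, so $\red(\mathbf{a}) = \mathbf{a}$ for the first column $\mathbf{a}$ of $P^{k_0}$; by Definition~\ref{def: rem} and the contrapositive of Lemma~\ref{lem: existence of removable entries} (using Proposition~\ref{prop: properties of rem}), having no removable entries forces $\mathbf{a}$ (hence $P^{k_0}$) to be symplectic, and $\ell(\nu) \leq n$ by the proposition following Definition~\ref{def: SpT}. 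That $\QAII(T) \in \Rec_{2n}(\lambda/\nu)$ is immediate from the definition of $\Rec_{2n}$. For \emph{injectivity}, the strategy is to reconstruct $T$ from $(\PAII(T),\QAII(T))$ by running the algorithm backwards: from $\QAII(T)$ one recovers $k_0$ and all the shapes $\nu^k$ (Lemma~\ref{lem: QAII, nu}), and then one inverts $\suc$ one step at a time. Inverting $\suc$ means inverting $\red$ on columns (the content of Section~\ref{sect: factor red}, which the excerpt promises factors $\red$ into injective pieces) and inverting the plactic product $\mathbf{b} * S$, which is injective on $\SST_{2n}(\varpi_k) \times \SST_{2n}(\lambda')$ by Pieri's formula (Proposition~\ref{prop: Pieri formula}), given that we know the target shape $\nu^{k}$ and the shape $\lambda'$ at that stage. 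The main obstacle here is the injectivity of $\red$: one must show that $\mathbf{a} \mapsto (\red(\mathbf{a}), \Rem(\mathbf{a}))$, or rather $\mathbf{a}\mapsto \red(\mathbf{a})$ together with the combinatorial data of \emph{which} shape results, determines $\mathbf{a}$; this is exactly what the factorization in Section~\ref{sect: factor red} is designed to give, so I would cite it. For \emph{surjectivity}, the honest combinatorial argument seems hard, so I follow the paper's route: build the quantum Littlewood-Richardson isomorphism $V(\lambda) \xrightarrow{\sim} \bigoplus_\nu V^\imath(\nu)\otimes \mathbb{Q}(q)\Rec_{2n}(\lambda/\nu)$ as a composite of $\mathbf{U}^\imath$-module maps (Section~\ref{sect: LR}), observe it is an isomorphism by comparing dimensions / characters (symplectic Schur functions being linearly independent), and then note that at $q = \infty$ it sends the canonical basis element $b_T$ to $b^\imath_{\PAII(T)}\otimes\QAII(T)$ up to $q^{-1}$; surjectivity of $\LRAII$ follows because the $\{b^\imath_T \otimes Q\}$ exhaust a basis of the target. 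Combined with injectivity, this gives the bijection, and the fact that $|\SST_{2n}(\lambda)| = \sum_\nu |\SpT_{2n}(\nu)|\cdot|\Rec_{2n}(\lambda/\nu)|$ on the level of counting then forces bijectivity even more cheaply.

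\textbf{Part (2): $\Rec_{2n}(\lambda/\nu) = \widetilde{\Rec}_{2n}(\lambda/\nu)$.}
Here I would argue both inclusions. For $\Rec \subseteq \widetilde{\Rec}$: take $Q = \QAII(T)$ and verify (R1)--(R5) directly from the construction. Property (R2) (weak decrease down columns) reflects that once a box enters $D(\nu^k)$ it stays, so the label $Q(i,j) = \min\{k : (i,j)\notin D(\nu^k)\}$ is monotone; (R1) (strict decrease along rows) similarly follows from the fact that $D(\nu^k)$ is a Young diagram at every stage, so in a row the boxes are abandoned left-to-right is forbidden — actually right-to-left, giving strictly decreasing labels. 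Property (R3) (each $Q[k]$ even) is the crucial one: $Q[k] = |\nu^{k-1}| - |\nu^{k}| = |\Rem(\mathbf{a}^{(k-1)})|$ where $\mathbf{a}^{(k-1)}$ is the first column of $P^{k-1}$, and $|\Rem(\cdot)|$ is always even because removable entries come in pairs $\{a_{l-1},a_l\}$ by Definition~\ref{def: rem}. Property (R4) ($Q[k]\geq 2(\ell(\nu^{k-1}) - n)$) says that the reduction step removes enough boxes to bring the length down towards $n$; this should follow from a careful analysis of $\Rem$ showing that if $\ell(\mathbf{a}) = \ell > n$ then $|\Rem(\mathbf{a})| \geq 2(\ell - n)$ — essentially because the "$a_l < 2l - |\Rem| - 1$" condition is automatically satisfied whenever $l$ exceeds $n$, given entries lie in $[1,2n]$. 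Property (R5) ($Q_{\leq r}[k+1]\leq Q_{\leq r}[k]$, a column-strictness-type constraint tracking how bumping routes nest) should come from Lemma~\ref{lem: properties of suc(T)}\eqref{item: properties of suc(T) 2.2}, the nesting $r_{t,j} > r_{t-1,j}$ of successive bumping routes, which controls how the removed boxes in row $\leq r$ evolve from step $k$ to step $k+1$. The reverse inclusion $\widetilde{\Rec}\subseteq\Rec$ is the harder direction: given $Q$ satisfying (R1)--(R5), one must produce some $T$ with $\QAII(T) = Q$. I would do this by downward induction on $k_0$: reconstruct the shapes $\nu^k$ from $Q$ via (R2), and at each stage use (R3)--(R5) to guarantee that a suitable column $\mathbf{a}$ with the prescribed $\Rem$-set exists and that the plactic factorization can be inverted — i.e.\ that the constraints (R1)--(R5) are precisely the image conditions of the algorithm.

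\textbf{Main obstacle.}
The hardest step is the reverse inclusion in Part (2) together with the surjectivity in Part (1) — concretely, showing that conditions (R1)--(R5) are not just necessary but \emph{sufficient} for a tableau to arise as a recording tableau. The paper's stated strategy is to sidestep the direct combinatorial construction of preimages by instead counting: prove $|\widetilde{\Rec}_{2n}(\lambda/\nu)| = m_{\lambda,\nu}$ by matching $\widetilde{\Rec}$ with Sundaram's symplectic Littlewood-Richardson tableaux (Section~\ref{sect: rec}), and separately prove via the quantum Littlewood-Richardson map that $|\Rec_{2n}(\lambda/\nu)| = m_{\lambda,\nu}$; since $\Rec_{2n}(\lambda/\nu)\subseteq\widetilde{\Rec}_{2n}(\lambda/\nu)$ always holds (the easy inclusion above) and the two finite sets have equal cardinality, they coincide. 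So the real work is (a) the $\mathbf{U}^\imath$-module-theoretic computation identifying $m_{\lambda,\nu}$ with $|\Rec_{2n}(\lambda/\nu)|$, and (b) the bijection between $\widetilde{\Rec}_{2n}(\lambda/\nu)$ and symplectic Littlewood-Richardson tableaux; both of these are deferred to later sections, and I would organize the proof of Theorem~\ref{thm: main} as the assembly of these pieces.
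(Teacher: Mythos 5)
Your overall plan reproduces the paper's architecture (fixed points of $\suc$ are symplectic, injectivity via injectivity of $\red$ plus Pieri with the shapes $\nu^k$ read off from $\QAII(T)$, surjectivity through the quantum Littlewood--Richardson map, and Part (2) by the sandwich $\Rec_{2n}(\lambda/\nu) \subseteq \widetilde{\Rec}_{2n}(\lambda/\nu)$, $|\widetilde{\Rec}_{2n}(\lambda/\nu)| \leq |\LRSp_{2n}(\lambda/\nu)| = m_{\lambda,\nu} = |\Rec_{2n}(\lambda/\nu)|$). But one step, as you state it, is circular: you propose to see that the quantum map is an isomorphism ``by comparing dimensions/characters.'' The target has dimension $\sum_\nu \dim V^\imath(\nu)\,|\Rec_{2n}(\lambda/\nu)|$, and the equality $|\Rec_{2n}(\lambda/\nu)| = m_{\lambda,\nu}$ is exactly what the isomorphism is later used to prove (via Proposition \ref{prop: q-multiplicity}); it is not available as an input, so no character comparison can be run at that stage. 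The paper's argument (Theorem \ref{thm: quantum LR is isom}) is different: $V(\lambda)$ carries a contragredient form, hence is completely reducible over $\mathbf{U}^\imath$; for each $Q \in \Rec_{2n}(\lambda/\nu)$ there is, by the very definition of $\Rec$, some $T$ with $\QAII(T) = Q$, and $\LRAII(b_T) \equiv b^\imath_{\PAII(T)} \otimes Q \neq 0$, so the irreducible summand $V^\imath(\nu) \otimes \mathbb{Q}(q)Q$ lies in the image; injectivity is imported from the combinatorial injectivity you already have. Only after this does the count $|\SST_{2n}(\lambda)| = \sum_\nu |\SpT_{2n}(\nu)|\,|\Rec_{2n}(\lambda/\nu)|$ become available, and then your ``injective map between finite sets of equal cardinality'' conclusion closes Part (1). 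You should also record the ingredient that makes the quantum map definable at all, namely the basis $\{b^\imath_T \mid T \in \SpT_{2n}(\nu)\}$ of $V^\imath(\nu)$ and the projection $p_\nu$ (Proposition \ref{prop: p_nu}), which itself rests on Corollaries \ref{cor: suc(T) = T} and \ref{cor: inj of comb suc}.

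The second soft spot is your verification of (R5). You cite the nesting $r_{t,j} > r_{t-1,j}$ of bumping routes within a single application of $\suc$ (Lemma \ref{lem: properties of suc(T)} \eqref{item: properties of suc(T) 2.2}), but (R5) compares the boxes vacated at step $k$ with those vacated at step $k+1$, i.e.\ bumping routes of two \emph{consecutive} applications of $\suc$. The paper needs genuinely harder cross-step estimates -- Lemmas \ref{lem: 1} and \ref{lem: N(t)} and the inequality $s'_t \geq s_t$, $r'_{t,j-1} \leq r_{t,j}$, culminating in Proposition \ref{prop: for cond e}, which is then what yields $Q_{\leq r}[k+1] \leq Q_{\leq r}[k]$ in Lemma \ref{lem: rec in rectil}. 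The within-step nesting alone does not give this, so this part of your sketch underestimates the real work; the rest of Part (2) (conditions (R1)--(R4) via Lemma \ref{lem: QAII, nu}, Lemma \ref{lem: rem is invariant under switch}, Proposition \ref{prop: low bound for rem}, and the counting argument through Lemma \ref{lem: rectil in lrt} and Theorem \ref{thm: Sun90}) matches the paper, and your fallback to counting rather than a direct construction of preimages for $\widetilde{\Rec} \subseteq \Rec$ is indeed the paper's route.
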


\begin{defi}\normalfont
  We call the map $\LRAII$ in Theorem \ref{thm: main} the \emph{Littlewood-Richardson map}.
\end{defi}

The rest of this paper is devoted to proving the theorem.
Since the proof is involved, we give an outline in the next subsection.

\subsection{Outline of the proof}
First, we reformulate the reduction map as a composite of certain maps (Corollary \ref{cor: factorization of red} \eqref{item: factorization of red 1}).
As a result, we see that the reduction map on $\SST_{2n}(\varpi_l)$ is injective (Corollary \ref{cor: factorization of red} \eqref{item: factorization of red 2}).

Next, by studying some properties of the reduction map, we deduce that the tableau $\PAII(T)$ is symplectic (Proposition \ref{prop: codomain of LR}).
Also, we prove the injectivity of the Littlewood-Richardson map by using the successor map (Proposition \ref{prop: inj of LR}).

To prove the surjectivity of the Littlewood-Richardson map, we use representation theory of a quantum symmetric pair $(\mathbf{U}, \mathbf{U}^\imath)$ of type $A\mathrm{II}_{2n-1}$.
It is known that for each $\lambda \in \Par_{\leq 2n}$, there exists an irreducible $\mathbf{U}$-module $V(\lambda)$ with basis of the form $\{ b_T \mid T \in \SST_{2n}(\lambda) \}$.
Similarly, for each $\nu \in \Par_{\leq n}$, there exists an irreducible $\mathbf{U}^\imath$-module $V^\imath(\nu)$.
We show that it admits a distinguished basis of the form $\{ b^\imath_T \mid T \in \SpT_{2n}(\nu) \}$ (Proposition \ref{prop: p_nu}).
Then, we lift the Littlewood-Richardson map to a $\mathbf{U}^\imath$-module homomorphism
\[
  V(\lambda) \rightarrow \bigoplus_{\substack{\nu \in \Par_{\leq n} \\ \nu \subseteq \lambda}} (V^\imath(\nu) \otimes \mathbb{Q}(q) \Rec_{2n}(\lambda/\nu))
\]
which maps $b_T$ to $b_{\PAII(T)} \otimes \QAII(T)$ modulo $q^{-1}$ for all $T \in \SST_{2n}(\lambda)$ (Proposition \ref{prop: quantum LR is comb LR at q infty}).
Here, each $x \in \mathbf{U}^\imath$ acts on each summand of the right-hand side as $x \otimes \mathrm{id}$.
The complete reducibility of the right-hand side implies that this homomorphism is surjective (Theorem \ref{thm: quantum LR is isom}).
Hence, we conclude that the Littlewood-Richardson map is surjective (Corollary \ref{cor: surj of LR}).

Finally, by studying the successor map in detail, we verify that $\Rec_{2n}(\lambda/\nu) \subseteq \widetilde{\Rec}_{2n}(\lambda/\nu)$ (Lemma \ref{lem: rec in rectil}).
On the other hand, we show that there is an injective map from $\widetilde{\Rec}_{2n}(\lambda/\nu)$ to the set $\LRSp_{2n}(\lambda/\nu)$ of symplectic Littlewood-Richardson tableaux (Lemma \ref{lem: rectil in lrt}).
Then, we conclude that $\Rec_{2n}(\lambda/\nu) = \widetilde{\Rec}_{2n}(\lambda/\nu)$ (Theorem \ref{thm: rec = rectil}) by proving that both $|\Rec_{2n}(\lambda/\nu)|$ and $|\LRSp_{2n}(\lambda/\nu)|$ are equal to the multiplicity $m_{\lambda,\nu}$ by Sundaram's branching rule (Theorem \ref{thm: Sun90}) and the bijectivity of the Littlewood-Richardson map.

\section{Factorization of the reduction map}\label{sect: factor red}
The aim of this section is to prove Corollary \ref{cor: factorization of red}, which describes the reduction map as a composite of several maps.
The other part is devoted to preparing the proof.

\subsection{Combinatorial $R$-matrices}
Recall from \eqref{eq: varpi_l} the partitions $\varpi_l$ for $l \geq 0$.

\begin{defi}[{{\it cf}.\ \cite[Rule 3.10]{NY97}}]\label{def: comb R}\normalfont
  Let $k,l \in [0, m]$.
  The \emph{combinatorial $R$-matrix} is the map
  \[
    R = R_{k,l} : \SST_m(\varpi_k) \times \SST_m(\varpi_l) \rightarrow \SST_m(\varpi_l) \times \SST_m(\varpi_k)
  \]
  defined as follows.
  Let $\mathbf{a} = (a_1,\dots,a_k) \in \SST_m(\varpi_k)$ and $\mathbf{b} = (b_1,\dots,b_l) \in \SST_m(\varpi_l)$.
  \begin{enumerate}
    \item\label{item: def comb R 1} When $k \leq l$.
    For each $r \in [1,k]$, define $i_r \in [1,l]$ inductively as follows.
    Set $i_1$ to be the minimum $i \in [1,l]$ such that $b_i \geq a_1$; when such $i$ does not exist, we set $i_1 := 1$.
    Suppose that $r \geq 2$ and we have determined $i_1,\dots,i_{r-1}$.
    Set $i_r$ to be the minimum $i \in [1,l] \setminus \{ i_1,\dots,i_{r-1} \}$ such that $b_i \geq a_r$; when such $i$ does not exist, we set $i_r := \min([1,l] \setminus \{ i_1,\dots,i_{r-1} \})$.
    Then, we set
    \[
      R(\mathbf{a}, \mathbf{b}) := (\mathbf{a} \sqcup \mathbf{b}'', \mathbf{b}'),
    \]
    where
    \[
      \mathbf{b}' := (b_{i_1}, \dots, b_{i_k}), \quad \mathbf{b}'' := \mathbf{b} \setminus \mathbf{b}'.
    \]

    \item\label{item: def comb R 2} When $k \geq l$.
    For each $r \in [1,l]$, define $i_r \in [1,k]$ inductively as follows.
    Set $i_1$ to be the maximum $i \in [1,k]$ such that $a_i \leq b_1$; when such $i$ does not exist, we set $i_1 := k$.
    Suppose that $r \geq 2$ and we have determined $i_1,\dots,i_{r-1}$.
    Set $i_r$ to be the maximum $i \in [1,k] \setminus \{ i_1,\dots,i_{r-1} \}$ such that $a_i \leq b_r$; when such $i$ does not exist, we set $i_r := \max([1,k] \setminus \{ i_1,\dots,i_{r-1} \})$.
    Then, we set
    \[
      R(\mathbf{a}, \mathbf{b}) := (\mathbf{a}', \mathbf{b} \sqcup \mathbf{a}''),
    \]
    where
    \[
      \mathbf{a}' := (a_{i_1}, \dots, a_{i_l}), \quad \mathbf{a}'' := \mathbf{a} \setminus \mathbf{a}'.
    \]
  \end{enumerate}
\end{defi}

\begin{prop}[{\cite[Proposition 3.21]{NY97}}]\label{prop: Rcomb is bij}
  The map $R_{k,l}$ is a bijection with inverse $R_{l,k}$.
\end{prop}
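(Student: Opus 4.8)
The plan is as follows. Since $\SST_m(\varpi_k)$ is in bijection with the set of $k$-element subsets of $[1,m]$, the source and the target of $R_{k,l}$ both have cardinality $\binom{m}{k}\binom{m}{l}$. It therefore suffices to show that $R_{l,k}\circ R_{k,l}$ is the identity: injectivity of $R_{k,l}$ then forces it to be a bijection, and a left inverse of a bijection between finite sets of equal cardinality is automatically the two-sided inverse. Moreover the assertions ``$R_{k,l}$ is a bijection with inverse $R_{l,k}$'' and ``$R_{l,k}$ is a bijection with inverse $R_{k,l}$'' are logically equivalent, so one may assume $k<l$; the case $k=l$ is immediate since then $\mathbf{b}''=\emptyset$ and $R_{k,k}=\mathrm{id}$. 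Thus everything reduces to proving, for $k<l$ and every $(\mathbf{a},\mathbf{b})$, that $R_{l,k}(R_{k,l}(\mathbf{a},\mathbf{b}))=(\mathbf{a},\mathbf{b})$.

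The first real step is a structural analysis of the matching $r\mapsto i_r$ of Definition \ref{def: comb R}\eqref{item: def comb R 1}. Writing $\mathbf{a}=(a_1<\dots<a_k)$, $\mathbf{b}=(b_1<\dots<b_l)$ and $R_{k,l}(\mathbf{a},\mathbf{b})=(\mathbf{a}\sqcup\mathbf{b}'',\ \mathbf{b}')$ with $\mathbf{b}'=\{b_{i_1},\dots,b_{i_k}\}$, I would establish by induction on $r$: (i) there is a threshold $r_0$ such that $b_{i_r}\ge a_r$ for $r<r_0$ (a ``genuine'' match) while for $r\ge r_0$ the rule falls back to the least still-unused index, and once a fall-back occurs all later steps are fall-backs (this uses $a_r<a_{r+1}<\cdots$); (ii) the genuine-match indices are strictly increasing, $i_1<\dots<i_{r_0-1}$, because $a_{r+1}>a_r$ pushes the least unused $b$-slot with value $\ge a_{r+1}$ strictly to the right of the one chosen for $a_r$; (iii) the fall-back indices $i_{r_0}<\dots<i_k$ are precisely the smallest elements of $[1,l]\setminus\{i_1,\dots,i_{r_0-1}\}$. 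It is clarifying to repackage (i)--(iii) as a non-crossing (bracket) matching: labelling each $v\in[1,m]$ by its membership in $\mathbf{a}\setminus\mathbf{b}$, $\mathbf{b}\setminus\mathbf{a}$, or $\mathbf{a}\cap\mathbf{b}$, the pairing $a_r\leftrightarrow b_{i_r}$ is exactly the one obtained by closing each $\mathbf{a}$-position with the nearest still-free $\mathbf{b}$-position to its right, the fall-backs being the $\mathbf{a}$-positions left without a partner.

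Next I would apply $R_{l,k}$ to the pair $(\mathbf{a}\sqcup\mathbf{b}'',\ \mathbf{b}')$ via Definition \ref{def: comb R}\eqref{item: def comb R 2} (valid since $|\mathbf{a}\sqcup\mathbf{b}''|=l\ge k=|\mathbf{b}'|$): that matching runs through the entries of $\mathbf{b}'$ in increasing order and assigns to each the largest unused entry of $\mathbf{a}\sqcup\mathbf{b}''$ not exceeding it, with fall-backs to the largest unused entry. The crux is to show, using (i)--(iii) and the non-crossing picture, that this ``greedy rightmost $\le$'' matching is the exact reverse of the ``greedy leftmost $\ge$'' matching above: each $b_{i_r}\in\mathbf{b}'$ is paired back with $a_r$, a genuine match staying genuine and a fall-back staying a fall-back. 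Granting this, the matched part of $\mathbf{a}\sqcup\mathbf{b}''$ is exactly $\mathbf{a}$ and the unmatched part is exactly $\mathbf{b}''$, whence
\[
  R_{l,k}(\mathbf{a}\sqcup\mathbf{b}'',\ \mathbf{b}')=(\mathbf{a},\ \mathbf{b}'\sqcup\mathbf{b}'')=(\mathbf{a},\mathbf{b}),
\]
as desired.

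The main obstacle will be precisely this reversal: although both the $a$'s and the matched $b_{i_r}$'s are processed in increasing order of value, the collections of slots that remain free at ``corresponding'' stages of the two algorithms are not literally mirror images, so one must track, position by position, which slots are still available; the interleaving region where the values of $\mathbf{a}$ and of the unmatched $\mathbf{b}$-entries are intermingled, together with the fall-back clause, is where the bookkeeping is heaviest. I expect the cleanest route to be an induction on $m$ (or on the positions scanned left to right), carrying the non-crossing matching along as the invariant. Finally, I would record a softer alternative once the crystal theory of Sections \ref{sect: preliminary Lie alg}--\ref{sect: preliminary qsp} is available: $R_{k,l}$ is (by its construction in \cite{NY97}) a morphism of $U_q(\mathfrak{gl}_m)$-crystals $B(\varpi_k)\otimes B(\varpi_l)\to B(\varpi_l)\otimes B(\varpi_k)$, and since both sides decompose multiplicity-freely into the components indexed by the shapes $(2^j,1^{k+l-2j})$ by Pieri's rule (Proposition \ref{prop: Pieri formula}), such a morphism is unique; being unique it is an isomorphism, and its inverse, again the unique crystal morphism in the opposite direction, must be $R_{l,k}$.
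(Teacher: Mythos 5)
The paper offers no argument of its own here (it simply cites \cite[Proposition 3.21]{NY97}), so the only question is whether your proposal stands on its own — and it does not yet. Your reductions are sound: the cardinality count showing that a one-sided identity $R_{l,k}\circ R_{k,l}=\mathrm{id}$ suffices, the reduction to $k\le l$ with the trivial case $k=l$, and the structural facts (i)--(iii) (once a fall-back occurs all later steps are fall-backs; genuine-match indices strictly increase; fall-backs take the smallest unused indices) are all correct and easy to justify. But the entire content of the proposition is the step you label ``the crux'' and then grant: that applying the case-(2) rule to $(\mathbf{a}\sqcup\mathbf{b}'',\mathbf{b}')$ pairs each $b_{i_r}$ back with $a_r$, so that the matched part of $\mathbf{a}\sqcup\mathbf{b}''$ is exactly $\mathbf{a}$ and the output is $(\mathbf{a},\mathbf{b})$. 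You do not prove this; you only say you ``expect'' an induction on $m$ carrying a non-crossing matching as invariant to work. Moreover the invariant as stated is not correct: the greedy pairing $a_r\leftrightarrow b_{i_r}$ is in general \emph{crossing} (e.g.\ $\mathbf{a}=(1,2)$, $\mathbf{b}=(3,4,\dots)$ gives the arcs $1$--$3$ and $2$--$4$); only the \emph{set} of matched $\mathbf{b}$-entries agrees with the bracket matching, so the bookkeeping you defer — showing that ``smallest available $\ge$'' run upward is undone by ``largest available $\le$'' run upward, in the presence of fall-backs and of $\mathbf{b}''$-entries interleaved among the $a$'s — is genuinely open in your write-up, and it is precisely the nontrivial part.

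The crystal-theoretic fallback has a gap of the same nature: in this paper $R_{k,l}$ is \emph{defined} by the explicit rule of Definition \ref{def: comb R}, not as a crystal morphism, so to use uniqueness of the isomorphism $B(\varpi_k)\otimes B(\varpi_l)\to B(\varpi_l)\otimes B(\varpi_k)$ you must first prove that the explicit rule commutes with the crystal operators — which is essentially the content of the cited result in \cite{NY97} and is nowhere supplied; the uniqueness claim itself also needs the morphism to be total and strict to rule out degenerate ones. To make either route complete you would have to actually establish the inversion identity, for instance by proving a lemma of the form: at the stage when the forward algorithm matches $a_r$ to $b_{i_r}$ and the backward algorithm processes $b_{i_r}$, the sets of still-available entries correspond in such a way that $b_{i_r}$ is the least available value $\ge a_r$ if and only if $a_r$ is the greatest available value $\le b_{i_r}$ (and similarly that fall-backs correspond to fall-backs); as written, that is asserted rather than proved.
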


Given an integer $a \in [1,m]$, set
\begin{align}\label{eq: a^vee}
  a^\vee := [1,m] \setminus \{ a \} \in \SST_m(\varpi_{m-1}).
\end{align}

\begin{lem}\label{lem: Rcomb m mvee}
  We have
  \[
    R(m,m^\vee) = (1^\vee,1).
  \]
\end{lem}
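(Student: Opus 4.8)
The plan is to unwind Definition \ref{def: comb R} directly in the single case that applies. Here $m$ stands for the one-term sequence $(m) \in \SST_m(\varpi_1)$ and $m^\vee = (1,2,\dots,m-1) \in \SST_m(\varpi_{m-1})$, so we are computing $R_{1,m-1}(m, m^\vee)$ with $k = 1 \leq l = m-1$; thus we are in situation \eqref{item: def comb R 1} of Definition \ref{def: comb R}.

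Next I would determine the single index $i_1$. By definition $i_1$ is the minimum $i \in [1,m-1]$ with $b_i \geq a_1$, where $\mathbf{a} = (a_1) = (m)$ and $\mathbf{b} = (b_1,\dots,b_{m-1}) = (1,2,\dots,m-1)$. Since $b_i = i \leq m-1 < m = a_1$ for every $i$, no such $i$ exists, and the definition prescribes $i_1 := 1$. Consequently $\mathbf{b}' = (b_{i_1}) = (b_1) = (1)$ and $\mathbf{b}'' = \mathbf{b} \setminus \mathbf{b}' = (2,3,\dots,m-1)$.

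Finally I would assemble the output: $R(\mathbf{a},\mathbf{b}) = (\mathbf{a} \sqcup \mathbf{b}'', \mathbf{b}') = \bigl((2,3,\dots,m-1,m),\, (1)\bigr)$. Recognizing $(2,3,\dots,m-1,m) = [1,m] \setminus \{1\} = 1^\vee$ and $(1) = 1$ gives $R(m, m^\vee) = (1^\vee, 1)$, as claimed. There is no real obstacle here: the statement is an immediate specialization of the definition, and the only point to be slightly careful about is the degenerate clause "when such $i$ does not exist, we set $i_1 := 1$", which is exactly what forces $\mathbf{b}'$ to consist of the smallest entry of $\mathbf{b}$.
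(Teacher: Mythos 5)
Your proposal is correct and follows exactly the paper's own argument: the paper likewise invokes case \eqref{item: def comb R 1} of Definition \ref{def: comb R}, notes that the degenerate clause forces $i_1 = 1$, and concludes; you have merely written out the computation of $\mathbf{b}'$, $\mathbf{b}''$, and the final pair in more detail.
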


\begin{proof}
  With the same notation as Definition \ref{def: comb R} \eqref{item: def comb R 1}, we see that
  \[
    i_1 = 1.
  \]
  Hence, the assertion follows.
\end{proof}

\begin{lem}\label{lem: Rcomb tvee 1,t}
  Let $t \in [1,m-1]$.
  Then, we have
  \[
    R(t^\vee, (1,\dots,t)) = ((1,\dots,t-1,m), m^\vee).
  \]
\end{lem}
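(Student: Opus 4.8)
The statement to prove is Lemma~\ref{lem: Rcomb tvee 1,t}: for $t \in [1,m-1]$, one has $R(t^\vee, (1,\dots,t)) = ((1,\dots,t-1,m), m^\vee)$. Here $t^\vee = [1,m]\setminus\{t\} = (1,\dots,t-1,t+1,\dots,m) \in \SST_m(\varpi_{m-1})$, and we are applying $R = R_{m-1,t}$, so we are in the case $k = m-1 \geq l = t$ of Definition~\ref{def: comb R}~\eqref{item: def comb R 2}. The plan is to run that algorithm directly: write $\mathbf{a} = (a_1,\dots,a_{m-1}) = t^\vee$ so that $a_i = i$ for $i \in [1,t-1]$ and $a_i = i+1$ for $i \in [t,m-1]$, and $\mathbf{b} = (b_1,\dots,b_t) = (1,\dots,t)$ so that $b_r = r$. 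Then I compute the indices $i_1,\dots,i_t$ defined inductively by taking $i_r$ to be the maximum $i \in [1,m-1]\setminus\{i_1,\dots,i_{r-1}\}$ with $a_i \leq b_r = r$.

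The key computation is to show $i_r = r$ for all $r \in [1,t]$, proved by induction on $r$. For the base case $r=1$: we need the maximum $i$ with $a_i \leq 1$; since $a_1 = 1$ and $a_i \geq 2$ for $i \geq 2$ (as $a_2 = 2$ when $t \geq 3$, $a_2 = 3$ when $t = 2$, and when $t=1$ there is no such $i$ so the convention sets $i_1 = m-1$ — I should double-check the $t=1$ edge case separately, where $t^\vee = (2,3,\dots,m)$, $\mathbf{b} = (1)$, and indeed $i_1 = m-1$, giving $\mathbf{a}' = (a_{m-1}) = (m)$ and $\mathbf{b}\sqcup\mathbf{a}'' = (1,\dots,m-1) = m^\vee$, matching the claim with the empty prefix $(1,\dots,0)$). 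For the inductive step with $2 \leq r \leq t$: assuming $\{i_1,\dots,i_{r-1}\} = \{1,\dots,r-1\}$, the available indices are $[r,m-1]$, and among these $a_i \leq r$ holds exactly for $i = r$ (since $a_r = r$ when $r \leq t-1$, $a_r = r+1 > r$ when $r \geq t$, but in the step $r \leq t$ so the only borderline is $r = t$: then $a_t = t+1 > t = b_t$, so no available index satisfies $a_i \leq b_t$, and the convention forces $i_t = \max([r,m-1]) $... wait — here I must be careful: at $r = t$, is there any available $i$ with $a_i \leq t$? The available set is $[t, m-1]$ and $a_i \geq t+1$ there, so none; the convention gives $i_t = \max([t,m-1]) = m-1$). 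So in fact $i_r = r$ for $r \in [1,t-1]$ and $i_t = m-1$.

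Given these indices, $\mathbf{a}' = (a_{i_1},\dots,a_{i_t}) = (a_1,\dots,a_{t-1},a_{m-1}) = (1,\dots,t-1,m)$, which is exactly the first component claimed. Then $\mathbf{a}'' = \mathbf{a}\setminus\mathbf{a}' = (a_t,\dots,a_{m-2}) = (t+1,\dots,m-1)$ (the entries of $t^\vee$ other than $1,\dots,t-1,m$), and $\mathbf{b} \sqcup \mathbf{a}'' = (1,\dots,t) \sqcup (t+1,\dots,m-1) = (1,\dots,m-1) = m^\vee$, giving the second component. I expect the main obstacle — really the only subtlety — to be bookkeeping the boundary index $r = t$ correctly and handling the degenerate case $t = 1$ (and possibly $t = m-1$, where $\mathbf{a}''$ is empty and $\mathbf{b}\sqcup\mathbf{a}'' = (1,\dots,m-1) = m^\vee$ still), i.e.\ making sure the "when such $i$ does not exist" clause of Definition~\ref{def: comb R}~\eqref{item: def comb R 2} is invoked at exactly the right step; everything else is a routine unwinding of the definition.
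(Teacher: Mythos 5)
Your proposal is correct and follows essentially the same route as the paper: a direct run of Definition~\ref{def: comb R}~\eqref{item: def comb R 2} showing $i_1=1,\dots,i_{t-1}=t-1$ and $i_t=m-1$, from which $\mathbf{a}'=(1,\dots,t-1,m)$ and $\mathbf{b}\sqcup\mathbf{a}''=m^\vee$ follow; your mid-proof correction (that $i_t=m-1$, not $t$) lands exactly on the paper's computation, and the $t=1$ and $t=m-1$ edge cases you check are fine.
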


\begin{proof}
  With the same notation as Definition \ref{def: comb R} \eqref{item: def comb R 2}, we see inductively that
  \[
    i_1 = 1, i_2 = 2, \dots, i_{t-1} = t-1, \text{ and } i_t = m-1.
  \]
  Hence, the assertion follows.
\end{proof}

\begin{lem}\label{lem: Rcomb a sub b}
  Let $\mathbf{a} = (a_1,\dots,a_k) \in \SST_m(\varpi_k)$ and $\mathbf{b} = (b_1,\dots,b_l) \in \SST_m(\varpi_l)$ with $k \leq l$.
  Suppose that for each $r \in [1,k]$, there exists $j_r \in [1,l]$ such that $a_r = b_{j_r}$.
  Then, we have
  \[
    R(\mathbf{a}, \mathbf{b}) = (\mathbf{b}, \mathbf{a})
  \]
  and
  \[
    R(\mathbf{b}, \mathbf{a}) = (\mathbf{a}, \mathbf{b}).
  \]
\end{lem}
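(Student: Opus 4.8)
The plan is to unwind the recipe in Definition~\ref{def: comb R}\eqref{item: def comb R 1} applied to the pair $(\mathbf{a},\mathbf{b})$ and check that it simply reselects the copy of $\mathbf{a}$ already sitting inside $\mathbf{b}$. Since $\mathbf{b}$ is a strictly increasing sequence, for each $r\in[1,k]$ the index $j_r$ with $a_r=b_{j_r}$ is unique, and $a_1<\dots<a_k$ forces $j_1<\dots<j_k$. I would then show, by induction on $r$, that the index $i_r$ produced by the algorithm equals $j_r$. For $r=1$: every $i<j_1$ satisfies $b_i<b_{j_1}=a_1$ by strict monotonicity, so $j_1$ is the smallest index with $b_i\ge a_1$; in particular the exceptional ``no such $i$'' branch is not taken. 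For the inductive step, assuming $\{i_1,\dots,i_{r-1}\}=\{j_1,\dots,j_{r-1}\}$, the index $j_r$ lies in $[1,l]\setminus\{i_1,\dots,i_{r-1}\}$ and $b_{j_r}=a_r$, while every $i<j_r$ has $b_i<a_r$ and hence is not admissible; therefore $i_r=j_r$.

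Granting $i_r=j_r$ for all $r$, we obtain $\mathbf{b}'=(b_{i_1},\dots,b_{i_k})=(a_1,\dots,a_k)=\mathbf{a}$, whence $\mathbf{b}''=\mathbf{b}\setminus\mathbf{b}'=\mathbf{b}\setminus\mathbf{a}$ and, using $\mathbf{a}\subseteq\mathbf{b}$, $\mathbf{a}\sqcup\mathbf{b}''=\mathbf{b}$. Plugging into the defining formula gives $R(\mathbf{a},\mathbf{b})=(\mathbf{a}\sqcup\mathbf{b}'',\mathbf{b}')=(\mathbf{b},\mathbf{a})$, the first identity.

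For the second identity I would avoid rerunning the mirror-image procedure of Definition~\ref{def: comb R}\eqref{item: def comb R 2} and instead invoke Proposition~\ref{prop: Rcomb is bij}: since $R_{l,k}$ is the inverse of $R_{k,l}$, applying $R_{l,k}$ to $R_{k,l}(\mathbf{a},\mathbf{b})=(\mathbf{b},\mathbf{a})$ yields $R(\mathbf{b},\mathbf{a})=R_{l,k}(\mathbf{b},\mathbf{a})=(\mathbf{a},\mathbf{b})$.

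I do not expect a genuine obstacle; the only point requiring care is verifying that the two fallback clauses in the definition of the $i_r$ (used when no admissible index exists) are never invoked under the hypothesis $\mathbf{a}\subseteq\mathbf{b}$, which is precisely what the monotonicity argument in the first paragraph guarantees.
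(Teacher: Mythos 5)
Your proposal is correct and follows the paper's own route: the paper declares the first identity ``clear from Definition~\ref{def: comb R}\eqref{item: def comb R 1}'' (you simply spell out the induction showing $i_r = j_r$), and it likewise derives the second identity from the first via the inverse property in Proposition~\ref{prop: Rcomb is bij}.
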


\begin{proof}
  The first assertion is clear from Definition \ref{def: comb R} \eqref{item: def comb R 1}.
  The second assertion follows from the first one and Proposition \ref{prop: Rcomb is bij}.
\end{proof}

\begin{lem}\label{lem: Rcomb svee a r,s}
  Let $\mathbf{a} = (a_1,\dots,a_k) \in \SST_m(\varpi_k)$, and $r,s \in [1,m]$ be such that $a_k+1 < r \leq s$.
  Then, we have
  \[
    R(s^\vee, \mathbf{a} \sqcup [r,s]) = (\mathbf{a} \sqcup [r-1,s-1], (r-1)^\vee)
  \]
  and
  \[
    R(\mathbf{a} \sqcup [r-1,s-1], (r-1)^\vee) = (s^\vee, \mathbf{a} \sqcup [r,s]).
  \]
\end{lem}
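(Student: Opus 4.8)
The statement is an explicit pair of formulas for the combinatorial $R$-matrix applied to $s^\vee$ against the sequence $\mathbf{a}\sqcup[r,s]$, under the gap hypothesis $a_k+1<r\le s$. The plan is to apply Definition \ref{def: comb R} \eqref{item: def comb R 2} directly, since here the left factor $s^\vee=[1,m]\setminus\{s\}$ has length $m-1$ and the right factor $\mathbf{b}:=\mathbf{a}\sqcup[r,s]$ has length $k+(s-r+1)\le m-1$ (this length bound needs to be checked, but follows from $\mathbf{a}\sqcup[r,s]$ being a strictly increasing sequence in $[1,m]$ that omits $s+1,\dots,m$ as well as at least the one value immediately below $r$, hence has length $\le m-1$); so we are in the case $k'\ge l'$ of the definition with $k'=m-1$, $l'=k+s-r+1$. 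Write $\mathbf{a}=(a_1,\dots,a_k)$ and $s^\vee=(c_1,\dots,c_{m-1})$ with $c_i=i$ for $i<s$ and $c_i=i+1$ for $i\ge s$. I would compute the indices $i_1,i_2,\dots,i_{l'}$ prescribed by the algorithm: for each entry $b_r$ of $\mathbf{b}$ (scanned in increasing order of $r$) one picks the \emph{maximum} available index $i$ with $c_i\le b_r$.

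\textbf{Key steps.} First, handle the block of $\mathbf{b}$ coming from $\mathbf{a}$: for $b=a_1$ the largest $c_i\le a_1$ is $c_{a_1}=a_1$, so $i=a_1$; more generally, processing $a_1<a_2<\dots<a_k$ in turn and each time deleting the used index, I expect $i_1=a_1,\ i_2=a_2,\ \dots,\ i_k=a_k$ — here it is essential that the $a_j$ are all $<r-1<s$ so that $c_{a_j}=a_j$ and no two of them collide. Second, handle the block $[r,s]$: after the $\mathbf{a}$-block is processed, the available indices are $[1,m-1]\setminus\{a_1,\dots,a_k\}$; for $b=r$ the largest available $i$ with $c_i\le r$ — note $c_{r-1}=r-1\le r$ and $c_r=r\le r$ and $c_{r+1}=r+1>r$, so the candidate is $i=r$ (which is available since $r>a_k+1>a_k$), giving $i_{k+1}=r$; then for $b=r+1$ the largest available $i$ with $c_i\le r+1$ is now $i=r+1$ (since $r$ is used), and inductively $i_{k+j}=r+j-1$ for $j=1,\dots,s-r$, and finally for $b=s$ the largest available $i$ with $c_i\le s$: here $c_{s-1}=s-1$, $c_s=s+1>s$, so the top candidate $\le s$ would be index $s-1$, but $s-1$ is already used (it was $i_{k+(s-r)+ ?}$ — one must track this carefully), so one descends to the next available index, which by the gap hypothesis is $r-1$; hence $i_{l'}=r-1$. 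Third, assemble $\mathbf{a}':=(c_{i_1},\dots,c_{i_{l'}})$ and $\mathbf{a}'':=s^\vee\setminus\mathbf{a}'$: the used indices are $\{a_1,\dots,a_k\}\cup\{r-1,r,r+1,\dots,s-1\}$, so $\mathbf{a}'=\mathbf{a}\sqcup(c_{r-1},c_r,\dots,c_{s-1})=\mathbf{a}\sqcup(r-1,r,\dots,s-1)=\mathbf{a}\sqcup[r-1,s-1]$, while $\mathbf{a}''$ is the complementary single column, which works out to $(r-1)^\vee$ after noting $c_i=i+1$ for $i\ge s$. Then $R(s^\vee,\mathbf{b})=(\mathbf{a}',\mathbf{b}\sqcup\mathbf{a}'')$ matches the claimed value once one checks $(r-1)^\vee$ is indeed the complement; and the second formula follows immediately from the first together with Proposition \ref{prop: Rcomb is bij} ($R_{l,k}$ inverts $R_{k,l}$), exactly as in the proof of Lemma \ref{lem: Rcomb svee a r,s}'s siblings above.

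\textbf{Main obstacle.} The delicate point is the very last index: when processing the top entry $b=s$ of the $[r,s]$-block, the naive choice index $s-1$ is already occupied, and the algorithm must fall through to the next-largest available index; proving that this is precisely $r-1$ — neither some $a_j$ (excluded because $a_k<r-1$) nor some index in $(r-1,s-1)$ (all occupied by the earlier entries of the block) — is where the gap hypothesis $a_k+1<r$ is used in an essential way, and it requires bookkeeping the set of used indices carefully rather than hand-waving. A secondary technical check is the length inequality $\ell(\mathbf{a}\sqcup[r,s])\le m-1$ needed to legitimately invoke case \eqref{item: def comb R 2} (the $k'\ge l'$ branch) rather than the other branch; this is immediate from $a_k+1<r$ forcing at least one integer in $[1,m]$ (namely one just below $r$, or the values $s+1,\dots,m$) to be omitted, but should be stated. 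Everything else is a routine unwinding of Definition \ref{def: comb R}, entirely parallel to Lemmas \ref{lem: Rcomb m mvee}--\ref{lem: Rcomb a sub b}.
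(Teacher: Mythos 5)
Your proposal is correct and takes essentially the same route as the paper: a direct unwinding of Definition \ref{def: comb R} \eqref{item: def comb R 2} producing the index sequence $i_1=a_1,\dots,i_k=a_k$, $i_{k+1}=r,\dots,i_{k+s-r}=s-1$, $i_{k+s-r+1}=r-1$ (the last step being exactly where $a_k+1<r$ is used), followed by Proposition \ref{prop: Rcomb is bij} for the second identity; the paper's proof is just a terser statement of this same computation. Only a cosmetic slip: it is $\mathbf{b}\sqcup\mathbf{a}''$, not $\mathbf{a}''$ itself, that equals $(r-1)^\vee$, which is what you in fact use in the final assembly.
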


\begin{proof}
  With the same notation as Definition \ref{def: comb R} \eqref{item: def comb R 2}, we see inductively that
  \[
    i_1 = a_1, \dots, i_k = a_k, i_{k+1} = r, i_{k+2} = r+1, \dots, i_{k+s-r} = s-1, \text{ and } i_{k+s-r+1} = r-1.
  \]
  Hence, the first assertion follows.
  Now, the second assertion follows from Proposition \ref{prop: Rcomb is bij}.
\end{proof}

\subsection{Some properties of removable entries}
In this subsection, we fix $l \in [0,2n]$ and $\mathbf{a} = (a_1,\dots,a_l) \in \SST_{2n}(\varpi_l)$.

\begin{lem}\label{lem: rem(a) = rem(a1,...,ai-1)}
  Let $i \in [1,l]$ be such that $a_j \notin \Rem(\mathbf{a})$ for all $j \in [i,l]$.
  Then, we have
  \[
    \Rem(\mathbf{a}) = \Rem(a_1,\dots,a_{i-1}).
  \]
\end{lem}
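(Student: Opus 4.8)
The statement to prove is: if $a_j \notin \Rem(\mathbf{a})$ for all $j \in [i,l]$, then $\Rem(\mathbf{a}) = \Rem(a_1,\dots,a_{i-1})$.

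The plan is to induct downward on $l - i$, i.e., on the number of "trailing" entries that are known not to be removable. The base case $i = l$ means $a_l \notin \Rem(\mathbf{a})$, which is precisely Proposition \ref{prop: properties of rem} \eqref{item: properties of rem 2}, giving $\Rem(\mathbf{a}) = \Rem(a_1,\dots,a_{l-1})$. (If $l = 0$ the statement is vacuous/trivial.)

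For the inductive step, suppose $i < l$ and $a_j \notin \Rem(\mathbf{a})$ for all $j \in [i,l]$. In particular $a_l \notin \Rem(\mathbf{a})$, so by Proposition \ref{prop: properties of rem} \eqref{item: properties of rem 2} we have $\Rem(\mathbf{a}) = \Rem(a_1,\dots,a_{l-1})$. Now I want to apply the induction hypothesis to the shorter sequence $(a_1,\dots,a_{l-1})$ with the same $i$. For that I need to know that $a_j \notin \Rem(a_1,\dots,a_{l-1})$ for all $j \in [i, l-1]$. But $\Rem(a_1,\dots,a_{l-1}) = \Rem(\mathbf{a})$, and by hypothesis $a_j \notin \Rem(\mathbf{a})$ for all $j \in [i,l] \supseteq [i,l-1]$, so this holds. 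Hence by induction $\Rem(a_1,\dots,a_{l-1}) = \Rem(a_1,\dots,a_{i-1})$, and combining gives $\Rem(\mathbf{a}) = \Rem(a_1,\dots,a_{i-1})$.

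There is essentially no obstacle here — the whole content is repeated application of Proposition \ref{prop: properties of rem} \eqref{item: properties of rem 2} together with the observation that "$\Rem$ of a truncation equals $\Rem$ of the whole" propagates the non-removability hypothesis down to the truncated sequence. The only mild subtlety is keeping the quantifier ranges straight: at each stage one strips off the last entry, and one must check the hypothesis survives for the new (shorter) sequence, which it does because the two $\Rem$-sets literally coincide.

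\begin{proof}
  We argue by downward induction on $i$, or equivalently by induction on $l-i$.

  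If $i = l$, then the hypothesis says $a_l \notin \Rem(\mathbf{a})$, so the claim is exactly Proposition \ref{prop: properties of rem} \eqref{item: properties of rem 2}.

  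Suppose now that $i < l$ and that $a_j \notin \Rem(\mathbf{a})$ for all $j \in [i,l]$. In particular $a_l \notin \Rem(\mathbf{a})$, so Proposition \ref{prop: properties of rem} \eqref{item: properties of rem 2} gives
  \[
    \Rem(\mathbf{a}) = \Rem(a_1,\dots,a_{l-1}).
  \]
  Consequently, for every $j \in [i, l-1]$ we have $a_j \notin \Rem(a_1,\dots,a_{l-1})$, since these entries lie outside $\Rem(\mathbf{a}) = \Rem(a_1,\dots,a_{l-1})$ by hypothesis. Applying the induction hypothesis to the sequence $(a_1,\dots,a_{l-1})$ (for which $l-1-i < l-i$) yields
  \[
    \Rem(a_1,\dots,a_{l-1}) = \Rem(a_1,\dots,a_{i-1}).
  \]
  Combining the two displayed equalities gives $\Rem(\mathbf{a}) = \Rem(a_1,\dots,a_{i-1})$, as desired.
\end{proof}
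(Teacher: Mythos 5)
Your proof is correct and is exactly the paper's argument: the paper disposes of this lemma by "iterative applications of Proposition \ref{prop: properties of rem} \eqref{item: properties of rem 2}", and your downward induction on $l-i$ is just a careful formalization of that iteration, including the (easy but worth stating) check that the non-removability hypothesis passes to the truncated sequence because the two $\Rem$-sets coincide.
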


\begin{proof}
  The assertion is deduced by iterative applications of Proposition \ref{prop: properties of rem} \eqref{item: properties of rem 2}.
\end{proof}

\begin{prop}\label{prop: characterization of rem(a)}
  For each $i \in [1,l]$, we have $a_i \in \Rem(\mathbf{a})$ if and only if one of the following hold:
  \begin{enumerate}
    \item $a_i \notin 2\mathbb{Z}$, $i < l$, $a_{i+1} = a_i+1$, and $a_i < 2i - |\Rem(a_1,\dots,a_{i-1})|$.
    \item $a_i \in 2\mathbb{Z}$, $i > 1$, $a_{i-1} = a_i-1$, and $a_i < 2i - |\Rem(a_1,\dots,a_{i-2})|-1$.
  \end{enumerate}
\end{prop}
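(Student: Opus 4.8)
The plan is to prove the statement by induction on $l$, establishing it for every $\mathbf{a} \in \SST_{2n}(\varpi_l)$ at once. The base case $l \le 1$ is immediate: then $\Rem(\mathbf{a}) = \emptyset$, and for the only index that can occur, $i = 1$ (which is possible only when $l = 1$), both conditions fail, since~(1) requires $i < l$ and~(2) requires $i > 1$. For $l \ge 2$ I would split the argument according to which branch of Definition~\ref{def: rem}~(2) computes $\Rem(\mathbf{a})$.

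Suppose first we are in the first branch, so $a_l \in 2\mathbb{Z}$, $a_{l-1} = a_l - 1$, $a_l < 2l - |\Rem(a_1,\dots,a_{l-2})| - 1$, and $\Rem(\mathbf{a}) = \Rem(a_1,\dots,a_{l-2}) \sqcup \{a_{l-1},a_l\}$. The indices $i = l$ and $i = l-1$ I would treat by hand: at $i = l$ both $a_l \in \Rem(\mathbf{a})$ and condition~(2) hold by hypothesis, while condition~(1) fails; at $i = l-1$ one has $a_{l-1} \in \Rem(\mathbf{a})$, and condition~(1) holds because $a_{l-1} = a_l - 1$ is odd, $a_{(l-1)+1} = a_l = a_{l-1}+1$, and $a_{l-1} = a_l - 1 < 2(l-1) - |\Rem(a_1,\dots,a_{l-2})|$. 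For $i \le l-2$, strict increase of $\mathbf{a}$ gives $a_i \notin \{a_{l-1},a_l\}$, hence $a_i \in \Rem(\mathbf{a}) \iff a_i \in \Rem(a_1,\dots,a_{l-2})$, and I would apply the induction hypothesis to $(a_1,\dots,a_{l-2})$.

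In the second (``otherwise'') branch, $\Rem(\mathbf{a}) = \Rem(a_1,\dots,a_{l-1})$, and the defining negation is precisely the negation of ``condition~(2) holds at $i = l$''. For $i = l$: since $a_l$ is the strictly largest entry, $a_l \notin \Rem(a_1,\dots,a_{l-1})$, condition~(1) fails ($i < l$), and condition~(2) fails by the branch hypothesis, so both sides of the asserted equivalence are false. For $i \le l-1$ one has $a_i \in \Rem(\mathbf{a}) \iff a_i \in \Rem(a_1,\dots,a_{l-1})$, and I would apply the induction hypothesis to $(a_1,\dots,a_{l-1})$.

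What makes these reductions go through is that conditions~(1) and~(2) at a fixed index $i$ refer only to $a_i$, $a_{i\pm1}$, and $\Rem$ of $(a_1,\dots,a_{i-1})$ or $(a_1,\dots,a_{i-2})$; so each condition reads identically in $\mathbf{a}$ and in any shorter sequence still containing $a_i$, with the single exception of the length bound ``$i < l$'' occurring in~(1). Consequently the only indices needing extra care are the boundary ones at which that bound changes value — namely $i = l-2$ in the first branch and $i = l-1$ in the second — and the one genuine obstacle is to verify that at those indices condition~(1) for $\mathbf{a}$ is actually false, so that the asserted equivalence there reduces to condition~(2), which does transfer. This is a short parity/inequality check: in the first branch, if~(1) held at $i = l-2$ then $a_{l-1} = a_{l-2}+1$, forcing $a_{l-2} = a_l - 2 \in 2\mathbb{Z}$, contrary to ``$a_{l-2} \notin 2\mathbb{Z}$''; in the second branch, if~(1) held at $i = l-1$ then $a_l = a_{l-1}+1 \in 2\mathbb{Z}$ and $a_l \le 2(l-1) - |\Rem(a_1,\dots,a_{l-2})| < 2l - |\Rem(a_1,\dots,a_{l-2})| - 1$, so condition~(2) would hold at $i = l$ — impossible in this branch. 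The degenerate small cases $l = 2,3$ need no separate treatment, since there several sub-conditions become vacuous and the same arguments apply verbatim. Assembling all of the above completes the induction.
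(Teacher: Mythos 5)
Your proposal is correct and follows essentially the same route as the paper: induction on $l$, splitting according to which branch of Definition \ref{def: rem}~(2) computes $\Rem(\mathbf{a})$, handling $i\geq l-1$ (resp.\ $i=l$) directly and reducing the remaining indices to the induction hypothesis for the truncated sequence. The only difference is that you spell out the boundary-index checks (that condition~(1) is vacuously false at $i=l-2$ in the first branch and at $i=l-1$ in the second), which the paper leaves implicit when it invokes the induction hypothesis.
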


\begin{proof}
  We prove the assertion by induction on $l$.
  If $l \leq 1$, then the assertion is clear since we have $\Rem(\mathbf{a}) = \emptyset$ and neither $i < l$ nor $i > 1$ for all $i \in [1,l]$.
  Hence, assume that $l > 1$ and the assertion holds for $0,1,\dots,l-1$.

  First, suppose that
  \begin{align}\label{eq: 2}
    a_l \in 2\mathbb{Z},\ a_{l-1} = a_l-1, \text{ and } a_l < 2l - |\Rem(a_1,\dots,a_{l-2})| - 1.
  \end{align}
  By Definition \ref{def: rem}, we have
  \begin{align}\label{eq: 1}
    \Rem(\mathbf{a}) = \Rem(a_1,\dots,a_{l-2}) \sqcup \{ a_{l-1}, a_l \}.
  \end{align}

  When $i \leq l-2$, equation \eqref{eq: 1} implies that $a_i \in \Rem(\mathbf{a})$ if and only if $a_i \in \Rem(a_1,\dots,a_{l-2})$.
  Hence, the assertion follows from our induction hypothesis.

  When $i \geq l-1$, equation \eqref{eq: 1} implies that $a_i \in \Rem(\mathbf{a})$.
  On the other hand, condition \eqref{eq: 2} implies the first (resp., second) condition in the statement when $i = l-1$ (resp., $i = l$).
  Therefore, the assertion follows in this case.

  Next, suppose that condition \eqref{eq: 2} fails.
  By Definition \ref{def: rem}, it holds that
  \[
    \Rem(\mathbf{a}) = \Rem(a_1,\dots,a_{l-1}).
  \]
  Then, the assertion follows from our induction hypothesis.
\end{proof}

For each $a \in \mathbb{Z}$, set
\begin{align}\label{eq: switch}
  s(a) := \begin{cases}
    a+1 & \text{ if } a \notin 2\mathbb{Z}, \\
    a-1 & \text{ if } a \in 2\mathbb{Z}
  \end{cases}
\end{align}

\begin{lem}\label{lem: rem is invariant under switch}
  Let $a \in [1,2n]$.
  Then, we have $a \in \Rem(\mathbf{a})$ if and only if $s(a) \in \Rem(\mathbf{a})$.
  Consequently, we have $|\Rem(\mathbf{a})| \in 2\mathbb{Z}$.
\end{lem}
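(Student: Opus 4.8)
The plan is to exhibit $\Rem(\mathbf{a})$ as a disjoint union of \emph{switch-pairs} $\{b,s(b)\}$. Observe first that $s$ is a fixed-point-free involution of the interval $[1,2n]$: it interchanges $2k-1$ and $2k$ for each $k \in [1,n]$. Consequently, once we know that $b \in \Rem(\mathbf{a})$ implies $s(b) \in \Rem(\mathbf{a})$ — that is, that $\Rem(\mathbf{a})$ is stable under $s$ — the full statement follows formally: applying this implication with $s(a)$ in place of $a$ yields the reverse implication, so $a \in \Rem(\mathbf{a})$ if and only if $s(a) \in \Rem(\mathbf{a})$; and a finite set carrying a fixed-point-free involution has even cardinality, so $|\Rem(\mathbf{a})| \in 2\mathbb{Z}$.

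It therefore remains to prove the stability of $\Rem(\mathbf{a})$ under $s$, which I would do by induction on $l = |\mathbf{a}|$, following the recursion in Definition \ref{def: rem}. The case $l \leq 1$ is vacuous since $\Rem(\mathbf{a}) = \emptyset$. For $l > 1$, if the ``otherwise'' branch of Definition \ref{def: rem} \eqref{item: rem 2} applies, then $\Rem(\mathbf{a}) = \Rem(a_1,\dots,a_{l-1})$ and we are done by the induction hypothesis applied to $(a_1,\dots,a_{l-1})$. If instead the first branch applies, then $\Rem(\mathbf{a}) = \Rem(a_1,\dots,a_{l-2}) \sqcup \{a_{l-1},a_l\}$ with $a_l$ even and $a_{l-1} = a_l - 1 = s(a_l)$; thus $\{a_{l-1},a_l\}$ is itself a switch-pair, hence $s$-stable, while $\Rem(a_1,\dots,a_{l-2})$ is $s$-stable by the induction hypothesis and contained in $\Rem(\mathbf{a})$ by Proposition \ref{prop: properties of rem} \eqref{item: properties of rem 3}. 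Hence $\Rem(\mathbf{a})$, being a union of two $s$-stable sets, is $s$-stable. (Here one uses that $\mathbf{a}$ is strictly increasing, so that $a_{l-1}$ and $a_l$ do not occur among $a_1,\dots,a_{l-2}$ and the displayed union is genuinely disjoint.)

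An alternative, more local argument uses the description in Proposition \ref{prop: characterization of rem(a)}: if $a = a_i$ is odd and removable, condition (1) there supplies $a_{i+1} = a_i+1 = s(a)$ together with the inequality $a_i < 2i - |\Rem(a_1,\dots,a_{i-1})|$, from which one checks directly that condition (2) at index $i+1$ holds, so that $s(a) = a_{i+1} \in \Rem(\mathbf{a})$; the even case is entirely symmetric, passing from index $i$ to $i-1$. I do not expect a genuine obstacle along either route. The only points requiring care are the bookkeeping of which prefix of $\mathbf{a}$ appears inside each invocation of $\Rem(\cdot)$, and — in the inductive argument — the disjointness of the union $\Rem(a_1,\dots,a_{l-2}) \sqcup \{a_{l-1},a_l\}$, which rests on the strict monotonicity of $\mathbf{a}$.
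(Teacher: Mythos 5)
Your proposal is correct. In fact your ``alternative, more local argument'' is precisely the paper's proof, which consists of the single line ``the assertion follows from Proposition \ref{prop: characterization of rem(a)}'': if $a=a_i$ is odd and removable, condition (1) gives $a_{i+1}=a_i+1=s(a)$ and $a_i<2i-|\Rem(a_1,\dots,a_{i-1})|$, whence $a_{i+1}<2(i+1)-|\Rem(a_1,\dots,a_{(i+1)-2})|-1$, i.e.\ condition (2) at index $i+1$, and symmetrically in the even case; your arithmetic there checks out, and the prefix bookkeeping you flag (that the set $\Rem(a_1,\dots,a_{i-1})$ occurring in condition (1) at $i$ is the same set occurring in condition (2) at $i+1$) is exactly the point to watch. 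Your primary route --- the induction on $l$ directly from Definition \ref{def: rem}, exhibiting $\Rem(\mathbf{a})$ as a union of the $s$-stable pair $\{a_{l-1},a_l\}$ and an inductively $s$-stable set --- is a genuinely different and slightly more elementary argument: it bypasses Proposition \ref{prop: characterization of rem(a)} entirely (which itself is proved by an induction of the same shape), at the cost of proving only the global stability statement rather than the pointwise criterion; the paper's route reuses the characterization it has already established, so the lemma becomes a one-line corollary. Both derivations of the consequences are fine: one-directional $s$-stability plus the fact that $s$ is a fixed-point-free involution of $[1,2n]$ yields the ``if and only if'' and the evenness of $|\Rem(\mathbf{a})|$. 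Only cosmetic remarks: in the inductive route you should phrase the induction so that the hypothesis is available for both $l-1$ and $l-2$ (strong induction, with $l\leq 1$ as base), and the disjointness of $\Rem(a_1,\dots,a_{l-2})\sqcup\{a_{l-1},a_l\}$ is not actually needed for stability, only the union is.
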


\begin{proof}
  The assertion follows from Proposition \ref{prop: characterization of rem(a)}.
\end{proof}

\begin{lem}\label{lem: tail of rem(a)}
  Suppose that $a_l \in 2\mathbb{Z}$ and that there exists $r \in [0,l-1]$ such that $a_{l-r} = a_l -r \in \Rem(\mathbf{a})$.
  Then, we have $a_{l-t} \in \Rem(\mathbf{a})$ for all $t \in [0,r]$.
  Moreover, if $r \notin 2\mathbb{Z}$, then it holds that
  \[
    \Rem(\mathbf{a}) = \Rem(a_1,\dots,a_{l-r-1}) \sqcup [a_l-r, a_l].
  \]
\end{lem}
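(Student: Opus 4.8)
The plan is to prove both assertions simultaneously by induction on $l$, peeling off the recursion of Definition \ref{def: rem} \eqref{item: rem 2} from the right. To begin, note that since $\mathbf{a}$ is strictly increasing, the hypothesis $a_{l-r} = a_l - r$ together with $a_{l-r} < a_{l-r+1} < \cdots < a_l$ already forces $a_{l-t} = a_l - t$ for every $t \in [0,r]$, so the tail of $\mathbf{a}$ is the block of consecutive integers $[a_l-r,a_l]$. The base cases $l \le 1$ are vacuous, since then $\Rem(\mathbf{a}) = \emptyset$ and no $r$ satisfies the hypothesis. So assume $l \ge 2$.

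The heart of the argument is to show that, under our hypotheses, the first branch of Definition \ref{def: rem} \eqref{item: rem 2} must apply at step $l$, i.e.
\[
  \Rem(\mathbf{a}) = \Rem(a_1,\dots,a_{l-2}) \sqcup \{a_{l-1},a_l\}.
\]
Since $a_l$ is even, this branch fails only if $a_{l-1} \ne a_l-1$, or $a_{l-1}=a_l-1$ and $a_l \ge 2l - |\Rem(a_1,\dots,a_{l-2})| - 1$. In the first case the tail cannot have length $\ge 2$, so $r = 0$; but then $a_l \in \Rem(\mathbf{a}) = \Rem(a_1,\dots,a_{l-1})$ is impossible, as every entry of $\Rem(a_1,\dots,a_{l-1})$ is an entry of $\mathbf{a}$ strictly below $a_l$. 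In the second case $a_{l-1}$ is odd, so the recursion cannot fire at step $l-1$ either, whence $\Rem(\mathbf{a}) = \Rem(a_1,\dots,a_{l-2})$; now $r \le 1$ is again impossible (its entries lie strictly below $a_{l-1}$), while for $r \ge 2$ one has $a_{l-2} = a_l - 2$ even and $a_{l-r} = a_{l-2} - (r-2) \in \Rem(a_1,\dots,a_{l-2})$, so the induction hypothesis applies to $(a_1,\dots,a_{l-2})$ and yields $a_{l-2} \in \Rem(a_1,\dots,a_{l-2})$. Then, applying the characterization of Proposition \ref{prop: characterization of rem(a)} to the (even) last entry of $(a_1,\dots,a_{l-2})$ shows that the recursion fires at step $l-2$, so $|\Rem(a_1,\dots,a_{l-2})| = |\Rem(a_1,\dots,a_{l-4})| + 2$ and $a_l - 2 = a_{l-2} < 2(l-2) - |\Rem(a_1,\dots,a_{l-4})| - 1 = 2l - 3 - |\Rem(a_1,\dots,a_{l-2})|$; hence $a_l \le 2l - 2 - |\Rem(a_1,\dots,a_{l-2})|$, contradicting the inequality of the second case. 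Thus the displayed identity holds.

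Granting the displayed identity, the conclusion follows by a short descent. For $r \le 1$ it is immediate: $a_l$ (and, when $r=1$, also $a_{l-1}$) lies in the peeled pair $\{a_{l-1},a_l\} = \{a_l-1,a_l\}$, and for $r = 1$ the asserted identity $\Rem(\mathbf{a}) = \Rem(a_1,\dots,a_{l-2}) \sqcup [a_l-1,a_l]$ is precisely the displayed one. For $r \ge 2$, the entry $a_{l-r} < a_{l-1}$ lies in $\Rem(a_1,\dots,a_{l-2})$, and $a_{l-2} = a_l - 2$ is even with $a_{l-r} = a_{l-2} - (r-2)$; applying the induction hypothesis to $(a_1,\dots,a_{l-2})$ with parameter $r-2$ gives $a_{l-2-t'} \in \Rem(a_1,\dots,a_{l-2})$ for all $t' \in [0,r-2]$, hence $a_{l-t} \in \Rem(\mathbf{a})$ for $t \in [2,r]$, which with $a_{l-1},a_l \in \Rem(\mathbf{a})$ proves the first assertion. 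If moreover $r$ is odd, then $r-2$ is odd, and the ``moreover'' part of the induction hypothesis gives $\Rem(a_1,\dots,a_{l-2}) = \Rem(a_1,\dots,a_{l-r-1}) \sqcup [a_l-r,a_l-2]$; adjoining the pair $\{a_l-1,a_l\}$ yields $\Rem(\mathbf{a}) = \Rem(a_1,\dots,a_{l-r-1}) \sqcup [a_l-r,a_l]$, as desired.

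I expect the main obstacle to be ruling out the non-firing branch in the second paragraph: this is the only place where the induction hypothesis is used in an essential (rather than purely formal) way, and it requires carefully tracking the inequality that defines the branch together with the size relation between $|\Rem(a_1,\dots,a_{l-2})|$ and $|\Rem(a_1,\dots,a_{l-4})|$ obtained from Proposition \ref{prop: characterization of rem(a)}. The rest is a routine bookkeeping of the two consecutive peelings.
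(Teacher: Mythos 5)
Your argument is correct, but it is organized quite differently from the paper's proof, so let me compare the two. The paper keeps $\mathbf{a}$ fixed and propagates membership \emph{upward} along the tail: it shows that $a_{l-t} \in \Rem(\mathbf{a})$ implies $a_{l-t+1} \in \Rem(\mathbf{a})$, splitting on the parity of $t$ (odd $t$ via Lemma \ref{lem: rem is invariant under switch}, even $t$ via Proposition \ref{prop: characterization of rem(a)} together with Definition \ref{def: rem}), and then derives the ``moreover'' identity by a separate short computation, iterating Definition \ref{def: rem} in pairs along the tail. You instead induct on the length $l$, peeling the last pair: the crux is your proof by contradiction that the first branch of Definition \ref{def: rem} must fire at step $l$, i.e.\ $\Rem(\mathbf{a}) = \Rem(a_1,\dots,a_{l-2}) \sqcup \{a_{l-1},a_l\}$ (equivalently $a_l \in \Rem(\mathbf{a})$, which is the $t=0$ case of the first assertion), obtained by applying Proposition \ref{prop: characterization of rem(a)} to the truncated word $(a_1,\dots,a_{l-2})$ and comparing the resulting inequality with the one defining the non-firing branch; after that, the induction hypothesis applied to $(a_1,\dots,a_{l-2})$ with parameter $r-2$ finishes both assertions at once. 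I checked the delicate steps: the cardinality relation $|\Rem(a_1,\dots,a_{l-2})| = |\Rem(a_1,\dots,a_{l-4})| + 2$ and the resulting bound $a_l \leq 2l-2-|\Rem(a_1,\dots,a_{l-2})|$ do contradict the case assumption $a_l \geq 2l-1-|\Rem(a_1,\dots,a_{l-2})|$, and the ranges needed for the induction hypothesis ($r-2 \in [0,(l-2)-1]$, last entry $a_{l-2}=a_l-2$ even) are all satisfied. Your route buys the ``moreover'' part for free from the induction hypothesis, at the price of the case analysis ruling out the non-firing branch; the paper's upward propagation avoids that contradiction argument but must treat the ``moreover'' identity separately. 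One cosmetic point: in the degenerate situation $l-2\leq 1$ (forced by $r=2$, $l=3$) your derived membership $a_{l-r}\in\Rem(a_1,\dots,a_{l-2})=\emptyset$ just shows the hypotheses cannot occur, so the case is vacuous; it would be worth saying this explicitly rather than invoking the induction hypothesis there.
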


\begin{proof}
  Since $a_{l-r} = a_l-r$ and $a_{l-r} < a_{l-r+1} < \dots < a_l$, we have $a_{l-t} = a_l-t$ for all $t \in [0,r]$.

  Let us prove for each $t \in [1,r]$ that if $a_{l-t} \in \Rem(\mathbf{a})$, then $a_{l-t+1} \in \Rem(\mathbf{a})$.
  It is clear that this claim implies the first assertion.

  First, suppose that $t \notin 2\mathbb{Z}$.
  Then we have
  \[
    a_{l-t+1} = a_{l-t}+1 = s(a_{l-t}) \in \Rem(\mathbf{a})
  \]
  by Lemma \ref{lem: rem is invariant under switch}.

  Next, suppose that $t \in 2\mathbb{Z}$.
  By Proposition \ref{prop: characterization of rem(a)}, we have $a_{l-t-1} = a_{l-t}-1$ and
  \[
    a_{l-t} < 2(l-t) - |\Rem(a_1,\dots,a_{l-t-2})| -1.
  \]
  Then, by Definition \ref{def: rem}, we obtain
  \begin{align}\label{eq: 3}
    |\Rem(a_1,\dots,a_{l-t})| = |\Rem(a_1,\dots,a_{l-t-2})| + 2.  
  \end{align}

  On the other hand, since $t \in [1,r] \cap 2\mathbb{Z}$, we see that $l-t+1 < l$.
  Hence, we have $a_{l-t+2} = a_{l-t+1} + 1$ and
  \[
    a_{l-t+1} = a_{l-t}+1 < 2(l-t) - |\Rem(a_1,\dots,a_{l-t-2})| = 2(l-t+1) - |\Rem(a_1,\dots,a_{l-t})|,
  \]
  by equation \eqref{eq: 3}.
  Then, Proposition \ref{prop: characterization of rem(a)} implies
  \[
    a_{l-t+1} \in \Rem(\mathbf{a}),
  \]
  as desired.

  So far, we have proved the first assertion.
  The second assertion follows from the following equality, which is obtained from Definition \ref{def: rem} under our hypothesis that $r \notin 2\mathbb{Z}$:
  \[
    \Rem(a_1,\dots,a_{l-r+2k+1}) = \Rem(a_1,\dots,a_{l-r+2k-1}) \sqcup \{ a_{l-r+2k}, a_{l-r+2k+1} \}
  \]
  for all $k \in [0,(r-1)/2]$.
  Thus, we complete the proof.
\end{proof}

\begin{lem}\label{lem: rem(a r,s)}
  Suppose that $a_l \in 2\mathbb{Z}$ and that there exists $r \in [1,l-1]$ such that $a_{l-r} = a_l-r \notin 2\mathbb{Z}$.
  Let $t$ denote the maximal odd integer in $[1,r]$ satisfying
  \[
    a_l-t < 2(l-t) - |\Rem(a_1,\dots,a_{l-r-1})|;
  \]
  if such $t$ does not exist, we set $t := -1$.
  Then, we have
  \[
    \Rem(\mathbf{a}) = \Rem(a_1,\dots,a_{l-r-1}) \sqcup [a_l-t,a_l].
  \]
\end{lem}

\begin{proof}
  Let $t' \in [0,r]$ denote the maximal odd integer such that $a_{l-t'} \in \Rem(\mathbf{a})$; when such $t'$ does not exist, we set $t' := -1$.
  Then, by Lemma \ref{lem: tail of rem(a)}, we have
  \[
    \Rem(\mathbf{a}) = \Rem(a_1,\dots,a_{l-t'-1}) \sqcup [a_l-t', a_l].
  \]
  Moreover, since $a_{l-r}, a_{l-r+1}, \dots, a_{l-t'-1} \notin \Rem(\mathbf{a})$ by the definition of $t'$, Lemma \ref{lem: rem(a) = rem(a1,...,ai-1)} implies that
  \begin{align}\label{eq: 4}
    \Rem(a_1,\dots,a_{l-t'-1}) = \Rem(a_1,\dots,a_{l-r-1}).
  \end{align}
  Hence, in order to complete the proof, we only need to show that $t' = t$.

  First, suppose that $t' \neq -1$.
  Then, by Proposition \ref{prop: characterization of rem(a)} and equation \eqref{eq: 4}, we have
  \[
    a_l-t' = a_{l-t'} < 2(l-t') - |\Rem(a_1,\dots,a_{l-t'-1})| = 2(l-t') - |\Rem(a_1,\dots,a_{l-r-1})|.
  \]
  This implies that $t' \leq t$.
  In particular, $t \neq -1$.
  Then, the definition of $t$ and equation \eqref{eq: 4} imply that
  \[
    a_{l-t} = a_l-t < 2(l-t) - |\Rem(a_1,\dots,a_{l-r-1})| = 2(l-t) - |\Rem(a_1,\dots,a_{l-t-1})|.
  \]
  Hence, we obtain $t \leq t'$.
  Therefore, we conclude that $t' = t$, as desired in this case (when $t' \neq -1$).

  Next, suppose that $t' = -1$.
  We only need to show that there exists no odd integer $t \in [0,r]$ such that
  \[
    a_l-t < 2(l-t) - |\Rem(a_1,\dots,a_{l-r-1})|.
  \]
  Assume contrary.
  Then, by Proposition \ref{prop: characterization of rem(a)}, we see that $a_{l-t} \in \Rem(\mathbf{a})$.
  This contradicts that $t' = -1$.
  Therefore, we obtain $t' = t$, as desired.
\end{proof}

\begin{lem}\label{lem: division of rem(a)}
  Let $i \in [1,l]$ be such that $a_i \notin \Rem(\mathbf{a})$.
  Then, we have
  \[
    \Rem(\mathbf{a}) = \Rem(a_1,\dots,a_{i-1}) \sqcup ((a_{i+1},\dots,a_l) \cap \Rem(\mathbf{a})).
  \]
\end{lem}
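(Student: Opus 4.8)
The plan is to prove this by downward induction on $i$, i.e.\ by considering the entries $a_l, a_{l-1}, \dots, a_{i+1}$ one at a time, using the recursive structure of $\Rem$ from Definition \ref{def: rem}. The base case $i = l$ is exactly Proposition \ref{prop: properties of rem} \eqref{item: properties of rem 2}, since then $(a_{i+1},\dots,a_l) = ()$ and the claim reads $\Rem(\mathbf{a}) = \Rem(a_1,\dots,a_{l-1})$. For the inductive step, suppose $i < l$ and $a_i \notin \Rem(\mathbf{a})$; I want to peel off the last entry $a_l$ and apply the induction hypothesis to the truncated sequence. The argument splits according to whether $a_l \in \Rem(\mathbf{a})$ or not.

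First I would treat the easy case $a_l \notin \Rem(\mathbf{a})$. By Proposition \ref{prop: properties of rem} \eqref{item: properties of rem 2}, $\Rem(\mathbf{a}) = \Rem(a_1,\dots,a_{l-1})$, and since $a_i$ is among $a_1,\dots,a_{l-1}$ (recall $i < l$) we still have $a_i \notin \Rem(a_1,\dots,a_{l-1})$. Applying the induction hypothesis to $(a_1,\dots,a_{l-1})$ yields $\Rem(a_1,\dots,a_{l-1}) = \Rem(a_1,\dots,a_{i-1}) \sqcup \big((a_{i+1},\dots,a_{l-1}) \cap \Rem(a_1,\dots,a_{l-1})\big)$. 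Since $a_l \notin \Rem(\mathbf{a}) = \Rem(a_1,\dots,a_{l-1})$, intersecting with $(a_{i+1},\dots,a_l)$ versus $(a_{i+1},\dots,a_{l-1})$ gives the same set, so we are done in this case.

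The more delicate case is $a_l \in \Rem(\mathbf{a})$. Then by Proposition \ref{prop: properties of rem} \eqref{item: properties of rem 1} and \eqref{item: properties of rem 2.1}, $a_l \in 2\mathbb{Z}$, $\Rem(a_1,\dots,a_{l-1}) = \Rem(a_1,\dots,a_{l-2})$, and from Definition \ref{def: rem} \eqref{item: rem 2} the first branch applies: $a_{l-1} = a_l - 1$ (so $a_{l-1} \in \Rem(\mathbf{a})$ too, with $i \le l-2$ forced since $a_i \notin \Rem(\mathbf{a})$ rules out $i \in \{l-1, l\}$) and $\Rem(\mathbf{a}) = \Rem(a_1,\dots,a_{l-2}) \sqcup \{a_{l-1}, a_l\}$. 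Now $a_i$ is among $a_1,\dots,a_{l-2}$, and $a_i \notin \Rem(\mathbf{a})$ forces $a_i \notin \Rem(a_1,\dots,a_{l-2})$. Apply the induction hypothesis to $(a_1,\dots,a_{l-2})$ to get $\Rem(a_1,\dots,a_{l-2}) = \Rem(a_1,\dots,a_{i-1}) \sqcup \big((a_{i+1},\dots,a_{l-2}) \cap \Rem(a_1,\dots,a_{l-2})\big)$. Substituting and using that $a_{l-1}, a_l$ both lie in $(a_{i+1},\dots,a_l)$ and in $\Rem(\mathbf{a})$, while $(a_{i+1},\dots,a_{l-2}) \cap \Rem(a_1,\dots,a_{l-2}) = (a_{i+1},\dots,a_{l-2}) \cap \Rem(\mathbf{a})$ (again by $\Rem(a_1,\dots,a_{l-2}) \subseteq \Rem(\mathbf{a})$ from Proposition \ref{prop: properties of rem} \eqref{item: properties of rem 3} together with the explicit decomposition of $\Rem(\mathbf{a})$), we obtain $\Rem(\mathbf{a}) = \Rem(a_1,\dots,a_{i-1}) \sqcup \big((a_{i+1},\dots,a_l) \cap \Rem(\mathbf{a})\big)$, as required. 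The main obstacle is purely bookkeeping: making sure the intersection $(a_{i+1},\dots,a_k) \cap \Rem(\mathbf{a})$ is handled consistently with $\Rem$ evaluated on truncations versus on $\mathbf{a}$ itself; Proposition \ref{prop: properties of rem} \eqref{item: properties of rem 3} (monotonicity of $\Rem$ under truncation) is what makes these match up.
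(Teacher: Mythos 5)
Your proof is correct, but it follows a genuinely different route from the paper's. The paper first gets the inclusion of the right-hand side into $\Rem(\mathbf{a})$ from Proposition \ref{prop: properties of rem} \eqref{item: properties of rem 3}, and then proves the reverse inclusion element by element: for $a_j \in \Rem(\mathbf{a})$ it splits into $j<i-1$, $j=i-1$, $j>i$, invoking the local characterization of removable entries (Proposition \ref{prop: characterization of rem(a)}) and the parity-switch Lemma \ref{lem: rem is invariant under switch} to handle the boundary case $j=i-1$ (where $a_{i-1}$ must be even, else $a_i=a_{i-1}+1$ would be removable). You instead run an induction that peels off the last entry (or the last two entries, when $a_l\in\Rem(\mathbf{a})$ and the first branch of Definition \ref{def: rem} applies), using only the recursion and Proposition \ref{prop: properties of rem}; this avoids both the characterization proposition and the switch lemma, so it is more self-contained and elementary, at the cost of the set-bookkeeping you describe, whereas the paper's argument is shorter given the tools it has already established and isolates exactly where the parity subtlety sits. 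One small presentational point: what you are really doing is induction on the length of the sequence (equivalently on $l-i$), not ``downward induction on $i$'' with $\mathbf{a}$ fixed, since the induction hypothesis is applied to the truncations $(a_1,\dots,a_{l-1})$ and $(a_1,\dots,a_{l-2})$ with the same $i$; your verification that $i\leq l-2$ in the second case (because $a_{l-1},a_l\in\Rem(\mathbf{a})$ while $a_i\notin\Rem(\mathbf{a})$) is exactly what makes that step legitimate, so the argument stands once the induction is labelled correctly.
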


\begin{proof}
  By Proposition \ref{prop: properties of rem} \eqref{item: properties of rem 3}, the right-hand side is contained in the left-hand side.
  Hence, we only need to show the opposite containment.

  Let $j \in [1,l]$ be such that $a_j \in \Rem(\mathbf{a})$.
  By our assumption, it must hold that $j \neq i$.

  First, suppose that $j < i-1$.
  In this case, we have $a_j \in \Rem(a_1,\dots,a_{i-1})$ by Proposition \ref{prop: characterization of rem(a)}.

  Next, suppose that $j = i-1$.
  In this case, we have $a_j \in 2\mathbb{Z}$; otherwise, it holds that $a_i = a_{j+1} = a_j+1$ by Proposition \ref{prop: characterization of rem(a)} and $a_j+1 = s(a_j) \in \Rem(\mathbf{a})$ by Lemma \ref{lem: rem is invariant under switch}, which contradicts that $a_i \notin \Rem(\mathbf{a})$.
  Then, we obtain $a_j \in \Rem(a_1,\dots,a_{i-1})$ by Proposition \ref{prop: characterization of rem(a)}.

  Finally, suppose that $j > i$.
  In this case, it is clear that
  \[
    a_j \in (a_{i+1}, \dots, a_l) \cap \Rem(\mathbf{a}).
  \]
  Thus, we complete the proof.
\end{proof}

\begin{prop}\label{prop: low bound for rem}
  We have
  \[
    0 \leq l - |\Rem(\mathbf{a})| \leq \min(l, 2n-l).
  \]
  In particular, it holds that
  \[
    |\Rem(\mathbf{a})| \geq 2(l-n).
  \]
\end{prop}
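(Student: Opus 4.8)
The plan is to reduce the whole statement to a single sharp estimate on the largest entry $a_l$. The bounds $0 \le l - |\Rem(\mathbf{a})|$ and $l - |\Rem(\mathbf{a})| \le l$ are immediate since $\Rem(\mathbf{a}) \subseteq \mathbf{a}$, so the only content is the inequality $l - |\Rem(\mathbf{a})| \le 2n - l$, equivalently $|\Rem(\mathbf{a})| \ge 2(l-n)$; and the case $l = 0$ is trivial.

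The key is the auxiliary estimate $a_l \ge 2l - |\Rem(\mathbf{a})| - 1$, which I would prove by induction on $l \ge 1$, the inductive hypothesis being applied to the prefix $(a_1,\dots,a_{l-1}) \in \SST_{2n}(\varpi_{l-1})$. The base case $l=1$ is just $a_1 \ge 1$. For $l \ge 2$, follow the two branches of Definition \ref{def: rem}\eqref{item: rem 2}. If $a_l$ is removable --- $a_l \in 2\mathbb{Z}$, $a_{l-1} = a_l-1$, and $a_l < 2l - |\Rem(a_1,\dots,a_{l-2})| - 1$ --- then $a_{l-1}$ is odd, hence $a_{l-1} \notin \Rem(a_1,\dots,a_{l-1})$ by Proposition \ref{prop: properties of rem}\eqref{item: properties of rem 1}, so $\Rem(a_1,\dots,a_{l-1}) = \Rem(a_1,\dots,a_{l-2})$ by Proposition \ref{prop: properties of rem}\eqref{item: properties of rem 2} and $|\Rem(\mathbf{a})| = |\Rem(a_1,\dots,a_{l-1})| + 2$; applying the inductive hypothesis to $(a_1,\dots,a_{l-1})$ and using $a_l = a_{l-1}+1$ then gives $a_l \ge 2l - |\Rem(\mathbf{a})|$, which is stronger than needed. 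If $a_l$ is not removable, then $|\Rem(\mathbf{a})| = |\Rem(a_1,\dots,a_{l-1})| =: m'$, and the inductive hypothesis reads $a_{l-1} \ge 2l - m' - 3$. If this is strict, then $a_l \ge a_{l-1} + 1 \ge 2l - m' - 1$ and we are done; if it is an equality, then $a_{l-1} = 2l - m' - 3$ is odd because $m' = |\Rem(a_1,\dots,a_{l-1})|$ is even by Lemma \ref{lem: rem is invariant under switch}, and I would exclude $a_l = a_{l-1}+1$: in that case $a_l \in 2\mathbb{Z}$, $a_{l-1} = a_l - 1$, and (since $a_{l-1}$ odd forces $\Rem(a_1,\dots,a_{l-1}) = \Rem(a_1,\dots,a_{l-2})$ as above) $a_l = 2l - m' - 2 < 2l - |\Rem(a_1,\dots,a_{l-2})| - 1$, so $a_l$ would in fact be removable, a contradiction; hence $a_l \ge a_{l-1}+2 = 2l - m' - 1$. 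This completes the induction.

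To conclude, combine the auxiliary estimate with $a_l \le 2n$ (valid since $\mathbf{a} \in \SST_{2n}(\varpi_l)$): it gives $|\Rem(\mathbf{a})| \ge 2l - 2n - 1$, and since $|\Rem(\mathbf{a})|$ is even (Lemma \ref{lem: rem is invariant under switch}) while $2l - 2n - 1$ is odd, this upgrades to $|\Rem(\mathbf{a})| \ge 2l - 2n = 2(l-n)$; rearranging yields $l - |\Rem(\mathbf{a})| \le 2n - l$, and together with the trivial bounds we obtain $0 \le l - |\Rem(\mathbf{a})| \le \min(l, 2n-l)$. The main obstacle is precisely the boundary sub-case of the non-removable branch, where the crude comparison $a_l \ge a_{l-1}+1$ falls exactly one short; resolving it requires simultaneously exploiting the parity of $|\Rem|$ and the contrapositive of the removability criterion. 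Everything else is a routine unwinding of Definition \ref{def: rem} and Proposition \ref{prop: properties of rem}.
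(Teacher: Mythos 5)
Your proof is correct, but it takes a genuinely different route from the paper's. You isolate an $n$-independent estimate, $a_l \geq 2l - |\Rem(\mathbf{a})| - 1$, prove it by induction on $l$ alone (with the delicate boundary subcase of the non-removable branch settled by the contrapositive of the removability criterion together with the evenness of $|\Rem|$), and only at the very end bring in the constraint $a_l \leq 2n$ plus parity to upgrade $|\Rem(\mathbf{a})| \geq 2l-2n-1$ to $|\Rem(\mathbf{a})| \geq 2(l-n)$. The paper instead inducts on $l$ while letting the rank vary: when $a_{l-1} < 2n-1$ it regards the prefix as an element of $\SST_{2(n-1)}(\varpi_{l-1})$ and invokes the statement for the smaller rank, and only the boundary case $a_{l-1} = 2n-1$ requires the characterization of removable entries (Proposition \ref{prop: characterization of rem(a)}) and the same parity argument via Lemma \ref{lem: rem is invariant under switch}. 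What each buys: your auxiliary inequality is sharper and uniform in $n$, and it essentially yields Corollary \ref{cor: low bound for rem(a')} directly (that corollary becomes a restatement of your lemma rather than a consequence of rescaling $n$), whereas the paper's argument avoids introducing any auxiliary estimate by exploiting the fact that the proposition is quantified over all ranks, at the cost of an induction whose hypothesis is applied with $n$ replaced by $n-1$. Both arguments rest on the same two ingredients --- the removability criterion and the parity of $|\Rem(\mathbf{a})|$ --- deployed at the respective boundary cases, and your use of the cited lemmas involves no circularity.
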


\begin{proof}
  Set $r := |\Rem(\mathbf{a})|$.
  Since $\Rem(\mathbf{a})$ is a subset of $\{ a_1,\dots, a_l \}$, we have
  \[
    0 \leq l-r \leq l.
  \]
  Hence, we only need to show that $l-r \leq 2n-l$, or equivalently,
  \[
    r \geq 2(l-n).
  \]
  We prove this inequality by induction on $l$.
  When $l \leq 1$, the claim is trivial since we have $\Rem(\mathbf{a}) = \emptyset$ and $n \geq 1$.
  
  Assume that $l > 1$ and our claim is true for $0,1,\dots,l-1$.
  Set
  \[
    \mathbf{a}' := (a_1,\dots, a_{l-1}) \text{ and } r' := |\Rem(\mathbf{a}')|.
  \]
  Then, we have $r \geq r'$. Note that $a_{l-1} \leq 2n-1$.

  Suppose first that $a_{l-1} < 2n-1$.
  Then, we have
  \[
    \mathbf{a}' \in \SST_{2(n-1)}(\varpi_{l-1}).
  \]
  By our induction hypothesis, we obtain
  \[
    r \geq r' \geq 2((l-1) - (n-1)) = 2(l-n),
  \]
  as desired.

  Next, suppose that $a_{l-1} = 2n-1$ and $a_{l-1} \in \Rem(\mathbf{a})$.
  In this case, Lemma \ref{lem: rem is invariant under switch} and Definition \ref{def: rem} imply that $a_l = 2n \in \Rem(\mathbf{a})$ and
  \[
    \Rem(\mathbf{a}) = \Rem(a_1,\dots,a_{l-2}) \sqcup \{ a_{l-1}, a_l \},
  \]
  respectively.
  Set
  \[
    r'' := |\Rem(a_1,\dots,a_{l-2})|.
  \]
  Then, by the same argument as in the previous case, we obtain
  \[
    r = r''+2 \geq 2((l-2)-(n-1)) + 2 = 2(l-n),
  \]
  as desired.

  Finally, suppose that $a_{l-1} = 2n-1$ and $a_{l-1} \notin \Rem(\mathbf{a})$.
  In this case, Proposition \ref{prop: characterization of rem(a)} implies
  \[
    2n-1 = a_{l-1} \geq 2(l-1) - r''.
  \]
  Furthermore, since $r'' \in 2\mathbb{Z}$ by Lemma \ref{lem: rem is invariant under switch}, it follows that
  \[
    2n-2 \geq 2(l-1) - r'',
  \]
  equivalently,
  \[
    r'' \geq 2(l-n).
  \]
  Now, the assertion follows from the fact that $r \geq r''$.
\end{proof}

\begin{cor}\label{cor: low bound for rem(a')}
  Let $i \in [1,l]$.
  \begin{enumerate}
    \item\label{item: low bound for rem(a') 1} If $a_i$ is odd, then $|\Rem(a_1,\dots,a_{i-1})| \geq 2i-a_i-1$.
    \item\label{item: low bound for rem(a') 2} If $a_i$ is even, then $|\Rem(a_1,\dots,a_i)| \geq 2i-a_i$.
  \end{enumerate}
\end{cor}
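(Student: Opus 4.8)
The statement to prove is Corollary \ref{cor: low bound for rem(a')}, which refines Proposition \ref{prop: low bound for rem} by replacing the global bound on $|\Rem(\mathbf{a})|$ with a bound on $|\Rem(a_1,\dots,a_{i-1})|$ (resp.\ $|\Rem(a_1,\dots,a_i)|$) in terms of the value $a_i$. The guiding idea is that the prefix $(a_1,\dots,a_{i-1})$ (or $(a_1,\dots,a_i)$) is a strictly increasing sequence whose entries lie in the interval $[1, a_i]$, so it is a column tableau with entries bounded by a number smaller than $2n$; applying Proposition \ref{prop: low bound for rem} to this shorter sequence, but with a smaller ambient value in place of $2n$, should yield the claimed estimates. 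Concretely, I would first record the elementary observation that if $\mathbf{c} = (c_1,\dots,c_p) \in \SST_{2n}(\varpi_p)$ and all entries satisfy $c_p \le N$, then in fact $\mathbf{c} \in \SST_{N}(\varpi_p)$, and Proposition \ref{prop: low bound for rem} applied with $2n$ replaced by $N$ (valid as long as $N$ is even and positive — or one rounds $N$ up to the next even integer) gives $|\Rem(\mathbf{c})| \ge 2p - N$.

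For part \eqref{item: low bound for rem(a') 1}, suppose $a_i$ is odd. Set $\mathbf{a}' := (a_1,\dots,a_{i-1})$; since $a_1 < a_2 < \dots < a_{i-1} < a_i$ and $a_i$ is odd, every entry of $\mathbf{a}'$ is at most $a_i - 1$, which is even. Thus $\mathbf{a}' \in \SST_{a_i-1}(\varpi_{i-1})$, and Proposition \ref{prop: low bound for rem} (with $2n$ replaced by the even number $a_i-1$, i.e.\ ``$n$'' replaced by $(a_i-1)/2$) gives
\[
  |\Rem(a_1,\dots,a_{i-1})| = |\Rem(\mathbf{a}')| \ge 2(i-1) - (a_i-1) = 2i - a_i - 1,
\]
which is exactly the desired inequality. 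For part \eqref{item: low bound for rem(a') 2}, suppose $a_i$ is even. Then $(a_1,\dots,a_i)$ is a strictly increasing sequence with largest entry $a_i$, so it lies in $\SST_{a_i}(\varpi_i)$; applying Proposition \ref{prop: low bound for rem} with $2n$ replaced by $a_i$ gives
\[
  |\Rem(a_1,\dots,a_i)| \ge 2i - a_i,
\]
as claimed.

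**Main obstacle.** There is essentially no deep obstacle here — the corollary is a direct specialization of the already-proved Proposition \ref{prop: low bound for rem}. The only point requiring a little care is the legitimacy of invoking that proposition with a different (smaller) even bound in place of $2n$: the proposition is stated for $\SST_{2n}(\varpi_l)$ with the fixed positive integer $n$, so I must make sure the argument in its proof goes through verbatim when $2n$ is replaced by any even positive integer $N$ (it does, since the induction on $l$ in that proof never uses anything about $n$ beyond $n \ge 1$, i.e.\ $N \ge 2$; the sole base-case input is $\Rem = \emptyset$ and the ambient bound being positive). In the odd case one could alternatively avoid even mentioning $a_i - 1$ and instead argue inside $\SST_{a_i}(\varpi_{i-1})$ after noting $a_i$ itself is not an entry of the prefix — but since $a_i$ is odd, $a_i-1$ is even and the cleaner route is to pass to $\SST_{a_i - 1}$. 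I would phrase the proof so that this ``change of ambient parameter'' is stated once and used twice.
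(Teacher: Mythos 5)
Your proposal is correct and is essentially the paper's own proof: the paper likewise observes that $(a_1,\dots,a_{i-1}) \in \SST_{a_i-1}(\varpi_{i-1})$ in the odd case and $(a_1,\dots,a_i) \in \SST_{a_i}(\varpi_i)$ in the even case, and then invokes Proposition \ref{prop: low bound for rem}. Your additional remark that the proposition must be applied with the smaller even ambient parameter in place of $2n$ (valid since $\Rem$ depends only on the sequence and the proposition's induction works for any $n \geq 1$) simply makes explicit a point the paper leaves implicit.
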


\begin{proof}
  Observe that $(a_1,\dots,a_{i-1}) \in \SST_{a_i-1}(\varpi_{i-1})$ in the first case, while $(a_1,\dots,a_i) \in \SST_{a_i}(\varpi_i)$ in the second case.
  Then, the assertion follows from Proposition \ref{prop: low bound for rem}.
\end{proof}

\subsection{Some properties of the reduction map}
In this subsection, we fix $l \in [0,2n]$ and $\mathbf{a} = (a_1,\dots,a_l) \in \SST_{2n}(\varpi_l)$.

\begin{lem}\label{lem: red 1,l}
  Let $l \in [1,2n]$ be an even integer.
  Then, we have
  \[
    \red(1,2,\dots,l) = ().
  \]
\end{lem}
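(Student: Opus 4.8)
The plan is to prove $\red(1,2,\dots,l)=()$ for even $l$ by showing $\Rem(1,2,\dots,l)=\{1,2,\dots,l\}$, equivalently $|\Rem(1,2,\dots,l)|=l$. The natural approach is induction on $l$ through even values, using the recursive formula in Definition \ref{def: rem} directly. Write $\mathbf{a}=(1,2,\dots,l)$, so $a_i=i$ for all $i$, and in particular $a_l=l\in 2\mathbb{Z}$ and $a_{l-1}=l-1=a_l-1$, so the first two conditions of the middle case of Definition \ref{def: rem}\eqref{item: rem 2} are satisfied automatically.

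The key step is to verify the inequality $a_l<2l-|\Rem(a_1,\dots,a_{l-2})|-1$. By the induction hypothesis applied to the even sequence $(1,2,\dots,l-2)$, we have $|\Rem(1,2,\dots,l-2)|=l-2$, hence $2l-|\Rem(a_1,\dots,a_{l-2})|-1=2l-(l-2)-1=l+1>l=a_l$. So the middle case of the recursion applies and gives
\[
  \Rem(1,2,\dots,l)=\Rem(1,2,\dots,l-2)\sqcup\{l-1,l\},
\]
which has cardinality $(l-2)+2=l$. Since $\Rem(1,2,\dots,l)\subseteq\{1,\dots,l\}$, this forces $\Rem(1,2,\dots,l)=\{1,\dots,l\}$, and therefore $\red(1,2,\dots,l)=\mathbf{a}\setminus\Rem(\mathbf{a})=()$. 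The base case $l=0$ (or, if one prefers a nonempty base, one can note $l\geq 2$ throughout and handle $l=2$ directly: $a_2=2\in 2\mathbb{Z}$, $a_1=1=a_2-1$, and $a_2=2<2\cdot 2-0-1=3$, so $\Rem(1,2)=\Rem()\sqcup\{1,2\}=\{1,2\}$) is immediate.

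Alternatively, and perhaps more cleanly, one can invoke Corollary \ref{cor: low bound for rem(a')}\eqref{item: low bound for rem(a') 2}: since $a_l=l$ is even, it gives $|\Rem(1,2,\dots,l)|\geq 2l-a_l=2l-l=l$, and combined with the trivial upper bound $|\Rem(\mathbf{a})|\leq l$ we get equality at once, hence $\red(1,2,\dots,l)=()$. This avoids the induction entirely. I would present the short argument via Corollary \ref{cor: low bound for rem(a')} as the main proof, since it is a one-line deduction from a result already established. I do not anticipate any real obstacle here; the only point requiring a moment's care is confirming that $(1,2,\dots,l)$ indeed lies in $\SST_{a_l}(\varpi_l)=\SST_{l}(\varpi_l)$ so that Corollary \ref{cor: low bound for rem(a')} is applicable, but this is immediate since all entries lie in $[1,l]$.
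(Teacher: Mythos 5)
Your preferred argument—invoking Corollary \ref{cor: low bound for rem(a')} \eqref{item: low bound for rem(a') 2} with $a_l=l$ to get $|\Rem(1,\dots,l)|\geq l$, hence $\Rem(1,\dots,l)=[1,l]$ and $\red(1,\dots,l)=()$—is exactly the paper's proof, and it is correct. The additional induction via Definition \ref{def: rem} is also valid but unnecessary.
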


\begin{proof}
  By Corollary \ref{cor: low bound for rem(a')} \eqref{item: low bound for rem(a') 2}, we have
  \[
    |\Rem(1,2,\dots,l)| \geq l.
  \]
  This implies that
  \[
    \Rem(1,2,\dots,l) = [1,l].
  \]
  Therefore, the assertion follows.
\end{proof}

\begin{lem}\label{lem: red a with al not in rem a}
  Let $i \in [1,l]$ be such that $a_j \notin \Rem(\mathbf{a})$ for all $j \in [i,l]$.
  Then, we have
  \[
    \red(\mathbf{a}) = \red(a_1,\dots,a_{i-1}) \sqcup (a_i,\dots,a_l).
  \]
\end{lem}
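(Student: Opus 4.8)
The plan is to reduce everything to Lemma~\ref{lem: rem(a) = rem(a1,...,ai-1)} together with the elementary observation that $\Rem$ of an initial segment only touches that initial segment. First I would invoke Lemma~\ref{lem: rem(a) = rem(a1,...,ai-1)}: since $a_j \notin \Rem(\mathbf{a})$ for all $j \in [i,l]$ by hypothesis, we get
\[
  \Rem(\mathbf{a}) = \Rem(a_1,\dots,a_{i-1}).
\]

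Next I would record that, by Definition~\ref{def: rem}, the set $\Rem(a_1,\dots,a_{i-1})$ is a subset of $\{a_1,\dots,a_{i-1}\}$. Since $\mathbf{a}$ is strictly increasing (it lies in $\SST_{2n}(\varpi_l)$), the entries $a_1,\dots,a_l$ are pairwise distinct, so $\Rem(a_1,\dots,a_{i-1})$ is disjoint from $\{a_i,\dots,a_l\}$. Under the identification of elements of $\SST_{2n}(\varpi_k)$ with increasing sequences (equivalently, with their underlying sets), this lets me split the set difference:
\[
  \red(\mathbf{a}) = \mathbf{a} \setminus \Rem(\mathbf{a}) = \big((a_1,\dots,a_{i-1}) \setminus \Rem(a_1,\dots,a_{i-1})\big) \sqcup (a_i,\dots,a_l).
\]

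Finally I would recognize the first summand on the right as $\red(a_1,\dots,a_{i-1})$ by Definition~\ref{def: red}, yielding the claimed equality. There is essentially no obstacle here; the only point requiring a word of care is the bookkeeping in passing between increasing sequences and sets, which is harmless because strict monotonicity guarantees distinct entries, so $\sqcup$ is literally a disjoint union and the positions of $a_i,\dots,a_l$ are unaffected by deleting entries from the first $i-1$ slots.
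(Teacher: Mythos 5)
Your argument is correct and is essentially the paper's proof: the paper also derives this lemma directly from Lemma~\ref{lem: rem(a) = rem(a1,...,ai-1)}, and your extra set-theoretic bookkeeping just makes explicit what the paper leaves implicit.
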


\begin{proof}
  The assertion follows from Lemma \ref{lem: rem(a) = rem(a1,...,ai-1)}.
\end{proof}

\begin{lem}\label{lem: tail of red(a)}
  Suppose that $a_l \in 2\mathbb{Z}$ and that there exists $r \in [0,l-1]$ such that $r \notin 2\mathbb{Z}$ and $a_{l-r} = a_l -r \in \Rem(\mathbf{a})$.
  Then, we have
  \[
    \red(\mathbf{a}) = \red(a_1,\dots,a_{l-r-1}).
  \]
\end{lem}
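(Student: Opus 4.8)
The plan is to deduce this directly from Lemma \ref{lem: tail of rem(a)}, which already does the heavy lifting. Since we assume $a_l \in 2\mathbb{Z}$, $r \notin 2\mathbb{Z}$, and $a_{l-r} = a_l - r \in \Rem(\mathbf{a})$, the hypotheses of Lemma \ref{lem: tail of rem(a)} are met with $r$ odd, so that lemma yields
\[
  \Rem(\mathbf{a}) = \Rem(a_1,\dots,a_{l-r-1}) \sqcup [a_l-r, a_l].
\]
So the whole argument will be an unwinding of the definition $\red(\mathbf{a}) = \mathbf{a} \setminus \Rem(\mathbf{a})$ together with this identity.

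First I would record the elementary observation that the last $r+1$ entries of $\mathbf{a}$ are exactly the interval $[a_l-r,a_l]$: since $\mathbf{a}$ is strictly increasing and $a_{l-r} = a_l - r$, the $r$ strict inequalities $a_{l-r} < a_{l-r+1} < \dots < a_l$ force $a_{l-t} = a_l - t$ for every $t \in [0,r]$, whence $(a_{l-r},\dots,a_l)$ coincides with $[a_l-r,a_l]$ as a set, and therefore $(a_1,\dots,a_l) \setminus [a_l-r,a_l] = (a_1,\dots,a_{l-r-1})$. Then I would compute
\[
  \red(\mathbf{a}) = (a_1,\dots,a_l) \setminus \big( \Rem(a_1,\dots,a_{l-r-1}) \sqcup [a_l-r,a_l] \big),
\]
noting that by Definition \ref{def: rem} the set $\Rem(a_1,\dots,a_{l-r-1})$ is a subset of $\{a_1,\dots,a_{l-r-1}\}$, hence (the $a_i$ being distinct) disjoint from $[a_l-r,a_l] = \{a_{l-r},\dots,a_l\}$. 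Thus the two removals may be carried out independently: removing $[a_l-r,a_l]$ leaves $(a_1,\dots,a_{l-r-1})$, and then removing $\Rem(a_1,\dots,a_{l-r-1})$ produces precisely $\red(a_1,\dots,a_{l-r-1})$, as claimed.

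The only point demanding any care — and it is the main, rather minor, obstacle — is this disjointness check that lets the set difference split cleanly, along with the bookkeeping that identifies the tail $(a_{l-r},\dots,a_l)$ with $[a_l-r,a_l]$. No induction or genuinely new idea appears to be needed beyond the appeal to Lemma \ref{lem: tail of rem(a)}.
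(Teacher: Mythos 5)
Your proposal is correct and takes essentially the same approach as the paper: the paper's own proof consists of the single sentence that the assertion follows from Lemma \ref{lem: tail of rem(a)}, and your argument is precisely the routine unwinding of that deduction (identifying the tail $(a_{l-r},\dots,a_l)$ with the interval $[a_l-r,a_l]$ and splitting the set difference using disjointness).
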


\begin{proof}
  The assertion follows from Lemma \ref{lem: tail of rem(a)}.
\end{proof}

\begin{lem}\label{lem: red(a r,s)}
  Suppose that $a_l \in 2\mathbb{Z}$ and that there exists $r \in [1,l-1]$ such that $a_{l-r} = a_l-r \notin 2\mathbb{Z}$.
  Then, we have
  \[
    \red(\mathbf{a}) = \red(a_1,\dots,a_{l-r-1}) \sqcup [a_l-r, a_l-t-1],
  \]
  where $t \in [0,r]$ is the maximal odd integer such that
  \[
    a_l-t < 2(l-t) - |\Rem(a_1,\dots,a_{l-r-1})|;
  \]
  when such $t$ does not exist, we set $t := -1$.
\end{lem}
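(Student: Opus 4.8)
The plan is to derive this as an essentially immediate corollary of Lemma \ref{lem: rem(a r,s)} by passing to complements. Recall from Definition \ref{def: red} that $\red(\mathbf{a}) = \mathbf{a} \setminus \Rem(\mathbf{a})$, so the whole task is to compute the complement, inside $\mathbf{a}$, of the set $\Rem(a_1,\dots,a_{l-r-1}) \sqcup [a_l-t,a_l]$ furnished by Lemma \ref{lem: rem(a r,s)} (with the very same $t$ as in the present statement, so there is nothing new to define).

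First I would record the elementary observation — identical to the opening line of the proof of Lemma \ref{lem: tail of rem(a)} — that the hypothesis $a_{l-r} = a_l - r$ together with $a_{l-r} < a_{l-r+1} < \cdots < a_l$ forces $a_{l-j} = a_l - j$ for every $j \in [0,r]$. Hence the tail of $\mathbf{a}$ is exactly the interval $[a_l-r, a_l]$, and as increasing sequences (equivalently, as sets) we may write $\mathbf{a} = (a_1,\dots,a_{l-r-1}) \sqcup [a_l-r,a_l]$. Next I would note the relevant disjointness: since $a_{l-r-1} < a_{l-r} = a_l-r$, the set $\{a_1,\dots,a_{l-r-1}\}$ — which contains $\Rem(a_1,\dots,a_{l-r-1})$ by Proposition \ref{prop: properties of rem} — is disjoint from $[a_l-r,a_l]$, which in turn contains $[a_l-t,a_l]$ because $-1 \le t \le r$.

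With these two observations the set difference splits along the decomposition $\mathbf{a} = (a_1,\dots,a_{l-r-1}) \sqcup [a_l-r,a_l]$: the ``head'' contributes $(a_1,\dots,a_{l-r-1}) \setminus \Rem(a_1,\dots,a_{l-r-1}) = \red(a_1,\dots,a_{l-r-1})$, and the ``tail'' contributes $[a_l-r,a_l] \setminus [a_l-t,a_l] = [a_l-r,\,a_l-t-1]$. Concatenating, and noting that every entry of $\red(a_1,\dots,a_{l-r-1})$ is $\le a_{l-r-1} < a_l-r$ so that the concatenation is genuinely increasing, yields the asserted formula $\red(\mathbf{a}) = \red(a_1,\dots,a_{l-r-1}) \sqcup [a_l-r,\,a_l-t-1]$. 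The degenerate case $t=-1$ is consistent under the convention $[a_l-t,a_l] = \emptyset$, since then $[a_l-r,\,a_l-t-1] = [a_l-r,a_l]$.

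There is no genuine obstacle here; the argument is a two-line complement computation resting entirely on Lemma \ref{lem: rem(a r,s)}. The only points demanding care are purely bookkeeping: confirming that the $t$ of Lemma \ref{lem: rem(a r,s)} is literally the $t$ of the present statement (so no re-derivation is needed), checking the disjointness that legitimizes distributing $\setminus$ over $\sqcup$, and verifying the boundary case $t=-1$ (and the trivial sub-case where $[a_l-r,\,a_l-t-1]$ is empty, i.e.\ $t=r$).
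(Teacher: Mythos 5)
Your proposal is correct and matches the paper's proof, which simply states that the lemma follows from Lemma \ref{lem: rem(a r,s)}; you have spelled out the same complement computation (using $\red(\mathbf{a}) = \mathbf{a} \setminus \Rem(\mathbf{a})$, the decomposition $\mathbf{a} = (a_1,\dots,a_{l-r-1}) \sqcup [a_l-r,a_l]$, and the disjointness) that the paper leaves implicit, with the same $t$ and the same handling of the case $t=-1$.
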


\begin{proof}
  The assertion follows from Lemma \ref{lem: rem(a r,s)}.
\end{proof}

\begin{lem}\label{lem: i_t}
  Let us write $\red(\mathbf{a}) = (a_{i_1}, a_{i_2}, \dots, a_{i_k})$ for some $k \in [0,l]$ and $1 \leq i_1 < i_2 < \cdots < i_k \leq l$.
  Then, for each $t \in [1,k]$, we have
  \[
    i_t = t + |\Rem(a_1,a_2,\dots,a_{i_t-1})|.
  \]
\end{lem}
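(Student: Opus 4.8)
The plan is to unwind the definition $\red(\mathbf{a}) = \mathbf{a} \setminus \Rem(\mathbf{a})$ and simply count removed entries. Since $\mathbf{a}$ is strictly increasing, the $a_1,\dots,a_l$ are pairwise distinct, so I can move freely between "indices" and "entries regarded as a set". By definition of $\red$, the entries $a_{i_1},\dots,a_{i_k}$ are precisely the $a_j$ not lying in $\Rem(\mathbf{a})$, listed in increasing order of index; hence among the first $i_t$ entries $a_1,\dots,a_{i_t}$ exactly $t$ of them — namely $a_{i_1},\dots,a_{i_t}$ — avoid $\Rem(\mathbf{a})$, so exactly $i_t-t$ of them lie in $\Rem(\mathbf{a})$. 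Thus the asserted equality $i_t = t + |\Rem(a_1,\dots,a_{i_t-1})|$ is equivalent to
\[
  |\{a_1,\dots,a_{i_t}\} \cap \Rem(\mathbf{a})| = |\Rem(a_1,\dots,a_{i_t-1})|,
\]
and proving this identity is the whole task.

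To establish it, I would first use $a_{i_t} \notin \Rem(\mathbf{a})$ to rewrite the left-hand set as $\{a_1,\dots,a_{i_t-1}\} \cap \Rem(\mathbf{a})$. Then I would invoke Lemma \ref{lem: division of rem(a)} with $i = i_t$ (applicable since $a_{i_t} \notin \Rem(\mathbf{a})$), which yields the disjoint decomposition
\[
  \Rem(\mathbf{a}) = \Rem(a_1,\dots,a_{i_t-1}) \sqcup \bigl((a_{i_t+1},\dots,a_l) \cap \Rem(\mathbf{a})\bigr).
\]
Intersecting with $\{a_1,\dots,a_{i_t-1}\}$, which is disjoint from $\{a_{i_t+1},\dots,a_l\}$ by distinctness of the entries and contains $\Rem(a_1,\dots,a_{i_t-1})$ by Proposition \ref{prop: properties of rem} \eqref{item: properties of rem 3}, collapses the right-hand side to $\Rem(a_1,\dots,a_{i_t-1})$. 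Taking cardinalities then gives the displayed identity and hence the lemma.

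I do not expect any real obstacle: this is pure bookkeeping once Lemma \ref{lem: division of rem(a)} is in hand. The only point needing a word of care is the sequence/set identification for $\mathbf{a}$, $\red(\mathbf{a})$, and $\Rem(\mathbf{a})$, which is harmless because all entries of $\mathbf{a}$ are distinct. (An induction on $t$ based on Proposition \ref{prop: properties of rem} would also work, but the one-shot argument via the division lemma is cleaner.)
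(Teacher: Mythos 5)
Your proposal is correct and follows essentially the same route as the paper's own proof: the counting observation $i_t = t + |(a_1,\dots,a_{i_t}) \cap \Rem(\mathbf{a})|$ together with Lemma \ref{lem: division of rem(a)} applied at $i = i_t$ (using $a_{i_t} \notin \Rem(\mathbf{a})$), which yields $(a_1,\dots,a_{i_t}) \cap \Rem(\mathbf{a}) = \Rem(a_1,\dots,a_{i_t-1})$. The only cosmetic point is that the containment $\Rem(a_1,\dots,a_{i_t-1}) \subseteq \{a_1,\dots,a_{i_t-1}\}$ is immediate from Definition \ref{def: rem} (removable entries form a subset of the sequence) rather than from Proposition \ref{prop: properties of rem} \eqref{item: properties of rem 3}, which states a different inclusion.
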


\begin{proof}
  Clearly, we have
  \[
    i_t = t + |(a_1,\dots,a_{i_t}) \cap \Rem(\mathbf{a})|.
  \]
  Since $a_{i_t} \notin \Rem(\mathbf{a})$, Lemma \ref{lem: division of rem(a)} implies
  \[
    (a_1,\dots,a_{i_t}) \cap \Rem(\mathbf{a}) = \Rem(a_1,\dots,a_{i_t-1}).
  \]
  Hence, the assertion follows.
\end{proof}

\begin{prop}\label{prop: red(a) is symplectic}
  The tableau $\red(\mathbf{a})$ is symplectic.
  Consequently, the assignment $\mathbf{a} \mapsto \red(\mathbf{a})$ gives rise to a map
  \[
    \red: \SST_{2n}(\varpi_l) \rightarrow \bigsqcup_{\substack{0 \leq k \leq \min(l, 2n-l) \\ l-k \in 2\mathbb{Z}}} \SpT_{2n}(\varpi_k)
  \]
  for each $l \in [0,2n]$.
\end{prop}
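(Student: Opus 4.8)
The plan is to verify the defining inequality of a symplectic tableau for $\red(\mathbf{a})$ directly, and then to read off the constraints on the codomain from results already established. Write $\red(\mathbf{a}) = (a_{i_1}, a_{i_2}, \dots, a_{i_k})$ with $1 \le i_1 < i_2 < \cdots < i_k \le l$, as in Lemma \ref{lem: i_t}. Since $\red(\mathbf{a})$ is a subsequence of the strictly increasing sequence $\mathbf{a}$, it already lies in $\SST_{2n}(\varpi_k)$; what remains is to show $a_{i_t} \ge 2t-1$ for every $t \in [1,k]$.

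So fix $t \in [1,k]$ and set $j := i_t$, so that $a_j \notin \Rem(\mathbf{a})$ and $j = i_t \ge t$ (the latter since $i_1 < \cdots < i_t$ are distinct positive integers). By Lemma \ref{lem: i_t}, $t = j - |\Rem(a_1,\dots,a_{j-1})|$. I would then split into cases on the parity of $a_j$. If $a_j$ is odd, Corollary \ref{cor: low bound for rem(a')} \eqref{item: low bound for rem(a') 1} gives $|\Rem(a_1,\dots,a_{j-1})| \ge 2j - a_j - 1$, hence $t \le a_j + 1 - j$, i.e.\ $a_{i_t} = a_j \ge t + j - 1 \ge 2t - 1$. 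If $a_j$ is even, then because $\Rem(a_1,\dots,a_j) \subseteq \Rem(\mathbf{a})$ by Proposition \ref{prop: properties of rem} \eqref{item: properties of rem 3}, we get $a_j \notin \Rem(a_1,\dots,a_j)$, so Proposition \ref{prop: properties of rem} \eqref{item: properties of rem 2} yields $\Rem(a_1,\dots,a_j) = \Rem(a_1,\dots,a_{j-1})$; combining this with Corollary \ref{cor: low bound for rem(a')} \eqref{item: low bound for rem(a') 2} gives $|\Rem(a_1,\dots,a_{j-1})| = |\Rem(a_1,\dots,a_j)| \ge 2j - a_j$, hence $t \le a_j - j$ and $a_{i_t} = a_j \ge t + j \ge 2t$. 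In both cases $a_{i_t} \ge 2t - 1$, so $\red(\mathbf{a})$ is symplectic.

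For the ``consequently'' part, the length $k$ of $\red(\mathbf{a})$ equals $l - |\Rem(\mathbf{a})|$; Proposition \ref{prop: low bound for rem} then gives $0 \le k \le \min(l, 2n-l)$, and Lemma \ref{lem: rem is invariant under switch} gives that $|\Rem(\mathbf{a})| = l - k$ is even. Together with the symplecticity just proved, this places $\red(\mathbf{a})$ in $\SpT_{2n}(\varpi_k)$ for a $k$ satisfying exactly the conditions appearing in the indexing set of the target disjoint union.

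I do not expect a serious obstacle; the only point that needs a little care is the even case, where one must first upgrade ``$a_j \notin \Rem(\mathbf{a})$'' to ``$a_j \notin \Rem(a_1,\dots,a_j)$'' before invoking the identity $\Rem(a_1,\dots,a_j) = \Rem(a_1,\dots,a_{j-1})$, and then correctly feed the even-index bound of Corollary \ref{cor: low bound for rem(a')} (which is stated for the prefix of length $j$, not $j-1$) into Lemma \ref{lem: i_t}.
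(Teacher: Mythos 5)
Your proof is correct, but it follows a different route from the paper's. The paper argues by contradiction: assuming $\red(\mathbf{a})$ is not symplectic, it invokes Lemma \ref{lem: existence of removable entries} to locate the first violation, which forces $a_{i_r}=2r-2$, $a_{i_{r-1}}=2r-3$ with $i_{r-1}=i_r-1$; then Lemma \ref{lem: i_t} together with Proposition \ref{prop: characterization of rem(a)} shows that $a_{i_r}$ satisfies the removability criterion, contradicting the fact that it survives in $\red(\mathbf{a})$. You instead prove the inequality $a_{i_t}\geq 2t-1$ directly for every $t$, combining Lemma \ref{lem: i_t} (which converts the position $i_t$ into the count $|\Rem(a_1,\dots,a_{i_t-1})| = i_t - t$) with the lower bounds of Corollary \ref{cor: low bound for rem(a')}, split according to the parity of $a_{i_t}$; in the even case you correctly pass from $a_{i_t}\notin\Rem(\mathbf{a})$ to $a_{i_t}\notin\Rem(a_1,\dots,a_{i_t})$ via Proposition \ref{prop: properties of rem} \eqref{item: properties of rem 3} before applying \eqref{item: properties of rem 2}, which is exactly the point needing care. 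The trade-off: your direct argument bypasses Lemma \ref{lem: existence of removable entries} and the full characterization of removable entries, and in fact yields the slightly sharper bounds $a_{i_t}\geq i_t+t-1$ (odd case) and $a_{i_t}\geq i_t+t$ (even case); the paper's contradiction argument is more local, exploiting only the structure of the first failure, and leans on Proposition \ref{prop: characterization of rem(a)}, which it uses repeatedly elsewhere anyway. Your treatment of the codomain (parity of $l-k$ from Lemma \ref{lem: rem is invariant under switch} and the range of $k$ from Proposition \ref{prop: low bound for rem}) matches what the statement requires.
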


\begin{proof}
  Let us write $\red(\mathbf{a}) = (a_{i_1}, \dots, a_{i_k})$ for some $k \leq l$ and $1 \leq i_1 < \cdots < i_k \leq l$.
  Assume contrary that $\red(\mathbf{a})$ is not symplectic.
  Then, by Lemma \ref{lem: existence of removable entries}, there exists $r \in [2,k]$ such that
  \[
    a_{i_r} = 2r-2,\ a_{i_{r-1}} = 2r-3,
  \]
  and
  \[
    a_{i_s} \geq 2s-1 \ \text{ for all } s \in [1,r-2].
  \]
  In particular, we have
  \[
    i_{r-1} = i_r-1.
  \]
  
  By Lemma \ref{lem: i_t}, it holds that
  \[
    i_{r-1} = (r-1) + |\Rem(a_1,\dots,a_{i_r-2})|.
  \]
  Therefore, we have
  \[
    2i_{r-1} - |\Rem(a_1,\dots,a_{i_r-2})| = 2(r-1) + |\Rem(a_1,\dots,a_{i_r-2})| \geq 2r-2.
  \]

  So far, we have shown that $a_{i_r}$ is even, $a_{i_r-1} = a_{i_r}-1$, and
  \[
    a_{i_r} = 2r-2 < 2i_r - |\Rem(a_1,\dots,a_{i_r-2})|-1.
  \]
  Then, Proposition \ref{prop: characterization of rem(a)} implies that $a_{i_r} \in \Rem(\mathbf{a})$.
  However, this contradicts that $a_{i_r}$ is an entry of $\red(\mathbf{a})$.
  Thus, we complete the proof.
\end{proof}

\begin{prop}\label{prop: red(a) = a iff a is symplectic}
  We have $\red(\mathbf{a}) = \mathbf{a}$ if and only if $\mathbf{a}$ is symplectic.
\end{prop}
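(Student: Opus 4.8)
The statement $\red(\mathbf{a}) = \mathbf{a}$ iff $\mathbf{a}$ is symplectic is the conjunction of two implications, and the proof should handle them separately. The ``only if'' direction is already done for us: by Proposition~\ref{prop: red(a) is symplectic}, $\red(\mathbf{a})$ is always symplectic, so if $\red(\mathbf{a}) = \mathbf{a}$ then $\mathbf{a}$ itself is symplectic. Hence the whole content is the ``if'' direction: assuming $\mathbf{a} = (a_1,\dots,a_l)$ is symplectic, i.e. $a_k \geq 2k-1$ for all $k \in [1,l]$, we must show $\Rem(\mathbf{a}) = \emptyset$.

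The natural approach for the ``if'' direction is induction on $l$, matching the recursive structure of Definition~\ref{def: rem}. The base case $l \leq 1$ is immediate since $\Rem(\mathbf{a}) = \emptyset$ by definition. For the inductive step with $l \geq 2$, the subsequence $(a_1,\dots,a_{l-2})$ is again symplectic (the condition $a_k \geq 2k-1$ for $k \in [1,l-2]$ is inherited), so by the induction hypothesis $\Rem(a_1,\dots,a_{l-2}) = \emptyset$. Looking at the recursion, $\Rem(\mathbf{a})$ is either $\Rem(a_1,\dots,a_{l-1})$ (which, again by the induction hypothesis applied to the symplectic sequence $(a_1,\dots,a_{l-1})$, is $\emptyset$), or else it is $\{a_{l-1}, a_l\}$ together with $\Rem(a_1,\dots,a_{l-2}) = \emptyset$, and the latter case requires the three conditions $a_l \in 2\mathbb{Z}$, $a_{l-1} = a_l - 1$, and $a_l < 2l - |\Rem(a_1,\dots,a_{l-2})| - 1 = 2l - 1$. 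So it suffices to derive a contradiction from these three conditions under the symplectic hypothesis. But $a_l \geq 2l - 1$ since $\mathbf{a}$ is symplectic, which directly contradicts $a_l < 2l - 1$. Hence the second branch of the recursion cannot occur, and $\Rem(\mathbf{a}) = \Rem(a_1,\dots,a_{l-1}) = \emptyset$, completing the induction.

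I do not expect any serious obstacle here: the argument is a short induction in which the symplectic inequality $a_l \geq 2l-1$ immediately kills the only branch of the $\Rem$ recursion that could produce a nonempty set, once we know the tail $\Rem(a_1,\dots,a_{l-2})$ is empty. The only point requiring a moment's care is making sure the induction hypothesis is invoked on the right truncation(s) and that the truncated sequences remain symplectic, which they plainly do. Alternatively, one could bypass induction entirely by appealing to Proposition~\ref{prop: characterization of rem(a)}: if some $a_i \in \Rem(\mathbf{a})$, then (after possibly replacing $a_i$ by $s(a_i)$ via Lemma~\ref{lem: rem is invariant under switch} to assume $a_i$ is even and $i > 1$) we would need $a_i < 2i - |\Rem(a_1,\dots,a_{i-2})| - 1 \leq 2i - 1$, contradicting $a_i \geq 2i - 1$; this is perhaps the cleanest phrasing and avoids setting up the induction at all.
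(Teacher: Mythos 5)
Your proposal is correct, and both directions go through. The ``only if'' direction is handled exactly as in the paper, via Proposition~\ref{prop: red(a) is symplectic}. For the ``if'' direction you take a slightly different route: you run an induction on $l$ directly on the recursion in Definition~\ref{def: rem}, observing that the truncations of a symplectic column are symplectic, so by the induction hypothesis $\Rem(a_1,\dots,a_{l-2})=\emptyset$ and the first branch of the recursion would force $a_l < 2l-1$, contradicting $a_l \geq 2l-1$; hence $\Rem(\mathbf{a})=\Rem(a_1,\dots,a_{l-1})=\emptyset$. (In fact you do not even need the induction hypothesis on the length-$(l-2)$ truncation, since $a_l < 2l - |\Rem(a_1,\dots,a_{l-2})| - 1 \leq 2l-1$ already suffices.) The paper instead argues by contradiction in one step from Proposition~\ref{prop: characterization of rem(a)}: if $\Rem(\mathbf{a}) \neq \emptyset$, there is some $i \in [2,l]$ with $a_i < 2i-1$. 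Your closing alternative --- pass to the even partner $s(a_i)$ via Lemma~\ref{lem: rem is invariant under switch} and apply the even case of the characterization --- is essentially the paper's proof, and in fact makes explicit the small step (why one may assume the removable entry is even and sits at a position $i \geq 2$) that the paper leaves implicit. Your induction is more self-contained, using only Definition~\ref{def: rem}; the paper's version is shorter given that the characterization proposition is already available.
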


\begin{proof}
  First, suppose that $\red(\mathbf{a}) = \mathbf{a}$.
  Then, Proposition \ref{prop: red(a) is symplectic} implies that the tableaux $\mathbf{a}$ is symplectic.

  Next, suppose that $\mathbf{a}$ is symplectic.
  Assume contrary that $\red(\mathbf{a}) \neq \mathbf{a}$.
  Then, by Proposition \ref{prop: characterization of rem(a)}, there exists $i \in [2,l]$ such that $a_i < 2i-1$.
  This contradicts that $\mathbf{a}$ is symplectic.
  Thus, the assertion follows.
\end{proof}

\begin{cor}\label{cor: suc(T) = T}
  Let $\nu \in \Par_{\leq n}$ and $T \in \SST_{2n}(\nu)$.
  Then, we have $\suc(T) = T$ if and only if $T$ is symplectic.
\end{cor}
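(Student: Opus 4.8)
The plan is to deduce the statement from Proposition \ref{prop: red(a) = a iff a is symplectic} together with the structure of the successor map $\suc = {*} \circ (\red, \mathrm{id}) \circ d$. First I would write $d(T) = (\mathbf{a}, T')$, where $\mathbf{a} = (T(1,1),\dots,T(l,1))$ is the first column of $T$ (with $l = \ell(\nu)$) and $T'$ collects the remaining columns, so that $\suc(T) = \red(\mathbf{a}) * T'$ by definition. The key bookkeeping observation, immediate from Definition \ref{def: SpT}, is that $T$ is symplectic if and only if the one-column tableau $\mathbf{a} \in \SST_{2n}(\varpi_l)$ is symplectic, i.e. $a_k = T(k,1) \geq 2k-1$ for all $k \in [1,l]$, since the symplectic condition constrains only the first column.

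For the ``if'' direction, assuming $T$ symplectic I would note that $\mathbf{a}$ is then symplectic, so $\red(\mathbf{a}) = \mathbf{a}$ by Proposition \ref{prop: red(a) = a iff a is symplectic}; combined with the reconstruction identity $\mathbf{a} * T' = T$ (already used in the proof of Lemma \ref{lem: stationality of suc}), this gives $\suc(T) = \red(\mathbf{a}) * T' = \mathbf{a} * T' = T$.

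For the ``only if'' direction, assuming $\suc(T) = T$ I would observe that then $\sh(\suc(T)) = \nu$, so the vertical-strip containment of Lemma \ref{lem: stationality of suc} (applied with the role of $\lambda$ played by $\nu$) is an equality; comparing sizes exactly as in the proof of that lemma yields $0 = l - |\red(\mathbf{a})| = |\Rem(\mathbf{a})|$, hence $\red(\mathbf{a}) = \mathbf{a}$, and Proposition \ref{prop: red(a) = a iff a is symplectic} shows $\mathbf{a}$, and therefore $T$, is symplectic.

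I do not expect a genuine obstacle: the statement is a short corollary of Proposition \ref{prop: red(a) = a iff a is symplectic} and Lemma \ref{lem: stationality of suc}. The only points requiring care are the translation between the symplectic condition on $T$ and the one on its first column $\mathbf{a}$, and a clean citation of the identity $\mathbf{a} * T' = T$ for splitting off and re-inserting the first column.
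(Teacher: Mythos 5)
Your proposal is correct and follows essentially the same route as the paper's proof: it reduces $\suc(T)=T$ to $\red(\mathbf{a})=\mathbf{a}$ via Lemma \ref{lem: stationality of suc} (using the identity $\mathbf{a}*T'=T$ and the size comparison from its proof), and then invokes Proposition \ref{prop: red(a) = a iff a is symplectic} together with the observation that the symplectic condition only concerns the first column. The paper's argument is just a terser version of the same reasoning.
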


\begin{proof}
  Let us write $d(T) = (\mathbf{a}, T')$.
  By Lemma \ref{lem: stationality of suc} and its proof, we have $\suc(T) = T$ if and only if $\red(\mathbf{a}) = \mathbf{a}$.
  The latter is equivalent to that $\mathbf{a}$ is symplectic, which is then equivalent to that $T$ is symplectic.
  Thus, the assertion follows.
\end{proof}

\subsection{Factorization of the reduction map}
Recall the combinatorial $R$-matrices, the map $s$, and the map $\cdot^\vee$ from Definition \ref{def: comb R}, \eqref{eq: switch}, and \eqref{eq: a^vee}, respectively.

\begin{prop}\label{prop: pre-factorization of red}
  Let $l \in [2,2n]$, $\mathbf{a} = (a_1,\dots,a_l) \in \SST_{2n}(\varpi_l)$.
  Let us write
  \[
    R(s(a_l)^\vee, (a_1,\dots,a_{l-1})) = (\mathbf{c}, \mathbf{d})
  \]
  for some $(\mathbf{c}, \mathbf{d}) \in \SST_{2n}(\varpi_{l-1}) \times \SST_{2n}(\varpi_{2n-1})$, and
  \[
    R(\red(\mathbf{c}), \mathbf{d}) = (s(b_k)^\vee, \mathbf{b}')
  \]
  for some $b_k \in \mathbb{Z}$ and $\mathbf{b}' = (b_1,\dots,b_{k-1}) \in \SST_{2n}(\varpi_{k-1})$, where $k := |\red(\mathbf{c})| + 1$.
  Then, we have $b_k > b_{k-1}$ and
  \[
    \mathbf{b} := (b_1,\dots,b_k) = \begin{cases}
      (1,2) & \text{ if } a_l = l \in 2\mathbb{Z}, \\
      \red(\mathbf{a}) & \text{ otherwise}.
    \end{cases}
  \]
\end{prop}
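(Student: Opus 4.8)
The plan is to analyze the two $R$-matrix applications directly, tracking exactly which entries get moved, and to reduce the statement to the recursive structure already established for $\Rem$ and $\red$ in the previous subsections. Write $\mathbf{a}' := (a_1,\dots,a_{l-1})$, so $\mathbf{a}' \in \SST_{2n}(\varpi_{l-1})$, and note $a_{l-1}^\vee$ and $s(a_l)^\vee$ both lie in $\SST_{2n}(\varpi_{2n-1})$. The first step is to compute $R(s(a_l)^\vee, \mathbf{a}')$ explicitly. Since $|s(a_l)^\vee| = 2n-1 \geq l-1 = |\mathbf{a}'|$, we are in case \eqref{item: def comb R 2} of Definition \ref{def: comb R}. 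The key observation is that $s(a_l)^\vee = [1,2n] \setminus \{s(a_l)\}$ contains every $a_j$ for $j \in [1,l-1]$ except possibly $s(a_l)$ itself; this happens precisely when $a_{l-1} = s(a_l)$, i.e.\ when $a_l$ is even and $a_{l-1} = a_l - 1$ (the "interesting" case for $\Rem$). I would split the whole proof into these two cases from the outset.

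**Case A: $a_l$ odd, or $a_l$ even with $a_{l-1} \neq a_l - 1$.** Here $s(a_l) \notin \mathbf{a}'$, so by Lemma \ref{lem: Rcomb a sub b} (applied after swapping roles) we get $R(s(a_l)^\vee, \mathbf{a}') = (\mathbf{a}', s(a_l)^\vee)$, i.e.\ $\mathbf{c} = \mathbf{a}'$ and $\mathbf{d} = s(a_l)^\vee$. By Proposition \ref{prop: properties of rem}\eqref{item: properties of rem 2}, $a_l \notin \Rem(\mathbf{a})$ gives $\Rem(\mathbf{a}) = \Rem(\mathbf{a}')$, hence $\red(\mathbf{a}) = \red(\mathbf{a}') \sqcup (a_l)$; set $\red(\mathbf{c}) = \red(\mathbf{a}') =: (b_1,\dots,b_{k-1})$, so here I must show the second $R$-matrix sends $(\red(\mathbf{a}'), s(a_l)^\vee)$ to $(s(b_k)^\vee, (b_1,\dots,b_{k-1}))$ with $b_k = a_l$. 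Since $\red(\mathbf{a}')$ is symplectic (Proposition \ref{prop: red(a) is symplectic}) with entries $\leq a_{l-1} < a_l = b_k$, each $b_i \in s(a_l)^\vee$; Lemma \ref{lem: Rcomb a sub b} then gives $R(\red(\mathbf{a}'), s(a_l)^\vee) = (s(a_l)^\vee, \red(\mathbf{a}'))$, so $s(b_k)^\vee = s(a_l)^\vee$ forces $s(b_k) = s(a_l)$, and since $b_k$ has the same parity as $a_l$ (it must, as $b_k = a_l$ is the candidate) we conclude $b_k = a_l$. The inequality $b_k = a_l > b_{k-1} = a_{l-1}$ is immediate. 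That settles $\mathbf{b} = \red(\mathbf{a}') \sqcup (a_l) = \red(\mathbf{a})$.

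**Case B: $a_l$ even and $a_{l-1} = a_l - 1$.** Now $s(a_l) = a_{l-1}$, so $\mathbf{a}' \setminus s(a_l)^\vee = \{a_{l-1}\}$. Write $\mathbf{a}'' := (a_1,\dots,a_{l-2})$. Running case \eqref{item: def comb R 2} of Definition \ref{def: comb R}: the indices $i_1,\dots,i_{l-2}$ match up $a_1,\dots,a_{l-2}$ with themselves inside $s(a_l)^\vee$ (since each $a_j < a_{l-1} = s(a_l)$), and the last index $i_{l-1}$ picks out the largest unused element $\leq a_{l-1}$, which is $a_{l-1} - 1 = a_l - 2$ — unless $a_l - 2 \in \{a_1,\dots,a_{l-2}\}$, in which case it picks the next gap down. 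More precisely, $\mathbf{c} = \mathbf{a}'' \sqcup (\text{that element})$ and $\mathbf{d} = s(a_l)^\vee$ again; in fact the cleanest route is to recognize $\mathbf{c}$ as $\mathbf{a}''$ together with a single entry $c^*$, and then to use the recursion in Lemma \ref{lem: rem(a r,s)}/Lemma \ref{lem: red(a r,s)} (the "$[r,s]$-block" lemmas) to identify $\red(\mathbf{c})$. The subcase $a_l = l$ (so $\mathbf{a} = (1,2,\dots,l)$, even $l$) is where $\red(\mathbf{a}) = ()$ by Lemma \ref{lem: red 1,l}, and then one checks directly that $\red(\mathbf{c})$ is empty too, $k = 1$, and $R((), \mathbf{d}) = (\mathbf{d}, ()) = (2^\vee, ())$ — so $b_k = 1$ with $s(b_k) = 2$ — giving $\mathbf{b} = (1,2)$; here there is no $b_{k-1}$ so "$b_k > b_{k-1}$" is vacuous (or one reads it as $k=1$). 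Otherwise $a_l < l$ or $a_l > l$, and I would track the block $[a_l - r, a_l]$ of removable entries via Lemmas \ref{lem: tail of rem(a)} and \ref{lem: rem(a r,s)}, matching them against the $R$-matrix formula in Lemma \ref{lem: Rcomb svee a r,s}, which is tailored exactly to this situation.

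**Main obstacle.** The routine bookkeeping is manageable; the real work is Case B, specifically showing that the second $R$-matrix application $R(\red(\mathbf{c}), \mathbf{d})$ "undoes" the tail-block shift in just the right way so that the recovered vacancy $s(b_k)$ matches the top of the removed block and $\mathbf{b}' = (b_1,\dots,b_{k-1})$ reassembles $\red(\mathbf{a})$ minus its last entry. I expect the argument to go by a secondary induction on $l$ (or on the block length $r$ from Lemma \ref{lem: rem(a r,s)}), peeling off $\{a_{l-1}, a_l\}$ when they are both removable (reducing to $\mathbf{a}''$) and invoking Lemma \ref{lem: Rcomb svee a r,s} for the shift when an odd "pivot" $a_{l-r}$ appears. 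Verifying $b_k > b_{k-1}$ throughout amounts to checking that the recovered largest entry $b_k$ of $\mathbf{b}$ is strictly bigger than the largest entry of $\red(\mathbf{c})$, which follows because $\red(\mathbf{a})$ is itself symplectic (hence strictly increasing) by Proposition \ref{prop: red(a) is symplectic} once we know $\mathbf{b} = \red(\mathbf{a})$, so this part is essentially free after the identification of $\mathbf{b}$.
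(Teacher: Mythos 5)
Your Case A is sound and complete: when $s(a_l)\notin\mathbf{a}'$ (i.e.\ $a_l$ odd, or $a_l$ even with $a_{l-1}\neq a_l-1$), both $R$-matrix applications are handled by Lemma \ref{lem: Rcomb a sub b}, and the identification $\mathbf{b}=\red(\mathbf{a}')\sqcup(a_l)=\red(\mathbf{a})$ with $b_k=a_l>b_{k-1}$ goes through; this even streamlines the paper slightly, which treats the even, $r=0$ situation inside its long third case. But Case B -- the substance of the proposition -- is only sketched, and the sketch contains concrete errors. First, your claim that after the first $R$-matrix one has ``$\mathbf{d}=s(a_l)^\vee$ again'' is false: writing $r\geq 1$ for the length of the consecutive tail $a_{l-r}=a_l-r,\dots,a_{l-1}=a_l-1$, Lemma \ref{lem: Rcomb svee a r,s} gives $\mathbf{c}$ by shifting the whole block down by one and $\mathbf{d}=(a_l-r-1)^\vee$, not $(a_l-1)^\vee$ (e.g.\ $\mathbf{a}=(1,3,4)$ yields $\mathbf{d}=2^\vee\neq 3^\vee$). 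Since the entire second half of the argument hinges on what $\mathbf{d}$ is, this is not a cosmetic slip. Second, your treatment of the exceptional subcase $a_l=l\in 2\mathbb{Z}$ is wrong: by Lemma \ref{lem: Rcomb tvee 1,t} one gets $\mathbf{c}=(1,\dots,l-2,2n)$ and $\mathbf{d}=(2n)^\vee$, so $\red(\mathbf{c})=(2n)$ (Lemmas \ref{lem: red a with al not in rem a}, \ref{lem: red 1,l}) and $k=2$, and the last step is $R((2n),(2n)^\vee)=(1^\vee,(1))$ (Lemma \ref{lem: Rcomb m mvee}); your version with $\red(\mathbf{c})=()$, $k=1$ and ``$R((),\mathbf{d})=(2^\vee,())$'' is internally inconsistent -- with $k=1$ the output $\mathbf{b}$ could not have the two entries $(1,2)$ you assert.

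Beyond these errors, the genuinely hard step is never carried out: one must show that after reducing $\mathbf{c}=\mathbf{a}''\sqcup[a_l-r-1,a_l-2]$ (here $\mathbf{a}''=(a_1,\dots,a_{l-r-1})$), the second $R$-matrix shifts the surviving part of the block back up by one and re-creates the hole at the right place, so that $\mathbf{b}'$ together with $b_k$ reassembles $\red(\mathbf{a})$. The paper does this by a case analysis on the parity of $r$ and on whether $a_l-r-1\in\Rem(\mathbf{c})$: in the first two subcases Lemma \ref{lem: red(a r,s)} identifies $\red(\mathbf{c})$ as $\red(\mathbf{a}'')\sqcup[a_l-r-1,a_l-t-3]$ and Lemma \ref{lem: Rcomb svee a r,s} is applied a second time, with a separate comparison of the cut-off parameters $t$ and $t'$ on the two sides; in the remaining subcase ($r$ odd and $a_l-r-1\in\Rem(\mathbf{c})$) the block is wiped out entirely, $\red(\mathbf{c})=\red(\mathbf{a}''')$, and Lemma \ref{lem: Rcomb a sub b} rather than Lemma \ref{lem: Rcomb svee a r,s} governs the second $R$-matrix. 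Your proposed induction (``peeling off $\{a_{l-1},a_l\}$ when both removable'') does not engage with this structure -- in particular it gives no mechanism for matching the odd cut-off $t$ produced by $\Rem$ on $\mathbf{c}$ with the one produced on $\mathbf{a}$, which is exactly where the work lies -- so as it stands the proof of Case B is missing. (A small further point: $b_k>b_{k-1}$ is not ``free after identifying $\mathbf{b}=\red(\mathbf{a})$'', since knowing the set $\{b_1,\dots,b_k\}$ equals $\red(\mathbf{a})$ requires first locating $b_k$ as its maximum; in Case A you do this correctly by computing $b_k=a_l$ explicitly, and the same must be done in Case B.)
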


\begin{proof}
  Set $\mathbf{a}' := (a_1,\dots,a_{l-1})$.

  First, suppose that $a_l = l \in 2\mathbb{Z}$.
  Then, we have
  \begin{align*}
    \begin{split}
      (s(a_l)^\vee, (1,\dots,l-1)) &= ((l-1)^\vee, \mathbf{a}') \\
      &\overset{R}{\mapsto} ((1,\dots,l-2,2n), (2n)^\vee) = (\mathbf{c}, \mathbf{d}) \\
      &\overset{(\red, \mathrm{id})}{\mapsto} ((2n), (2n)^\vee) = (\red(\mathbf{c}), \mathbf{d}) \\
      &\overset{R}{\mapsto} (1^\vee, (1)) = (s(b_k)^\vee, \mathbf{b}').
    \end{split}
  \end{align*}
  Here, we used Lemmas \ref{lem: Rcomb tvee 1,t}, \ref{lem: red a with al not in rem a}, \ref{lem: red 1,l}, and \ref{lem: Rcomb m mvee}.
  As a result, we obtain $\mathbf{b} = (1,2)$, as desired.

  Next, suppose that $a_l \notin 2\mathbb{Z}$.
  Then, we have
  \begin{align*}
    \begin{split}
      (s(a_l)^\vee, \mathbf{a}') &= ((a_l+1)^\vee, \mathbf{a}') \\
      &\overset{R}{\mapsto} (\mathbf{a}', (a_l+1)^\vee) = (\mathbf{c}, \mathbf{d}) \\
      &\overset{(\red, \mathrm{id})}{\mapsto} (\red(\mathbf{a}'), (a_l+1)^\vee) = (\red(\mathbf{c}), \mathbf{d}) \\
      &\overset{R}{\mapsto} ((a_l+1)^\vee, \red(\mathbf{a}')) = (s(b_k)^\vee, \mathbf{b}').
    \end{split}
  \end{align*}
  Here, we used Lemma \ref{lem: Rcomb a sub b}.
  This implies that $\mathbf{b}' = \red(\mathbf{a}')$ and $b_k = s(a_l+1) = a_l$.
  Hence, we obtain
  \[
    b_k = a_l > a_{l-1} \geq b_{k-1}
  \]
  since $\mathbf{b}' = \red(\mathbf{a}')$ is a subsequence of $\mathbf{a}' = (a_1,\dots,a_{l-1})$.

  On the other hand, since $a_l+1 \notin \Rem(\mathbf{a})$, we have $a_l = s(a_l+1) \notin \Rem(\mathbf{a})$ by Lemma \ref{lem: rem is invariant under switch}.
  Then, Lemma \ref{lem: red a with al not in rem a} implies
  \[
    \red(\mathbf{a}) = \red(\mathbf{a}') \sqcup \{ a_l \} = \mathbf{b},
  \]
  as desired.

  Finally, suppose that $a_l \in 2\mathbb{Z}$ and $a_l \neq l$.
  Set
  \[
    r := \max\{ i \in [0,l-1] \mid a_{l-i} = a_l-i \}, \quad \mathbf{a}'' := (a_1,\dots,a_{l-r-1}).
  \]
  Note that $r < l-1$.
  Then, we have $\mathbf{a} = \mathbf{a}'' \sqcup [a_l-r,a_l]$, $\mathbf{a}' = \mathbf{a}'' \sqcup [a_l-r,a_l-1]$, $s(a_l) = a_l-1$, and
  \[
    (\mathbf{c}, \mathbf{d}) = R((a_l-1)^\vee, \mathbf{a}') = ((\mathbf{a}'' \sqcup [a_l-r-1,a_l-2]), (a_l-r-1)^\vee).
  \]
  The last equality follows from Lemma \ref{lem: Rcomb svee a r,s}.

  \begin{enumerate}
    \item When $r \in 2\mathbb{Z}$.
    Since $a_l-r-1 \notin 2\mathbb{Z}$ and $a_l-2 \in 2\mathbb{Z}$, we can apply Lemma \ref{lem: red(a r,s)} to obtain
    \[
      \red(\mathbf{c}) = \red(\mathbf{a}'') \sqcup [a_l-r-1, a_l-t-3],
    \]
    where $t$ is the maximal odd integer in $[0, r-1]$ such that
    \[
      (a_l-2)-t < 2((l-1)-t) - |\Rem(\mathbf{a}'')|;
    \]
    when such $t$ does not exist, we set $t := -1$.
    Then, by Lemma \ref{lem: Rcomb svee a r,s} again, we have
    \begin{align*}
      \begin{split}
        R(\red(\mathbf{c}), \mathbf{d}) = ((a_l-t-2)^\vee, \red(\mathbf{a}'') \sqcup [a_l-r, a_l-t-2]).
      \end{split}
    \end{align*}
    This implies that $\mathbf{b}' = \red(\mathbf{a}'') \sqcup [a_l-r, a_l-t-2]$, $b_k = s(a_l-t-2) = a_l-t-1$, and
    \[
      \mathbf{b} = \red(\mathbf{a}'') \sqcup [a_l-r, a_l-t-1].
    \]

    On the other hand, we have
    \[
      \mathbf{a} = \mathbf{a}'' \sqcup [a_l-r, a_l] = (\mathbf{a}'' \sqcup (a_l-r)) \sqcup [a_l-r+1, a_l].
    \]
    By Lemma \ref{lem: red(a r,s)}, we have
    \[
      \red(\mathbf{a}) = \red(\mathbf{a}'' \sqcup (a_l-r)) \sqcup [a_l-r+1, a_l-t'-1],
    \]
    where $t'$ is the maximal odd integer in $[0,r-1]$ such that
    \[
      a_l-t' < 2(l-t') - |\Rem(\mathbf{a}'' \sqcup (a_l-r))|;
    \]
    when such $t'$ does not exist, we set $t' := -1$.
    Since $s(a_l-r) = a_l-r-1 \notin \mathbf{a}''$, we have
    \[
      \Rem(\mathbf{a}'' \sqcup (a_l-r)) = \Rem(\mathbf{a}'') \text{ and } \red(\mathbf{a}'' \sqcup (a_l-r)) = \red(\mathbf{a}'') \sqcup (a_l-r).
    \]
    Therefore, we obtain
    \[
      \red(\mathbf{a}) = \red(\mathbf{a}'') \sqcup [a_l-r, a_l-t'-1].
    \]

    Now, one can straightforwardly verify that $t' = t$.
    Hence, we conclude that
    \[
      \red(\mathbf{a}) = \red(\mathbf{a}'') \sqcup [a_l-r, a_l-t-1] = \mathbf{b},
    \]
    as desired.

    \item When $r \notin 2\mathbb{Z}$  and $a_l-r-1 \notin \Rem(\mathbf{c})$.
    This case can be proved as in the same way as the previous case.

    \item When $r \notin 2\mathbb{Z}$ and $a_l-r-1 \in \Rem(\mathbf{c})$.
    Since $a_l-r-2 = s(a_l-r-1) \in \Rem(\mathbf{c})$, we see that
    \[
      a_{l-r-1} = a_l-r-2.
    \]
    Set
    \[
      \mathbf{a}''' := (a_1,\dots,a_{l-r-2}).
    \]
    Then, we have
    \[
      \mathbf{c} = \mathbf{a}''' \sqcup [a_l-r-2, a_l-2]
    \]
    with $a_l-r-2 \in \Rem(\mathbf{c})$.
    By Lemma \ref{lem: tail of red(a)}, we obtain
    \[
      \red(\mathbf{c}) = \red(\mathbf{a}''').
    \]
    Then, Lemma \ref{lem: Rcomb a sub b} implies
    \[
      R(\red(\mathbf{c}), \mathbf{d}) = (\mathbf{d}, \red(\mathbf{c})),
    \]
    and consequently,
    \[
      \mathbf{b} = \red(\mathbf{a}''') \sqcup (a_l-r-2).
    \]

    On the other hand, we have
    \[
      \mathbf{a} = (\mathbf{a}''' \sqcup (a_l-r-2)) \sqcup [a_l-r, a_l].
    \]
    Since $a_l-r-2 \notin 2\mathbb{Z}$, Proposition \ref{prop: properties of rem} \eqref{item: properties of rem 2} implies that
    \[
      \Rem(\mathbf{a}''' \sqcup (a_l-r-2)) = \Rem(\mathbf{a}''').
    \]
    On the other hand, the fact that $a_l-r-2 \in \Rem(\mathbf{c})$, together with Proposition \ref{prop: characterization of rem(a)}, implies
    \[
      a_l-r-2 < 2(l-r-1) - |\Rem(\mathbf{a}''')|.
    \]
    These two equalities and Proposition \ref{prop: characterization of rem(a)} ensure that
    \[
      a_l-r \in \Rem(\mathbf{a}).
    \]
    Then, Lemma \ref{lem: tail of red(a)} implies
    \[
      \red(\mathbf{a}) = \red(\mathbf{a}''') \sqcup (a_l-r-2) = \mathbf{b},
    \]
    as desired.
  \end{enumerate}
\end{proof}

\begin{ex}
  Let $n = 7$ and $\mathbf{a} = (1,3,4,5,6,7,11,12,13,14)$.
  Then, $s(14)^\vee = (1,\dots,12,14)$, and
  \[
    R(s(14)^\vee,(1,3,4,5,6,7,11,12,13)) = ((1,3,4,5,6,7,10,11,12),(1,\dots,9,11,\dots,14)).
  \]
  Since
  \[
    \red(1,3,4,5,6,7,10,11,12) = (1,7,10),
  \]
  we compute
  \[
    R((1,7,10),(1,\dots,9,11,\dots,14)) = ((1,\dots,10,12,13,14),(1,7,11)).    
  \]
  Therefore, we obtain
  \[
    \red(\mathbf{a}) = (1,7,11,12).
  \]
\end{ex}

In order to state the main result in this section, let us introduce four maps:
\begin{itemize}
  \item $\bigvee: \SST_{2n}(\varpi_l) \rightarrow \SST_{2n}(\varpi_1) \times \SST_{2n}(\varpi_{l-1});\ (a_1,\dots,a_l) \mapsto ((a_l), (a_1,\dots,a_{l-1}))$.
  \item $K: \SST_{2n}(\varpi_1) \rightarrow \SST_{2n}(\varpi_{2n-1});\ (a) \mapsto s(a)^\vee$.
  \item \begin{align*}
    \begin{split}
      \bigwedge: \SST_{2n}(\varpi_1) \times \SST_{2n}(\varpi_k) &\rightarrow \SST_{2n}(\varpi_{k+1}) \sqcup \{ 0 \};\\ ((a), (a_1,\dots,a_k)) &\mapsto \begin{cases}
        (a_1,\dots,a_k,a) & \text{ if } a > a_k, \\
        0 & \text{ if } a \leq a_k,
      \end{cases}
    \end{split}
  \end{align*}
  where $0$ is a formal symbol.
  \item $\pi: \SST_{2n}(\varpi_{k'}) \rightarrow \bigsqcup_{k''=0}^{2n} \SST_{2n}(\varpi_{k''});\ \mathbf{a} \mapsto \begin{cases}
    () & \text{ if } \mathbf{a} = (1,2), \\
    \mathbf{a} & \text{ if } \mathbf{a} \neq (1,2).
  \end{cases}$
\end{itemize}

\begin{cor}\label{cor: factorization of red}
  Let $l \in [0,2n]$.
  \begin{enumerate}
    \item\label{item: factorization of red 1} If $l \geq 2$, then the composite
    \[
      \pi \circ \bigwedge \circ (K^{-1}, \mathrm{id}) \circ R \circ (\red, \mathrm{id}) \circ R \circ (K, \mathrm{id}) \circ \bigvee
    \]
    is well defined on $\SST_{2n}(\varpi_l)$ and coincides with the reduction map.
    \item\label{item: factorization of red 2} The reduction map on $\SST_{2n}(\varpi_l)$ is injective.
  \end{enumerate}
\end{cor}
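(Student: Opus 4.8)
The plan is to read off both parts from Proposition~\ref{prop: pre-factorization of red}, which already identifies every intermediate term of the composite appearing in \eqref{item: factorization of red 1}.

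For \eqref{item: factorization of red 1}, I would first note that $K\colon\SST_{2n}(\varpi_1)\to\SST_{2n}(\varpi_{2n-1})$ is a bijection: the switch $s$ is an involution of $[1,2n]$, and $a\mapsto a^\vee$ identifies $[1,2n]$ with the set of $(2n-1)$-element subsets of $[1,2n]$, which is exactly $\SST_{2n}(\varpi_{2n-1})$; hence $K^{-1}$ is everywhere defined. Then I would push a fixed $\mathbf{a}=(a_1,\dots,a_l)$ with $l\ge2$ through the composite, matching each stage to the notation of Proposition~\ref{prop: pre-factorization of red}: $\bigvee$ gives $((a_l),(a_1,\dots,a_{l-1}))$; applying $(K,\mathrm{id})$ and then $R$ gives $(\mathbf{c},\mathbf{d})$; applying $(\red,\mathrm{id})$ and then $R$ gives $(s(b_k)^\vee,\mathbf{b}')$ with $\mathbf{b}'=(b_1,\dots,b_{k-1})$; applying $(K^{-1},\mathrm{id})$ gives $((b_k),\mathbf{b}')$; and since $b_k>b_{k-1}$ by Proposition~\ref{prop: pre-factorization of red}, the map $\bigwedge$ returns not $0$ but $\mathbf{b}=(b_1,\dots,b_k)$. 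This proves well-definedness, and it remains to apply $\pi$ and compare with Proposition~\ref{prop: pre-factorization of red}: if $a_l=l\in2\mathbb{Z}$ then $\mathbf{b}=(1,2)$, so $\pi(\mathbf{b})=()$, which equals $\red(\mathbf{a})=\red(1,2,\dots,l)$ by Lemma~\ref{lem: red 1,l}; otherwise $\mathbf{b}=\red(\mathbf{a})$, which is symplectic by Proposition~\ref{prop: red(a) is symplectic} and hence distinct from $(1,2)$ (as $2<3=2\cdot2-1$, the sequence $(1,2)$ is not symplectic), so $\pi(\mathbf{b})=\mathbf{b}=\red(\mathbf{a})$. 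Either way the composite equals $\red$.

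For \eqref{item: factorization of red 2}, I would induct on $l$, the cases $l\le1$ being immediate since then $\Rem(\mathbf{a})=\emptyset$ and $\red=\mathrm{id}$. For $l\ge2$, write $\red=\pi\circ\Phi$ on $\SST_{2n}(\varpi_l)$, where $\Phi$ is the composite from \eqref{item: factorization of red 1} with the final $\pi$ dropped. Each factor of $\Phi$ is injective: $\bigvee$ obviously; $K$ and $K^{-1}$ by the bijectivity noted above; $R$ by Proposition~\ref{prop: Rcomb is bij}; the middle $(\red,\mathrm{id})$ because $\red$ is injective on $\SST_{2n}(\varpi_{l-1})$ by the inductive hypothesis; and $\bigwedge$ because, by Proposition~\ref{prop: pre-factorization of red}, it is only ever fed a pair $((b_k),(b_1,\dots,b_{k-1}))$ with $b_k>b_{k-1}$, on which it is the manifestly invertible concatenation $((b_k),(b_1,\dots,b_{k-1}))\mapsto(b_1,\dots,b_{k-1},b_k)$. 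Hence $\Phi$ is injective.

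The only step that is not purely formal — and the one I expect to be the crux — is that $\pi$ is not injective: it collapses $(1,2)$ and $()$ to $()$. To circumvent this, I would observe that $()$ never occurs in the image of $\Phi$, since every value of $\bigwedge$ on the relevant domain is a nonempty sequence (of length $k\ge1$). Because $\pi$ acts as the identity away from $(1,2)$, and its sole coincidence is the pair $\{(1,2),()\}$, its restriction to $\Phi\big(\SST_{2n}(\varpi_l)\big)$ is injective; therefore $\red=\pi\circ\Phi$ is injective on $\SST_{2n}(\varpi_l)$, which closes the induction.
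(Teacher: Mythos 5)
Your proposal is correct and follows essentially the same route as the paper: part (1) is read off from Proposition \ref{prop: pre-factorization of red} together with Lemma \ref{lem: red 1,l}, and part (2) is deduced from part (1) via the injectivity of each factor in the composite. Your explicit induction on $l$ for the inner reduction map, and your restriction of $\pi$ to the image of the rest of the composite (where $()$ never occurs), are exactly the details the paper compresses into the phrase ``each factor in the composite is injective (on a suitable domain).''
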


\begin{proof}
  The first assertion follows from Proposition \ref{prop: pre-factorization of red} and Lemma \ref{lem: red 1,l}.
  The second assertion for $l \leq 1$ is trivial, and for $l \geq 2$ follows from the first one since each factor in the composite is injective (on a suitable domain).
\end{proof}

\begin{cor}\label{cor: inj of comb suc}
  Let $\lambda \in \Par_{\leq 2n}$.
  Then, the successor map in injective on $\SST_{2n}(\lambda)$.
\end{cor}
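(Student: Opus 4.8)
The plan is to invert $\suc = {*} \circ (\red,\mathrm{id}) \circ d$ one factor at a time, showing that each factor is injective on the domain where it is actually applied, so that $T$ can be reconstructed from $\suc(T)$.

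First I would fix $l := \ell(\lambda)$ and $\lambda' := (\lambda_1-1,\dots,\lambda_l-1)$, and for $T \in \SST_{2n}(\lambda)$ write $d(T) = (\mathbf{a}, T')$ with $\mathbf{a} \in \SST_{2n}(\varpi_l)$ and $T' \in \SST_{2n}(\lambda')$, so that $\suc(T) = \red(\mathbf{a}) * T'$. The crucial remark is that the shape $\mu := \sh(\suc(T))$ already pins down the size of $\red(\mathbf{a})$: as recorded in the proof of Lemma \ref{lem: stationality of suc}, Proposition \ref{prop: Pieri formula} gives $\lambda' \underset{\text{vert}}{\subseteq} \mu$ and $|\mu/\lambda'| = |\red(\mathbf{a})|$, so upon setting $k := |\mu| - |\lambda'|$ we know $\red(\mathbf{a}) \in \SST_{2n}(\varpi_k)$ with $k$ depending only on $\suc(T)$.

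Next, Proposition \ref{prop: Pieri formula} tells us that the multiplication map ${*} : \SST_{2n}(\varpi_k) \times \SST_{2n}(\lambda') \to \bigsqcup_{\mu'} \SST_{2n}(\mu')$ is a bijection; hence $(\red(\mathbf{a}), T')$ is the unique preimage of $\suc(T)$ under it, so both $\red(\mathbf{a})$ and $T'$ are determined by $\suc(T)$. Then Corollary \ref{cor: factorization of red} \eqref{item: factorization of red 2}, the injectivity of $\red$ on $\SST_{2n}(\varpi_l)$, recovers $\mathbf{a}$ from $\red(\mathbf{a})$ — here it is essential that $l = \ell(\lambda)$ is fixed, so that the relevant domain is known. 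Finally, $d$ is manifestly injective, since $T$ is obtained from $(\mathbf{a}, T')$ by taking $\mathbf{a}$ as its first column and $T'$ as the remaining columns (this tableau equals $\mathbf{a} * T'$, as noted in the proof of Lemma \ref{lem: stationality of suc}). Hence $T$ is determined by $\suc(T)$, so $\suc$ is injective.

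The only genuine input is Corollary \ref{cor: factorization of red} \eqref{item: factorization of red 2}, which has already been established; the remaining steps are bookkeeping. The one point requiring a moment's care is the observation that the output shape determines $k = |\red(\mathbf{a})|$, which is what lets us invert Pieri's bijection inside the single block $\SST_{2n}(\varpi_k) \times \SST_{2n}(\lambda')$ rather than over the whole disjoint union.
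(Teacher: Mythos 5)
Your proposal is correct and is essentially the paper's own argument: the paper proves the corollary by citing exactly the three facts you use — injectivity of $d$, injectivity of the reduction map (Corollary \ref{cor: factorization of red} \eqref{item: factorization of red 2}), and Pieri's formula (Proposition \ref{prop: Pieri formula}). Your extra observation that the shape of $\suc(T)$ determines $|\red(\mathbf{a})|$ is just a spelled-out version of the bookkeeping the paper leaves implicit.
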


\begin{proof}
  The assertion follows from the injectivity of the map $d$ (see \eqref{eq: map d} for the definition), the reduction map (Corollary \ref{cor: factorization of red} \eqref{item: factorization of red 2}), and the Pieri's formula (Proposition \ref{prop: Pieri formula}).
\end{proof}

\section{Preliminaries from Lie algebras}\label{sect: preliminary Lie alg}
In this section, we briefly review representation theory of $\mathfrak{gl}_{2n}(\mathbb{C})$ and $\mathfrak{sp}_{2n}(\mathbb{C})$.

\subsection{General linear algebras}
Let $\mathfrak{g}$ denote the general linear algebra $\mathfrak{gl}_{2n}(\mathbb{C})$, that is, the complex Lie algebra $\mathbb{C}$ consisting of all $2n \times 2n$ complex matrices.

For each $i,j \in [1,2n]$, let $E_{i,j}$ denote the matrix unit with entry $1$ at $(i,j)$ position.
For each $i \in [1,2n]$, set
\[
  d_i := E_{i,i}.
\]
Also, for each $i \in [1,2n-1]$, set
\[
  h_i := d_i - d_{i+1}.
\]

Let $M$ be a finite-dimensional $\mathfrak{g}$-module.
Then, it decomposes into its weight spaces:
\[
  M = \bigoplus_{\mathbf{w} = (w_1,\dots,w_{2n}) \in \mathbb{Z}^{2n}} M_{\mathbf{w}},
\]
  where
\[
  M_\mathbf{w} := \{ m \in M \mid d_i m = w_i m \ \text{ for all } i \in [1,2n] \}.
\]
The character of $M$ is the Laurent polynomial given by
\[
  \operatorname{ch}_\mathfrak{g} M = \sum_{\mathbf{w} \in \mathbb{Z}^{2n}} (\dim M_\mathbf{w}) x_1^{w_1} \cdots x_{2n}^{w_{2n}} \in \mathbb{Z}[x_1^{\pm 1}, \dots, x_{2n}^{\pm 1}].
\]

For each $\lambda \in \Par_{\leq 2n}$, there exists a unique, up to isomorphism, finite-dimensional irreducible $\mathfrak{g}$-module $V^\mathfrak{g}(\lambda)$ such that
\[
  \operatorname{ch}_\mathfrak{g} V^\mathfrak{g}(\lambda) = s_\lambda(x_1,\dots,x_{2n}),
\]
where the right-hand side denotes the Schur function (see \eqref{eq: Schur} for the definition).

\subsection{Symplectic Lie algebras}
Consider the $2n$-dimensional complex vector space $\mathbb{C}^{2n}$ with a standard basis $\{ e_1,\dots,e_{2n }\}$.
Let $\langle ,  \rangle$ denote the skew-symmetric bilinear form on $\mathbb{C}^{2n}$ given by
\[
  \langle e_i, e_j \rangle = \begin{cases}
    1 & \text{ if } j-i = n, \\
    -1 & \text{ if } i-j = n, \\
    0 & \text{ otherwise}.
  \end{cases}
\]

Let $\mathfrak{s}$ denote the symplectic Lie algebra $\mathfrak{sp}_{2n}(\mathbb{C})$, that is, the subalgebra of $\mathfrak{g}$ consisting of $X \in \mathfrak{g}$ such that
\[
  {}^t X J_n + J_n X = O_{2n},
\]
where $O_{2n}$ denote the zero matrix of size $2n$,
\[
  J_n := \begin{pmatrix}
    O_n & I_n \\
    -I_n & O_n
  \end{pmatrix}
\]
the matrix representation of the skew-symmetric bilinear form $\langle ,  \rangle$ with respect to the standard basis with $I_n$ the identity matrix of size $n$.

For each $i \in [1,n]$, set
\[
  h'_i := \begin{cases}
    h_i - h_{n+i} & \text{ if } i < n, \\
    d_n - d_{2n} & \text{ if } i = n.
  \end{cases}
\]
Then, we have $h'_i \in \mathfrak{s}$.

Let $M$ be a finite-dimensional $\mathfrak{s}$-module.
Then, it decomposes into its weight spaces:
\[
  M = \bigoplus_{\mathbf{z} = (z_1,\dots,z_{n}) \in \mathbb{Z}^{n}} M_{\mathbf{z}},
\]
where
\[
  M_\mathbf{z} := \{ m \in M \mid h'_i m = (z_i-z_{i+1})m \ \text{ for all } i \in [1,n-1] \text{ and } h'_n m = z_n m \}.
\]
The character of $M$ is the Laurent polynomial given by
\[
  \operatorname{ch}_\mathfrak{s} M = \sum_{\mathbf{z} \in \mathbb{Z}^{n}} (\dim M_\mathbf{z}) y_1^{z_1} \cdots y_{n}^{z_{n}} \in \mathbb{Z}[y_1^{\pm 1}, \dots, y_n^{\pm 1}].
\]

For each $\nu \in \Par_{\leq n}$, there exists a unique finite-dimensional irreducible $\mathfrak{s}$-module $V^{\mathfrak{s}}(\nu)$ such that
\[
  \operatorname{ch}_\mathfrak{s} V^{\mathfrak{s}}(\nu) = s^{Sp}_\nu(y_1,\dots,y_{n}),
\]
where the right-hand side denotes the symplectic Schur function (see \eqref{eq: sp Schur} for the definition).

Let $M = \bigoplus_{\mathbf{w} \in \mathbb{Z}^{2n}} M_\mathbf{w}$ be a finite-dimensional $\mathfrak{g}$-module.
For each $m \in M_\mathbf{w}$, we have
\[
  h'_i m = \begin{cases}
    (w_i-w_{i+1} - w_{n+i} + w_{n+i+1}) m & \text{ if } i < n, \\
    (w_n - w_{2n})m & \text{ if } i = n,
  \end{cases} \ \text{ for all } i \in [1,n].
\]
Therefore, as an $\mathfrak{s}$-module, the $M$ decomposes as
\[
  M = \bigoplus_{\mathbf{z} \in \mathbb{Z}^n} M_\mathbf{z}, \quad M_{\mathbf{z}} = \bigoplus_{\mathbf{w} \in (\res^\mathfrak{s})^{-1}(\mathbf{z})} M_\mathbf{w},
\]
where
\[
  \res^\mathfrak{s}: \mathbb{Z}^{2n} \rightarrow \mathbb{Z}^n;\ (w_1,\dots,w_{2n}) \mapsto (w_1-w_{n+1}, w_2-w_{n+2}, \dots, w_n - w_{2n}).
\]
This observation implies that
\[
  \operatorname{ch}_{\mathfrak{s}} M = \res^\mathfrak{s}(\operatorname{ch}_\mathfrak{g} M),
\]
where
\[
  \res^\mathfrak{s}: \mathbb{Z}[x_1^{\pm 1}, \dots, x_{2n}^{\pm 1}] \rightarrow \mathbb{Z}[y_1^{\pm 1}, \dots, y_n^{\pm 1}]
\]
denotes the ring homomorphism such that
\[
  \res^\mathfrak{s}(x_i) = \begin{cases}
    y_i & \text{ if } i \leq n, \\
    y_{i-n}^{-1} & \text{ if } i > n.
  \end{cases}
\]
In particular, for each $\lambda \in \Par_{\leq 2n}$, we obtain
\begin{align}\label{eq: ch_s V^g(lm)}
  \operatorname{ch}_\mathfrak{s} V^\mathfrak{g}(\lambda) = s_\lambda(y_1,\dots,y_n,y_1^{-1},\dots,y_n^{-1}).
\end{align}

On the other hand, as an $\mathfrak{s}$-module, $V^\mathfrak{g}(\lambda)$ decomposes into a direct sum of several copies of $V^\mathfrak{s}(\nu)$ for various $\nu \in \Par_{\leq n}$:
\begin{align}\label{eq: decomp of V^g(lm)}
  V^\mathfrak{g}(\lambda) \simeq \bigoplus_{\nu \in \Par_{\leq n}} V^\mathfrak{s}(\nu)^{m_{\lambda,\nu}} \ \text{ for some } m_{\lambda,\nu} \geq 0.
\end{align}
An explicit descriptions of the multiplicities $m_{\lambda,\nu}$ is called a \emph{branching rule}.

\subsection{A Non-standard realization of the symplectic algebra}
Let $\mathfrak{k}$ denote the Lie subalgebra of $\mathfrak{g}$ generated by
\[
  \{ E_{2i-1, 2i},\ E_{2i,2i-1} \mid i \in [1,n] \} \sqcup \{ E_{2i+1,2i} + E_{2i-1,2i+2} \mid i \in [1,n-1] \}.
\]
Then, there exists an isomorphism
\[
  f: \mathfrak{s} \rightarrow \mathfrak{k}
\]
of Lie algebras such that
\[
  f(h'_i) = \begin{cases}
    (-1)^{i-1}(h_{2i-1} + h_{2i+1}) & \text{ if } i < n, \\
    (-1)^{n-1}h_{2n-1} & \text{ if } i = n,
  \end{cases}
\]
({\it cf.}\ \cite[Section 4.3]{W21a}).

\begin{rem}
  A matrix description of $\mathfrak{k}$ can be found in \cite[Remark 5.2]{CuSh22}.
\end{rem}

The notions of weight $\mathfrak{k}$-modules and their characters are defined in the same way as those for $\mathfrak{s}$ by replacing $h'_i$ with $f(h'_i)$.

For each $\nu \in \Par_{\leq n}$, the irreducible $\mathfrak{s}$-module $V^{\mathfrak{s}}(\nu)$ is equipped with a $\mathfrak{k}$-module structure via the isomorphism $f$.
Let $V^\mathfrak{k}(\nu)$ denote this $\mathfrak{k}$-module.
Clearly, we have
\[
  \operatorname{ch}_\mathfrak{k} V^\mathfrak{k}(\nu) = \operatorname{ch}_\mathfrak{s} V^\mathfrak{s}(\nu) = s^{Sp}_\nu(y_1,\dots,y_n).
\]

Let $M$ be a $\mathfrak{g}$-module, $\mathbf{w} = (w_1,\dots,w_{2n}) \in \mathbb{Z}^{2n}$, and $m \in M_\mathbf{w}$.
Then, for each $i \in [1,n]$, we have
\[
  f(h'_i) m = \begin{cases}
    (-1)^{i-1}(w_{2i-1} - w_{2i} + w_{2i+1} - w_{2i+2})m & \text{ if } i < n, \\
    (-1)^{n-1}(w_{2n-1} - w_{2n})m & \text{ if } i = n.
  \end{cases}
\]
Therefore, defining a map
\[
  \res^\mathfrak{k}: \mathbb{Z}^{2n} \rightarrow \mathbb{Z}^n
\]
by
\[
  \res^\mathfrak{k}(w_1,\dots,w_{2n}) = (w_1-w_2, -w_3+w_4, w_5-w_6, \dots, (-1)^{n-1}(w_{2n-1} - w_{2n})),
\]
we obtain the following weight space decomposition of $M$ as a $\mathfrak{k}$-module:
\[
  M = \bigoplus_{\mathbf{z} \in \mathbb{Z}^n} M_\mathbf{z}, \quad M_\mathbf{z} = \bigoplus_{\mathbf{w} \in (\res^\mathfrak{k})^{-1}(\mathbf{z})} M_\mathbf{w}.
\]
Consequently, we have
\begin{align}\label{eq: ch_k M}
  \operatorname{ch}_\mathfrak{k} M = \res^\mathfrak{k}(\operatorname{ch}_\mathfrak{g} M),
\end{align}
where
\[
  \res^\mathfrak{k}: \mathbb{Z}[x_1^{\pm 1}, \dots, x_{2n}^{\pm 1}] \rightarrow \mathbb{Z}[y_1^{\pm 1}, \dots, y_n^{\pm 1}]
\]
denotes the ring homomorphism given by
\[
  \res^\mathfrak{k}(x_{2i-1}) = \begin{cases}
    y_i & \text{ if $i$ is odd}, \\
    y_i^{-1} & \text{ if $i$ is even},
  \end{cases} \quad \res^\mathfrak{k}(x_{2i}) = \begin{cases}
    y_i^{-1} & \text{ if $i$ is odd}, \\
    y_i & \text{ if $i$ is even},
  \end{cases} \quad \text{ for all } i \in [1,n].
\]

\begin{prop}\label{prop: ch_t V^g(lm)}
  We have
  \[
    \operatorname{ch}_\mathfrak{k} V^\mathfrak{g}(\lambda) = \sum_{\nu \in \Par_{\leq n}} m_{\lambda,\nu} s^{Sp}_\nu(y_1,\dots,y_n).
  \]
\end{prop}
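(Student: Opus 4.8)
The statement to prove is that $\operatorname{ch}_{\mathfrak{k}} V^{\mathfrak{g}}(\lambda) = \sum_{\nu \in \Par_{\leq n}} m_{\lambda,\nu}\, s^{Sp}_\nu(y_1,\dots,y_n)$, where $\mathfrak{k}$ is the non-standard realization of the symplectic algebra obtained via the isomorphism $f: \mathfrak{s} \to \mathfrak{k}$. The key point is that characters are invariant under isomorphisms of the acting Lie algebra provided one matches up the Cartan data correctly. The plan is: first, recall from \eqref{eq: decomp of V^g(lm)} that as an $\mathfrak{s}$-module, $V^{\mathfrak{g}}(\lambda) \simeq \bigoplus_{\nu \in \Par_{\leq n}} V^{\mathfrak{s}}(\nu)^{m_{\lambda,\nu}}$. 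Second, transport this decomposition along $f$: since $V^{\mathfrak{k}}(\nu)$ is by definition the $\mathfrak{k}$-module obtained from $V^{\mathfrak{s}}(\nu)$ via $f$, and since $f$ sends the distinguished Cartan elements $h'_i$ to the elements $f(h'_i)$ that define $\mathfrak{k}$-weights, we get $V^{\mathfrak{g}}(\lambda) \simeq \bigoplus_{\nu} V^{\mathfrak{k}}(\nu)^{m_{\lambda,\nu}}$ as $\mathfrak{k}$-modules. Taking $\mathfrak{k}$-characters and using $\operatorname{ch}_{\mathfrak{k}} V^{\mathfrak{k}}(\nu) = s^{Sp}_\nu(y_1,\dots,y_n)$ then yields the claim.

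The one subtlety to check is that the $\mathfrak{k}$-module structure on $V^{\mathfrak{g}}(\lambda)$ referred to in the statement — namely, restriction of the $\mathfrak{g}$-action to the subalgebra $\mathfrak{k} \subseteq \mathfrak{g}$ — really does coincide with the one obtained by transporting the $\mathfrak{s}$-module structure (restriction of $\mathfrak{g}$ to $\mathfrak{s}$) along $f$. This is \emph{not} automatic from abstract nonsense; it requires that $f$ be compatible with the two embeddings of $\mathfrak{s}$ and $\mathfrak{k}$ into $\mathfrak{g}$, i.e. that restricting along $\mathfrak{s} \hookrightarrow \mathfrak{g}$ and then transporting via $f^{-1}: \mathfrak{k} \to \mathfrak{s}$ gives the same $\mathfrak{k}$-module as restricting along $\mathfrak{k} \hookrightarrow \mathfrak{g}$. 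Rather than belabor this, I would give the argument purely at the level of characters, which sidesteps the module-theoretic bookkeeping: by \eqref{eq: ch_k M} and \eqref{eq: ch_s V^g(lm)},
\[
  \operatorname{ch}_{\mathfrak{k}} V^{\mathfrak{g}}(\lambda) = \res^{\mathfrak{k}}\big(\operatorname{ch}_{\mathfrak{g}} V^{\mathfrak{g}}(\lambda)\big) = \res^{\mathfrak{k}}\big(s_\lambda(x_1,\dots,x_{2n})\big) = s_\lambda(y_1, y_1^{-1}, y_2^{-1}, y_2, y_3, y_3^{-1}, \dots),
\]
where the arguments are $x_{2i-1} \mapsto y_i^{(-1)^{i-1}}$, $x_{2i} \mapsto y_i^{(-1)^{i}}$. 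Since the Schur function $s_\lambda$ is a symmetric polynomial in its arguments (as recorded after \eqref{eq: Schur}), permuting the $2n$ arguments does not change the value; hence this equals $s_\lambda(y_1,\dots,y_n,y_1^{-1},\dots,y_n^{-1}) = \operatorname{ch}_{\mathfrak{s}} V^{\mathfrak{g}}(\lambda)$ by \eqref{eq: ch_s V^g(lm)}.

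Finally, by \eqref{eq: decomp of V^g(lm)} we have $\operatorname{ch}_{\mathfrak{s}} V^{\mathfrak{g}}(\lambda) = \sum_{\nu \in \Par_{\leq n}} m_{\lambda,\nu}\, \operatorname{ch}_{\mathfrak{s}} V^{\mathfrak{s}}(\nu) = \sum_{\nu} m_{\lambda,\nu}\, s^{Sp}_\nu(y_1,\dots,y_n)$, which combined with the chain of equalities above gives the desired formula. The main (and really only) obstacle is the symmetry step: one must be careful that $\res^{\mathfrak{k}}$ does land among the arguments $\{y_1,\dots,y_n,y_1^{-1},\dots,y_n^{-1}\}$ — each $y_i$ appears exactly once and each $y_i^{-1}$ exactly once, just in a scrambled order — so that the symmetry of $s_\lambda$ applies verbatim. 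This is immediate from the explicit definition of $\res^{\mathfrak{k}}$ on the $x_i$, where for each $i$ the pair $(x_{2i-1}, x_{2i})$ maps to $(y_i, y_i^{-1})$ or $(y_i^{-1}, y_i)$ according to the parity of $i$.
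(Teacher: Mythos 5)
Your proof is correct and follows essentially the same route as the paper: apply \eqref{eq: ch_k M} to write $\operatorname{ch}_{\mathfrak{k}} V^{\mathfrak{g}}(\lambda) = \res^{\mathfrak{k}}(s_\lambda(x_1,\dots,x_{2n}))$, use the symmetry of the Schur function to identify this with $s_\lambda(y_1,\dots,y_n,y_1^{-1},\dots,y_n^{-1}) = \operatorname{ch}_{\mathfrak{s}} V^{\mathfrak{g}}(\lambda)$ via \eqref{eq: ch_s V^g(lm)}, and conclude with \eqref{eq: decomp of V^g(lm)}. Your preliminary remarks about transporting module structures along $f$ are unnecessary precisely because, as you note, the character-level argument sidesteps them — which is exactly what the paper does.
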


\begin{proof}
  Using the fact that the Schur functions are symmetric, we compute as follows:
  \begin{align*}
    \begin{split}
      \operatorname{ch}_\mathfrak{k} V^\mathfrak{g}(\lambda) &\overset{\eqref{eq: ch_k M}}{=} \res^\mathfrak{k}(\operatorname{ch}_\mathfrak{g} V^\mathfrak{g}(\lambda)) \\
      &= s_\lambda(y_1,y_1^{-1},y_2^{-1},y_2,\dots,y_n^{(-1)^{n-1}}, y_n^{(-1)^n}) \\
      &= s_\lambda(y_1,\dots,y_n,y_1^{-1},\dots,y_n^{-1}) \\
      &\overset{\eqref{eq: ch_s V^g(lm)}}{=} \operatorname{ch}_\mathfrak{s} V^\mathfrak{g}(\lambda) \\
      &\overset{\eqref{eq: decomp of V^g(lm)}}{=} \sum_{\nu \in \Par_{\leq n}} m_{\lambda,\nu} s^{Sp}_\nu(y_1,\dots,y_n).
    \end{split}
  \end{align*}
  Thus, the assertion follows.
\end{proof}

\section{Preliminaries from quantum symmetric pairs}\label{sect: preliminary qsp}
In this section, we briefly review representation theory of the quantum group $\mathbf{U}$ of $\mathfrak{gl}_{2n}(\mathbb{C})$ and an $\imath$quantum group $\mathbf{U}^\imath$ of $\mathfrak{sp}_{2n}(\mathbb{C})$.

\subsection{Quantum group of type $A$}
Let $\mathbf{U}$ denote the quantum group $U_q(\mathfrak{gl}_{2n})$, that is, the unital associative algebra over $\mathbb{Q}(q)$ with generators
\[
  E_i, F_i, D_k^{\pm 1} \ \text{ for } i \in [1,2n-1] \text{ and } k \in [1,2n]
\]
subject to the following relations:
\begin{align*}
  \begin{split}
    &D_k D_k^{-1} = D_k^{-1} D_k = 1, \\
    &D_k D_l = D_l D_k, \\
    &D_k E_i = q^{\delta_{k,i} - \delta_{k,i+1}} E_i D_k, \quad D_k F_i = q^{-\delta_{k,i} + \delta_{k,i+1}} F_i D_k, \\
    &E_i F_j - F_j E_i = \delta_{i,j} \frac{K_i-K_i^{-1}}{q-q^{-1}}, \\
    &E_i E_j = E_j E_i, \quad F_i F_j = F_j F_i, \\
    &E_i^2E_j - (q+q^{-1})E_iE_jE_i + E_jE_i^2 = 0, \quad F_i^2F_j - (q+q^{-1})F_iF_jF_i + F_jF_i^2 = 0,
  \end{split}
\end{align*}
where
\[
  K_i := D_iD_{i+1}^{-1}.
\]

A $\mathbf{U}$-module $M$ is said to be a \emph{weight module} if it admits a decomposition
\[
  M = \bigoplus_{\mathbf{w} \in \mathbb{Z}^{2n}} M_\mathbf{w}
\]
as a vector space such that
\[
  M_\mathbf{w} = \{ m \in M \mid D_i m = q^{w_i} m \ \text{ for all } i \in [1,2n] \}.
\]

The character of a finite-dimensional weight $\mathbf{U}$-module $M$ is the Laurent polynomial $\operatorname{ch} M \in \mathbb{Z}[x_1^{\pm 1}, \dots, x_{2n}^{\pm 1}]$ given by
\[
  \operatorname{ch} M = \sum_{\mathbf{w} \in \mathbb{Z}^{2n}} (\dim M_\mathbf{w}) \mathbf{x}^\mathbf{w}.
\]

By \cite[19.1.1]{Lus93}, there exists a unique anti-algebra automorphism $\rho$ on $\mathbf{U}$ such that
\[
  \rho(E_i) = qK_iF_i, \quad \rho(F_i) = qK_i^{-1} E_i, \quad \rho(K_i) = K_i \ \text{ for all } i \in I.
\]
A symmetric bilinear form $(,)$ on a $\mathbf{U}$-module $M$ is said to be contragredient if
\[
  (x m_1, m_2) = (m_1, \rho(x) m_2) \ \text{ for all } x \in \mathbf{U},\ m_1, m_2 \in M.
\]
A basis $B$ of such $M$ is said to be \emph{almost orthonormal} if
\[
  (b_1, b_2) \in \delta_{b_1, b_2} + q^{-1} \mathbb{Q}[\![q^{-1}]\!] \ \text{ for all } b_1,b_2 \in B.
\]
For each $m_1, m_2 \in M$, we write $m_1 \equiv m_2$ to indicate that
\[
  m_1 - m_2 \in q^{-1} \mathbb{Q}[\![q^{-1}]\!] B.
\]

Given two $\mathbf{U}$-modules $M$ and $N$ with almost orthonormal bases $B_M$ and $B_N$ respectively, we see that the tensor product $B_M \otimes B_N := \{ b_1 \otimes b_2 \mid (b_1,b_2) \in B_M \times B_N \}$ forms an almost orthonormal basis of $M \otimes N$ with respect to the natural bilinear form.

For each $\lambda \in \Par_{\leq 2n}$, there exists a unique, up to isomorphism, finite-dimensional irreducible $\mathbf{U}$-module $V(\lambda)$ such that
\[
  \operatorname{ch} V(\lambda) = s_\lambda(x_1,\dots,x_{2n}).
\]
The weight space $V(\lambda)_{\lambda}$ is one-dimensional.
Fix a nonzero vector $v_\lambda \in V(\lambda)_\lambda$.
Then, there exists a unique contragredient form $(,)$ on $V(\lambda)$ such that $(v_\lambda, v_\lambda) = 1$.

There exists a distinguished basis $\mathbf{B}(\lambda)$, called the \emph{canonical basis}, of $V(\lambda)$.
It is of the form
\[
  \mathbf{B}(\lambda) = \{ b_T \mid T \in \SST_{2n}(\lambda) \},
\]
and the vector $b_T$ is a weight vector of weight $\operatorname{wt}(T)$ (see \eqref{eq: wt(T)} for the definition):
\[
  b_T \in V(\lambda)_{\operatorname{wt}(T)}.
\]
The canonical basis forms an almost orthonormal basis.

\begin{ex}\label{ex: Umod str of Vvp1 and Vvp2n-1}\normalfont\hfill
  \begin{enumerate}
    \item The $\mathbf{U}$-module structure of $V(\varpi_1)$ is as follows:
    \begin{align*}
      \begin{split}
        &E_i b_{(a)} = \delta_{a,i+1} b_{(i)}, \\
        &F_i b_{(a)} = \delta_{a,i} b_{(i+1)}, \\
        &D_k b_{(a)} = q^{\delta_{a, k}} b_{(a)}.
      \end{split}
    \end{align*}
    \item The $\mathbf{U}$-module structure of $V(\varpi_{2n-1})$ is as follows (recall the map $\cdot^\vee$ from \eqref{eq: a^vee}):
    \begin{align*}
      \begin{split}
        &E_i b_{a^\vee} = \delta_{a,i} b_{(i+1)^\vee}, \\
        &F_i b_{a^\vee} = \delta_{a,i+1} b_{i^\vee}, \\
        &D_k b_{a^\vee} = q^{1-\delta_{a, k}} b_{a^\vee}.
      \end{split}
    \end{align*}
  \end{enumerate}
\end{ex}

\begin{prop}[{{\it cf.}\ \cite{Kw09}}]\label{prop: quantum maps}\hfill
  \begin{enumerate}
    \item Let $l \in [1,2n]$.
    Then, there exists an injective $\mathbf{U}$-module homomorphism
    \[
      \bigvee: V(\varpi_l) \rightarrow V(\varpi_1) \otimes V(\varpi_{l-1})
    \]
    such that
    \[
      \bigvee(b_{(a_1,\dots,a_l)}) \equiv b_{(a_l)} \otimes b_{(a_1,\dots,a_{l-1})}
    \]
    for all $(a_1,\dots,a_l) \in \SST_{2n}(\varpi_l)$.

    \item Let $k' \in [0,2n-1]$.
    Then, there exists a $\mathbf{U}$-module homomorphism
    \[
      \bigwedge: V(\varpi_1) \otimes V(\varpi_{k'}) \rightarrow V(\varpi_{k'+1})
    \]
    such that
    \[
      \bigwedge(b_{(a)} \otimes b_{(a_1,\dots,a_{k'})}) \equiv \begin{cases}
        b_{(a_1,\dots,a_{k'},a)} & \text{ if } a > a_{k'}, \\
        0 & \text{ if } a \leq a_{k'}.
      \end{cases}
    \]
    for all $a \in [1,2n]$ and $(a_1,\dots,a_{k'}) \in \SST_{2n}(\varpi_{k'})$.
    
    \item Let $k,l \in [0,2n]$.
    Then, there exists a $\mathbf{U}$-module isomorphism
    \[
      R = R_{k,l}: V(\varpi_k) \otimes V(\varpi_l) \rightarrow V(\varpi_l) \otimes V(\varpi_k)
    \]
    such that
    \[
      R(b_{\mathbf{a}} \otimes b_{\mathbf{b}}) \equiv b_{\mathbf{c}} \otimes b_{\mathbf{d}}
    \]
    for all $\mathbf{a} \in \SST_{2n}(\varpi_k)$ and $\mathbf{b} \in \SST_{2n}(\varpi_l)$, where
    \[
      (\mathbf{c}, \mathbf{d}) := R(\mathbf{a}, \mathbf{b}).
    \]

    \item Let $l \in [0,2n]$ and $\mu \in \Par_{\leq 2n}$.
    Then, there exists a $\mathbf{U}$-module isomorphism
    \[
      *: V(\varpi_l) \otimes V(\mu) \rightarrow \bigoplus_{\substack{\lambda \in \Par_{\leq 2n} \\ \mu \underset{\text{vert}}{\subseteq} \lambda}} V(\lambda)
    \]
    such that
    \[
      *(b_S \otimes b_T) \equiv b_{S*T}
    \]
    for all $(S, T) \in \SST_{2n}(\varpi_l) \times \SST_{2n}(\mu)$.

    \item Let $\lambda \in \Par_{\leq 2n}$, and set $l := \ell(\lambda)$.
    Let $\lambda'$ denote the partition $(\lambda_1-1,\dots,\lambda_l-1)$.
    Then, there exists a $\mathbf{U}$-module homomorphism
    \[
      d: V(\lambda) \rightarrow V(\varpi_l) \otimes V(\lambda')
    \]
    such that
    \[
      d(b_T) \equiv b_{\mathbf{a}} \otimes b_{T'}
    \]
    for all $T \in \SST_{2n}(\lambda)$, where
    \[
      (\mathbf{a}, T') := d(T).
    \]
  \end{enumerate}
\end{prop}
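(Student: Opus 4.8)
The plan is to construct each of the five homomorphisms in two stages: first produce \emph{some} $\mathbf{U}$-module map with the prescribed source and target by a character-plus-complete-reducibility argument, and then pin down its leading term modulo $q^{-1}$ by passing to crystals. The ambient facts to be used throughout are that finite-dimensional weight $\mathbf{U}$-modules are completely reducible, that $\operatorname{ch} V(\lambda) = s_\lambda(x_1,\dots,x_{2n})$, that the canonical bases are almost orthonormal, and that tensor-product multiplicities are computed by Pieri's rule (Proposition \ref{prop: Pieri formula}) and the Littlewood--Richardson rule for Schur functions. All of the statements below are essentially contained in, or are minor variants of, \cite{Kw09} (see also \cite{NY97}), so the real work is to assemble and normalize them.

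For the existence part: comparing characters shows that $V(\varpi_l)$ is a multiplicity-one constituent of $V(\varpi_1)\otimes V(\varpi_{l-1})$ (for $l\geq 2$ via $s_{(1)}s_{(1^{l-1})} = s_{(1^{l})} + s_{(2,1^{l-2})}$, and trivially for $l=1$), so an embedding $\bigvee$ exists and is unique up to a scalar; dually, $V(\varpi_{k'+1})$ is a multiplicity-one constituent of $V(\varpi_1)\otimes V(\varpi_{k'})$, which yields the surjection $\bigwedge$, extended by zero on the complementary summand. Pieri's rule at the level of characters gives $\operatorname{ch}\bigl(V(\varpi_l)\otimes V(\mu)\bigr) = \sum_{\mu \underset{\text{vert}}{\subseteq}\lambda}\operatorname{ch} V(\lambda)$, so complete reducibility produces the isomorphism $*$ of part (4); and since $\lambda' := (\lambda_1-1,\dots,\lambda_l-1)$ satisfies $\lambda'\underset{\text{vert}}{\subseteq}\lambda$ with $|\lambda/\lambda'|=l$, Pieri shows $V(\lambda)$ occurs in $V(\varpi_l)\otimes V(\lambda')$ with multiplicity one, giving the map $d$ of part (5), again unique up to a scalar. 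For part (3) one takes the braiding coming from the universal $R$-matrix of $\mathbf{U}$ composed with the flip, renormalized so as to be an honest isomorphism of weight modules.

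Each of these maps is then rigidified by prescribing its value on one canonical basis vector --- for $\bigvee$ by demanding $\bigvee(b_{(1,\dots,l)})\equiv b_{(l)}\otimes b_{(1,\dots,l-1)}$, for $d$ likewise via the highest-weight tableaux, for $\bigwedge$ and $*$ via the appropriate canonical basis vectors of highest weight in the relevant summands, and for $R$ via its action on a highest-weight vector; that these prescriptions are legitimate is checked weight-by-weight using the explicit module structures of $V(\varpi_1)$ and $V(\varpi_{2n-1})$ (Example \ref{ex: Umod str of Vvp1 and Vvp2n-1}). To verify the congruences one observes that all the modules in play are based modules with basis the canonical basis, and the homomorphisms above can be chosen compatibly with the $\mathbb{Q}[\![q^{-1}]\!]$-lattices spanned by these bases, so that each descends to a morphism of the associated crystals $B(\text{source})\to B(\text{target})\sqcup\{0\}$. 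It then remains to identify these crystal morphisms with the combinatorial maps $\bigvee,\bigwedge,R,*,d$ introduced earlier, which is precisely the crystal combinatorics of columns and of (products of) semistandard tableaux, with columns read from bottom to top: splitting the bottom entry off a column gives part (1), and iterating this while splitting off the first column gives part (5); the unique crystal projection $B(\varpi_1)\otimes B(\varpi_{k'})\twoheadrightarrow B(\varpi_{k'+1})$ gives part (2); the combinatorial $R$-matrix of \cite{NY97} as the unique crystal isomorphism $B(\varpi_k)\otimes B(\varpi_l)\xrightarrow{\sim}B(\varpi_l)\otimes B(\varpi_k)$ gives part (3); and the fact that the tensor product of tableau crystals is governed by the plactic product, so that the highest-weight components of $B(\varpi_l)\otimes B(\mu)$ are indexed by the products $S\ast T$, gives part (4).

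The main obstacle is the bookkeeping of normalizations needed to make the leading coefficients come out exactly $1$ --- not a sign, nor a power of $q$ --- which requires tracking the bar-involution and the $q$-shifts in the coproduct through each construction; once the crystal-level identifications are in hand this is routine but must be carried out with care, especially for the $R$-matrix and for the iterated column splitting underlying part (5).
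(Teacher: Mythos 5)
The paper offers no proof of this proposition at all---it is quoted from the literature (cf.\ \cite{Kw09}, with the combinatorial $R$-matrix from \cite{NY97})---and your outline (existence via character/Pieri computations plus complete reducibility, then leading terms modulo $q^{-1}$ via based-module/crystal structure) is precisely the standard argument behind that citation, so your approach is consistent with, and indeed more detailed than, the paper's treatment. Two minor remarks: part (5) is realized by the single crystal embedding that splits off the entire first column (not by iterating the one-box splitting of part (1)), and the normalization bookkeeping you defer at the end is exactly the content the paper sidesteps by citing \cite{Kw09}.
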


\subsection{Quantum symmetric pair of type $A\mathrm{II}$}
For each $i \in [1,n]$, set
\[
  B_{2i} := F_{2i} -q [E_{2i-1}, [E_{2i+1}, E_{2i}]_{q^{-1}}]_{q^{-1}} K_{2i}^{-1} \in \mathbf{U}.
\]
Here, $[,]_{q^{-1}}$ denotes the $q$-commutator given by
\[
  [x,y]_{q^{-1}} := xy - q^{-1} yx.
\]
Let $\mathbf{U}^\imath$ denote the subalgebra of $\mathbf{U}$ generated by
\[
  \{ E_{2i-1}, F_{2i-1}, K_{2i-1}^{\pm 1} \mid i \in [1,n] \} \sqcup \{ B_{2i} \mid i \in [1,n-1] \}.
\]
The pair $(\mathbf{U}, \mathbf{U}^\imath)$ forms a quantum symmetric pair of type $A\mathrm{II}_{2n-1}$.

A $\mathbf{U}^\imath$-module $M$ is said to be a \emph{weight module} if it admits a decomposition
\[
  M = \bigoplus_{\mathbf{z} \in \mathbb{Z}^{n}} M_\mathbf{z}
\]
as a vector space such that
\[
  M_\mathbf{z} = \{ m \in M \mid K_{2i-1} m = q^{z_{2i-1}-z_{2i}} m \ \text{ for all } i \in [1,n] \}.
\]

The character of a finite-dimensional weight $\mathbf{U}^\imath$-module $M$ is the Laurent polynomial $\operatorname{ch}_\imath M \in \mathbb{Z}[y_1^{\pm 1}, \dots, y_{n}^{\pm 1}]$ given by
\[
  \operatorname{ch}_{\imath} M = \sum_{\mathbf{z} \in \mathbb{Z}^{n}} (\dim M_\mathbf{z}) \mathbf{y}^\mathbf{z}.
\]

\begin{prop}\label{prop: ch_i M}
  Let $M$ be a finite-dimensional weight $\mathbf{U}$-module.
  Then, we have
  \[
    \operatorname{ch}_\imath M = \res^\mathfrak{k}(\operatorname{ch} M).
  \]
\end{prop}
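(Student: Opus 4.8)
The plan is to run, in the quantum setting, the same bookkeeping that produced the identity \eqref{eq: ch_k M} for $\mathfrak{k}$-modules. The only point is that the Cartan-type generators $K_{2i-1}$ of $\mathbf{U}^\imath$ act diagonally on each $\mathbf{U}$-weight space of $M$, so the $\mathbf{U}^\imath$-weight decomposition of $M$ is obtained by bundling together $\mathbf{U}$-weight spaces along a linear map which one identifies with $\res^\mathfrak{k}$.

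First I would compute the action of $K_{2i-1}$ on a $\mathbf{U}$-weight vector. Since $K_{2i-1} = D_{2i-1} D_{2i}^{-1}$, for $m \in M_\mathbf{w}$ with $\mathbf{w} = (w_1,\dots,w_{2n})$ one has $K_{2i-1} m = q^{w_{2i-1}-w_{2i}} m$ for every $i \in [1,n]$. By the definition of the $\mathbf{U}^\imath$-weight spaces, this identifies the scalar by which $K_{2i-1}$ acts with the one attached to the $\mathbf{U}^\imath$-weight $\mathbf{z}$ precisely when $\mathbf{z} = \res^\mathfrak{k}(\mathbf{w})$; this is a direct check of the sign conventions $(-1)^{i-1}$ against the defining formula for $\res^\mathfrak{k} \colon \mathbb{Z}^{2n} \to \mathbb{Z}^n$, entirely parallel to the $\mathfrak{k}$-case preceding \eqref{eq: ch_k M}. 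Consequently each $\mathbf{U}$-weight space $M_\mathbf{w}$ lies in the single $\mathbf{U}^\imath$-weight space $M_{\res^\mathfrak{k}(\mathbf{w})}$, so that, as $\mathbf{U}^\imath$-modules,
\[
  M = \bigoplus_{\mathbf{z} \in \mathbb{Z}^n} M_\mathbf{z}, \qquad M_\mathbf{z} = \bigoplus_{\mathbf{w} \in (\res^\mathfrak{k})^{-1}(\mathbf{z})} M_\mathbf{w}.
\]

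Then I would pass to characters: taking dimensions and summing yields
\[
  \operatorname{ch}_\imath M = \sum_{\mathbf{z} \in \mathbb{Z}^n} (\dim M_\mathbf{z})\, \mathbf{y}^\mathbf{z} = \sum_{\mathbf{w} \in \mathbb{Z}^{2n}} (\dim M_\mathbf{w})\, \mathbf{y}^{\res^\mathfrak{k}(\mathbf{w})}.
\]
It remains to note that the ring homomorphism $\res^\mathfrak{k} \colon \mathbb{Z}[x_1^{\pm 1},\dots,x_{2n}^{\pm 1}] \to \mathbb{Z}[y_1^{\pm 1},\dots,y_n^{\pm 1}]$ satisfies $\res^\mathfrak{k}(\mathbf{x}^\mathbf{w}) = \mathbf{y}^{\res^\mathfrak{k}(\mathbf{w})}$ for all $\mathbf{w}$, which is immediate from the formulas defining $\res^\mathfrak{k}(x_{2i-1})$ and $\res^\mathfrak{k}(x_{2i})$: for each $i$ the factor $x_{2i-1}^{w_{2i-1}} x_{2i}^{w_{2i}}$ is sent to $y_i^{(-1)^{i-1}(w_{2i-1}-w_{2i})}$, which is the $i$-th factor of $\mathbf{y}^{\res^\mathfrak{k}(\mathbf{w})}$. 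Substituting gives $\operatorname{ch}_\imath M = \res^\mathfrak{k}\bigl(\sum_{\mathbf{w}} (\dim M_\mathbf{w})\, \mathbf{x}^\mathbf{w}\bigr) = \res^\mathfrak{k}(\operatorname{ch} M)$, as claimed. There is no genuine obstacle here; the statement is the quantum shadow of the classical computation already carried out for $\mathfrak{k}$, and the only care needed is to keep the two incarnations of $\res^\mathfrak{k}$ — the linear map on weight lattices and the induced ring homomorphism on Laurent polynomials — consistent.
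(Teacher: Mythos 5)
Your argument is correct and is exactly the computation the paper intends (it states Proposition \ref{prop: ch_i M} without proof, leaving it as the quantum analogue of the calculation preceding \eqref{eq: ch_k M}): $K_{2i-1}=D_{2i-1}D_{2i}^{-1}$ acts on $M_\mathbf{w}$ by $q^{w_{2i-1}-w_{2i}}$, so the $\mathbf{U}^\imath$-weight spaces are the sums of the $\mathbf{U}$-weight spaces over the fibers of $\res^\mathfrak{k}$, and the monomial identity $\res^\mathfrak{k}(\mathbf{x}^\mathbf{w})=\mathbf{y}^{\res^\mathfrak{k}(\mathbf{w})}$ finishes the proof. Note only that you are (rightly) reading the paper's definition of a weight $\mathbf{U}^\imath$-module with the sign convention matching $\res^\mathfrak{k}$, i.e.\ $K_{2i-1}$ acting by $q^{(-1)^{i-1}z_i}$ on $M_\mathbf{z}$; the formula $q^{z_{2i-1}-z_{2i}}$ printed there is a typo, since $\mathbf{z}\in\mathbb{Z}^n$.
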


For each $\nu \in \Par_{\leq n}$, there exists, up to isomorphism, a unique finite-dimensional irreducible $\mathbf{U}^\imath$-module $V^\imath(\nu)$ such that
\[
  \operatorname{ch}_{\imath} V^\imath(\nu) = s^{Sp}_\nu(y_1,\dots,y_n)
\]
({\it cf.}\ \cite[Proposition 3.3.9 and Corollary 4.3.2]{W21a}).

The anti-automorphism $\rho$ on $\mathbf{U}$ restricts to an anti-automorphism on $\mathbf{U}^\imath$.
The notions of contragredient inner product, almost orthogonal basis, and binary relation $\equiv$ are defined for $\mathbf{U}^\imath$-modules as in the same way as $\mathbf{U}$-modules.

Let $M$ be a $\mathbf{U}$-module equipped with a contragredient inner product.
When we regard the $\mathbf{U}$-module $M$ as a $\mathbf{U}^\imath$-module by restriction, the inner product is still contragredient.
In particular, the irreducible $\mathbf{U}$-module $V(\lambda)$, regarded as a $\mathbf{U}^\imath$-module, admits a contragredient inner product.
Hence, it is completely reducible.

\begin{prop}\label{prop: q-multiplicity}
  Let $\lambda \in \Par_{\leq 2n}$.
  Then, the multiplicity of $V^\imath(\nu)$ in $V(\lambda)$ coincides with $m_{\lambda,\nu}$.
\end{prop}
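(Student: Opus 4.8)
The plan is to compute the $\mathbf{U}^\imath$-character of $V(\lambda)$ in two different ways and compare coefficients. As noted just before the statement, the $\mathbf{U}$-module $V(\lambda)$, regarded as a $\mathbf{U}^\imath$-module by restriction, is completely reducible; moreover the $V^\imath(\nu)$ for distinct $\nu \in \Par_{\leq n}$ are pairwise non-isomorphic (their characters $s^{Sp}_\nu$ are distinct). Hence we may write uniquely
\[
  V(\lambda) \simeq \bigoplus_{\nu \in \Par_{\leq n}} V^\imath(\nu)^{\oplus c_{\lambda,\nu}}, \qquad c_{\lambda,\nu} \in \mathbb{Z}_{\geq 0},
\]
and the assertion amounts to $c_{\lambda,\nu} = m_{\lambda,\nu}$ for all $\nu$.

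First I would take $\imath$-characters of both sides of this decomposition. Using additivity of $\operatorname{ch}_\imath$ on direct sums and $\operatorname{ch}_\imath V^\imath(\nu) = s^{Sp}_\nu(y_1,\dots,y_n)$, this gives $\operatorname{ch}_\imath V(\lambda) = \sum_{\nu} c_{\lambda,\nu}\, s^{Sp}_\nu(y_1,\dots,y_n)$. On the other hand, Proposition \ref{prop: ch_i M} applied to the finite-dimensional weight $\mathbf{U}$-module $V(\lambda)$ yields $\operatorname{ch}_\imath V(\lambda) = \res^\mathfrak{k}(\operatorname{ch} V(\lambda)) = \res^\mathfrak{k}\big(s_\lambda(x_1,\dots,x_{2n})\big)$, since $\operatorname{ch} V(\lambda) = s_\lambda(x_1,\dots,x_{2n})$. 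Because the ring homomorphism $\res^\mathfrak{k}$ occurring here is literally the same as the one appearing in \eqref{eq: ch_k M}, and because $\operatorname{ch}_\mathfrak{g} V^\mathfrak{g}(\lambda) = s_\lambda(x_1,\dots,x_{2n})$ as well, we get $\res^\mathfrak{k}\big(s_\lambda(x_1,\dots,x_{2n})\big) = \operatorname{ch}_\mathfrak{k} V^\mathfrak{g}(\lambda)$, which by Proposition \ref{prop: ch_t V^g(lm)} equals $\sum_{\nu} m_{\lambda,\nu}\, s^{Sp}_\nu(y_1,\dots,y_n)$.

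Comparing the two computations, we obtain the identity $\sum_{\nu} c_{\lambda,\nu}\, s^{Sp}_\nu = \sum_{\nu} m_{\lambda,\nu}\, s^{Sp}_\nu$ in $\mathbb{Z}[y_1^{\pm 1},\dots,y_n^{\pm 1}]$. Since the symplectic Schur functions $\{ s^{Sp}_\nu \mid \nu \in \Par_{\leq n} \}$ are linearly independent (as recorded right after \eqref{eq: wtsp}), it follows that $c_{\lambda,\nu} = m_{\lambda,\nu}$ for every $\nu \in \Par_{\leq n}$, which is exactly the claim.

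There is essentially no serious obstacle: the proof is a straightforward character chase, and the only point that deserves a line of justification is that the $\imath$-character functor uses the same restriction homomorphism $\res^\mathfrak{k}$ as the classical $\mathfrak{g}$-to-$\mathfrak{k}$ restriction, so that the already-established formula $\operatorname{ch}_\mathfrak{k} V^\mathfrak{g}(\lambda) = \sum_\nu m_{\lambda,\nu}\, s^{Sp}_\nu$ transfers to the quantum side verbatim; this is immediate from the definitions of $\res^\mathfrak{k}$ and of $\operatorname{ch}_\imath$.
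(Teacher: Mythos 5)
Your proof is correct and is essentially identical to the paper's: both decompose the restricted module $V(\lambda)$ into copies of the $V^\imath(\nu)$, compute $\operatorname{ch}_\imath V(\lambda)$ via Proposition \ref{prop: ch_i M} as $\res^\mathfrak{k}(s_\lambda) = \operatorname{ch}_\mathfrak{k} V^\mathfrak{g}(\lambda)$, invoke Proposition \ref{prop: ch_t V^g(lm)}, and conclude by linear independence of the symplectic Schur functions. The only difference is that you spell out the complete-reducibility and uniqueness-of-decomposition points slightly more explicitly, which the paper leaves implicit.
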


\begin{proof}
  Let us write
  \[
    V(\lambda) \simeq \bigoplus_{\nu \in \Par_{\leq n}} V^\imath(\nu)^{m'_{\lambda,\nu}}
  \]
  for some $m'_{\lambda,\nu} \geq 0$.
  Then, we have
  \[
    \sum_{\nu \in \Par_{\leq n}} m'_{\lambda,\nu} s^{Sp}_\nu(y_1,\dots,y_n) = \operatorname{ch}_\imath V(\lambda) = \res^\mathfrak{k}(s_\lambda(x_1,\dots,x_{2n})) = \operatorname{ch}_\mathfrak{k} V^\mathfrak{g}(\lambda),
  \]
  where the second equality follows from Proposition \ref{prop: ch_i M}.
  Now, the assertion follows from Proposition \ref{prop: ch_t V^g(lm)} and the linearly independence of the symplectic Schur functions.
\end{proof}

\begin{lem}\label{lem: quantum K}
  There exists a $\mathbf{U}^\imath$-module isomorphism
  \[
    K: V(\varpi_1) \rightarrow V(\varpi_{2n-1})
  \]
  such that
  \[
    K(b_{(a)}) = b_{s(a)^\vee} \ \text{ for all } a \in [1,2n].
  \]
\end{lem}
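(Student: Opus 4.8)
The plan is to define $K\colon V(\varpi_1)\to V(\varpi_{2n-1})$ to be the $\mathbb{Q}(q)$-linear map determined on bases by $b_{(a)}\mapsto b_{s(a)^\vee}$. Since $s$ is an involution of $[1,2n]$, the assignment $a\mapsto s(a)^\vee$ is a bijection of index sets, so $K$ is visibly a linear isomorphism. Thus the entire content of the lemma is to check that $K$ intertwines the action of each algebra generator of $\mathbf{U}^\imath$, namely $E_{2i-1},F_{2i-1},K_{2i-1}^{\pm1}$ for $i\in[1,n]$ and $B_{2i}$ for $i\in[1,n-1]$; once this is verified, $K$ is a $\mathbf{U}^\imath$-module isomorphism.

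\textbf{The easy generators.} For $E_{2i-1},F_{2i-1},K_{2i-1}^{\pm1}$ I would just read off the actions from Example~\ref{ex: Umod str of Vvp1 and Vvp2n-1}: on $V(\varpi_1)$ one has $E_{2i-1}b_{(a)}=\delta_{a,2i}b_{(2i-1)}$, $F_{2i-1}b_{(a)}=\delta_{a,2i-1}b_{(2i)}$, $K_{2i-1}b_{(a)}=q^{\delta_{a,2i-1}-\delta_{a,2i}}b_{(a)}$, while on $V(\varpi_{2n-1})$ one has $E_{2i-1}b_{c^\vee}=\delta_{c,2i-1}b_{(2i)^\vee}$, $F_{2i-1}b_{c^\vee}=\delta_{c,2i}b_{(2i-1)^\vee}$, $K_{2i-1}b_{c^\vee}=q^{\delta_{c,2i}-\delta_{c,2i-1}}b_{c^\vee}$. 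Since $s$ interchanges $2i-1$ and $2i$ and preserves the partition of $[1,2n]$ into such pairs, a short case check shows each of these three actions on $V(\varpi_1)$ is carried by $K$ onto the corresponding action on $V(\varpi_{2n-1})$.

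\textbf{The generator $B_{2i}$.} This is the only genuine computation. I would evaluate $B_{2i}=F_{2i}-q[E_{2i-1},[E_{2i+1},E_{2i}]_{q^{-1}}]_{q^{-1}}K_{2i}^{-1}$ on both modules by expanding the iterated $q$-commutator and applying Example~\ref{ex: Umod str of Vvp1 and Vvp2n-1} term by term. On $V(\varpi_1)$ one gets $E_{2i+1}E_{2i}b_{(a)}=0$ and $E_{2i}E_{2i+1}b_{(a)}=\delta_{a,2i+2}b_{(2i)}$, hence $[E_{2i+1},E_{2i}]_{q^{-1}}b_{(a)}=-q^{-1}\delta_{a,2i+2}b_{(2i)}$; applying $[E_{2i-1},-]_{q^{-1}}$ gives $-q^{-1}\delta_{a,2i+2}b_{(2i-1)}$, and then multiplying by $-qK_{2i}^{-1}$ (which scales $b_{(a)}$ by $q^{\delta_{a,2i+1}-\delta_{a,2i}}$) and adding $F_{2i}b_{(a)}=\delta_{a,2i}b_{(2i+1)}$ yields
\[
  B_{2i}b_{(a)}=\delta_{a,2i}b_{(2i+1)}+\delta_{a,2i+2}b_{(2i-1)}.
\]
An entirely parallel computation on $V(\varpi_{2n-1})$ gives
\[
  B_{2i}b_{c^\vee}=\delta_{c,2i+1}b_{(2i)^\vee}+\delta_{c,2i-1}b_{(2i+2)^\vee}.
\]
Now I would compare through $K$: $K(B_{2i}b_{(2i)})=K(b_{(2i+1)})=b_{(2i+2)^\vee}=B_{2i}b_{(2i-1)^\vee}=B_{2i}K(b_{(2i)})$, and likewise $K(B_{2i}b_{(2i+2)})=K(b_{(2i-1)})=b_{(2i)^\vee}=B_{2i}b_{(2i+1)^\vee}=B_{2i}K(b_{(2i+2)})$; on every other basis vector both sides vanish, because $B_{2i}b_{c^\vee}\neq0$ forces $c\in\{2i-1,2i+1\}$, i.e.\ $b_{c^\vee}$ equals $K(b_{(2i)})$ or $K(b_{(2i+2)})$. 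This establishes intertwining for $B_{2i}$ and completes the proof.

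\textbf{Main obstacle and a sanity check.} The main obstacle is exactly the computation of $B_{2i}$ on the two modules: it is routine but requires care, since the iterated $q$-commutator involves several cancellations and one must keep track of the $K_{2i}^{-1}$ twist. As an independent sanity check on the existence of $K$ (though not on its precise normalization), one can observe that $\res^\mathfrak{k}(s_{\varpi_1})=\res^\mathfrak{k}(s_{\varpi_{2n-1}})=s^{Sp}_{\varpi_1}$, so by Proposition~\ref{prop: q-multiplicity} both $V(\varpi_1)$ and $V(\varpi_{2n-1})$ are isomorphic to the irreducible $\mathbf{U}^\imath$-module $V^\imath(\varpi_1)$; since all $\mathbf{U}^\imath$-weight spaces of the latter are one-dimensional and the $\mathbf{U}^\imath$-weights of $b_{(a)}$ and $b_{a^\vee}$ are negatives of one another (as one sees from $\res^\mathfrak{k}$ applied to the $\mathbf{U}$-weights $\epsilon_a$ and $(1,\dots,1)-\epsilon_a$), any $\mathbf{U}^\imath$-isomorphism must send $b_{(a)}$ to a scalar multiple of $b_{s(a)^\vee}$, and the actions of $F_{2i-1}$ and $B_{2i}$ computed above force all these scalars to coincide.
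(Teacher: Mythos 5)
Your proposal is correct and follows essentially the same route as the paper: the paper defines $K$ by the same formula and simply asserts that the intertwining with the $\mathbf{U}^\imath$-generators can be "straightforwardly verified" from Example \ref{ex: Umod str of Vvp1 and Vvp2n-1}, which is exactly the computation you carry out (and your evaluations of $B_{2i}$ on both $V(\varpi_1)$ and $V(\varpi_{2n-1})$ check out).
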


\begin{proof}
  Clearly, the linear map $K$ defined as above is an isomorphism.
  Using Example \ref{ex: Umod str of Vvp1 and Vvp2n-1}, one can straightforwardly verify that
  \[
    K(x b_{(a)}) = x b_{s(a)^\vee} \ \text{ for all } x \in \mathbf{U}^\imath \text{ and } a \in [1,2n].
  \]
  Hence, the map $K$ is a $\mathbf{U}^\imath$-module isomorphism.
\end{proof}

\begin{lem}\label{lem: quantum pi}
  There exists a $\mathbf{U}^\imath$-module homomorphism
  \[
    \pi: V(\varpi_2) \rightarrow V(\varpi_2) \oplus V(\varpi_0)
  \]
  such that
  \[
    \pi(b_{(a_1,a_2)}) \equiv \begin{cases}
      b_{()} & \text{ if } (a_1,a_2) = (1,2), \\
      b_{(a_1,a_2)} & \text{ if } (a_1,a_2) \neq (1,2).
    \end{cases}
  \]
  for all $1 \leq a_1 < a_2 \leq 2n$.
\end{lem}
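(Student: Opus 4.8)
The plan is to realize $\pi$ as the direct sum of the two $\mathbf{U}^\imath$-module projections of $V(\varpi_2)$ onto its isotypic components, and to verify the stated congruence by means of a contragredient form together with the almost orthonormality of the canonical basis. I would begin by recording the decomposition. Since $V^\mathfrak{g}(\varpi_2) = \wedge^2\mathbb{C}^{2n}$, restricted to $\mathfrak{sp}_{2n}(\mathbb{C})$, is $V^\mathfrak{s}(\varpi_2)\oplus V^\mathfrak{s}(\varpi_0)$, we have $m_{\varpi_2,\varpi_2} = m_{\varpi_2,\varpi_0} = 1$ and $m_{\varpi_2,\nu} = 0$ for all other $\nu$; hence by Proposition \ref{prop: q-multiplicity} there is a $\mathbf{U}^\imath$-module isomorphism $V(\varpi_2)\simeq V^\imath(\varpi_2)\oplus V^\imath(\varpi_0)$ (the first summand being absent when $n=1$). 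Note also that $V(\varpi_0)$, restricted to $\mathbf{U}^\imath$, is the trivial module $V^\imath(\varpi_0)$.

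Next I would pin down the trivial $\mathbf{U}^\imath$-submodule $N\subseteq V(\varpi_2)$ explicitly. A weight computation shows that the $\mathbf{U}^\imath$-weight-zero subspace of $V(\varpi_2)$ is $\bigoplus_{i=1}^{n}\mathbb{Q}(q)\,b_{(2i-1,2i)}$ and that $E_{2i-1}$ and $F_{2i-1}$ annihilate it (the resulting $\mathbf{U}$-weight cannot occur in $\wedge^2\mathbb{C}^{2n}$); hence a vector $v = \sum_i v_i b_{(2i-1,2i)}$ spans a trivial $\mathbf{U}^\imath$-submodule if and only if $B_{2i}v = 0$ for every $i\in[1,n-1]$. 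Transporting this computation along the embedding $\bigvee\colon V(\varpi_2)\hookrightarrow V(\varpi_1)\otimes V(\varpi_1)$ and using the explicit action on $V(\varpi_1)$ from Example \ref{ex: Umod str of Vvp1 and Vvp2n-1}, one finds that $B_{2i}$ carries $b_{(2i-1,2i)}$ and $b_{(2i+1,2i+2)}$ into $\mathbb{Q}(q)\,b_{(2i-1,2i+1)}$ and kills $b_{(2j-1,2j)}$ for $j\neq i,i+1$, the equation $B_{2i}v = 0$ becoming a recursion of the form $v_{i+1} = q^{-2}v_i$. Normalizing $v_1 = 1$ therefore gives $v = \sum_{i=1}^{n}q^{-2(i-1)}b_{(2i-1,2i)}$; in particular $v\equiv b_{(1,2)}$.

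Finally I would assemble $\pi$. Equip $V(\varpi_2)$ with the contragredient form for which $\mathbf{B}(\varpi_2)$ is almost orthonormal; it remains contragredient over $\mathbf{U}^\imath$. As $V^\imath(\varpi_2)$ and $V^\imath(\varpi_0)$ are non-isomorphic irreducibles, the line $N = \mathbb{Q}(q)v$ and its orthogonal complement $M = N^\perp$ are complementary $\mathbf{U}^\imath$-submodules with $(v,v)\neq 0$; the projection onto $M$ along $N$, namely $p_M\colon x\mapsto x - (x,v)(v,v)^{-1}v$, and the projection onto $N$ followed by the isomorphism $v\mapsto b_{()}$, namely $p_N\colon x\mapsto (x,v)(v,v)^{-1}b_{()}$, are $\mathbf{U}^\imath$-module homomorphisms $V(\varpi_2)\to V(\varpi_2)$ and $V(\varpi_2)\to V(\varpi_0)$; set $\pi := (p_M,p_N)$. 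Almost orthonormality together with the explicit form of $v$ gives $(b_{(1,2)},v)\equiv 1$, $(v,v)\equiv 1$, and $(b_{(a_1,a_2)},v)\equiv 0$ for $(a_1,a_2)\neq(1,2)$ — either $b_{(a_1,a_2)}$ has nonzero $\mathbf{U}^\imath$-weight, hence is orthogonal to $v$, or it is some $b_{(2i_0-1,2i_0)}$ with $i_0\geq 2$, in which case $(b_{(a_1,a_2)},v)$ reduces to the single term $q^{-2(i_0-1)}(b_{(2i_0-1,2i_0)},b_{(2i_0-1,2i_0)})\in q^{-1}\mathbb{Q}[\![q^{-1}]\!]$. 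Combined with $v\equiv b_{(1,2)}$, this yields $p_M(b_{(1,2)})\equiv 0$, $p_N(b_{(1,2)})\equiv b_{()}$, and $p_M(b_{(a_1,a_2)})\equiv b_{(a_1,a_2)}$, $p_N(b_{(a_1,a_2)})\equiv 0$ for $(a_1,a_2)\neq(1,2)$, which is exactly the asserted congruence. The chief obstacle is the explicit determination of $v$: since $B_{2i}$ is not a Chevalley generator, its action on the canonical basis must be computed by hand through $\bigvee$, and the $q$-powers must be tracked precisely in order to conclude $v\equiv b_{(1,2)}$ rather than merely that $v$ spans the trivial summand. The remaining points — the branching multiplicities, orthogonality of distinct isotypic components under a contragredient form, and that $p_M$ and $p_N$ are $\mathbf{U}^\imath$-module maps (a short check using that $\rho$ restricts to an anti-automorphism of $\mathbf{U}^\imath$) — are routine.
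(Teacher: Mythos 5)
Your proposal is correct in substance, but it takes a different route from the paper: the paper proves this lemma essentially by citation, invoking \cite[Theorem 4.3.1]{W23+} (a general stability result for $\imath$canonical bases), and only remarks in passing that one can alternatively exhibit the trivial $\mathbf{U}^\imath$-summand by hand, writing down the vector $w_0 = b_{(1,2)} - q^{-2}b_{(3,4)}$ with a reference to \cite[Lemma 4.1.4]{W23}. What you do is carry out that alternative in full and self-contained form: you get the decomposition $V(\varpi_2) \simeq V^\imath(\varpi_2) \oplus V^\imath(\varpi_0)$ from complete reducibility and Proposition \ref{prop: q-multiplicity}, locate the trivial summand inside the $K_{2i-1}$-weight-zero space $\bigoplus_i \mathbb{Q}(q) b_{(2i-1,2i)}$ (your observation that $E_{2i-1}, F_{2i-1}$ kill this space for weight reasons, and that $B_{2i}$ only couples the $i$-th and $(i{+}1)$-st pair vectors through $b_{(2i-1,2i+1)}$, is correct), and then build $\pi$ as the orthogonal projections with respect to the contragredient form, with the congruences following from almost orthonormality and $v \equiv b_{(1,2)}$. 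This buys independence from the external references, at the price of the one genuinely delicate computation you flag: the exact coefficient in the recursion $v_{i+1} = c\, v_i$. Note that your all-positive answer $v = \sum_i q^{-2(i-1)} b_{(2i-1,2i)}$ differs in sign pattern from the paper's hinted $w_0$ (which, as literally written with only two terms, is anyway not $B_{4}$-invariant once $n \geq 3$, so it should be read as shorthand); the precise power and sign depend on the coproduct and canonical-basis conventions, which the paper does not fix. This does not affect your argument, since all you use is that the recursion is two-term with nonzero coefficients (so the invariant line exists and is unique in the weight-zero space) and that the ratio lies in $q^{-1}\mathbb{Q}[q^{-1}]$, giving $v \equiv b_{(1,2)}$; but if you present the explicit $v$, you should either fix conventions and verify the coefficient, or weaken the claim to $v_{i+1}/v_i \in q^{-1}\mathbb{Q}[q^{-1}]$, which is all that is needed.
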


\begin{proof}
  The assertion follows from \cite[Theorem 4.3.1]{W23+}.
  Alternatively, one can verify that the vector $w_0 := b_{(1,2)} - q^{-2} b_{(3,4)} \in V(\varpi_2)$ spans the $\mathbf{U}^\imath$-submodule isomorphic to $V(\varpi_0)$ ({\it cf.}\ \cite[Lemma 4.1.4]{W23}).
\end{proof}

\subsection{Reduction map and successor map}
\begin{defi}\normalfont
  The \emph{reduction map} is the $\mathbf{U}^\imath$-module homomorphism
  \[
    \red = \red_l : V(\varpi_l) \rightarrow \bigoplus_{\substack{0 \leq k \leq \min(l, 2n-l) \\ l-k \in 2\mathbb{Z}}} V(\varpi_k)
  \]
  defined inductively as follows.
  \begin{enumerate}
    \item If $l \leq 1$, then the reduction map is the identity map.
    \item If $l > 1$, then the reduction map is the composite
    \[
      \pi \circ \bigwedge \circ (K^{-1} \otimes \mathrm{id}) \circ R \circ (\red_{l-1} \otimes \mathrm{id}) \circ R \circ (K \otimes \mathrm{id}) \circ \bigvee
    \]
  \end{enumerate}
\end{defi}

\begin{prop}\label{prop: quantum red is comb red at infty}
  For each $\mathbf{a} \in \SST_{2n}(\varpi_l)$, we have
  \[
    \red(b_{\mathbf{a}}) \equiv b_{\red(\mathbf{a})}.
  \]
\end{prop}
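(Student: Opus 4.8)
The plan is to prove the proposition by induction on $l$, following the inductive definition of the quantum reduction map and matching it step by step with the combinatorial factorization of $\red$ recorded in Corollary \ref{cor: factorization of red}\eqref{item: factorization of red 1}.

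The base case $l \leq 1$ is immediate, since then both the quantum and the combinatorial reduction maps are the identity. For $l \geq 2$, recall that by definition $\red_l = \pi \circ \bigwedge \circ (K^{-1} \otimes \mathrm{id}) \circ R \circ (\red_{l-1} \otimes \mathrm{id}) \circ R \circ (K \otimes \mathrm{id}) \circ \bigvee$. I would feed $b_{\mathbf{a}}$ into this composite and propagate the relation $\equiv$ through each factor, using Proposition \ref{prop: quantum maps} for $\bigvee$, $R$, $\bigwedge$, Lemma \ref{lem: quantum K} for $K$ and $K^{-1}$, the induction hypothesis for $\red_{l-1}$, and Lemma \ref{lem: quantum pi} for $\pi$. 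Concretely, $\bigvee$ followed by $K \otimes \mathrm{id}$ yields $\equiv b_{s(a_l)^\vee} \otimes b_{(a_1,\dots,a_{l-1})}$; the first $R$ yields $\equiv b_{\mathbf{c}} \otimes b_{\mathbf{d}}$ with $(\mathbf{c},\mathbf{d}) = R(s(a_l)^\vee,(a_1,\dots,a_{l-1}))$; then $\red_{l-1} \otimes \mathrm{id}$ followed by the second $R$ yields $\equiv b_{s(b_k)^\vee} \otimes b_{\mathbf{b}'}$ with $R(\red(\mathbf{c}),\mathbf{d}) = (s(b_k)^\vee,\mathbf{b}')$; then $K^{-1} \otimes \mathrm{id}$ followed by $\bigwedge$ yields $\equiv b_{\mathbf{b}}$ with $\mathbf{b} = (b_1,\dots,b_k)$, the application of $\bigwedge$ being legitimate precisely because Proposition \ref{prop: pre-factorization of red} guarantees $b_k > b_{k-1}$, so that $\bigwedge$ does not return the formal symbol $0$; and finally $\pi$ gives $\equiv b_{()}$ or $\equiv b_{\mathbf{b}}$ according as $\mathbf{b} = (1,2)$ or not. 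By Proposition \ref{prop: pre-factorization of red} together with Lemma \ref{lem: red 1,l} this common value is exactly $b_{\red(\mathbf{a})}$ in every case, which is what we want.

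The only point demanding care is transporting the congruence $\equiv$ along the composite. For this I would first record the elementary observation that if $\varphi\colon M \to N$ is a $\mathbf{U}^\imath$-module homomorphism between modules carrying almost orthonormal bases $B_M$, $B_N$ with $\varphi(b) \in \mathbb{Q}[\![q^{-1}]\!]B_N$ for every $b \in B_M$ — which holds automatically once $\varphi(b) \equiv$ (a basis vector or $0$) for all $b \in B_M$ — then $\varphi$ maps the lattice $\mathbb{Q}[\![q^{-1}]\!]B_M$ into $\mathbb{Q}[\![q^{-1}]\!]B_N$, hence induces a well-defined map modulo $q^{-1}$, and therefore respects $\equiv$. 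Every factor occurring in $\red_l$ has this property, where for the tensor-product factors one uses that a tensor product of almost orthonormal bases is again almost orthonormal, as recalled in Section \ref{sect: preliminary qsp}; so the congruences chain together along the composite. I expect this bookkeeping, together with the minor matching of the direct-sum indexings in the codomains of $\red_{l-1}$, $\bigwedge$, and $\pi$, to be the only real subtlety, everything else being a direct appeal to Proposition \ref{prop: quantum maps}, Lemmas \ref{lem: quantum K}, \ref{lem: quantum pi}, \ref{lem: red 1,l}, and the combinatorial identities of Proposition \ref{prop: pre-factorization of red} and Corollary \ref{cor: factorization of red}.
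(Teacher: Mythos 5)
Your proposal is correct and is essentially the paper's own argument: the paper proves the proposition in one line by combining Corollary \ref{cor: factorization of red}\eqref{item: factorization of red 1}, Proposition \ref{prop: quantum maps}, and Lemmas \ref{lem: quantum K} and \ref{lem: quantum pi}, i.e.\ by matching the inductive definition of the quantum reduction map with the combinatorial factorization factor by factor, exactly as you do. Your extra care about propagating $\equiv$ through the composite (lattice preservation and almost orthonormality of tensor-product bases) is a correct elaboration of details the paper leaves implicit.
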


\begin{proof}
  The assertion follows from Corollary \ref{cor: factorization of red} \eqref{item: factorization of red 1}, Proposition \ref{prop: quantum maps}, and Lemmas \ref{lem: quantum K} and \ref{lem: quantum pi}.
\end{proof}

\begin{defi}\normalfont
  Let $\lambda = (\lambda_1,\dots,\lambda_l) \in \Par_{\leq 2n}$.
  Set $\lambda' := (\lambda_1-1,\dots,\lambda_l-1)$.
  The \emph{successor map} is the $\mathbf{U}^\imath$-module homomorphism
  \[
    \suc : V(\lambda) \rightarrow \bigoplus_{\lambda' \underset{\text{vert}}{\subseteq} \mu} V(\mu)
  \]
  defined to be the composite
  \[
    \suc := * \circ (\red \otimes \mathrm{id}) \circ d.
  \]
\end{defi}

\begin{prop}
  Let $\lambda \in \Par_{\leq 2n}$ and $T \in \SST_{2n}(\lambda)$.
  Then, we have
  \[
    \suc(b_T) \equiv b_{\suc(T)}.
  \]
\end{prop}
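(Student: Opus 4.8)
The plan is a direct diagram chase through the definition $\suc = * \circ (\red \otimes \mathrm{id}) \circ d$ of the quantum successor map, feeding in the three relevant statements: Proposition \ref{prop: quantum maps}(5) for $d$, Proposition \ref{prop: quantum red is comb red at infty} for $\red$, and Proposition \ref{prop: quantum maps}(4) for $*$. Since the combinatorial successor map is defined by the \emph{same} composite $* \circ (\red, \mathrm{id}) \circ d$, matching the two amounts to showing that the symbol $\equiv$ propagates through each factor.

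The one point worth isolating first is exactly this propagation. If $\phi \colon M \to N$ is a $\mathbf{U}^\imath$-module homomorphism between finite-dimensional weight modules carrying almost orthonormal bases $B_M, B_N$, and $\phi(B_M) \subseteq \mathbb{Q}[\![q^{-1}]\!] B_N$, then $\phi\bigl(q^{-1}\mathbb{Q}[\![q^{-1}]\!] B_M\bigr) \subseteq q^{-1}\mathbb{Q}[\![q^{-1}]\!] B_N$, so $m_1 \equiv m_2$ forces $\phi(m_1) \equiv \phi(m_2)$. The hypothesis $\phi(B_M) \subseteq \mathbb{Q}[\![q^{-1}]\!] B_N$ holds for $d$, for $\red \otimes \mathrm{id}$, and for $*$, precisely because the cited $\equiv$-identities say that each basis vector is sent to a canonical basis vector (or to $0$, or to $b_{()}$) modulo $q^{-1}\mathbb{Q}[\![q^{-1}]\!]$. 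Similarly, if $m_2 \equiv b$ for a basis vector $b$, then $m_1 \otimes m_2 \equiv m_1' \otimes b$ whenever $m_1 \equiv m_1'$, because $m_1 \otimes m_2 - m_1' \otimes b = (m_1 - m_1')\otimes m_2 + m_1' \otimes (m_2 - b)$ and both summands lie in $q^{-1}\mathbb{Q}[\![q^{-1}]\!](B_M \otimes B_N)$ (using $m_1' \in \mathbb{Q}[\![q^{-1}]\!] B_M$, valid since $m_1'$ is a basis vector). Throughout, $\equiv$ is meaningful because tensor products and the relevant direct sums of modules with almost orthonormal bases again carry such bases, as recalled in the excerpt.

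Granting this, the computation runs as follows. Write $d(T) = (\mathbf{a}, T')$, so that $\suc(T) = \red(\mathbf{a}) * T'$ by the definition of the combinatorial successor map. By Proposition \ref{prop: quantum maps}(5), $d(b_T) \equiv b_{\mathbf{a}} \otimes b_{T'}$; applying the $\mathbf{U}^\imath$-module homomorphism $\red \otimes \mathrm{id}$ and then Proposition \ref{prop: quantum red is comb red at infty},
\begin{align*}
  (\red \otimes \mathrm{id})\bigl(d(b_T)\bigr) \equiv (\red \otimes \mathrm{id})(b_{\mathbf{a}} \otimes b_{T'}) = \red(b_{\mathbf{a}}) \otimes b_{T'} \equiv b_{\red(\mathbf{a})} \otimes b_{T'}.
\end{align*}
Applying $*$ and Proposition \ref{prop: quantum maps}(4),
\begin{align*}
  \suc(b_T) = {*}\bigl((\red \otimes \mathrm{id})(d(b_T))\bigr) \equiv {*}\bigl(b_{\red(\mathbf{a})} \otimes b_{T'}\bigr) \equiv b_{\red(\mathbf{a}) * T'} = b_{\suc(T)},
\end{align*}
which is the claim.

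I do not expect a genuine obstacle here: the argument is entirely formal once the lattice-preservation remark of the second paragraph is in place. The only thing demanding care is the bookkeeping — keeping track of which basis the symbol $\equiv$ refers to at each stage (the canonical basis of $V(\lambda)$, the tensor-product bases on $V(\varpi_l)\otimes V(\lambda')$ and $V(\varpi_k)\otimes V(\lambda')$, and the direct-sum basis on the target of $*$) and confirming that each map sends basis vectors into the $\mathbb{Q}[\![q^{-1}]\!]$-span of the target basis — and this is exactly what the cited propositions provide.
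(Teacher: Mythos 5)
Your argument is correct and is essentially the paper's own proof: the paper simply cites Proposition \ref{prop: quantum maps} and Proposition \ref{prop: quantum red is comb red at infty} and composes them through $\suc = * \circ (\red \otimes \mathrm{id}) \circ d$, exactly as you do. Your explicit lattice-preservation remark justifying why $\equiv$ propagates through each factor is a detail the paper leaves implicit, and it is handled correctly.
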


\begin{proof}
  The assertion follows from Propositions \ref{prop: quantum maps} and \ref{prop: quantum red is comb red at infty}.
\end{proof}

\subsection{An orthonormal basis of $V^\imath(\nu)$}

\begin{prop}\label{prop: p_nu}
  Let $\nu \in \Par_{\leq n}$.
  Then, there exists a basis $\{ b^\imath_T \mid T \in \SpT_{2n}(\nu) \}$ of $V^\imath(\nu)$ and a $\mathbf{U}^\imath$-module homomorphism
  \[
    p_\nu: V(\nu) \rightarrow V^\imath(\nu)
  \]
  such that
  \[
    p_\nu(b_T) \equiv \begin{cases}
      b^\imath_T & \text{ if } T \in \SpT_{2n}(\nu), \\
      0 & \text{ if } T \notin \SpT_{2n}(\nu),
    \end{cases}
  \]
  for all $T \in \SST_{2n}(\nu)$.
\end{prop}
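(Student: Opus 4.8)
The plan is to realize $V^\imath(\nu)$ as a concrete $\mathbf{U}^\imath$-submodule of $V(\nu)$ cut out by the successor endomorphism, and to track the canonical basis of $V(\nu)$ under the orthogonal projection onto it. First, $V^\imath(\nu)$ occurs in $V(\nu)$ with multiplicity exactly one: by Proposition \ref{prop: q-multiplicity} this multiplicity equals $m_{\nu,\nu}$, and by Littlewood's branching rule \cite{Lit50} (valid since $\nu\in\Par_{\leq n}$) one has $m_{\nu,\nu}=\sum_{\kappa}c^{\nu}_{\nu\kappa}=1$, the sum running over partitions $\kappa$ with even columns, forced to be $\kappa=()$ because $|\kappa|=|\nu|-|\nu|=0$. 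Since $V(\nu)$, restricted to $\mathbf{U}^\imath$, carries a contragredient form and is completely reducible, there is a unique submodule $W\subseteq V(\nu)$ with $W\cong V^\imath(\nu)$; write $L:=\sum_{T\in\SST_{2n}(\nu)}\mathbb{Q}[\![q^{-1}]\!]\,b_T$ for the canonical lattice, on which (by almost orthonormality) the form induces a nondegenerate positive definite form on $L/q^{-1}L$. The whole statement reduces to proving that
\[
  L=(L\cap W)\oplus(L\cap W^{\perp})
  \qquad\text{and}\qquad
  \overline{L\cap W}=\bigoplus_{T\in\SpT_{2n}(\nu)}\mathbb{Q}\,\overline{b_T}\subseteq L/q^{-1}L .
\]
Granting this, I set $b^\imath_T:=p_\nu(b_T)$ for $T\in\SpT_{2n}(\nu)$, where $p_\nu\colon V(\nu)\to W$ is the orthogonal projection; then $\overline{b^\imath_T}=\overline{b_T}$, so $\{b^\imath_T\}$ is a $\mathbb{Q}[\![q^{-1}]\!]$-basis of $L\cap W$ and hence a basis of $V^\imath(\nu)$, while for $T\notin\SpT_{2n}(\nu)$ the class $\overline{b_T}$ lies in the complementary space $\overline{L\cap W^\perp}$, so $p_\nu(b_T)\in q^{-1}(L\cap W)$, i.e.\ $p_\nu(b_T)\equiv 0$.

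To produce $W$ and the splitting I would use the quantum successor map $\suc\colon V(\nu)\to\bigoplus_{\mu}V(\mu)$, which is $\mathbf{U}^\imath$-linear and satisfies $\suc(b_T)\equiv b_{\suc(T)}$. Let $\operatorname{pr}_\nu$ be the projection onto the summand $V(\nu)$ and set $\Phi:=\operatorname{pr}_\nu\circ\suc\in\operatorname{End}_{\mathbf{U}^\imath}(V(\nu))$. By Lemma \ref{lem: stationality of suc} and Corollary \ref{cor: suc(T) = T} (the place where $\nu\in\Par_{\leq n}$ is used again), $\suc(T)=T$ for $T\in\SpT_{2n}(\nu)$ while $\sh(\suc(T))\subsetneq\nu$ otherwise; hence $\Phi(L)\subseteq L$ and the reduced operator $\bar\Phi$ on $L/q^{-1}L$ is precisely the projection onto $\bigoplus_{T\in\SpT_{2n}(\nu)}\mathbb{Q}\,\overline{b_T}$ along $\bigoplus_{T\notin\SpT_{2n}(\nu)}\mathbb{Q}\,\overline{b_T}$; in particular $\bar\Phi^2=\bar\Phi$. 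Since $\Phi$ preserves the free lattice $L$, its characteristic polynomial lies in $\mathbb{Q}[\![q^{-1}]\!][x]$ and reduces modulo $q^{-1}$ to $x^{a}(x-1)^{b}$ with $a=\#(\SST_{2n}(\nu)\setminus\SpT_{2n}(\nu))$, $b=\#\SpT_{2n}(\nu)$, so every eigenvalue of $\Phi$ is integral over $\mathbb{Q}[\![q^{-1}]\!]$ and reduces to $0$ or to $1$.

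The crux, and the step I expect to be the main obstacle, is to show that $\Phi$ is self-adjoint for the contragredient form. I would get this by checking that each elementary map composing $\suc=*\circ(\red\otimes\mathrm{id})\circ d$ is adjointable with its natural partner: $\bigvee$ and $\bigwedge$ are mutually adjoint, the combinatorial $R$-matrices and the map $K$ of Lemma \ref{lem: quantum K} are isometries, $\pi$ of Lemma \ref{lem: quantum pi} is (up to the scalar there) the orthogonal projection attached to the decomposition $V(\varpi_2)=\mathbb{Q}(q)\bigl(b_{(1,2)}-q^{-2}b_{(3,4)}\bigr)\oplus(\cdots)^{\perp}$, and $d$ and $*$ are mutually adjoint up to the vertical-strip bookkeeping; assembling these identities gives $\operatorname{pr}_\nu\circ\suc=(\operatorname{pr}_\nu\circ\suc)^{*}$. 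Granting self-adjointness, specialization of $q$ to a large transcendental real together with the spectral theorem shows $\Phi$ is semisimple over $\mathbb{Q}(q)$. Let $W_1$ (resp.\ $W_0$) be the sum of the eigenspaces of $\Phi$ whose eigenvalue reduces to $1$ (resp.\ to $0$): these are mutually orthogonal, $\Phi$-stable $\mathbf{U}^\imath$-submodules with $V(\nu)=W_1\oplus W_0$, and $\Phi$ preserves the full-rank sublattices $L\cap W_1$ and $L\cap W_0$. On $L\cap W_1$ the characteristic polynomial of $1-\Phi$ reduces to a power of $x$, so a high power of $1-\Phi$ maps $L\cap W_1$ into $q^{-1}(L\cap W_1)$; its reduction is $(1-\bar\Phi)^{\,\bullet}=1-\bar\Phi$ (as $\bar\Phi$ is idempotent), whence $\overline{L\cap W_1}\subseteq\operatorname{im}\bar\Phi$; symmetrically $\overline{L\cap W_0}\subseteq\ker\bar\Phi$. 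Comparing dimensions (using $\dim W_1+\dim W_0=\dim V(\nu)$ and $\operatorname{im}\bar\Phi\cap\ker\bar\Phi=0$) forces equalities, so $L=(L\cap W_1)\oplus(L\cap W_0)$, $\overline{L\cap W_1}=\bigoplus_{T\in\SpT_{2n}(\nu)}\mathbb{Q}\,\overline{b_T}$, and $W_0=W_1^{\perp}$.

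It remains to identify $W_1$. Its character equals that of the span of the symplectic classes, and via Proposition \ref{prop: ch_i M} and the invariance of symplectic Schur functions under $y_i\mapsto y_i^{\pm1}$ — exactly the manipulation in the proof of Proposition \ref{prop: ch_t V^g(lm)} — this gives $\operatorname{ch}_\imath W_1=\sum_{T\in\SpT_{2n}(\nu)}\prod_i y_i^{(-1)^{i-1}(T[2i-1]-T[2i])}=s^{Sp}_\nu(y_1,\dots,y_n)$; by complete reducibility and linear independence of symplectic Schur functions, $W_1\cong V^\imath(\nu)$, and the multiplicity-one statement yields $W_1=W$, $W_0=W^{\perp}$. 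Then $p_\nu$, the orthogonal projection onto $W=W_1$, is the desired $\mathbf{U}^\imath$-module homomorphism: it satisfies $p_\nu(L)=L\cap W$ and $\bar p_\nu=\bar\Phi$, so with $b^\imath_T:=p_\nu(b_T)$ for $T\in\SpT_{2n}(\nu)$ all the asserted congruences hold, as recorded in the first paragraph. The only genuinely computational ingredient is the self-adjointness of $\Phi$; everything else is formal bookkeeping with lattices, characters, and semisimplicity.
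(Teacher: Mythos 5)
Your argument stands or falls on the self-adjointness of $\Phi=\operatorname{pr}_\nu\circ\suc$, which you yourself identify as the crux, and this is a genuine gap: it is not proved, and it cannot be ``assembled'' from the ingredients you cite. The maps $\bigvee$, $\bigwedge$, $R$, $K$, $\pi$, $d$, $*$ entering $\suc$ are characterized in Proposition \ref{prop: quantum maps} and Lemmas \ref{lem: quantum K}, \ref{lem: quantum pi} only up to the congruence $\equiv$; on each irreducible constituent of their sources they may be rescaled by any factor congruent to $1$ modulo $q^{-1}$ without violating any property stated in the paper, and such rescalings destroy the adjointness/isometry identities you want (nothing in the paper pins $R$, $d$, $*$ down as isometries or adjoint pairs, and for the maps as given these identities may simply fail). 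Everything downstream of this --- the spectral splitting $V(\nu)=W_1\oplus W_0$, the orthogonality of the two pieces, $p_\nu(L)=L\cap W$, $\bar p_\nu=\bar\Phi$ --- rests on that missing step. There is also a secondary untreated issue: the eigenvalues of $\Phi$ need not lie in $\mathbb{Q}(q)$, and the grouping ``eigenvalue reduces to $1$'' versus ``reduces to $0$'' need not be stable under $\mathrm{Gal}(\overline{\mathbb{Q}(q)}/\mathbb{Q}(q))$ (e.g.\ the roots of $x^2-x+q^{-1}$ are conjugate but reduce to $1$ and $0$), so your $W_1$, $W_0$ are a priori only defined after extending scalars to $\mathbb{Q}(\!(q^{-1})\!)$; semisimplicity via specialization does not by itself restore rationality of the splitting.

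The detour through an orthogonal projection is also unnecessary, and this is where your route diverges from the paper. The paper starts from the very same operator: it sets $p_0:=\operatorname{pr}_\nu\circ\suc$ (your $\Phi$) and lets $p_1$ be the complementary component of $\suc$, observes via Corollary \ref{cor: suc(T) = T} that $p_0(b_T)\equiv b_T$ or $0$ and $p_1(b_T)\equiv 0$ or $b_{\suc(T)}$ according to whether $T$ is symplectic, uses the injectivity of the combinatorial successor map (Corollary \ref{cor: inj of comb suc}) to see that $\{p_0(b_T)\mid T\in\SpT_{2n}(\nu)\}$ is a basis of the image $p_0(V(\nu))$, and then computes $\operatorname{ch}_\imath p_0(V(\nu))=s^{Sp}_\nu(y_1,\dots,y_n)$ to conclude $p_0(V(\nu))\simeq V^\imath(\nu)$; one then simply takes $p_\nu:=p_0$ and $b^\imath_T:=p_0(b_T)$. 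Since the proposition only asks for \emph{some} $\mathbf{U}^\imath$-homomorphism and \emph{some} basis realizing the stated congruences, no multiplicity-one statement, no orthogonal complement, and no self-adjointness are needed. Your Littlewood multiplicity computation and the lattice bookkeeping are fine as far as they go, but they are superstructure erected on the unproved self-adjointness; to salvage your write-up you would either have to prove that identity for the specific normalizations of the maps (new, nontrivial work) or drop the orthogonal-projection formulation and argue with the image of $\Phi$ directly, as the paper does.
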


\begin{proof}
  Let $p_0 : V(\nu) \rightarrow V(\nu)$ and $p_1: V(\nu) \rightarrow \bigoplus_{\xi \underset{\text{vert}}{\subset} \nu} V(\xi)$ denote the composite of the successor map $\suc: V(\nu) \rightarrow \bigoplus_\xi V(\xi)$ followed by the projections, respectively.
  Then, by Corollary \ref{cor: suc(T) = T}, we have
  \[
    p_0(b_T) \equiv \begin{cases}
      b_T & \text{ if } T \in \SpT_{2n}(\nu), \\
      0 & \text{ if } T \notin \SpT_{2n}(\nu),
    \end{cases} \quad p_1(b_T) \equiv \begin{cases}
      0 & \text{ if } T \in \SpT_{2n}(\nu), \\
      b_{\suc(T)} & \text{ if } T \notin \SpT_{2n}(\nu),
    \end{cases}
  \]
  This implies that $p_0(V(\nu))$ and $p_1(V(\nu))$ contain linearly independent subsets $\{ p_0(b_T) \mid T \in \SpT_{2n}(\nu) \}$ and $\{ p_1(b_T) \mid T \notin \SpT_{2n}(\nu) \}$, respectively.
  Since the successor map on $\SST_{2n}(\nu)$ is injective by Corollary \ref{cor: inj of comb suc}, we see that they are bases of the two spaces.

  Now, we compute as
  \[
    \operatorname{ch}_\imath p_0(V(\nu)) = \sum_{T \in \SpT_{2n}(\nu)} \mathbf{y}^{\operatorname{wt}^{Sp}(T)} = \operatorname{ch}_\imath V^\imath(\nu).
  \]
  This implies $p_0(\nu) \simeq V^\imath(\nu)$.
  Thus, we complete the proof.
\end{proof}

\subsection{Quantum Littlewood-Richardson map}
\begin{defi}\normalfont
  The quantum Littlewood-Richardson map is the $\mathbf{U}^\imath$-module homomorphism
  \[
    \LRAII: V(\lambda) \rightarrow \bigoplus_{\substack{\nu \in \Par_{\leq n} \\ \nu \subseteq \lambda}} V^\imath(\nu) \otimes \mathbb{Q}(q) \Rec_{2n}(\lambda/\nu)
  \]
  defined to be the composite of the $\mathbf{U}^\imath$-module homomorphism
  \[
    V(\lambda) \rightarrow \bigoplus_{\substack{\nu \in \Par_{\leq n} \\ \nu \subseteq \lambda}} V(\nu) \otimes \mathbb{Q}(q) \SST_{2n}(\lambda)
  \]
  which sends $b_T$ to $\suc^k(b_T) \otimes T$ for each $T \in \SST_{2n}(\lambda)$, where $k > 0$ is such that $\suc^k(S) = \PAII(S)$ for all $S \in \SST_{2n}(\lambda)$, and the sum of $\mathbf{U}^\imath$-homomorphisms of the form
  \[
    V(\nu) \otimes \mathbb{Q}(q) \SST_{2n}(\lambda) \rightarrow V^\imath(\nu) \otimes \mathbb{Q}(q) \Rec_{2n}(\lambda/\nu)
  \]
  which sends $b_S \otimes T$ to $\delta_{S, \PAII(T)} p_\nu(b_{\PAII(T)}) \otimes \QAII(T)$ for each $S \in \SST_{2n}(\nu)$ and $T \in \SST_{2n}(\lambda)$.
\end{defi}

The following is immediate from the definition.

\begin{prop}\label{prop: quantum LR is comb LR at q infty}
  Let $T \in \SST_{2n}(\lambda)$.
  Then, we have
  \[
    \LRAII(b_T) \equiv b^\imath_{\PAII(T)} \otimes \QAII(T).
  \]
\end{prop}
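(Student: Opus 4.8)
The plan is to unwind the definition of $\LRAII$ as the stated composite of two $\mathbf{U}^\imath$-module homomorphisms and to track the image of the canonical basis vector $b_T$ modulo $q^{-1}$ through each stage. The only genuine inputs are the compatibility of the quantum successor map with the combinatorial one (which rests on Propositions~\ref{prop: quantum maps} and~\ref{prop: quantum red is comb red at infty}) and Proposition~\ref{prop: p_nu}; everything else is substitution into the definitions.

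First I would upgrade the identity $\suc(b_S)\equiv b_{\suc(S)}$ to its $k$-fold iterate. Since each of the homomorphisms $\bigvee,\bigwedge,R,*,d$ of Proposition~\ref{prop: quantum maps} sends a canonical basis vector to a canonical basis vector or to $0$ modulo $q^{-1}$, it preserves the $\mathbb{Q}[\![q^{-1}]\!]$-span of the canonical basis and maps the $q^{-1}$-sublattice into the corresponding $q^{-1}$-sublattice of the target; hence so does the reduction map, and therefore so does $\suc = {*}\circ(\red\otimes\mathrm{id})\circ d$. Consequently $v\equiv w$ implies $\suc(v)\equiv\suc(w)$, and an induction on $k$ yields $\suc^k(b_T)\equiv b_{\suc^k(T)}$ for every $k\geq 0$. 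Taking $k$ as in the definition of $\LRAII$, so that $\suc^k(S)=\PAII(S)$ for all $S\in\SST_{2n}(\lambda)$, this gives
\[
  \suc^k(b_T)\equiv b_{\PAII(T)}.
\]

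Next I would run $b_T$ through the two halves of the composite. Write $\nu:=\sh(\PAII(T))$. The first homomorphism sends $b_T$ to $\suc^k(b_T)\otimes T$, whose reduction modulo $q^{-1}$ is $b_{\PAII(T)}\otimes T$, lying in the summand $V(\nu)\otimes\mathbb{Q}(q)\SST_{2n}(\lambda)$; here $\nu\subseteq\lambda$ since $\nu=\nu^{k_0}\subseteq\cdots\subseteq\nu^1\subseteq\nu^0=\lambda$, and $\nu\in\Par_{\leq n}$ because $\PAII(T)=\suc(\PAII(T))$ forces the first column $\mathbf a$ of $\PAII(T)$ to satisfy $\red(\mathbf a)=\mathbf a$ (by Lemma~\ref{lem: stationality of suc} and its proof), so that $\mathbf a$ — and hence $\PAII(T)$ — is symplectic by Proposition~\ref{prop: red(a) = a iff a is symplectic} and therefore has at most $n$ rows. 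Expanding $\suc^k(b_T)=c\,b_{\PAII(T)}+\sum_{S\neq\PAII(T)}c_S b_S$ with $c\in 1+q^{-1}\mathbb{Q}[\![q^{-1}]\!]$ and $c_S\in q^{-1}\mathbb{Q}[\![q^{-1}]\!]$, the second homomorphism sends $b_S\otimes T\mapsto\delta_{S,\PAII(T)}\,p_{\sh(S)}(b_{\PAII(T)})\otimes\QAII(T)$, so every term with $S\neq\PAII(T)$ is annihilated and
\[
  \LRAII(b_T)=c\,p_\nu(b_{\PAII(T)})\otimes\QAII(T).
\]
Finally, since $\PAII(T)\in\SpT_{2n}(\nu)$, Proposition~\ref{prop: p_nu} gives $p_\nu(b_{\PAII(T)})\equiv b^\imath_{\PAII(T)}$, and as $c\equiv 1$ we conclude $\LRAII(b_T)\equiv b^\imath_{\PAII(T)}\otimes\QAII(T)$.

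I do not expect any real obstacle: the statement is genuinely immediate once the preceding results are available. The only point needing attention is the bookkeeping in the last step — checking that in the canonical-basis expansion of $\suc^k(b_T)$ only the leading term $b_{\PAII(T)}$ survives under $S\mapsto\delta_{S,\PAII(T)}(\cdots)$ while its coefficient is $\equiv 1$, so that the $q^{-1}$-errors either vanish or are absorbed — together with the verification that $\PAII(T)$ is symplectic, which is the only substantive ingredient beyond the iterated successor identity.
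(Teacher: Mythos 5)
Your proposal is correct and follows essentially the same route as the paper, which simply declares the statement immediate from the definition of the quantum Littlewood--Richardson map; your write-up is the careful unwinding of that definition, using exactly the ingredients the paper has already set up (the iterated compatibility $\suc^k(b_T)\equiv b_{\suc^k(T)}$, the symplecticity of $\PAII(T)$, and Proposition \ref{prop: p_nu}).
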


\section{Surjectivity and codomain of the Littlewood-Richardson map}\label{sect: LR}
In this section, we complete the proof of the first assertion of Theorem \ref{thm: main}.

\subsection{Injectivity and codomain of the Littlewood-Richardson map}
\begin{prop}\label{prop: codomain of LR}
  Let $\lambda \in \Par_{\leq 2n}$ and $T \in \SST_{2n}(\lambda)$.
  Then, the tableau $\PAII(T)$ is symplectic.
  Consequently, we have
  \[
    \LRAII(T) \in \SpT_{2n}(\nu) \times \Rec_{2n}(\lambda/\nu)
  \]
  for some $\nu \in \Par_{\leq n}$ such that $\nu \subseteq \lambda$.
\end{prop}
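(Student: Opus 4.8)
The plan is to use that $\PAII(T)$ is, by construction, a fixed point of the combinatorial successor map, and then to read the symplectic condition off from the reduction map; everything else is bookkeeping with results already proved.

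Set $P := \PAII(T) = P^{k_0}$. First I would observe that $\suc(P) = P$: this is exactly what the choice of $k_0$ in Section~\ref{sect: main} guarantees (via Lemma~\ref{lem: stationality of suc}), since $P^{k_0+1} = \suc(P^{k_0}) = P^{k_0}$. Write $d(P) = (\mathbf{a}, P')$, so that $\mathbf{a} = (a_1,\dots,a_l)$ is the first column of $P$, with $l = \ell(\sh(P))$. Arguing as in the proof of Lemma~\ref{lem: stationality of suc} — where it is shown, via Pieri's formula (Proposition~\ref{prop: Pieri formula}), that $\sh(\suc(P)) = \sh(P)$ forces $|\Rem(\mathbf{a})| = l - |\red(\mathbf{a})| = 0$ — we obtain $\red(\mathbf{a}) = \mathbf{a}$. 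By Proposition~\ref{prop: red(a) = a iff a is symplectic} this says precisely that $\mathbf{a}$ is symplectic, i.e.\ $P(k,1) = a_k \geq 2k-1$ for all $k \in [1,l]$. Since the defining condition of a symplectic tableau (Definition~\ref{def: SpT}) only involves the first column, this is exactly the assertion that $P = \PAII(T)$ is symplectic. (One could instead invoke Corollary~\ref{cor: suc(T) = T} directly, but it is stated only for shapes in $\Par_{\leq n}$, which we do not yet know applies here; hence it is cleaner to pass through Proposition~\ref{prop: red(a) = a iff a is symplectic}, which carries no restriction on the number of rows.)

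Having shown $\PAII(T)$ is symplectic, I would then set $\nu := \sh(\PAII(T)) = \nu^{k_0}$. Since $\PAII(T)$ has entries in $[1,2n]$, the Proposition following Definition~\ref{def: SpT} gives $\ell(\nu) \leq n$, so $\nu \in \Par_{\leq n}$ and $\PAII(T) \in \SpT_{2n}(\nu)$. The chain $\lambda = \nu^0 \underset{\text{vert}}{\supset} \nu^1 \underset{\text{vert}}{\supset} \cdots \underset{\text{vert}}{\supset} \nu^{k_0} = \nu$ yields in particular $\nu \subseteq \lambda$, and $\QAII(T)$ is by definition a recording tableau of shape $\lambda/\nu$, so $\QAII(T) \in \Rec_{2n}(\lambda/\nu)$. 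Combining, $\LRAII(T) = (\PAII(T), \QAII(T)) \in \SpT_{2n}(\nu) \times \Rec_{2n}(\lambda/\nu)$, as desired.

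The argument is essentially an assembly of earlier statements, so there is no serious obstacle; the one point requiring a little care is the logical order — one must first establish symplecticity of $\PAII(T)$ through the reduction map (which has no restriction on the number of rows) and only afterwards conclude $\ell(\nu) \leq n$, rather than the other way around — together with the elementary but essential observation that being symplectic is a property of the first column alone, so that symplecticity of the column $\mathbf{a}$ and of the whole tableau $P$ coincide.
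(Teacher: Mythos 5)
Your proof is correct and takes essentially the same route as the paper, whose entire argument is that $\suc(\PAII(T)) = \PAII(T)$ and hence $\PAII(T)$ is symplectic by Corollary \ref{cor: suc(T) = T}. Your extra step of bypassing that corollary (whose statement assumes the shape lies in $\Par_{\leq n}$) by arguing through Lemma \ref{lem: stationality of suc} and Proposition \ref{prop: red(a) = a iff a is symplectic}, and noting that symplecticity depends only on the first column, is a legitimate and slightly more careful rendering of the same idea, since the paper formally invokes the corollary before $\ell(\nu) \leq n$ is established.
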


\begin{proof}
  Since $\suc(\PAII(T)) = \PAII(T)$, it is symplectic by Corollary \ref{cor: suc(T) = T}.
\end{proof}

Let $\lambda,\mu \in \Par_{\leq 2n}$ be such that $\mu \subseteq \lambda$.
Define a map
\[
  \widetilde{\suc}: \SST_{2n}(\mu) \times \Tab(\lambda/\mu) \rightarrow \bigsqcup_{\substack{\mu' \in \Par_{\leq 2n} \\ \mu' \underset{\text{vert}}{\subseteq} \mu}} \SST_{2n}(\mu') \times \Tab(\lambda/\mu')
\]
as follows.
Let $(S, R) \in \SST_{2n}(\mu) \times \Tab(\lambda/\mu)$.
Set
\[
  k := \max \{ R(i,j) \mid (i,j) \in D(\lambda/\mu) \},
\]
and $\mu' := \sh(\suc(S))$.
Then,
\[
  \widetilde{\suc}(S, R) = (\suc(S), R'),
\]
where $R' \in \Tab(\lambda/\mu')$ is such that
\[
  R'(i,j) = \begin{cases}
    R(i,j) & \text{ if } (i,j) \notin D(\mu), \\
    k+1 & \text{ if } (i,j) \in D(\mu),
  \end{cases} \ \text{ for all } (i,j) \in D(\lambda/\mu').
\]

\begin{lem}\label{lem: injectivity of suctil}
  Let $\lambda,\mu \in \Par_{\leq 2n}$ be such that $\mu \subseteq \lambda$.
  Then, the map $\widetilde{suc}$ on $\SST_{2n}(\mu) \times \Tab(\lambda/\mu)$ is injective.
\end{lem}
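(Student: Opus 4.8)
The plan is to show that $\widetilde{\suc}$ is injective by recovering $(S,R)$ from its image $(S',R') = \widetilde{\suc}(S,R)$, where $S' = \suc(S)$ and $\mu' = \sh(S')$. The key observation is that the value $k+1$ introduced by $\widetilde{\suc}$ is strictly larger than every entry of $R$, hence strictly larger than every other entry of $R'$, since the entries of $R'$ on $D(\lambda/\mu)$ coincide with those of $R$. Therefore, from $(S',R')$ one can read off $k$ as $k = \max\{ R'(i,j) \mid (i,j) \in D(\lambda/\mu') \} - 1$, and one can recover the set $D(\mu)$ as $D(\mu') \sqcup \{ (i,j) \in D(\lambda/\mu') \mid R'(i,j) = k+1 \}$, i.e. the cells of $\lambda/\mu'$ carrying the maximal entry $k+1$, together with all of $D(\mu')$. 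This determines $\mu$, hence the restriction $R = R'|_{D(\lambda/\mu)}$.

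Once $\mu$ is recovered, it remains to recover $S$ from $S' = \suc(S)$. First I would note that $\sh(S) = \mu$ is now known, so $S$ lies in $\SST_{2n}(\mu)$, and $S'= \suc(S)$ with $\sh(S') = \mu' \underset{\text{vert}}{\subseteq} \mu$. The successor map $\suc$ restricted to $\SST_{2n}(\mu)$ is injective by Corollary \ref{cor: inj of comb suc}, so $S$ is the unique preimage of $S'$ under $\suc: \SST_{2n}(\mu) \to \bigsqcup_{\mu' \underset{\text{vert}}{\subseteq} \mu} \SST_{2n}(\mu')$. Combining the two recovery steps, the pair $(S,R)$ is uniquely determined by $(S',R')$, which is exactly the claimed injectivity.

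One subtlety worth spelling out: the quantity $k = \max\{ R(i,j) \mid (i,j) \in D(\lambda/\mu) \}$ is only defined when $D(\lambda/\mu) \neq \emptyset$; when $\mu = \lambda$ the set $D(\lambda/\mu)$ is empty and one takes $k = 0$ by convention, so $R'$ has all entries equal to $1$ on $D(\lambda/\lambda') = D(\mu) = D(\lambda)$ and the argument degenerates but still goes through — in that case $\mu$ is forced to equal $\lambda$ (it is the largest partition with $D(\mu') \subseteq D(\mu) \subseteq D(\lambda)$ receiving the constant value), and $R = R'|_{\emptyset}$ is the empty tableau. Another point to check is that the recovery of $D(\mu)$ from $(S',R')$ genuinely uses only data available in the image: the maximal entry of $R'$ on $D(\lambda/\mu')$ is $k+1$ precisely because $\widetilde{\suc}$ overwrites $D(\mu)$ with $k+1$ and leaves the (strictly smaller) entries of $R$ on $D(\lambda/\mu)$ untouched, and $D(\mu') \subseteq D(\mu)$ so no cell of $D(\mu/\mu')$ is missed.

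I expect the main obstacle to be purely bookkeeping: carefully separating the degenerate case $\mu = \lambda$ (equivalently $D(\lambda/\mu) = \emptyset$) and making sure the ``recover $k$, then recover $\mu$, then recover $S$'' chain is stated so that each step depends only on quantities already reconstructed. There is no hard combinatorics here — the only nontrivial input is the injectivity of $\suc$ on a fixed-shape set of semistandard tableaux, which is Corollary \ref{cor: inj of comb suc} — so the proof should be short once the reconstruction procedure is laid out cleanly.
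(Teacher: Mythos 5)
There is a genuine gap in the first half of your argument: the reconstruction of $k$ and of $\mu$ from the image $(S',R')$ fails precisely when $\suc(S)=S$, i.e.\ when $S$ is symplectic (Corollary \ref{cor: suc(T) = T}). In that case $\mu'=\mu$, so $D(\lambda/\mu')$ contains no cell of $D(\mu)$, no entry equal to $k+1$ is ever written, and $R'=R$; hence $\max R' = k$, your formula $k=\max R'-1$ returns the wrong value, and the cells carrying the maximal entry of $R'$ lie in $D(\lambda/\mu)$ rather than in $D(\mu/\mu')$, so the recovered ``$D(\mu)$'' is wrong. This cannot be repaired within your strategy, because $\widetilde{\suc}$ is genuinely non-injective once the shape $\mu$ is allowed to vary: take $n=2$, $\lambda=(1,1,1)$; the pair with $\mu_2=(1,1)$, $S_2$ the column with entries $1,2$ and $R_2(3,1)=1$, and the pair with $\mu_1=()$, $S_1$ the empty tableau and $R_1$ the column with entries $2,2,1$, both map to (empty tableau, column $2,2,1$). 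So no procedure can read $\mu$ off from $(S',R')$ alone, and your chain ``recover $k$, then $\mu$, then $R$, then $S$'' breaks at its first link; note also that the stationary case $\suc(S)=S$ is not exotic here, since it is exactly the situation reached when the algorithm terminates.

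The gap is easy to close because the lemma fixes $\mu$: the recovery of $\mu$ (and of $k$) is simply unnecessary, and your remaining two steps are exactly what is needed. Since $\mu'\underset{\text{vert}}{\subseteq}\mu$, we have $D(\lambda/\mu)\subseteq D(\lambda/\mu')$ and $R'$ agrees with $R$ there, so $R=R'|_{D(\lambda/\mu)}$ is determined by the image with no reference to $k$; and $S$ is determined by $S'=\suc(S)$ because $\suc$ is injective on $\SST_{2n}(\mu)$. With that rearrangement your proof is in substance the paper's: the paper argues identically for $R$, and for $S$ it re-derives the injectivity of $\suc$ inline (via Pieri, Proposition \ref{prop: Pieri formula}, and the injectivity of $\red$, Corollary \ref{cor: factorization of red}), whereas you may simply cite Corollary \ref{cor: inj of comb suc}. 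The special pleading about $\mu=\lambda$ then also becomes unnecessary.
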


\begin{proof}
  Let $(S_1, R_1), (S_2, R_2) \in \SST_{2n}(\mu) \times \Tab(\lambda/\mu)$ be such that
  \[
    (\suc(S_1), R'_1) := \widetilde{\suc}(S_1, R_1) = \widetilde{\suc}(S_2, R_2) = (\suc(S_2), R'_2).
  \]
  Then, for all $(i,j) \in D(\lambda/\mu)$, we have
  \[
    R_1(i,j) = R'_1(i,j) = R'_2(i,j) = R_2(i,j).
  \]
  This implies that
  \[
    R_1 = R_2.
  \]
  
  Next, we show that $S_1 = S_2$.
  Let us write
  \[
    d(S_i) = (\mathbf{a}_i, S'_i) \ \text{ for each } i = 1,2.
  \]
  Then, we have
  \[
    \red(\mathbf{a}_1) * S'_1 = \suc(S_1) = \suc(S_2) = \red(\mathbf{a}_2) * S'_2.
  \]
  Since $|\sh(S'_1)| = |\sh(S'_2)|$, we must have
  \[
    |\red(\mathbf{a}_1)| = |\red(\mathbf{a}_2)|.
  \]
  Then, by Proposition \ref{prop: Pieri formula}, we obtain
  \[
    (\red(\mathbf{a}_1), S'_1) = (\red(\mathbf{a}_2), S'_2).
  \]
  Now, Corollary \ref{cor: factorization of red} \eqref{item: factorization of red 2} implies
  \[
    \mathbf{a}_1 = \mathbf{a}_2.
  \]
  Hence, we deduce
  \[
    S_1 = \mathbf{a}_1 * S'_1 = \mathbf{a}_2 * S'_2 = S_2,
  \]
  as desired.
\end{proof}

\begin{prop}\label{prop: inj of LR}
  Let $\lambda \in \Par_{\leq 2n}$.
  Then, the Littlewood-Richardson map on $\SST_{2n}(\lambda)$ is injective.
\end{prop}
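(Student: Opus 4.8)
The plan is to deduce injectivity of $\LRAII$ on $\SST_{2n}(\lambda)$ from the injectivity of the auxiliary map $\widetilde{\suc}$ established in Lemma \ref{lem: injectivity of suctil}, by realizing the pair $(\PAII(T),\QAII(T))$ as the stable value of an iteration of $\widetilde{\suc}$ applied to a canonical starting datum. Concretely, for $T\in\SST_{2n}(\lambda)$ consider the pair $(P^k,Q^k)\in\SST_{2n}(\nu^k)\times\Tab(\lambda/\nu^k)$ produced by the algorithm of Section \ref{sect: main}. First I would check that $(P^{k+1},Q^{k+1})=\widetilde{\suc}(P^k,Q^k)$ for every $k\geq 0$: the first component is $\suc(P^k)$ by definition of $P^{k+1}$, and the second component agrees because the maximal entry of $Q^k$ on $D(\lambda/\nu^k)$ is exactly $k$ (an easy induction using Lemma \ref{lem: QAII, nu}), so that $\widetilde{\suc}$ overwrites $D(\nu^k)$ with $k+1$, which is precisely the rule defining $Q^{k+1}$. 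Since $(P^0,Q^0)=(T,Q^0_\lambda)$ where $Q^0_\lambda$ is the unique tableau of shape $\lambda/\lambda$, we get $(\PAII(T),\QAII(T))=\widetilde{\suc}^{\,k_0}(T,Q^0_\lambda)$, and by stationarity (Lemma \ref{lem: stationality of suc}) in fact $(\PAII(T),\QAII(T))=\widetilde{\suc}^{\,N}(T,Q^0_\lambda)$ for every sufficiently large fixed $N$, uniformly in $T$ (e.g.\ $N=|\lambda|$ works, since each nontrivial step strictly decreases the size of $\nu^k$).

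Now suppose $T_1,T_2\in\SST_{2n}(\lambda)$ satisfy $\LRAII(T_1)=\LRAII(T_2)$, i.e.\ $(\PAII(T_1),\QAII(T_1))=(\PAII(T_2),\QAII(T_2))$. Fixing $N=|\lambda|$, we then have $\widetilde{\suc}^{\,N}(T_1,Q^0_\lambda)=\widetilde{\suc}^{\,N}(T_2,Q^0_\lambda)$. Applying Lemma \ref{lem: injectivity of suctil} $N$ times — noting that each intermediate pair lies in some $\SST_{2n}(\mu)\times\Tab(\lambda/\mu)$ with $\mu\subseteq\lambda$, which is exactly the setting of that lemma — we peel off the iterates one at a time and conclude $(T_1,Q^0_\lambda)=(T_2,Q^0_\lambda)$, hence $T_1=T_2$. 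This proves injectivity.

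I expect the only genuinely delicate point to be bookkeeping the shapes along the iteration: one must make sure that at each stage the pair $(P^k,Q^k)$ really does lie in the domain $\SST_{2n}(\nu^k)\times\Tab(\lambda/\nu^k)$ with $\nu^k\subseteq\lambda$ to which Lemma \ref{lem: injectivity of suctil} applies, and that the "maximal entry of $Q^k$ on $D(\lambda/\nu^k)$ equals $k$" claim is correctly handled at the boundary cases $k=0$ and $k\geq k_0$ (where $D(\lambda/\nu^k)$ may already be all of $D(\lambda/\nu^{k+1})$, so no overwriting occurs and the identity $(P^{k+1},Q^{k+1})=\widetilde{\suc}(P^k,Q^k)$ still holds trivially). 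All of this is routine once Lemma \ref{lem: stationality of suc} and Lemma \ref{lem: QAII, nu} are in hand; the substantive injectivity input has already been isolated in Lemma \ref{lem: injectivity of suctil}, which itself rests on the injectivity of the reduction map (Corollary \ref{cor: factorization of red}\eqref{item: factorization of red 2}) and Pieri's formula.
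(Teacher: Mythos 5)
Your strategy is the same as the paper's: realize the algorithm as the iteration of $\widetilde{\suc}$ starting from $(T,Q^0_\lambda)$ and feed the equality of outputs into Lemma \ref{lem: injectivity of suctil}. However, the peeling step as you state it has a genuine gap. Lemma \ref{lem: injectivity of suctil} asserts injectivity of $\widetilde{\suc}$ on $\SST_{2n}(\mu)\times\Tab(\lambda/\mu)$ for a \emph{fixed} $\mu$; to peel off one iterate you must therefore know beforehand that the two intermediate pairs coming from $T_1$ and $T_2$ lie over the \emph{same} shape, i.e.\ that $\sh(\suc^k(T_1))=\sh(\suc^k(T_2))$ for every $k$. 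This is not automatic, and it is not supplied by your remark that ``each intermediate pair lies in some $\SST_{2n}(\mu)\times\Tab(\lambda/\mu)$'': the map $\widetilde{\suc}$ is \emph{not} injective on the disjoint union over all $\mu$. Indeed, take any $S_2$ with $\suc(S_2)\neq S_2$ but $\suc(\suc(S_2))=\suc(S_2)$ (e.g.\ a tableau whose first column is $(1,2)$ and which becomes symplectic after one step), take any $R_2$, and set $(S_1,R_1):=\widetilde{\suc}(S_2,R_2)$. Then $\suc(S_1)=S_1$, so $\widetilde{\suc}(S_1,R_1)=(S_1,R_1)=\widetilde{\suc}(S_2,R_2)$ while $(S_1,R_1)\neq(S_2,R_2)$ and the two inputs have different shapes. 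So without shape synchronization the descending induction is not justified; your ``delicate point'' paragraph addresses a different (and easier) issue, namely that each single chain stays in a legitimate domain and the max-entry convention at the boundary, not the comparison between the two chains.

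The gap is exactly what the paper's proof spends its effort on, and it closes with material you already cite: by Lemma \ref{lem: QAII, nu} (2), the intermediate shapes are determined by the common output $(P,Q)=\LRAII(T_1)=\LRAII(T_2)$, namely $D\bigl(\sh(\suc^k(T_i))\bigr)=D(\nu)\sqcup\{(a,b)\in D(\lambda/\nu)\mid Q(a,b)>k\}$ with $\nu=\sh(P)$, hence these shapes agree for $i=1,2$ and all $k$; then Lemma \ref{lem: injectivity of suctil} applies at each step and your peeling argument goes through. (Equivalently, one can note inside the induction that the common pair at step $k$ determines the shape at step $k-1$, since the cells removed at step $k$ are precisely those where the recording tableau carries the entry $k$.) With this point added, your proof coincides with the one in the paper; the rest of your write-up, including the identification $(P^{k+1},Q^{k+1})=\widetilde{\suc}(P^k,Q^k)$ and the uniform bound on the number of iterations, is fine.
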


\begin{proof}
  Let $T_1, T_2 \in \SST_{2n}(\lambda)$ be such that $\LRAII(T_1) = \LRAII(T_2)$.
  Let us write
  \[
    \LRAII(T_1) = (P, Q),
  \]
  and set $\nu := \sh(P)$.

  First, note that
  \[
    \LRAII(T_1) = \widetilde{\suc}^{k_0}(T_1, Q^0)
  \]
  for a sufficiently large $k_0 > 0$, where $Q^0$ denotes the unique tableau of shape $\lambda/\lambda$.
  For each $k \in [0,k_0]$, let $\nu^k$ denote the partition whose Young diagram is
  \[
    D(\nu^k) = D(\nu) \sqcup \{ (i,j) \in D(\lambda/\nu) \mid Q(i,j) > k \}.
  \]
  Then, we see that
  \[
    \sh(\suc^k(T_1)) = \nu^k \ \text{ for all } k \in [0,k_0].
  \]
  Since $\LRAII(T_2) = \LRAII(T_1) = (P,Q)$, we have also
  \[
    \sh(\suc^k(T_2)) = \nu^k \ \text{ for all } k \in [0,k_0].
  \]
  Applying Lemma \ref{lem: injectivity of suctil}, we deduce that
  \[
    \suc^k(T_1) = \suc^k(T_2) \ \text{ for all } k \in [0,k_0]
  \]
  by descending induction on $k$.
  In particular, we obtain
  \[
    T_1 = \suc^0(T_1) = \suc^0(T_2) = T_2,
  \]
  as desired.
\end{proof}

\subsection{Surjectivity of the Littlewood-Richardson map}
\begin{thm}\label{thm: quantum LR is isom}
  The quantum Littlewood-Richardson map on $V(\lambda)$ is a $\mathbf{U}^\imath$-module isomorphism.
\end{thm}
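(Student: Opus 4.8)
Write $N:=\bigoplus_{\nu\in\Par_{\le n},\,\nu\subseteq\lambda}\bigl(V^\imath(\nu)\otimes\mathbb{Q}(q)\Rec_{2n}(\lambda/\nu)\bigr)$ for the codomain, equipped with its basis $B:=\{\,b^\imath_S\otimes Q\,\}$ coming from Proposition~\ref{prop: p_nu}; note that $N$ is completely reducible, the multiplicity of $V^\imath(\nu)$ in it being $|\Rec_{2n}(\lambda/\nu)|$. Since $\LRAII$ is a $\mathbf{U}^\imath$-module homomorphism between finite-dimensional modules, it suffices to show it is injective and that $\dim_{\mathbb{Q}(q)}V(\lambda)=\dim_{\mathbb{Q}(q)}N$; the latter amounts to $|\Rec_{2n}(\lambda/\nu)|=m_{\lambda,\nu}$ for every $\nu$, because $V(\lambda)\cong\bigoplus_\nu V^\imath(\nu)^{\oplus m_{\lambda,\nu}}$ as a $\mathbf{U}^\imath$-module by Proposition~\ref{prop: q-multiplicity} and complete reducibility. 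Throughout I use the congruence $\LRAII(b_T)\equiv b^\imath_{\PAII(T)}\otimes\QAII(T)$ from Proposition~\ref{prop: quantum LR is comb LR at q infty}.

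\textbf{Injectivity.} By Proposition~\ref{prop: inj of LR} the combinatorial assignment $T\mapsto(\PAII(T),\QAII(T))$ is injective, so the vectors $\LRAII(b_T)$, $T\in\SST_{2n}(\lambda)$, are congruent modulo $q^{-1}$ to pairwise distinct elements of $B$. The standard $q=\infty$ argument (given a linear dependence, clear denominators, divide by the least power of $q^{-1}$, and reduce modulo $q^{-1}\mathbb{Q}[\![q^{-1}]\!]B$) forces the coefficients to vanish, so $\LRAII$ is injective. Consequently $V(\lambda)$ embeds into the semisimple module $N$, and comparing composition factors gives $m_{\lambda,\nu}\le|\Rec_{2n}(\lambda/\nu)|$ for all $\nu\subseteq\lambda$.

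\textbf{The reverse inequality (the crux).} Fix $\nu$ with $\nu\subseteq\lambda$, and for each $Q\in\Rec_{2n}(\lambda/\nu)$ let $\phi_Q:=\mathrm{pr}_{\nu,Q}\circ\LRAII\in\mathrm{Hom}_{\mathbf{U}^\imath}\!\bigl(V(\lambda),V^\imath(\nu)\bigr)$, where $\mathrm{pr}_{\nu,Q}\colon N\to V^\imath(\nu)\otimes Q\cong V^\imath(\nu)$ is the canonical projection, which is $\mathbf{U}^\imath$-equivariant because $\mathbf{U}^\imath$ acts on each summand through $x\otimes\mathrm{id}$. One has $\phi_Q(b_T)\equiv \delta_{\QAII(T),Q}\,\delta_{\sh(\PAII(T)),\nu}\,b^\imath_{\PAII(T)}$. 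I claim the $\phi_Q$ are linearly independent over $\mathbb{Q}(q)$: given a relation $\sum_Q c_Q\phi_Q=0$, rescale so that all $c_Q\in\mathbb{Q}[\![q^{-1}]\!]$ while $c_{Q_*}\notin q^{-1}\mathbb{Q}[\![q^{-1}]\!]$ for some $Q_*$. By the very definition of $\Rec_{2n}(\lambda/\nu)$ there is $T_*$ with $\QAII(T_*)=Q_*$, and then necessarily $\sh(\PAII(T_*))=\nu$; evaluating the relation at $b_{T_*}$ and reducing modulo $q^{-1}$ annihilates every term with $Q\ne Q_*$ and leaves $\overline{c_{Q_*}}\,b^\imath_{\PAII(T_*)}=0$ in $V^\imath(\nu)$, contradicting $\overline{c_{Q_*}}\ne 0$. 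Hence $|\Rec_{2n}(\lambda/\nu)|\le\dim_{\mathbb{Q}(q)}\mathrm{Hom}_{\mathbf{U}^\imath}(V(\lambda),V^\imath(\nu))=m_{\lambda,\nu}$, the last equality by Proposition~\ref{prop: q-multiplicity} together with $\mathrm{End}_{\mathbf{U}^\imath}V^\imath(\nu)=\mathbb{Q}(q)$.

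\textbf{Conclusion.} Combining the two inequalities yields $|\Rec_{2n}(\lambda/\nu)|=m_{\lambda,\nu}$ for all $\nu$, hence $\dim_{\mathbb{Q}(q)}N=\dim_{\mathbb{Q}(q)}V(\lambda)$; an injective linear map between finite-dimensional spaces of equal dimension is bijective, so $\LRAII$ is a $\mathbf{U}^\imath$-module isomorphism. The main obstacle is the reverse inequality: a purely module-theoretic argument is insufficient here (a submodule of $N$ can surject onto every irreducible constituent without being all of $N$, e.g.\ a diagonal copy inside $V^\imath(\nu)^{\oplus 2}$), so one must use the integral ``$q=\infty$'' structure --- concretely, the fact that each recording tableau $Q$ is realized as $\QAII(T)$ for some $T$ --- to separate the homomorphisms $\phi_Q$ inside the Hom-space. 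A secondary point requiring care is that $V^\imath(\nu)$ is absolutely irreducible over $\mathbb{Q}(q)$, which is ensured by the contragredient form and the distinguished basis $\{b^\imath_T\}$ supplied in Proposition~\ref{prop: p_nu}.
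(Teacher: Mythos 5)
Your proposal is correct in substance, but it reaches surjectivity by a genuinely different route than the paper. The injectivity step coincides with the paper's: both deduce it from Proposition \ref{prop: inj of LR} together with Proposition \ref{prop: quantum LR is comb LR at q infty} by the standard leading-term argument. For surjectivity, the paper argues at the module level: for each $Q \in \Rec_{2n}(\lambda/\nu)$ it chooses $T$ with $\QAII(T) = Q$, observes $\LRAII(b_T) \equiv b^\imath_{\PAII(T)} \otimes Q \neq 0$, and invokes irreducibility of the summand $V^\imath(\nu) \otimes \mathbb{Q}(q) Q$ together with complete reducibility to conclude that every summand lies in the image; the identity $|\Rec_{2n}(\lambda/\nu)| = m_{\lambda,\nu}$ is only extracted afterwards, in the proof of Theorem \ref{thm: rec = rectil}. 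You instead convert surjectivity into that numerical identity and prove it inside the proof, by producing $|\Rec_{2n}(\lambda/\nu)|$ linearly independent elements $\phi_Q$ of $\mathrm{Hom}_{\mathbf{U}^\imath}(V(\lambda), V^\imath(\nu))$, separated by the same $q = \infty$ mechanism. What your route buys is a self-contained treatment of precisely the subtlety you point out (a submodule of $N$ can have nonzero projection to every irreducible summand without containing them --- the ``diagonal'' phenomenon), and it delivers $|\Rec_{2n}(\lambda/\nu)| = m_{\lambda,\nu}$ immediately rather than as a later corollary; what it costs is one extra input, namely $\dim_{\mathbb{Q}(q)} \mathrm{Hom}_{\mathbf{U}^\imath}(V(\lambda), V^\imath(\nu)) = m_{\lambda,\nu}$, which requires $\mathrm{End}_{\mathbf{U}^\imath}(V^\imath(\nu)) = \mathbb{Q}(q)$.

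On that last point your justification is not adequate as stated: the existence of a contragredient form and of the almost orthonormal basis $\{ b^\imath_T \}$ does not by itself force scalar endomorphisms, and without $\mathrm{End}_{\mathbf{U}^\imath}(V^\imath(\nu)) = \mathbb{Q}(q)$ your count only gives $|\Rec_{2n}(\lambda/\nu)| \leq m_{\lambda,\nu} \cdot \dim_{\mathbb{Q}(q)} \mathrm{End}_{\mathbf{U}^\imath}(V^\imath(\nu))$. The fact is true, however, and follows from data already in the paper: any $\mathbf{U}^\imath$-endomorphism $\phi$ commutes with the $K_{2i-1}$, hence preserves the weight-space decomposition, and the weight-$\nu$ space of $V^\imath(\nu)$ is one-dimensional because the coefficient of $\mathbf{y}^{\nu}$ in $s^{Sp}_\nu$ equals $1$; so $\phi$ acts there by a scalar $c \in \mathbb{Q}(q)$, and $\phi - c\,\mathrm{id}$, having nonzero kernel on an irreducible module, is zero. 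With this patch your argument is complete, and since its ingredients (Propositions \ref{prop: q-multiplicity}, \ref{prop: quantum LR is comb LR at q infty}, \ref{prop: inj of LR}, \ref{prop: p_nu}) all precede the theorem, there is no circularity in establishing $|\Rec_{2n}(\lambda/\nu)| = m_{\lambda,\nu}$ at this stage rather than in Section \ref{sect: rec}.
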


\begin{proof}
  Let $Q \in \Rec_{2n}(\lambda/\nu)$.
  Then, there exists $T \in \SST_{2n}(\lambda)$ such that $\QAII(T) = Q$.
  By Proposition \ref{prop: quantum LR is comb LR at q infty}, we have
  \[
    \LRAII(b_T) \equiv b_{\PAII(T)} \otimes Q \neq 0.
  \]
  Since the summand $V^\imath(\nu) \otimes \mathbb{Q}(q) Q$ is an irreducible $\mathbf{U}^\imath$-module, we see that it is contained in the image of $\LRAII$.
  Since the recording tableau $Q$ is arbitrary, the $\LRAII$ on $V(\lambda)$ is surjective.

  The injectivity follows from that of the Littlewood-Richardson map on $\SST_{2n}(\lambda)$ and Proposition \ref{prop: quantum LR is comb LR at q infty}
\end{proof}

\begin{cor}\label{cor: surj of LR}
  The Littlewood-Richardson map on $\SST_{2n}(\lambda)$ is bijective.
\end{cor}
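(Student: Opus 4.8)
The plan is to obtain the corollary by combining the injectivity of $\LRAII$ on $\SST_{2n}(\lambda)$, already established in Proposition~\ref{prop: inj of LR}, with a dimension count read off from Theorem~\ref{thm: quantum LR is isom}. By Proposition~\ref{prop: codomain of LR}, the combinatorial map $\LRAII$ takes values in
\[
  \mathcal{C} := \bigsqcup_{\substack{\nu \in \Par_{\leq n} \\ \nu \subseteq \lambda}} \SpT_{2n}(\nu) \times \Rec_{2n}(\lambda/\nu),
\]
and Proposition~\ref{prop: inj of LR} says it is an injection $\SST_{2n}(\lambda) \hookrightarrow \mathcal{C}$. Both sets being finite, it will then suffice to check the equality of cardinalities $|\SST_{2n}(\lambda)| = |\mathcal{C}|$: an injection between finite sets of the same size is a bijection.

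To prove that cardinality identity, I would invoke Theorem~\ref{thm: quantum LR is isom}, which provides a $\mathbf{U}^\imath$-module isomorphism
\[
  V(\lambda) \;\simeq\; \bigoplus_{\substack{\nu \in \Par_{\leq n} \\ \nu \subseteq \lambda}} V^\imath(\nu) \otimes \mathbb{Q}(q)\,\Rec_{2n}(\lambda/\nu);
\]
comparing dimensions over $\mathbb{Q}(q)$, the left-hand side has dimension $|\SST_{2n}(\lambda)|$ because $\{ b_T \mid T \in \SST_{2n}(\lambda) \}$ is the canonical basis of $V(\lambda)$, while by Proposition~\ref{prop: p_nu} one has $\dim_{\mathbb{Q}(q)} V^\imath(\nu) = |\SpT_{2n}(\nu)|$, so the right-hand side has dimension $\sum_{\nu} |\SpT_{2n}(\nu)| \cdot |\Rec_{2n}(\lambda/\nu)| = |\mathcal{C}|$. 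Hence $|\SST_{2n}(\lambda)| = |\mathcal{C}|$, and $\LRAII$ is a bijection onto $\mathcal{C}$, as desired.

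I do not anticipate a real obstacle at this final step, since the substantive content has already been assembled: the injectivity of Proposition~\ref{prop: inj of LR} (which in turn rests on the factorization of the reduction map in Corollary~\ref{cor: factorization of red} and on the injectivity of $\widetilde{\suc}$ in Lemma~\ref{lem: injectivity of suctil}) and the surjectivity of the quantum map in Theorem~\ref{thm: quantum LR is isom} (built on complete reducibility of $V(\lambda)$ as a $\mathbf{U}^\imath$-module together with the congruence $\LRAII(b_T) \equiv b^\imath_{\PAII(T)} \otimes \QAII(T)$ of Proposition~\ref{prop: quantum LR is comb LR at q infty}). The only thing to be careful about is the bookkeeping in the dimension comparison, namely matching the index sets $\SST_{2n}(\lambda)$, $\SpT_{2n}(\nu)$, and $\Rec_{2n}(\lambda/\nu)$ correctly on the two sides. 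A more hands-on alternative would be to argue surjectivity directly: for $(P,Q) \in \mathcal{C}$ with $P \in \SpT_{2n}(\nu)$, pull $b^\imath_P \otimes Q$ back through the quantum isomorphism, expand the preimage in the canonical basis, and use Proposition~\ref{prop: quantum LR is comb LR at q infty} together with the injectivity of the combinatorial map to force the existence of a $T$ with $\PAII(T) = P$ and $\QAII(T) = Q$; but the counting argument is shorter and I would go with that.
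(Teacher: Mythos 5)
Your argument is correct and is essentially the route the paper intends: the corollary is stated without separate proof precisely because it follows from the injectivity of Proposition~\ref{prop: inj of LR} and the codomain statement of Proposition~\ref{prop: codomain of LR}, together with the cardinality identity $|\SST_{2n}(\lambda)| = \sum_{\nu} |\SpT_{2n}(\nu)|\,|\Rec_{2n}(\lambda/\nu)|$ obtained by comparing dimensions across the isomorphism of Theorem~\ref{thm: quantum LR is isom} (using $\dim_{\mathbb{Q}(q)} V^\imath(\nu) = |\SpT_{2n}(\nu)|$ from Proposition~\ref{prop: p_nu}). Your bookkeeping is right, so nothing further is needed.
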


\section{Characterization of the recording tableaux}\label{sect: rec}
In this section, we prove the second assertion of Theorem \ref{thm: main}.

\subsection{Some properties of the successor map}
Let $\lambda = (\lambda_1,\dots,\lambda_l) \in \Par_{\leq 2n}$ and $T \in \SST_{2n}(\lambda)$.
Let us write $d(T) = (\mathbf{a}, S)$, $\mathbf{a} = (a_1,\dots,a_l)$, and $\red(\mathbf{a}) = \mathbf{b} = (b_1,\dots,b_k)$.

For each $t \in [0,k]$, set
\[
  S_t := \begin{cases}
    S & \text{ if } t = 0, \\
    b_t \rightarrow S_{t-1} & \text{ if } t > 0,
  \end{cases}
\]
and $\lambda^t := \sh(S_t)$.
Let us write
\[
  \br(b_t, S_{t-1}) = (r_{t,1}, r_{t,2}, \dots, r_{t,s_t}).
\]
Also, let $r_{t,0} \in [1,l]$ be such that
\[
  a_{r_{t,0}} = b_t.
\]
Set
\[
  \mathrm{br}_{\leq t} := \{ (r_{u,j},j) \mid u \in [1,t],\ j \in [1,s_u] \}.
\]

Set $T' := \suc(T)$, $\mu := \sh(T')$, and $l' := \ell(\mu)$.

Define $\mathbf{a}' = (a'_1,\dots,a'_{l'})$, $S'$, $\mathbf{b}' = (b'_1,\dots,b'_{k'})$, $S'_{t'}$, $\mu^{t'}$, $(r'_{t',1}, \dots, r'_{t',s'_{t'}})$ for $t' \in [1,k']$ in the same way as before.

\begin{lem}\label{lem: 1}
  Let $t \in [1,k]$ be such that $r_{t,0} = r_{t,1}$ and $a'_{r_{t,1}+1} = a'_{r_{t,1}} + 1$.
  Then, we have $a_{r_{t,0}+1} = a_{r_{t,0}}+1$.
\end{lem}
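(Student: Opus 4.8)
The plan is to read the first columns of $T$ and of $T' = \suc(T)$ directly off the explicit description of the insertion process recorded in Lemma \ref{lem: properties of suc(T)}, and then to conclude by a short case distinction. Throughout, set $p := r_{t,0} = r_{t,1}$.

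First I would pin down the entries in row $p$ of the two first columns. By the definition of $r_{t,0}$ we have $a_p = a_{r_{t,0}} = b_t$; and applying Lemma \ref{lem: properties of suc(T)} \eqref{item: properties of suc(T) 2} with the outermost tableau $S^k = T'$, the column index $j = 1$, and the bumping index $u = t$ (admissible since $1 \in [1,s_t]$ and $r_{t,1} = p$), we get $a'_p = T'(p,1) = T(r_{t,0},1) = a_p = b_t$. Thus the hypothesis $a'_{r_{t,1}+1} = a'_{r_{t,1}} + 1$ becomes $a'_{p+1} = b_t+1$. Since this forces $a'_{p+1}$ to be an honest entry, $p+1 \le \ell(\mu)$, and $\ell(\mu) \le \ell(\lambda) = l$ by Lemma \ref{lem: stationality of suc}; in particular $(p,1)$ and $(p+1,1)$ lie in $D(\lambda)$, so the desired identity $a_{p+1} = a_p+1$ is meaningful.

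Next I would distinguish whether $p+1 = r_{u,1}$ for some $u \in [1,k]$. If so, then since $r_{1,1} < r_{2,1} < \cdots < r_{k,1}$ by Lemma \ref{lem: properties of suc(T)} \eqref{item: properties of suc(T) 2.2} and $r_{t,1} = p$, necessarily $u = t+1$ (so $t+1 \le k$); the same formula \eqref{item: properties of suc(T) 2} then gives $a'_{p+1} = T'(p+1,1) = T(r_{t+1,0},1) = b_{t+1}$, whence $b_{t+1} = b_t+1$. As $\mathbf{a}$ is strictly increasing and $a_p = b_t < b_{t+1} = a_{r_{t+1,0}}$, we have $r_{t+1,0} \ge p+1$; and $r_{t+1,0} \ge p+2$ would place the integer $a_{p+1}$ strictly between $a_p = b_t$ and $b_t+1$, which is impossible, so $r_{t+1,0} = p+1$ and $a_{p+1} = b_{t+1} = a_p+1$. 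If instead $p+1 \ne r_{u,1}$ for every $u$, then the remaining clause of \eqref{item: properties of suc(T) 2} gives $a'_{p+1} = T'(p+1,1) = T(p+1,2)$, so $T(p+1,2) = b_t+1 < \infty$ and hence $(p+1,2) \in D(\lambda)$; semistandardness of $T$ then yields $b_t = T(p,1) < T(p+1,1) \le T(p+1,2) = b_t+1$, forcing $a_{p+1} = T(p+1,1) = b_t+1 = a_p+1$.

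The argument is short and there is no genuine obstacle once the data of Lemma \ref{lem: properties of suc(T)} is in hand; the only points deserving care are checking that the rows $p$ and $p+1$ stay inside the relevant Young diagrams (so that every symbol in the statement names an actual entry), which follows from Lemma \ref{lem: stationality of suc}, and confirming that the case split on whether $p+1$ is an endpoint of some bumping route is exhaustive, which is immediate from the statement of Lemma \ref{lem: properties of suc(T)} \eqref{item: properties of suc(T) 2}.
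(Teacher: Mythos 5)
Your proof is correct and follows essentially the same route as the paper's: both read off $a'_{r_{t,1}} = a_{r_{t,0}}$ and $a'_{r_{t,1}+1}$ from Lemma \ref{lem: properties of suc(T)} \eqref{item: properties of suc(T) 2}, split into the two cases according to whether $(r_{t,1}+1,1)$ lies on some bumping route, and conclude via strict monotonicity of $\mathbf{a}$ (respectively semistandardness of $T$) exactly as in the paper. The only differences are cosmetic: you pin down $u=t+1$ where the paper only needs $u>t$, and you make the domain checks ($p+1\le\ell(\mu)\le l$, $(p+1,2)\in D(\lambda)$) explicit.
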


\begin{proof}
  By Lemma \ref{lem: properties of suc(T)} \eqref{item: properties of suc(T) 2}, we have
  \[
    a'_{r_{t,1}} = T'(r_{t,1}, 1) = T(r_{t,0}, 1) = a_{r_{t,0}}.
  \]

  First, suppose that $(r_{t,1}+1, 1) \notin \mathrm{br}_{\leq k}$.
  Then, we have
  \[
    T'(r_{t,1}+1, 1) = T(r_{t,1}+1, 2).
  \]
  Hence, we deduce that
  \[
    a'_{r_{t,1}+1} = T'(r_{t,1}+1, 1) = T(r_{t,1}+1, 2) \geq T(r_{t,1}+1, 1) = a_{r_{t,1}+1}.
  \]
  This implies that
  \[
    a_{r_{t,0}+1} = a_{r_{t,1}+1} \leq a'_{r_{t,1}+1} = a'_{r_{t,1}}+1 = a_{r_{t,0}}+1.
  \]
  Therefore, the assertion follows.

  Next, suppose that $(r_{t,1}+1, 1) \in \mathrm{br}_{\leq k}$.
  Let us write $r_{t,1}+1 = r_{u,1}$ for some $u \in [1,k]$.
  Then, we have
  \[
    T'(r_{t,1}+1, 1) = T'(r_{u,1}, 1) = T(r_{u,0}, 1).
  \]
  This implies that
  \[
    a'_{r_{t,1}+1} = T'(r_{t,1}+1, 1) = T(r_{u,0}, 1) = a_{r_{u,0}}.
  \]
  Since we have
  \[
    r_{1,1} < r_{2,1} < \cdots < r_{k,1}
  \]
  by Lemma \ref{lem: properties of suc(T)} \eqref{item: properties of suc(T) 2.2}, 
  it must hold that $u > t$.
  Noting that $a_{r_{u,0}} > a_{r_{t,0}}$, we obtain as before that
  \[
    a_{r_{u,0}} = a_{r_{t,0}}+1.
  \]
  This implies that $r_{u,0} = r_{t,0}+1$ since $(a_1,\dots,a_k)$ is a strictly increasing sequence.
  Hence, the assertion follows.
\end{proof}

\begin{lem}\label{lem: N(t)}
  Let $t \in [0,k]$.
  Then, we have
  \[
    \sharp \{ t' \in [1,k'] \mid r'_{t',0} \leq r_{t,1} \} \geq t,
  \]
  where we set $r_{0,1} := 0$.
  In particular, it holds that $k' \geq k$.
\end{lem}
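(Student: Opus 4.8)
## Plan of proof

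The plan is to prove the inequality $\sharp\{t'\in[1,k']\mid r'_{t',0}\le r_{t,1}\}\ge t$ by induction on $t$. The base case $t=0$ is vacuous since $r_{0,1}:=0$ and the cardinality of a set is always nonnegative. For the inductive step, I would assume the bound holds for $t-1$ and aim to produce at least one \emph{new} index $t'$ with $r'_{t',0}\le r_{t,1}$ that was not among the $t-1$ indices already furnished for $t-1$, i.e. with $r_{t-1,1}<r'_{t',0}\le r_{t,1}$. Here I will use Lemma \ref{lem: properties of suc(T)} \eqref{item: properties of suc(T) 2.2}, which gives $r_{1,1}<r_{2,1}<\cdots<r_{k,1}$, so that the interval $(r_{t-1,1},r_{t,1}]$ is nonempty and, more importantly, the indices exhibited at stage $t-1$ all satisfy $r'_{t',0}\le r_{t-1,1}<r_{t,1}$, so any new index in that half-open interval genuinely increments the count.

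The heart of the argument is to identify the required index $t'$. The natural candidate is the one determined by the entry $b_t$: since $b_t=\red(\mathbf{a})$'s $t$-th entry equals $a_{r_{t,0}}$, and $b_t$ is not removable, one expects $a_{r_{t,0}}$ to survive in $\red(\mathbf{a}')$ as well, where $\mathbf{a}'=(a'_1,\dots,a'_{l'})$ is the first column of $T'=\suc(T)$. Concretely, I would track what happens to the entry in row $r_{t,1}$ of the first column of $T'$: by Lemma \ref{lem: properties of suc(T)} \eqref{item: properties of suc(T) 2} we have $a'_{r_{t,1}}=T'(r_{t,1},1)=T(r_{t,0},1)=a_{r_{t,0}}=b_t$. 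The task is then to show that this value $b_t$ is not a removable entry of $\mathbf{a}'$, so that it is recorded as some $b'_{t'}$ with $r'_{t',0}=r_{t,1}\le r_{t,1}$, giving exactly the new index we need. To argue non-removability I would invoke Proposition \ref{prop: characterization of rem(a)}: if $b_t=a'_{r_{t,1}}$ were removable, then (depending on its parity) either $a'_{r_{t,1}+1}=a'_{r_{t,1}}+1$ with $a'_{r_{t,1}}<2r_{t,1}-|\Rem(a'_1,\dots,a'_{r_{t,1}-1})|$, or the analogous condition with $r_{t,1}$ replaced by $r_{t,1}-1$. Lemma \ref{lem: 1} is tailor-made to rule out the first scenario: it says precisely that $r_{t,0}=r_{t,1}$ together with $a'_{r_{t,1}+1}=a'_{r_{t,1}}+1$ forces $a_{r_{t,0}+1}=a_{r_{t,0}}+1$, which (combined with the count estimate $|\Rem(a'_1,\dots)|$ versus $|\Rem(a_1,\dots)|$) would contradict the fact that $b_t\notin\Rem(\mathbf{a})$.

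The main obstacle I anticipate is the bookkeeping when $r_{t,0}\ne r_{t,1}$, or when $b_t$ ends up in a different row of the first column of $T'$ than expected, or when the inequality $r'_{t',0}\le r_{t,1}$ must be extracted from the position of $b_t$ among the $b'$'s rather than from a direct row identification — this requires comparing $\Rem(a'_1,\dots,a'_{r_{t,1}-1})$ with the corresponding quantity for $\mathbf{a}$, using Lemma \ref{lem: properties of suc(T)} \eqref{item: properties of suc(T) 2} to relate the entries $a'_i$ to entries of $T$, and carefully handling the rows of the first column of $T'$ that lie in $\mathrm{br}_{\leq k}$ versus those that do not (the two cases already separated in the proof of Lemma \ref{lem: 1}). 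I would organize this by first showing $a'_i\ge a_i$ for the relevant range of $i$ (so fewer or equal removable entries can accumulate), then chasing the cardinality inequality through Definition \ref{def: rem}. The ``in particular'' clause $k'\ge k$ then follows by taking $t=k$, since the left-hand side is at most $k'$.

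Once the inductive step is in place, the final statement follows immediately: apply the displayed inequality with $t=k$, noting $\sharp\{t'\in[1,k']\mid r'_{t',0}\le r_{k,1}\}\le k'$, hence $k'\ge k$.
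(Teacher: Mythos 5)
Your induction skeleton matches the paper's, and your Case ``$a'_{r_{t,1}}\notin\Rem(\mathbf{a}')$'' is exactly the easy case of the paper's proof: then $i=r_{t,1}$ is a genuinely new index beyond the ones counted at stage $t-1$ (since $r_{t-1,1}<r_{t,1}$), and the bound increments. But the heart of your plan --- showing that $b_t=a'_{r_{t,1}}$ is \emph{never} removable in $\mathbf{a}'$, by deriving a contradiction from Lemma \ref{lem: 1} --- does not work, and this is precisely where the paper has to do real work. The proposed contradiction fails because removability of $a'_{r_{t,1}}$ in $\mathbf{a}'$ and non-removability of $a_{r_{t,0}}$ in $\mathbf{a}$ are governed by \emph{different} counts: the former involves $|\Rem(a'_1,\dots,a'_{r_{t,1}-1})|$, the latter $|\Rem(a_1,\dots,a_{r_{t,0}-1})|$, and these need not agree (indeed, since the entries $a'_i$ tend to be larger, the removable set of the truncated $\mathbf{a}'$ can be strictly smaller, which makes the removability threshold for $a'_{r_{t,1}}$ \emph{easier} to meet, not harder). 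From Lemma \ref{lem: 1} and $b_t\notin\Rem(\mathbf{a})$ you only get the inequality $a_{r_{t,0}}\geq 2r_{t,0}-|\Rem(a_1,\dots,a_{r_{t,0}-1})|$, which is perfectly compatible with $a'_{r_{t,1}}<2r_{t,1}-|\Rem(a'_1,\dots,a'_{r_{t,1}-1})|$. Moreover, when $r_{t,0}>r_{t,1}$, Lemma \ref{lem: 1} is not even applicable; you flag this as an ``obstacle'' but do not resolve it.

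The paper's proof does not attempt to rule out removability at all. It works with the counting identity
\[
  N(t)=\sharp\{t'\in[1,k']\mid r'_{t',0}\leq r_{t,1}\}=r_{t,1}-|(a'_1,\dots,a'_{r_{t,1}})\cap\Rem(\mathbf{a}')|,
\]
and in the removable case it computes $N(t)$ exactly: Corollary \ref{cor: low bound for rem(a')} forces $|\Rem(a'_1,\dots,a'_{r_{t,1}-1})|=2r_{t,1}-a'_{r_{t,1}}-1$, hence $N(t)=a'_{r_{t,1}}-r_{t,1}=a_{r_{t,0}}-r_{t,1}$, and then the non-removability of $b_t$ in $\mathbf{a}$ (together with $r_{t,0}-|\Rem(a_1,\dots,a_{r_{t,0}-1})|=t$) gives $a_{r_{t,0}}-r_{t,0}\geq t-1$, which already suffices when $r_{t,0}>r_{t,1}$; Lemma \ref{lem: 1} enters only in the remaining subcase $r_{t,0}=r_{t,1}$ to upgrade this to $a_{r_{t,0}}-r_{t,0}\geq t$, not to produce a contradiction. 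So your proposal is missing the argument for exactly the cases where $a'_{r_{t,1}}\in\Rem(\mathbf{a}')$ (split by parity) and where $r_{t,0}\neq r_{t,1}$, and the contradiction you hope to extract there is not available. The final deduction $k'\geq k$ from the case $t=k$ is fine.
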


\begin{proof}
  For each $t \in [0,k]$, set
  \[
    N(t) := \sharp \{ t' \in [1,k'] \mid r'_{t',0} \leq r_{t,1} \}.
  \]
  Note that we have
  \[
    N(t) = \sharp \{ i \in [1,r_{t,1}] \mid a'_i \notin \Rem(\mathbf{a}') \} = r_{t,1} - |(a'_1,\dots,a'_{r_{t,1}}) \cap \Rem(\mathbf{a}')|.
  \]

  Let us prove the assertion by induction on $t$.
  When $t = 0$, the assertion is trivial.
  Assume that $t > 0$ and the assertion holds for $0,1,\dots,t-1$.

  First, suppose that $a'_{r_{t,1}} \notin \Rem(\mathbf{a}')$.
  Then, we have
  \[
    N(t) \geq N(t-1) + 1.
  \]
  Hence, we deduce from our induction hypothesis that
  \[
    N(t) \geq N(t-1) + 1 \geq (t-1) + 1 = t,
  \]
  as desired.

  Next, suppose that $a'_{r_{t,1}} \in \Rem(\mathbf{a}')$ and $a'_{r_{t,1}} \notin 2\mathbb{Z}$.
  Then, by Proposition \ref{prop: characterization of rem(a)}, we must have $r_{t,1} < l$, $a'_{r_{t,1}+1} = a'_{r_{t,1}}+1$, and
  \[
    a'_{r_{t,1}} < 2r_{t,1} - |\Rem(a'_1,\dots,a'_{r_{t,1}-1})|.
  \]
  Also, by Corollary \ref{cor: low bound for rem(a')}, we have
  \[
    |\Rem(a'_1,\dots,a'_{r_{t,1}-1})| \geq 2r_{t,1} - a'_{r_{t,1}} - 1.
  \]
  Hence, we obtain
  \[
    |\Rem(a'_1,\dots,a'_{r_{t,1}-1})| = 2r_{t,1} - a'_{r_{t,1}} - 1.
  \]
  Therefore,
  \[
    N(t) = r_{t,1} - (|\Rem(a'_1,\dots,a'_{r_{t,1}-1})|+1) = a'_{r_{t,1}} - r_{t,1}.
  \]

  On the other hand, by Corollary \ref{cor: low bound for rem(a')} again, we have
  \[
    |\Rem(a_1,\dots,a_{r_{t,0}-1})| \geq 2r_{t,0} - a_{r_{t,0}} - 1.
  \]
  Also, since $a_{r_{t,0}} \notin \Rem(\mathbf{a})$, it holds that
  \[
    r_{t,0} - |\Rem(a_1,\dots,a_{r_{t,0}-1})| = t.
  \]
  Hence,
  \[
    a_{r_{t,0}} - r_{t,0} \geq t-1.
  \]

  Combining above, we obtain
  \[
    N(t) = a'_{r_{t,1}} - r_{t,0} + (r_{t,0} - r_{t,1}) \geq t + (r_{t,0} - r_{t,1}) - 1.
  \]
  Therefore, the assertion follows when $r_{t,0} > r_{t,1}$.
  Hence, we only need to consider the case when $r_{t,0} = r_{t,1}$.
  In this case, Lemma \ref{lem: 1} implies
  \[
    a_{r_{t,0}+1} = a_{r_{t,0}}+1.
  \]
  Since $a_{r_{t,0}} \notin \Rem(\mathbf{a})$, we have
  \[
    a_{r_{t,0}} \geq 2r_{t,0} - |\Rem(a_1,\dots,a_{r_{t,0}-1})| = r_{t,0} + t.
  \]
  Now, we deduce
  \[
    N(t) = a'_{r_{t,1}} - r_{t,1} = a_{r_{t,0}} - r_{t,0} \geq t,
  \]
  as desired.

  Finally, suppose that $a'_{r_{t,1}} \in \Rem(\mathbf{a}')$ and $a'_{r_{t,1}} \in 2\mathbb{Z}$.
  In this case, one can deduce the assertion in a similar way to the previous case.

  Thus, we complete the proof.
\end{proof}

\begin{lem}
  Let $t \in [0,k]$.
  Then, we have $s'_t \geq s_t$ and $r'_{t,j-1} \leq r_{t,j}$ for all $j \in [1,s_t]$, where we set $s_0 = s'_0 = 0$.
\end{lem}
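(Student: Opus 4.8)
The plan is to establish both assertions simultaneously by induction on $t$, the case $t=0$ being vacuous since $s_0=s'_0=0$. Fix $t\in[1,k]$ and assume the lemma for all smaller indices. The case $j=1$ of the second assertion, $r'_{t,0}\le r_{t,1}$, is immediate from Lemma \ref{lem: N(t)}: it gives $N(t)\ge t$, i.e.\ at least $t$ of the strictly increasing integers $r'_{1,0}<r'_{2,0}<\dots<r'_{k',0}$ are $\le r_{t,1}$, which forces $r'_{t,0}\le r_{t,1}$. For the remaining values $j\in[2,s_t]$ I would run an inner induction on $j$, proving at stage $j$ that $s'_t\ge j$ (so that $r'_{t,j-1}$ is defined) and that $r'_{t,j-1}\le r_{t,j}$.

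For the inner step, the key tool is the recursive description of bumping routes in Lemma \ref{lem: properties of suc(T)}\eqref{item: properties of suc(T) 3}, read off for the insertion producing $\suc(T')$: it expresses $r'_{t,j-1}$ as the smallest $r$ in the interval $[\,r'_{t-1,j-1}+1,\ \col_{j-1}(\mu^{t-1})+1\,]$ with $T'(r,j)\ge T'(r'_{t,j-2},j-1)$, where $\mu^{t-1}=\sh(S'_{t-1})$. I would then verify that $r=r_{t,j}$ itself lies in this set, which yields $r'_{t,j-1}\le r_{t,j}$. The lower endpoint is cleared by the outer induction hypothesis $r'_{t-1,j-1}\le r_{t-1,j}$ (valid since $j\le s_t\le s_{t-1}$ by Lemma \ref{lem: properties of suc(T)}\eqref{item: properties of suc(T) 2.1}) combined with the strict inequality $r_{t-1,j}<r_{t,j}$ from Lemma \ref{lem: properties of suc(T)}\eqref{item: properties of suc(T) 2.2}. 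For the upper endpoint one must compare the height of column $j-1$ in $\sh(S'_{t-1})$ with its height in $\sh(S_{t-1})$; here I would use $\mu\underset{\text{vert}}{\subseteq}\lambda$ (Lemma \ref{lem: stationality of suc}) to control $\sh(S')$ versus $\sh(S)$, together with the description of the added cells in Lemma \ref{lem: properties of suc(T)}\eqref{item: properties of suc(T) 1} for both the primed and unprimed insertions and the bound $k'\ge k$. The entry inequality $T'(r_{t,j},j)\ge T'(r'_{t,j-2},j-1)$ I would deduce from the $T$-side relation $T(r_{t,j},j+1)\ge T(r_{t,j-1},j)$, the inner hypothesis $r'_{t,j-2}\le r_{t,j-1}$, semistandardness, and the identity $T'=S_k$ of Lemma \ref{lem: properties of suc(T)}\eqref{item: properties of suc(T) 2} (taken with the index $k$), which rewrites entries of $T'$ as entries of $T$; this last part needs a case split on whether the cells $(r_{t,j},j)$ and $(r'_{t,j-2},j-1)$ lie on one of the earlier bumping routes recorded in $\mathrm{br}_{\le k}$, in the spirit of the proofs of Lemmas \ref{lem: 1} and \ref{lem: N(t)}. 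The bound $s'_t\ge s_t$ is then extracted from the same interval analysis: once $r'_{t,j}\le r_{t,j+1}$ is known for $j<s_t$, the target rows remain within the corresponding columns of $S'_{t-1}$, so the insertion of $b'_t$ genuinely bumps through column $j$ (cf.\ Proposition \ref{prop: properties of colum insertion}).

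The step I expect to be hardest is the column-height comparison governing the upper endpoint of that interval: one must track how the columns of $\sh(S'_{t-1})$ — which start out strictly shorter than those of $\sh(S_{t-1})$, since $\sh(S')$ comes from the smaller shape $\mu$ — catch up under the insertions of $b'_1,\dots,b'_{t-1}$, exploiting $k'\ge k$. A secondary annoyance is the purely bookkeeping task of translating cells of $T'$ into cells of $T$ via $T'=S_k$ and the attendant membership tests in $\mathrm{br}_{\le k}$; I would isolate the needed translations in a short preliminary computation so that the induction itself stays readable.
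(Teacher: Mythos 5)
Your outline reproduces the paper's argument: an outer induction on $t$ and an inner induction on $j$, the case $j=1$ settled by Lemma \ref{lem: N(t)} together with $r'_{1,0}<\cdots<r'_{k',0}$, and the inductive step carried out by showing that $r_{t,j}$ lies in the set whose minimum defines $r'_{t,j-1}$ (the primed instance of Lemma \ref{lem: properties of suc(T)} \eqref{item: properties of suc(T) 3}): the lower endpoint is cleared by $r'_{t-1,j-1}\le r_{t-1,j}<r_{t,j}$, and the entry condition by rewriting $T'$-entries as $T$-entries via $T'=S_k$ with a case split on membership in $\mathrm{br}_{\le k}$ and the inner hypothesis $r'_{t,j-2}\le r_{t,j-1}$. (One small correction there: the cell $(r_{t,j},j)$ always lies on the $t$-th unprimed route, so $T'(r_{t,j},j)=T(r_{t,j-1},j)$ outright; only $(r'_{t,j-2},j-1)$ needs the two-case analysis, and the relation $T(r_{t,j},j+1)\ge T(r_{t,j-1},j)$ you invoke is not what is used.)

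The genuine gap is the step you yourself flag as hardest, and your plan for it does not work as stated. What is needed is $r_{t,j}\le \col_{j-1}(\mu^{t-1})+1$ with $\mu^{t-1}=\sh(S'_{t-1})$ — and in fact, to conclude $s'_t\ge j$ at the end of the inner step, the sharper bound $r_{t,j}\le\col_{j-1}(\mu^{t-1})$ without the $+1$. You propose to compare the column heights of $\sh(S'_{t-1})$ and $\sh(S_{t-1})$ at the same intermediate stage, making the primed columns ``catch up'' via $\mu\underset{\text{vert}}{\subseteq}\lambda$, the added-cell description and $k'\ge k$; but no such same-stage comparison is available (already at $t=1$ one has $\col_{j-1}(\sh(S'_0))=\col_j(\mu)\le\col_j(\lambda)=\col_{j-1}(\sh(S_0))$, with strict inequality in general, and there is no control over which columns the first $t-1$ primed insertions hit), and $k'\ge k$ plays no role here. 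The correct comparison is between the \emph{final} unprimed shape and the \emph{initial} primed shape: the cell $(r_{t,j},j)$ lies in $D(\lambda^t)\subseteq D(\lambda^k)=D(\mu)$, so $r_{t,j}\le\col_j(\mu)$; and since $S'$ is $T'$ with its first column deleted, $\col_j(\mu)=\col_{j-1}(\mu^{0})\le\col_{j-1}(\mu^{t-1})$. This one-line observation (the chain $\col_j(\lambda^{t-1})\le\col_j(\lambda^{k})=\col_{j-1}(\mu^{0})\le\col_{j-1}(\mu^{t-1})$ in the paper's proof) simultaneously clears the upper endpoint and yields $r'_{t,j-1}\le r_{t,j}\le\col_{j-1}(\mu^{t-1})$, so the route of $b'_t$ does not terminate at column $j-1$, i.e.\ $s'_t\ge j$ — a conclusion your ``same interval analysis'' remark leaves implicit and which genuinely requires the bound without the $+1$. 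With that step replaced, your proof is the paper's proof.
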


\begin{proof}
  We proceed by induction on $t$.
  The case when $t = 0$ is trivial.
  Hence, assume that $t > 0$ and the assertion holds for $0,1,\dots,t-1$.

  We prove the assertion by induction on $j$.
  By lemma \ref{lem: N(t)} and the fact that $r'_{1,0} < r'_{2,0} < \cdots < r'_{k',0}$, we obtain
  \[
    r'_{t,0} \leq r_{t,1}.
  \]

  Now, assume that $j > 1$ and we have $s'_t, s_t \geq j-1$ and $r'_{t,j-2} \leq r_{t,j-1}$.
  When $s_t = j-1$, there is nothing to prove.
  Hence, assume that $s_t \geq j$.

  By Lemma \ref{lem: properties of suc(T)}, we have
  \[
    r'_{t,j-1} = \min \{ r' \in [1,\col_{j-1}(\mu^{t-1})+1] \mid T'(r',j) \geq T'(r'_{t,j-2}, j-1) \}.
  \]
  Note that our induction hypothesis implies that
  \[
    r'_{t-1,j-1} \leq r_{t-1,j}.
  \]
  The right-hand side is less than $r_{t,j}$ by Lemma \ref{lem: properties of suc(T)} \eqref{item: properties of suc(T) 2.2}.
  Also we have
  \[
    r_{t,j} \leq \col_j(\lambda^{t-1}) + 1,
  \]
  and
  \[
    \col_j(\lambda^{t-1}) \leq \col_j(\lambda^k) = \col_j(\mu^0) \leq \col_{j-1}(\mu^0) \leq \col_{j-1}(\mu^{t-1}).
  \]
  Observe that
  \begin{align*}
    \begin{split}
      &T'(r_{t,j},j) = T(r_{t,j-1}, j), \\
      &T'(r'_{t,j-2},j-1) = \begin{cases}
        T(r_{u,j-2},j-1) & \text{ if } r'_{t,j-2} = r_{u,j-1} \text{ for some } u \in [1,k], \\
        T(r'_{t,j-2}, j) & \text{ otherwise},
      \end{cases}
    \end{split}
  \end{align*}
  and
  \begin{align*}
    \begin{split}
      &T(r_{u,j-2},j-1) \leq T(r_{u,j-1},j) \leq T(r_{t,j-1},j), \\
      &T(r'_{t,j-2},j) \leq T(r_{t,j-1},j).
    \end{split}
  \end{align*}
  By above, we obtain $r'_{t,j-1} \leq r_{t,j}$ and $s'_t \geq j$, as desired.
\end{proof}

\begin{prop}\label{prop: for cond e}
  Let $r \in [0,l]$.
  Then, we have
  \[
    \sharp \{ t \in [1,k] \mid r_{t,s_t} \leq r \} \leq \sharp \{ t' \in [1,k'] \mid r'_{t', s_{t'}} \leq r \}.
  \]
\end{prop}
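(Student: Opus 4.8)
The plan is to read the result off from three facts already available: the bound $k \le k'$ from Lemma \ref{lem: N(t)}; the inequalities $s'_t \ge s_t$ and $r'_{t,j-1} \le r_{t,j}$ for all $j \in [1,s_t]$ from the lemma immediately preceding this proposition; and the monotonicity $r'_{t,0} \ge r'_{t,1} \ge \cdots \ge r'_{t,s'_t}$ of a bumping route, which is Lemma \ref{lem: properties of suc(T)} \eqref{item: properties of suc(T) 4} applied to $T'$ in place of $T$.

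The key step is to show that $r'_{t,s'_t} \le r_{t,s_t}$ for every $t \in [1,k]$; geometrically, the box that the $t$-th insertion adds when passing from $T'$ to $\suc(T')$ sits in a row weakly above the box the $t$-th insertion adds when passing from $T$ to $\suc(T) = T'$. To see this, recall that a bumping route is nonempty, so $s_t \ge 1$. The preceding lemma with $j = s_t$ gives $r'_{t,s_t-1} \le r_{t,s_t}$, and the same lemma gives $s'_t \ge s_t$. Hence $0 \le s_t - 1 \le s'_t$, so the index $s_t - 1$ appears among the indices of the decreasing chain $r'_{t,0} \ge r'_{t,1} \ge \cdots \ge r'_{t,s'_t}$, and therefore $r'_{t,s'_t} \le r'_{t,s_t-1} \le r_{t,s_t}$.

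To conclude, fix $r \in [0,l]$. Since $k \le k'$ by Lemma \ref{lem: N(t)}, the assignment $t \mapsto t$ is a well-defined map $[1,k] \to [1,k']$, and if $t \in [1,k]$ satisfies $r_{t,s_t} \le r$ then $r'_{t,s'_t} \le r_{t,s_t} \le r$ by the key step. Thus $t \mapsto t$ restricts to an injection $\{ t \in [1,k] \mid r_{t,s_t} \le r \} \hookrightarrow \{ t' \in [1,k'] \mid r'_{t',s'_{t'}} \le r \}$, and comparing cardinalities yields exactly the asserted inequality; the case $k = 0$ is vacuous.

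I do not anticipate a genuine obstacle, since the two comparison lemmas do the substantive work. The only point worth a second glance is the boundary case $s_t = 1$, where the symbol $r'_{t,s_t-1}$ denotes $r'_{t,0}$, the first term of the decreasing chain, so the reasoning goes through verbatim.
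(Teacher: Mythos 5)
Your proposal is correct and follows essentially the same route as the paper: the paper also deduces $r'_{t,s'_t} \leq r'_{t,s_t-1} \leq r_{t,s_t} \leq r$ from the preceding comparison lemma together with the monotonicity of the chain $r'_{t,0} \geq r'_{t,1} \geq \cdots \geq r'_{t,s'_t}$, and concludes by the tautological inclusion of index sets. Your explicit mention of $k \leq k'$ and of the boundary case $s_t = 1$ only makes precise what the paper leaves implicit.
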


\begin{proof}
  For each $t \in [1,k]$, we have $s'_t \geq s_t$ and $r'_{t,s_t-1} \leq r_{t,s_t}$.
  Hence, if $r_{t,s_t} \leq r$, then
  \[
    r'_{t,s'_t} \leq r'_{t,s_t-1} \leq r_{t,s_t} \leq r.
  \]
  Thus, the assertion follows.
\end{proof}

\subsection{Symplectic Littlewood-Richardson tableaux}
\begin{defi}\label{def: spLR}\normalfont
  Let $\lambda \in \Par_{\leq 2n}$ and $\nu \in \Par_{\leq n}$.
  A tableau $T \in \Tab(\lambda/\nu)$ is said to be a \emph{symplectic Littlewood-Richardson tableau} if it satisfies the following.
  \begin{enumerate}
    \item\label{item: def spLR 1} $T \in \SST_{2n}(\lambda/\nu)$.
    \item\label{item: def spLR 2} Let $(w_1,\dots,w_N)$ denote the column-word $w^\mathrm{col}(T)$ of $T$.
    Then, the reversed word $(w_N,\dots,w_1)$ is a lattice permutation:
    For each $r \in [1,N]$ and $k \in [1,2n-1]$, the number of occurrences of $k$ in the subsequence $(w_N, \dots, w_r)$ is greater than or equal to that of $k+1$.
    \item\label{item: def spLR 3} The sequence $\operatorname{wt}(T) = (T[1], T[2], \dots, T[2n])$ is a partition which has even columns (see Definition \ref{def: even col}).
    \item\label{item: def spLR 4} If $T(i,j) = 2k+1$ for some $(i,j) \in D(\lambda/\nu)$ and $k \in \mathbb{Z}_{\geq 0}$, then we have $i \leq n+k$.
  \end{enumerate}
  Let $\LRSp_{2n}(\lambda/\nu)$ denote the set of all symplectic Littlewood-Richardson tableaux of shape $\lambda/\nu$.
\end{defi}

\begin{rem}\normalfont
  Let $T \in \SST_{2n}(\lambda/\nu)$ be such that the sequence $\operatorname{wt}(T)$ is a partition which has even columns.
  Then, we have $T \in \LRSp_{2n}(\lambda/\nu)$ if and only if the reversed column-word \emph{fits $\lambda/\nu$ $n$-symplectically} in the sense of \cite[Definition 3.9]{S90}.
\end{rem}

The symplectic Littlewood-Richardson tableaux provide us a branching rule:

\begin{thm}[{{\it cf.}\ \cite[Corollary 3.12]{S90}}]\label{thm: Sun90}
  Let $\lambda \in \Par_{\leq 2n}$ and $\nu \in \Par_{\leq n}$ be such that $\nu \subseteq \lambda$.
  Then, we have
  \[
    m_{\lambda,\nu} = |\LRSp_{2n}(\lambda/\nu)|.
  \]
\end{thm}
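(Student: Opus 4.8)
The plan is to obtain the statement directly from Sundaram's branching rule \cite[Corollary 3.12]{S90} by matching the combinatorial data. First I would recall Sundaram's rule in its original form: for $\lambda \in \Par_{\leq 2n}$ and $\nu \in \Par_{\leq n}$ with $\nu \subseteq \lambda$, the multiplicity $m_{\lambda,\nu}$ equals the number of semistandard skew tableaux $T$ of shape $\lambda/\nu$ with entries in $[1,2n]$ whose content $\operatorname{wt}(T)$ is a partition with even columns (see Definition~\ref{def: even col}) and whose reversed column word $(w_N,\dots,w_1)$ fits $\lambda/\nu$ $n$-symplectically in the sense of \cite[Definition 3.9]{S90}. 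Here one should note that Sundaram's formula is naturally phrased as a sum of Littlewood--Richardson-type coefficients over all admissible contents; fixing the skew shape $\lambda/\nu$ and letting the content range over all partitions with even columns reorganizes that sum into the single count above.

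The second step is to identify the set of tableaux counted by Sundaram's rule with $\LRSp_{2n}(\lambda/\nu)$. The semistandardness requirement is condition~\eqref{item: def spLR 1} of Definition~\ref{def: spLR}, and ``$\operatorname{wt}(T)$ is a partition with even columns'' is condition~\eqref{item: def spLR 3}. By the Remark following Definition~\ref{def: spLR}, for such a $T$ the reversed column word fits $\lambda/\nu$ $n$-symplectically precisely when it is a lattice permutation (condition~\eqref{item: def spLR 2}) and $T(i,j) = 2k+1$ forces $i \leq n+k$ (condition~\eqref{item: def spLR 4}). Hence the tableaux counted by Sundaram's rule are exactly the elements of $\LRSp_{2n}(\lambda/\nu)$, and the equality $m_{\lambda,\nu} = |\LRSp_{2n}(\lambda/\nu)|$ follows.

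The main obstacle is the translation contained in that Remark: one must verify that Sundaram's single notion ``fits $\lambda/\nu$ $n$-symplectically'' unfolds exactly into the two conditions~\eqref{item: def spLR 2} and~\eqref{item: def spLR 4}, and that the conventions agree (even column lengths versus even row lengths in the content, and the precise indexing $i \leq n+k$ of the admissible rows for an entry $2k+1$). A secondary point is to confirm that any $T \in \SST_{2n}(\lambda/\nu)$ satisfying conditions~\eqref{item: def spLR 2}--\eqref{item: def spLR 4} automatically has entries in $[1,2n]$ — using column-strictness together with the row bound — so that restricting to $\SST_{2n}(\lambda/\nu)$ in condition~\eqref{item: def spLR 1} imposes nothing beyond Sundaram's own hypotheses; this, together with the reindexing of Sundaram's sum noted above, is the only place where a genuine verification, as opposed to a mere translation of notation, is needed.
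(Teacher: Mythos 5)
Your proposal matches the paper's treatment: the paper does not reprove this statement but imports it from Sundaram's Corollary 3.12, with the preceding Remark carrying exactly the translation you describe (semistandardness and even-column content plus the identification of the $n$-symplectic fitting condition with conditions \eqref{item: def spLR 2} and \eqref{item: def spLR 4} of Definition \ref{def: spLR}). Your added care about matching conventions and about the entries lying in $[1,2n]$ is a sensible sanity check but introduces nothing beyond what the paper's citation-plus-remark already asserts.
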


\subsection{Characterization of the recording tableaux}
Let $\lambda \in \Par_{\leq 2n}$, $\nu \in \Par_{\leq n}$ be such that $\nu \subseteq \lambda$.

\begin{lem}\label{lem: rec in rectil}
  We have
  \[
    \Rec_{2n}(\lambda/\nu) \subseteq \widetilde{\Rec}_{2n}(\lambda/\nu).
  \]
\end{lem}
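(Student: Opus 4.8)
The plan is to fix $T \in \SST_{2n}(\lambda)$, put $Q := \QAII(T)$ and $\nu := \nu^{k_0}$, and check directly that $Q$ satisfies each of the five conditions (R1)--(R5) defining $\widetilde{\Rec}_{2n}(\lambda/\nu)$. Throughout I use the chain $\lambda = \nu^0 \underset{\text{vert}}{\supset} \nu^1 \underset{\text{vert}}{\supset} \cdots \underset{\text{vert}}{\supset} \nu^{k_0} = \nu$ together with the two descriptions in Lemma \ref{lem: QAII, nu}: first, $Q(i,j) = \min\{k \in [1,k_0] : (i,j) \notin D(\nu^k)\}$; second, $D(\nu^k) = D(\nu) \sqcup \{(i,j) \in D(\lambda/\nu) : Q(i,j) > k\}$, which (after the shift $k \mapsto k-1$) is exactly the partition called $\nu^{k-1}$ in condition (R4).

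Conditions (R1) and (R2) concern only the chain of shapes. For (R1), fix a row $i$: since $\nu^k/\nu^{k+1}$ is a vertical strip, $\nu^k_i$ is weakly decreasing in $k$ and drops by at most one at each step, so $Q(i,j)$ --- the first step at which $\nu^\bullet_i$ falls below $j$ --- strictly increases as $j$ decreases, i.e.\ $Q$ strictly decreases along row $i$ from left to right. For (R2), each $\nu^k$ is a partition, so $(i+1,j) \in D(\nu^k)$ forces $(i,j) \in D(\nu^k)$; hence $\{k : (i,j) \notin D(\nu^k)\} \subseteq \{k : (i+1,j) \notin D(\nu^k)\}$, and therefore $Q(i+1,j) \le Q(i,j)$.

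Conditions (R3) and (R4) reduce to facts about removable entries. Writing $d(P^{k-1}) = (\mathbf{a}, S)$, so that $\mathbf{a}$ has length $\ell(\nu^{k-1})$ and $\suc(P^{k-1}) = \red(\mathbf{a}) * S$, a size count (via Pieri's formula, Proposition \ref{prop: Pieri formula}) yields $|\nu^{k-1}| - |\nu^k| = |\Rem(\mathbf{a})|$, that is, $Q[k] = |\Rem(\mathbf{a})|$. Then (R3) is the evenness assertion of Lemma \ref{lem: rem is invariant under switch}, while (R4) follows from Proposition \ref{prop: low bound for rem}, which gives $|\Rem(\mathbf{a})| \ge 2(\ell(\nu^{k-1}) - n)$, once $\nu^{k-1}$ in (R4) is identified with the $(k-1)$-st term of the chain via Lemma \ref{lem: QAII, nu}.

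The only substantive point is (R5), to be deduced from Proposition \ref{prop: for cond e} applied to $T = P^{k-1}$: in its notation $\lambda$ becomes $\nu^{k-1}$, $\mu$ becomes $\nu^k$, the primed data belong to $\suc(P^{k-1}) = P^k$, and $\mu'$ becomes $\nu^{k+1}$. By Lemma \ref{lem: properties of suc(T)} \eqref{item: properties of suc(T) 1}, $\{(r_{t,s_t},s_t) : t\} = D(\nu^k/(\nu^{k-1})')$, so for $r \in [0, \ell(\nu^{k-1})]$ counting the boxes of this skew shape lying in rows $\le r$ gives
\[
  \sharp\{t : r_{t,s_t} \le r\} = \sum_{i=1}^{r}(\nu^k_i - \nu^{k-1}_i) + \min(r, \ell(\nu^{k-1})) = \min(r, \ell(\nu^{k-1})) - Q_{\le r}[k],
\]
and the same computation on the primed side gives $\sharp\{t' : r'_{t', s'_{t'}} \le r\} = \min(r, \ell(\nu^{k})) - Q_{\le r}[k+1]$. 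Substituting these two equalities into the inequality of Proposition \ref{prop: for cond e} and using $\ell(\nu^k) \le \ell(\nu^{k-1})$ (so that $\min(r, \ell(\nu^k)) - \min(r, \ell(\nu^{k-1})) \le 0$) yields $Q_{\le r}[k+1] \le Q_{\le r}[k]$ for $r \in [0, \ell(\nu^{k-1})]$; for $r \ge \ell(\nu^{k-1})$ both counts have already stabilized to $Q[k+1]$ and $Q[k]$ respectively, so the inequality persists for all $r$. I expect this last paragraph to be the main obstacle: one must translate the bumping-route counts of Proposition \ref{prop: for cond e} into the partial column-counts $Q_{\le r}[\cdot]$ and carry the correction term $\min(r, \ell(\cdot))$ through carefully so that the length inequality $\ell(\nu^k) \le \ell(\nu^{k-1})$ can be invoked.
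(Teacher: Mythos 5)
Your proof is correct and follows essentially the same route as the paper: (R1)--(R2) from the chain of shapes via Lemma \ref{lem: QAII, nu}, (R3)--(R4) from $Q[k]=|\Rem(\mathbf{a}^{k-1})|$ together with Lemma \ref{lem: rem is invariant under switch} and Proposition \ref{prop: low bound for rem}, and (R5) from Proposition \ref{prop: for cond e} applied to $P^{k-1}$. Your handling of (R5) is in fact slightly more careful than the paper's, since you keep the correction term $\min(r,\ell(\nu^{k-1}))$ (the paper writes $r-\sharp\{t \mid r^{k-1}_{t,s^{k-1}_t}\leq r\}$, which tacitly assumes $r\leq\ell(\nu^{k-1})$) and then use $\ell(\nu^k)\leq\ell(\nu^{k-1})$, which is exactly what makes the inequality go through for all $r$.
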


\begin{proof}
  Let $Q \in \Rec_{2n}(\lambda/\nu)$.
  We only need to show that $Q \in \widetilde{\Rec}_{2n}(\lambda/\nu)$, that is, to verify that the tableau $Q$ satisfies conditions (R1)--(R5) in Section \ref{sect: main}.

  By the definition of recording tableaux, there exists $T \in \SST_{2n}(\lambda)$ such that $\QAII(T) = Q$.
  For each $k \in \mathbb{Z}_{\geq 0}$, set
  \[
    P^k := \begin{cases}
      T & \text{ if } k = 0, \\
      \suc(P^{k-1}) & \text{ if } k > 0,
    \end{cases} \quad \nu^k := \sh(P^k).
  \]
  Let us write
  \[
    d(P^k) = (\mathbf{a}^k, S^k),\ \mathbf{a}^k = (a^k_1,\dots,a^k_{l_k}),\ \red(\mathbf{a}^k) = \mathbf{b}^k = (b^k_1,\dots,b^k_{l'_k}),
  \]
  and
  \[
    \mathrm{br}(b^k_t, b^k_{t-1} \rightarrow ( \cdots \rightarrow (b^k_1 \rightarrow S^{k-1}))) = (r^k_{t,1}, r^k_{t,2}, \dots, r^k_{t,s^k_t}).
  \]

  First, let us verify conditions (R1) and (R2).
  Let $(i,j) \in D(\lambda/\nu)$ and write $Q(i,j) = k$.
  Suppose that $(i,j+1) \in D(\lambda/\nu)$ (resp., $(i+1,j) \in D(\lambda/\nu)$).
  Then, since $\nu^k \underset{\text{vert}}{\subseteq} \nu^{k-1}$, we must have $(i,j+1) \notin D(\nu^{k-1})$ (resp., $(i+1,j) \notin D(\nu^k)$).
  This, together with Lemma \ref{lem: QAII, nu}, implies that $Q(i,j+1) \leq k-1$ (resp., $Q(i+1,j) \leq k$), as desired.

  Next, let us verify conditions (R3) and (R4).
  Let $k > 0$.
  Then, we have
  \[
    Q[k] = |\nu^{k-1}/\nu^k|
  \]
  by Lemma \ref{lem: QAII, nu}.
  Since $P^k = \suc(P^{k-1}) = \red(\mathbf{a}^{k-1})*S^{k-1}$, it holds that
  \[
    |\nu^{k-1}/\nu^k| = |\Rem(\mathbf{a}^{k-1})|.
  \]
  The right-hand side is even by Lemma \ref{lem: rem is invariant under switch}, and is greater than or equal to $2(\ell(\nu^{k-1})-n)$ by Proposition \ref{prop: low bound for rem}.

  Finally, let us verify condition (R5).
  Let $r,k > 0$.
  Then, we have
  \begin{align*}
    \begin{split}
      Q_{\leq r}[k] &= \sharp \{ i \in [1,r] \mid Q(i,j) = k \ \text{ for some } j \} \\
      &= |[1,r] \setminus \{ r^{k-1}_{t,s^{k-1}_t} \mid t \in [1,l'_{k-1}] \}| \\
      &= r - \sharp \{ t \in [1,l'_{k-1}] \mid r^{k-1}_{t,s^{k-1}_t} \leq r \}.
    \end{split}
  \end{align*}
  Hence, we deduce that
  \[
    Q_{\leq r}[k] - Q_{\leq r}[k+1] = \{ t' \in [1,l'_{k}] \mid r^{k}_{t',s^{k}_{t'}} \leq r \} - \{ t \in [1,l'_{k-1}] \mid r^{k-1}_{t,s^{k-1}_t} \leq r \}.
  \]
  The right-hand side is nonnegative by Proposition \ref{prop: for cond e}.

  Thus, we complete the proof.
\end{proof}

\begin{lem}\label{lem: rectil in lrt}
  Consider the map
  \[
    \widetilde{\Rec}_{2n}(\lambda/\nu) \rightarrow \Tab(\lambda/\nu)
  \]
  which sends each $R \in \widetilde{\Rec}_{2n}(\lambda/\nu)$ to the tableaux of shape $\lambda/\nu$ whose $(i,j)$ entry is equal to $R_{\leq i}[R(i,j)]$; the number of occurrences of $R(i,j)$ in $R$ in the $i$-th row or above.
  Then, it is injective and its image is contained in $\LRSp_{2n}(\lambda/\nu)$.
\end{lem}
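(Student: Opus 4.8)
The plan is to prove injectivity of the map of the lemma—call it $\Phi$—by reconstructing $R$ from $T:=\Phi(R)$ row by row, and to verify conditions (1)--(4) of Definition~\ref{def: spLR} for $T$ directly. Throughout, write $v_{ij}:=R(i,j)$, so $T(i,j)=R_{\le i}[v_{ij}]\ge 1$; in particular $\Phi$ does land in $\Tab(\lambda/\nu)$. I will use three monotonicity facts valid for any $R\in\widetilde{\Rec}_{2n}(\lambda/\nu)$: by (R1) the values strictly decrease along each row, so a given value occurs at most once per row and $R_{\le i}[v]-R_{\le i-1}[v]\in\{0,1\}$; by (R5) with $r=i$ the function $v\mapsto R_{\le i}[v]$ is non-increasing; and if one lists the cells of $R$ with a fixed value $v$ from top to bottom as $(i_1,c_1),\dots,(i_p,c_p)$ (so $p=R[v]$ and $i_1<\dots<i_p$), then $c_1\ge\dots\ge c_p$. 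The last fact holds because $c_q<c_{q+1}$ would force $(i_q,c_{q+1})\in D(\lambda/\nu)$ (since $\nu_{i_q}<c_q<c_{q+1}\le\lambda_{i_{q+1}}\le\lambda_{i_q}$), and then (R1) in row $i_q$ gives $R(i_q,c_{q+1})<v$ while (R2) in column $c_{q+1}$ gives $R(i_q,c_{q+1})\ge R(i_{q+1},c_{q+1})=v$, a contradiction. A consequence I reuse: the cells of $T$ of value $q$ are exactly the $q$-th cells from the top of the various value strips, so $T[q]=\#\{v:R[v]\ge q\}$.

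For injectivity, suppose $R,R'\in\widetilde{\Rec}_{2n}(\lambda/\nu)$ have $\Phi(R)=\Phi(R')=T$; I prove $R$ and $R'$ agree in every row by induction on the row index. Granting agreement on rows $<i$, set $g(v):=R_{\le i-1}[v]=R'_{\le i-1}[v]$, a non-increasing function by (R5). Then $T(i,j)=g(R(i,j))+1=g(R'(i,j))+1$, so $g(R(i,j))=g(R'(i,j))$ for every $j$ in row $i$. Since $j\mapsto R(i,j)$ is strictly decreasing and $g$ non-increasing, the columns $j$ in row $i$ with $g(R(i,j))=d$ form an interval depending only on $T$ (it is $\{j:T(i,j)=d+1\}$), on which $R(i,\cdot)$ is the strictly decreasing enumeration of an $n_d$-element subset of the interval $I_d:=\{v:g(v)=d\}$, where $n_d:=\#\{j:T(i,j)=d+1\}$; the same holds for $R'$. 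Finally (R5) with $r=i$ forces $g+\mathbf{1}_V$ to be non-increasing, $V$ being the set of values in row $i$ of $R$; restricting to $I_d$, on which $g$ is constant, forces $V\cap I_d$ to be the initial segment of $I_d$ of size $n_d$, hence uniquely determined, and likewise for $R'$. Therefore $R$ and $R'$ use the same values in the same columns in the same forced order in row $i$, completing the induction.

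For the codomain statement I check Definition~\ref{def: spLR}(1)--(4) in turn. (1): $T(i,j)=R_{\le i}[v_{ij}]\le i\le\ell(\lambda)\le 2n$; $T$ is weakly increasing along rows since $R_{\le i}$ is non-increasing and $R$ strictly decreases along rows; and $T$ is strictly increasing down columns because if $(i,j),(i',j)\in D(\lambda/\nu)$ with $i<i'$ then $R(i',j)\le R(i,j)$ by (R2) and $R(i',j)$ occurs in row $i'$, so $T(i',j)\ge R_{\le i}[R(i',j)]+1\ge R_{\le i}[R(i,j)]+1=T(i,j)+1$. (3): from $T[q]=\#\{v:R[v]\ge q\}$ and the fact that $(R[1],R[2],\dots)$ is a partition (by (R5) with $r=\ell(\lambda)$), $\operatorname{wt}(T)$ is the conjugate of $(R[1],R[2],\dots)$, hence a partition whose $q$-th column has length $R[q]$, which is even by (R3); thus $\operatorname{wt}(T)$ has even columns in the sense of Definition~\ref{def: even col}. (2): the cells of $T$ valued $q$ are the $q$-th strip cells, and by the strip monotonicity the $(q+1)$-st cell of any strip lies weakly left of and strictly below its $q$-th cell; since the reversed column word reads columns right-to-left, each top-to-bottom, any prefix containing the $(q+1)$-st cell of a strip also contains its $q$-th cell, giving a strip-preserving injection from occurrences of $q+1$ into occurrences of $q$ in every prefix, which is the lattice condition. (4): if $T(i,j)=2k+1$, put $v:=R(i,j)$ and $p:=R[v]$; then $p$ is even (R3) and $p\ge 2k+1$, so $p\ge 2k+2$, and since $(i,j)$ is the $(2k+1)$-st cell of the value-$v$ strip, that strip has $p-(2k+1)\ge 1$ further cells in distinct rows below $i$, so its bottom cell lies in row $\ge i+p-2k-1$; being a cell of value $\ge v$ it lies in $D(\nu^{v-1})$, so $\ell(\nu^{v-1})\ge i+p-2k-1$. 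Combined with (R4) at $k'=v$, i.e. $\ell(\nu^{v-1})\le n+p/2$, this yields $i\le n-p/2+2k+1\le n+k$.

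I expect the only genuine obstacle to be isolating the right inequality for condition (4): one must apply (R4) at the value $v=R(i,j)$ itself rather than at the integer $2k+1$, and recognize that the value-$v$ strip necessarily continues below row $i$ because $R[v]$ is even and at least $2k+1$; the trade-off between the $p-2k-1$ extra strip rows and the slack $p/2$ in (R4) then collapses exactly to $i\le n+k$. The remaining items—injectivity and conditions (1)--(3)—are a routine unwinding of the monotonicity properties (R1), (R2), (R5) together with conjugate-partition bookkeeping for (3).
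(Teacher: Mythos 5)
Your proposal is correct and takes essentially the same route as the paper: the containment in $\LRSp_{2n}(\lambda/\nu)$ is proved by verifying conditions \eqref{item: def spLR 1}--\eqref{item: def spLR 4} of Definition \ref{def: spLR} directly from (R1)--(R5), with the key step for \eqref{item: def spLR 4} being, as in the paper, an application of (R4) at the value $v=R(i,j)$ combined with the one-occurrence-per-row bound (your bookkeeping via the bottom cell of the $v$-strip and evenness of $R[v]$ is a harmless variant of the paper's contradiction argument). Beyond that you also spell out the strip column-monotonicity underlying the lattice-word identity and give an explicit row-by-row reconstruction proof of injectivity, both of which the paper's proof leaves implicit; these additions are correct.
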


\begin{proof}
  Let $R \in \widetilde{\Rec}_{2n}(\lambda/\nu)$, and $S \in \Tab(\lambda/\nu)$ be the image of $R$ under the map above.
  We only need to show that the tableau $S$ satisfies the conditions \eqref{item: def spLR 1}--\eqref{item: def spLR 4} in Definition \ref{def: spLR}.

  First, let us verify condition \eqref{item: def spLR 1}.
  By condition (R1), each $k' > 0$ appears as an entry of $R$ at most once in each row.
  In particular, we have
  \[
    R[k'] \leq \ell(\lambda) \leq 2n \ \text{ for all } k' > 0.
  \]
  This implies that the entries of $S$ are in $[1,2n]$.

  Let $(i,j) \in D(\lambda/\nu)$ and set $k := R(i,j)$.
  Suppose that $(i,j+1) \in D(\lambda/\nu)$ (resp., $(i+1,j) \in D(\lambda/\nu)$), and set $k' := R(i,j+1) < k$ (resp., $k'' := R(i+1,j) \leq k$).
  Here, we used condition (R1) (resp., (R2)).
  Then, we have
  \[
    S(i,j+1) = R_{\leq i}[k'] \geq R_{\leq i}[k] = S(i,j),
  \]
  (resp., 
  \[
    S(i+1,j) = R_{\leq i+1}[k''] = R_{\leq i}[r''] + 1 \geq R_{\leq i}[r] + 1 = S(i,j) + 1,)
  \]
  where the inequality follows from condition (R5).
  This implies that $S$ is semistandard.

  Second, let us verify condition \eqref{item: def spLR 2}.
  Let us write
  \[
    w_\col(S) = (w_1,\dots,w_N) \text{ and } w_\col(R) = (w'_1,\dots,w'_N).
  \]
  Let $r \in [1,N]$ and $k \in [1,2n]$.
  Then, we have
  \[
    \sharp\{ t \in [r,N] \mid w_t = k \} = \sharp\{ k' > 0 \mid \sharp\{ t' \in [r,N] \mid w'_{t'} = k' \} \geq k \}.
  \]
  The right-hand side decreases as $k$ increases.
  Hence, the assertion follows.

  Next, let us verify condition \eqref{item: def spLR 3}.
  By condition (R5), we have
  \[
    R[1] \geq R[2] \geq \cdots.
  \]
  Then, we can consider the tableau whose $j$-th column consists of exactly $R[j]$ $j$'s.
  Clearly, its shape is $\operatorname{wt}(S)$.
  Noting that $R[j] \in 2\mathbb{Z}$ by condition (R3), we see that the partition $\operatorname{wt}(S)$ has even columns.

  Finally, let us verify condition \eqref{item: def spLR 4}.
  Let $(i,j) \in D(\lambda/\nu)$ and suppose that $S(i,j) = 2k+1$ for some $k \geq 0$.
  When $i \leq n$, there is nothing to prove.
  Hence, assume that $i > n$.

  Set $r := R(i,j)$.
  Then, we have $R_{\leq i}[r] = 2k+1$.
  Let $\mu$ be the partition whose Young diagram is given by
  \[
    D(\mu) = D(\nu) \sqcup \{ (i,j) \in D(\lambda/\nu) \mid R(i,j) \geq r \}.
  \]
  
  We have $R_{\leq i}[r] \geq 2(i-n)$.
  Otherwise, we obtain
  \[
    R[r] = R_{\leq \ell(\mu)}[r] \leq R_{\leq i}[r] + (\ell(\mu)-i) < 2(i-n) + (\ell(\mu)-i) \leq 2(\ell(\mu)-i),
  \]
  which contradicts condition (R4).

  Now, we have
  \[
    2k+1 = R_{\leq i}[r] \geq 2(i-n).
  \]
  This implies that $2k \geq 2(i-n)$, and then the assertion follows.

  Thus, we complete the proof.
\end{proof}

\begin{thm}\label{thm: rec = rectil}
  We have $\Rec_{2n}(\lambda/\nu) = \widetilde{\Rec}_{2n}(\lambda/\nu)$.
\end{thm}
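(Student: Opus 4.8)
The plan is a finiteness-counting argument: by Lemma \ref{lem: rec in rectil} we already know $\Rec_{2n}(\lambda/\nu) \subseteq \widetilde{\Rec}_{2n}(\lambda/\nu)$, and both sets are finite, so it suffices to prove that they have the same cardinality. I would do this by showing that each of the two cardinalities equals the branching multiplicity $m_{\lambda,\nu}$, sandwiching $|\widetilde{\Rec}_{2n}(\lambda/\nu)|$ between $|\Rec_{2n}(\lambda/\nu)|$ and $|\LRSp_{2n}(\lambda/\nu)|$.

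First I would establish $|\Rec_{2n}(\lambda/\nu)| = m_{\lambda,\nu}$. By Theorem \ref{thm: quantum LR is isom}, the quantum Littlewood-Richardson map is a $\mathbf{U}^\imath$-module isomorphism, so
\[
  V(\lambda) \simeq \bigoplus_{\substack{\nu \in \Par_{\leq n} \\ \nu \subseteq \lambda}} V^\imath(\nu)^{\oplus |\Rec_{2n}(\lambda/\nu)|}.
\]
Since the modules $V^\imath(\nu)$ are pairwise non-isomorphic irreducibles (their characters $s^{Sp}_\nu(y_1,\dots,y_n)$ are linearly independent) and $V(\lambda)$ is completely reducible as a $\mathbf{U}^\imath$-module, the multiplicity of $V^\imath(\nu)$ in $V(\lambda)$ is well defined and equals $|\Rec_{2n}(\lambda/\nu)|$; by Proposition \ref{prop: q-multiplicity} this multiplicity is $m_{\lambda,\nu}$.

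Next I would bound $|\widetilde{\Rec}_{2n}(\lambda/\nu)|$ from above. By Lemma \ref{lem: rectil in lrt} there is an injective map $\widetilde{\Rec}_{2n}(\lambda/\nu) \to \LRSp_{2n}(\lambda/\nu)$, so $|\widetilde{\Rec}_{2n}(\lambda/\nu)| \leq |\LRSp_{2n}(\lambda/\nu)|$, and by Sundaram's branching rule (Theorem \ref{thm: Sun90}) the right-hand side equals $m_{\lambda,\nu}$. Combining these facts with the inclusion of Lemma \ref{lem: rec in rectil} gives
\[
  m_{\lambda,\nu} = |\Rec_{2n}(\lambda/\nu)| \leq |\widetilde{\Rec}_{2n}(\lambda/\nu)| \leq |\LRSp_{2n}(\lambda/\nu)| = m_{\lambda,\nu},
\]
so all the inequalities are equalities. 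In particular $|\Rec_{2n}(\lambda/\nu)| = |\widetilde{\Rec}_{2n}(\lambda/\nu)|$, and since $\Rec_{2n}(\lambda/\nu) \subseteq \widetilde{\Rec}_{2n}(\lambda/\nu)$ is an inclusion of finite sets of equal size, it is an equality, which is the assertion.

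There is no genuinely hard step remaining: all the substantial content — the bijectivity of $\LRAII$ (obtained from the quantum model), the inclusion $\Rec \subseteq \widetilde{\Rec}$, the injection $\widetilde{\Rec} \hookrightarrow \LRSp$, and Sundaram's rule — has already been carried out, so this proof is purely an assembly of cardinality (in)equalities. The one place that warrants a little care is the first step: one must invoke complete reducibility of $V(\lambda)$ over $\mathbf{U}^\imath$ and the pairwise non-isomorphism of the $V^\imath(\nu)$ in order to read off $|\Rec_{2n}(\lambda/\nu)|$ as a genuine multiplicity before identifying it with $m_{\lambda,\nu}$ via Proposition \ref{prop: q-multiplicity}.
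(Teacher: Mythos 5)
Your proposal is correct and follows essentially the same route as the paper: the inclusion from Lemma \ref{lem: rec in rectil}, the injection into $\LRSp_{2n}(\lambda/\nu)$ from Lemma \ref{lem: rectil in lrt}, and the identification of both $|\Rec_{2n}(\lambda/\nu)|$ (via Theorem \ref{thm: quantum LR is isom} and Proposition \ref{prop: q-multiplicity}) and $|\LRSp_{2n}(\lambda/\nu)|$ (via Theorem \ref{thm: Sun90}) with $m_{\lambda,\nu}$, yielding the cardinality sandwich. Your extra remarks on complete reducibility and the pairwise non-isomorphism of the $V^\imath(\nu)$ simply make explicit a point the paper leaves implicit.
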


\begin{proof}
  By Lemmas \ref{lem: rec in rectil} and \ref{lem: rectil in lrt}, we have
  \[
    \Rec_{2n}(\lambda/\nu) \subseteq \widetilde{\Rec}_{2n}(\lambda/\nu)
  \]
  and
  \[
    |\Rec_{2n}(\lambda/\nu)| \leq |\widetilde{\Rec}_{2n}(\lambda/\nu)| \leq |\LRSp_{2n}(\lambda/\nu)|.
  \]
  Hence, we only need to show that
  \begin{align}\label{eq: coin card}
    |\Rec_{2n}(\lambda/\nu)| = |\LRSp_{2n}(\lambda/\nu)|.    
  \end{align}

  Recall from Theorem \ref{thm: Sun90} that
  \[
    |\LRSp_{2n}(\lambda/\nu)| = m_{\lambda,\nu}.
  \]
  On the other hand, by Theorem \ref{thm: quantum LR is isom} and Proposition \ref{prop: q-multiplicity}, we obtain
  \[
    |\Rec_{2n}(\lambda/\nu)| = m_{\lambda,\nu}.
  \]
  Therefore, equation \eqref{eq: coin card} holds.

  Thus, we complete the proof.
\end{proof}

\end{document}